\documentclass[11pt]{article}
\usepackage[margin=1in]{geometry}
\usepackage[utf8]{inputenc}
\usepackage[T1]{fontenc} 
\usepackage{mathtools}
\usepackage{subcaption}

\usepackage{xcolor} 
\usepackage{color} 
\definecolor{niceRed}{RGB}{190,38,38} 
\definecolor{niceYellow}{HTML}{f5b400}
\definecolor{blueGrotto}{HTML}{059DC0}
\definecolor{royalBlue}{HTML}{057DCD}
\definecolor{navyBlue}{HTML}{0B579C}
\definecolor{limeGreen}{HTML}{81B622}
\definecolor{nicePurple}{HTML}{9c27b0}
\definecolor{lightRoyalBlue}{HTML}{def2ff}  
\definecolor{gold}{HTML}{ffa300}

\newcommand*{\white}[1]{\textcolor{white}{#1}}

\usepackage{color-edits}  
 
\usepackage{hyperref}
\hypersetup{
  colorlinks = true, 
  urlcolor = {blueGrotto},
  linkcolor = {royalBlue},
  citecolor = {navyBlue}
}
\usepackage[numbers,sort&compress]{natbib}
\makeatletter
\providecommand{\abx@aux@refcontext}[1]{}
\providecommand{\abx@aux@cite}[2]{}
\providecommand{\abx@aux@segm}[3]{}
\providecommand{\abx@aux@defaultrefcontext}[2]{}
\makeatother

\usepackage{booktabs} 
\usepackage{multicol} 
\usepackage{multirow} 
\usepackage{makecell} 
\usepackage{longtable}

\usepackage{graphicx}
\usepackage{float} 
\usepackage{tikz}
\usepackage{pgfplots}
\pgfplotsset{compat=1.17}
\usepgfplotslibrary{fillbetween}

\usepackage{physics} 
\usepackage{amsmath}
\numberwithin{equation}{section}
\allowdisplaybreaks
\usepackage{amssymb}
\usepackage{amsthm}
\usepackage{bbm}
\usepackage{blkarray}
\usepackage{nameref}
\usepackage{nicefrac}
\usepackage{dsfont}
\usepackage{pifont} 
\usepackage{thm-restate} 
\usepackage[capitalise,noabbrev,nameinlink]{cleveref}

\usepackage{enumerate} 
\usepackage[inline,shortlabels]{enumitem} 
\usepackage{tcolorbox}
\usepackage[framemethod=TikZ]{mdframed}
\mdfsetup{%
backgroundcolor=yellow!00, , 
roundcorner=4pt,
linewidth=1pt}
\usepackage{changepage} 

\usepackage{algorithm}
\usepackage{algpseudocode}[1]

\usepackage{mathrsfs} 
\usepackage{soul} 

\theoremstyle{plain} 
\newtheorem{theorem}{Theorem}[section]
\newtheorem{corollary}[theorem]{Corollary}

\newtheorem{proposition}[theorem]{Proposition}
\newtheorem{lemma}[theorem]{Lemma}

\newtheorem{assumption}{Assumption}[section]

\newtheorem{inftheorem}{Informal Theorem}

\newtheorem{definition}{Definition}[section]

\newtheorem*{definition*}{Definition}

\theoremstyle{definition} 

\newtheorem{remark}[theorem]{Remark}

\theoremstyle{remark}

\AfterEndEnvironment{definition}{\noindent\ignorespaces}
\AfterEndEnvironment{claim}{\noindent\ignorespaces}
\AfterEndEnvironment{assumption}{\noindent\ignorespaces}
\AfterEndEnvironment{lemma}{\noindent\ignorespaces}
\AfterEndEnvironment{theorem}{\noindent\ignorespaces}
\AfterEndEnvironment{proposition}{\noindent\ignorespaces}
\AfterEndEnvironment{fact}{\noindent\ignorespaces}
\AfterEndEnvironment{question}{\noindent\ignorespaces}
\AfterEndEnvironment{corollary}{\noindent\ignorespaces}
\AfterEndEnvironment{model}{\noindent\ignorespaces}
\AfterEndEnvironment{remark}{\noindent\ignorespaces}
\AfterEndEnvironment{proof}{\noindent\ignorespaces}
\AfterEndEnvironment{fact}{\noindent\ignorespaces}
\AfterEndEnvironment{minftheorem}{\noindent\ignorespaces}
\AfterEndEnvironment{inftheorem}{\noindent\ignorespaces}
\AfterEndEnvironment{maintheorem}{\noindent\ignorespaces}
\AfterEndEnvironment{restatable}{\noindent\ignorespaces}
\AfterEndEnvironment{infassumption}{\noindent\ignorespaces}
\AfterEndEnvironment{conjecture}{\noindent\ignorespaces}

\crefname{section}{Section}{Sections}
\crefname{theorem}{Theorem}{Theorems}
\crefname{lemma}{Lemma}{Lemmas}
\crefname{definition}{Definition}{Definitions}
\crefname{conjecture}{Conjecture}{Conjectures}
\crefname{corollary}{Corollary}{Corollaries}
\crefname{construction}{Construction}{Constructions}
\crefname{conjecture}{Conjecture}{Conjectures}
\crefname{claim}{Claim}{Claims}
\crefname{observation}{Observation}{Observations}
\crefname{proposition}{Proposition}{Propositions}
\crefname{fact}{Fact}{Facts}
\crefname{question}{Question}{Questions}
\crefname{problem}{Problem}{Problems}
\crefname{remark}{Remark}{Remarks}
\crefname{example}{Example}{Examples}
\crefname{equation}{Equation}{Equations}
\crefname{appendix}{Appendix}{Appendices}
\crefname{algorithm}{Algorithm}{Algorithms}
\crefname{model}{Model}{Models}
\crefname{figure}{Figure}{Figures}
\crefname{infthm}{Informal Theorem}{Informal Theorems}
\crefname{infassumption}{Informal Assumption}{Informal Assumptions}
\crefname{minftheorem}{Main Informal Theorem}{Main Informal Theorems}
\crefname{maintheorem}{Main Theorem}{Main Theorems}
\crefname{assumption}{Assumption}{Assumptions}
\crefname{case}{Case}{Cases}

\newlist{asmpenum}{enumerate}{1} 
\setlist[asmpenum]{label={\arabic*.},ref=\theassumption.{\arabic*}}
\crefname{asmpenumi}{Assumption}{Assumptions}

\newlist{infasmpenum}{enumerate}{1} 
\setlist[infasmpenum]{label={\arabic*.},ref=\theassumption.{\arabic*},leftmargin=20pt}
\crefname{infasmpenumi}{Informal Assumption}{Informal Assumptions}

\usepackage{etoolbox}

\BeforeBeginEnvironment{subenvironment}{\white{.}\\ \vspace{-10mm}}
\AfterEndEnvironment{subenvironment}{\vspace{4mm}}

\makeatletter
\newcommand{\tagnum}[2]{%
    \refstepcounter{equation}%
    \tag{#1) \ (\theequation}%
    \protected@write \@auxout {}{%
        \string \newlabel {#2}{{\theequation}{\thepage}{}{equation.\theequation}{}}%
    }%
}
\makeatother

\makeatletter
\let\orgdescriptionlabel\descriptionlabel
\renewcommand*{\descriptionlabel}[1]{%
  \let\orglabel\label
  \let\label\@gobble
  \phantomsection
  \edef\@currentlabel{#1\unskip}%
  \let\label\orglabel
  \orgdescriptionlabel{#1}%
}
\makeatother

\def\abs#1{|#1|}

\newcommand{\inangle}[1]{\left\langle#1\right\rangle}

\let\norm\relax
\newcommand{\norm}[1]{\ensuremath{\lVert #1 \rVert}}

\newcommand{\R}{\mathbb{R}}
\newcommand{\PP}{\mathbb{P}}

\newcommand{\E}{\operatorname{\mathbb{E}}} 
\newcommand{\Ex}{\E}

\newcommand{\cov}{\ensuremath{\operatornamewithlimits{\rm Cov}}}
\let\var\relax
\newcommand{\var}{\ensuremath{\operatornamewithlimits{\rm Var}}}
\newcommand{\Var}{\var}

\newcommand{\argmax}{\operatornamewithlimits{arg\,max}}

\newcommand{\one}{\mathbbm{1}}

\newcommand\simiid{\overset{\text{i.i.d.}}{\sim}}
\newcommand{\CPI}{C_{\mathsf{PI}}}
\newcommand{\mugood}{\mu|_{\hat\Theta_{\delta}}}
\DeclareMathOperator{\osc}{osc}

\newcommand{\deq}{\coloneqq}

\renewcommand{\epsilon}{\varepsilon}
\newcommand{\T}{\mathsf{T}}
\makeatletter
\newcommand*{\tran}{{\mathpalette\@tran{}}}
\newcommand*{\@tran}[2]{\raisebox{\depth}{$\m@th#1\intercal$}}
\makeatother

\mathchardef\NABLA"272
\newcommand*{\Nabla}{\boldsymbol\NABLA}
\let\nabla\Nabla

\renewcommand{\hat}{\widehat}

\renewcommand{\bar}{\overline}

\renewcommand{\tilde}{\widetilde}

\newcommand{\sN}{\mathsf{N}}

\newcommand{\Cgr}{C^{\mathsf{LG}}}
\newcommand{\error}{\mathtt{Err}}
\newcommand{\Pois}{\mathsf{Poisson}}

\title{Sampling from Constrained Gibbs Measures: with Applications to High-Dimensional Bayesian Inference} 
\author{Ruixiao Wang \\ \texttt{\small ruixiao.wang@yale.edu} \and Xiaohong Chen \\ \texttt{\small xiaohong.chen@yale.edu} \and Sinho Chewi \\ \texttt{\small sinho.chewi@yale.edu}}
\date{Yale University \\[1em] \today}

\begin{document}

\maketitle
\begin{abstract}
    This paper considers a non-standard problem of generating samples from a low-temperature Gibbs distribution with \emph{constrained} support, when some of the coordinates of the mode lie on the boundary. These coordinates are referred to as the non-regular part of the model. We show that in a ``pre-asymptotic'' regime in which the limiting Laplace approximation is not yet valid, the low-temperature Gibbs distribution concentrates on a neighborhood of its mode. Within this region, the distribution is a bounded perturbation of a product measure: a strongly log-concave distribution in the regular part and a one-dimensional exponential-type distribution in each coordinate of the non-regular part. Leveraging this structure, we provide a non-asymptotic sampling guarantee by analyzing the spectral gap of Langevin dynamics. Key examples of low-temperature Gibbs distributions include Bayesian posteriors, and we demonstrate our results on three canonical examples: a high-dimensional logistic regression model, a Poisson linear model, and a Gaussian mixture model.
\end{abstract}
%

\section{Introduction}\label{sec:intro}
We consider sampling from a high-dimensional low-temperature Gibbs distribution that is defined over a constrained domain. More specifically, we assume that the distribution $\mu$ of interest admits a Lebesgue density of the following form:
    \begin{equation}
        \mu(d\theta) = \frac{\pi(\theta)\exp{n \ell(\theta)} \one_{[0,\infty)^d}(\theta)}{\int_{[0,\infty)^d}\pi(\theta)\exp{n \ell(\theta)}\, d\theta}\,d\theta\,, \label{eq:constrained_gibbs_density}
    \end{equation}
    where $\pi : [0,\infty)^d \to [0,\infty)$, and $\ell$ is $C^2$ on the interior of $[0, \infty)^d$ and admits a unique global maximizer $\hat \theta$.  
    Given access to evaluations of $\pi$ and $\ell$ and their gradients, our aim is to generate samples whose distribution is close to $\mu$  in total variation distance.
    Here, $n \ge 0$ can be interpreted as an inverse temperature parameter, but we use the notation $n$ since it will take on the interpretation of a \emph{sample size} in our primary application. 

    Indeed, a first motivation for studying such distributions comes from the field of Bayesian inference
    in which $\mu$ is the posterior distribution\footnote{Our results generally extend to pseudo-posterior distributions, in which the log-likelihood is replaced by a general criterion function. For ease of discussion, we stick with the terminology ``posterior''.} and is thus random, depending on the observed data.
    In this case, $\pi$ is the prior, and $\ell = \ell_n$ is the averaged log-likelihood over $n$ observations.
    
 Bayesian inference provides a principled framework for uncertainty quantification, but it also raises natural computational questions, as the distribution~\eqref{eq:constrained_gibbs_density} can be quite complicated.
 Toward this end, the predominant approach is to apply sampling algorithms based on the Markov Chain Monte Carlo (MCMC) paradigm.
 From a theoretical standpoint, the justification of these methods hinges on an analysis of their convergence behavior.

A second motivation comes from the study of rare events. 
Specifically, if $E$ denotes the rare event of interest, and the mode of the distribution lies outside of $E$, then the distribution conditioned on $E$ can be modelled in the form~\eqref{eq:constrained_gibbs_density} where $n$ is a parameter controlling the rarity.
Although we do not discuss this and further applications in this work, the reason why we work at the level of generality of~\eqref{eq:constrained_gibbs_density} is because such distributions are ubiquitous in applications.

When $n$ tends to infinity with $d$ fixed, the classical Laplace approximation and non-regular variants thereof provide accurate asymptotic approximations to the distribution $\mu$, and hence sampling becomes easy.
Recent works (see \cref{sec:related_work}) have also established non-asymptotic versions of these results, although they generally require at least $n \gg d^2$.
In contrast, we are particularly interested in a ``pre-asymptotic'' regime in which the limiting approximation is not yet accurate, e.g., $d^2 \gg n \gg d$.
    In this setting, the sampling problem is rich, yet---as we will show---remains surprisingly tractable.

\paragraph{Non-regular models.}
In this paper, we study the setting in which the distribution $\mu$ is supported on a constraint set $\mathcal C$.
For simplicity, we confine ourselves to the model case $\mathcal C = [0,\infty)^d$.
We expect that our arguments, being local in nature, extend to the case when $\mathcal C$ is, e.g., an open domain with a smooth boundary via the standard technique of ``straightening out the boundary'', although we leave this for future work.
In fact, $[0,\infty)^d$ is in a sense more challenging because multiple coordinates of the mode can equal $0$, corresponding to multiple non-regular directions (whereas the boundary of an open domain has but one); one says that $[0,\infty)^d$ is ``stratified''.

From the standpoint of practical applications, constraint sets arise frequently, either because of the natural interpretation of the parameters (e.g., the mean of a Poisson distribution is always non-negative), or because of problem-specific constraints.

For example, in economics, constraints are present in auction models \cite{PAARSCH1992191, DONALD2002305, 4284864d-4807-33db-bf07-abcba0681499, 7a7a3cca-73b5-3d1e-89e3-f917af691716}, and some structural models in labor economics \cite{FLINN1982115}, where the support of the model depends on the unknown parameter. Simple examples include a one-sided uniform distribution $\mathsf{Unif}([0, \theta^\star])$, and Pareto distributions which are used to model latent cost distributions.
Such models are referred to as \emph{non-regular} and underscore the need to develop theory encompassing these settings.

A key challenge, however, is that non-regular models can exhibit starkly different phenomena from the regular (smooth, unconstrained) case.
In asymptotic statistics, the maximum likelihood estimator is often inefficient~\cite{Vaart_1998}.
Moreover, the classical asymptotic approximation to~\eqref{eq:constrained_gibbs_density} is no longer Gaussian, but includes truncated Gaussian and gamma components~\cite{bochkinaBernsteinMisesTheoremNonregular2014}.
What, then, is the behavior of $\mu$ in the pre-asymptotic phase?
Here, the distribution $\mu$ does not admit a simple approximation; instead, our approach will be to establish structural properties of $\mu$ which imply that MCMC algorithms are rapidly mixing, at least locally.

\paragraph{Pre-asymptotic guarantees for MCMC via Poincar\'e inequalities.}

Given access to evaluations of the log-density $\log \mu$ and its gradients, an MCMC algorithm forms a Markov chain whose stationary distribution approximates $\mu$. Ergodicity ensures that the law of the iterate of the Markov chain converges to the stationary distribution as the number of iterations tends to infinity. However, MCMC is computationally expensive, and practical performance can severely degrade due to slow mixing or poor scaling with the dimension. It is therefore important to provide theoretical foundations for the convergence of MCMC algorithms in high-dimensional settings.

Convergence guarantees are often provided under the assumption of strong log-concavity~\cite{Chewi25Book}, which however is too restrictive to capture many distributions of interest.
However, in the regular case,~\cite{nickl2022} showed that for many high-dimensional Bayesian inverse problems, the posterior is \emph{locally} strongly log-concave; see the related works section (\Cref{sec:related_work}) for further details. Furthermore, the onset of local strong log-concavity typically occurs in the pre-asymptotic regime $n \gg d$.
Their result implies that standard MCMC methods, such as the Langevin diffusion, mix rapidly---in the sense of producing a sample from the posterior in time which is polynomial in both $d$ and $n$---\emph{when initialized in a local region around the mode}.

Actually, the initialization assumption is fundamental.
As shown in~\cite{Ban+23BarrierMCMC}, even a unimodal posterior can exhibit a ``free energy barrier'', resulting in exponentially slow mixing when initialized far away from the mode.
Nevertheless, these results paint a hopeful picture for MCMC\@, as they suggest that sampling is tractable as soon as point estimation is.
Since uncertainty quantification is generally more difficult than point estimation, and since the latter is often solvable both via powerful heuristics and via a growing body of rigorous results, the takeaway message is that high-dimensional Bayesian inference appears to be surprisingly tractable.

In our work, we build upon these insights and aim to provide a theory for non-regular models.
However, in this setting, local strong log-concavity fails, and we must seek a different approach.
Indeed, even in the asymptotic picture, the limiting distribution contains exponential-type distributions which are not strongly log-concave.
Despite this, it is well-known that the Langevin dynamics mixes rapidly for an exponential target, because it satisfies a functional inequality known as a \emph{Poincar\'e inequality}.
Intuitively, a Poincar\'e inequality corresponds to a
a spectral gap for the generator of the Langevin dynamics; see~\cite{bakry2014analysis, Chewi25Book} for a detailed treatment.

Taking inspiration from this picture, we establish that the distribution~\eqref{eq:constrained_gibbs_density}, under appropriate assumptions and when localized to a neighborhood around the mode, satisfies a Poincar\'e inequality in the pre-asymptotic phase.
In turn, this extends the story in the regular setting to the non-regular one, by showing that standard MCMC methods mix rapidly when suitably initialized.
In summary:
\begin{itemize}
    \item In the regular case, local strong log-concavity kicks in before the limiting Gaussian approximation.
    \item In the non-regular case, a \emph{local Poincar\'e inequality} kicks in before the limiting Gaussian-exponential approximation.
\end{itemize}

\paragraph{Our contributions.}
Our contributions are twofold: first, we obtain deterministic conditions under which we can sample from the Gibbs distribution~\eqref{eq:constrained_gibbs_density} with non-asymptotic convergence guarantees; second, we provide a suite of conditions under which the random posterior distribution in a Bayesian setting satisfies our deterministic conditions.
Taken together, our results yield various sampling corollaries for concrete non-regular Bayesian problems, which we explore in~\Cref{section:examples}. 

In the first part, our main result (see~\Cref{infthm:poincare}) is a Poincar\'e inequality for the Gibbs density~\eqref{eq:constrained_gibbs_density} when restricted to a ``good set'', consisting of a ball of radius $\asymp n^{-1/2}$ in the regular coordinates (coordinates in which $\hat\theta_i > 0$), and a box with side length $\asymp n^{-1}$ in the non-regular coordinates (coordinates in which $\hat\theta_i = 0$).
Our proof decomposes the Gibbs density on the good set into three components: $(i)$ a strongly log-concave distribution on the regular coordinates, $(ii)$ a one-dimensional perturbed exponential distribution for each non-regular coordinate, and $(iii)$ a perturbation term coupling the two parts. We show that the distributions associated with $(i)$ and $(ii)$ satisfy a Poincar\'e inequality with dimension-free constants. By the tensorization property of the Poincar\'e inequality and its stability under bounded perturbations, this yields a dimension-free Poincar\'e constant for the Gibbs density restricted to the good set, which implies fast mixing guarantees for MCMC algorithms truncated to the good set.
We then show that the good set has overwhelming probability under~\eqref{eq:constrained_gibbs_density}, justifying the truncation.

In the second part, we perform a frequentist analysis of a Bayesian
setup in which $\ell = \ell_n$ is an average of $n$ i.i.d.\ realizations of a random function.
Letting $\ell^\star \deq \E \ell_n$ denote the population version of the criterion function, we place assumptions on $\ell^\star$ ensuring that the assumptions in the first part hold with high probability.
The main assumption is that $\ell^\star$ admits a unique global minimizer, together with standard regularity and smoothness conditions.

We also note here two caveats of our approach.
First, as discussed above, our results only imply the existence of fast samplers when the good set is known, which amounts to approximate knowledge of the mode.
When the target density is highly non-log-concave, identifying the mode can also be challenging, but this assumption is natural in light of the lower bound of~\cite{Ban+23BarrierMCMC} and is justified by the widespread prevalence of practically successful methods for optimization/point estimation, e.g., \cite{doi:10.1073/pnas.2519845123}, where they deal with the challenge of highly non-log-concave settings and boundary modes. 

Second, if $d_0$, $d_1$ denote the number of regular and non-regular coordinates respectively, our polynomial-time sampling guarantees and concentration result for the good set only hold\footnote{Here, and subsequently, we assume that $d_1 \ge 1$ to avoid trivialities; otherwise, our bounds should be stated in terms of $1\vee d_1$ instead of $d_1$.} when $n \gg d_0 d_1$.
In this paper, our goal is to focus on settings in which the number of non-regular coordinates is small; recall, for instance, that when the constraint set is an open domain with a smooth boundary, this morally corresponds to $d_1 = 1$. When $d_1$ is constant or growing slowly with $n$, e.g., $d_1 \lesssim \log n$, then our condition reduces to $n \gg d_0$ up to logarithmic factors. In contrast, prior quantitative works in the non-regular setting only consider $d_1 = 1$~\cite{katsevich2025asymptoticanalysisrareevents}.

\subsection{Related works}\label{sec:related_work}

\paragraph{Laplace approximation.}
Approximation of low-temperature Gibbs measures by Gaussians is classically known as the Laplace approximation, and it remains an active area of research~\cite{33dbad4f-5b73-3bc9-ac01-477e2b81fa2a, doi:10.1137/1.9780898719260, doi:10.1137/S0036144504446175, kirwin2010higherasymptoticslaplacesapproximation,Nemes_2013, BarDrtTan16Laplace, Ogden21Laplace, tang2023laplacesaddlepointapproximationshigh, katsevich2024laplaceapproximationaccuracyhigh}.
In high-dimensional settings, a central question is to determine the rate at which the dimension $d$ can grow with the parameter $n$ while ensuring that the Laplace approximation remains valid.
Recently, \cite{kasprzak2025goodlaplaceapproximationbayesian} established the sufficient condition $n \gg C_d\, d^2$ for some model-specific constant $C_d > 0$ which could possibly depend on the dimension $d$. This result was further refined and complemented with necessary conditions in~\cite{katsevich2024laplaceasymptoticexpansionhigh}, thereby arriving at a tight characterization of its validity.

On the other hand, there has been limited work on the Laplace expansion in non-regular models (e.g., constrained measures). A notable exception is~\cite{math8040479}, which studies the convergence rate of a Laplace-type integral approximation near the boundary of the constraint set.
Recently, in the context of rare event sampling, the work of \cite{katsevich2025asymptoticanalysisrareevents} established the validity of a Laplace-type approximation in the regime where $n \gg d^2$. The rare event $E$ they consider has a smooth boundary, which is locally homeomorphic to a half-space.
  Our work complements theirs by providing non-asymptotic guarantees for sampling algorithms in the regime where $n$ is comparable to $d$, and our results apply to the case with more than one non-regular direction.

\paragraph{Bernstein--von Mises (BvM) theorems.}
When the Gibbs measure arises as a posterior distribution formed from $n$ samples, the celebrated Bernstein--von Mises (BvM) theorem performs a frequentist analysis of its limiting behavior assuming that the data is generated from a ground truth parameter.
It states that
under certain regularity conditions, the posterior distribution converges in total variance (TV) distance to a normal distribution as $n \to \infty$ centered around the posterior mode $\hat\theta$ and with a covariance matrix given by the inverse of Fisher information at $\hat\theta$. 
Perhaps the most important implication of the BvM theorem is that Bayesian inference is asymptotically correct from a frequentist point of view, which cements the Bayesian approach as both a computationally tractable and theoretically principled approach.

Similarly to work on the Laplace approximation, recent research interest centers on the high-dimensional setting. Recently,~\cite{Kat25BvM} showed that the BvM theorem holds when $n \gg d^2$ for logistic regression models with random design, whereas the pre-existing general theory required $n \gg d^3$, e.g., for semi-parametric models in \cite{panovFiniteSampleBernstein2015} and Bayesian inverse problems in \cite{lu2017bernsteinvonmisestheoremhigh}.
There are also many works on the BvM theorem in non-parametric settings, which we do not survey here.

We turn to work in non-regular settings. 
For Bayesian inference on set identified models, \cite{chenMonteCarloConfidence2018} establishes Bayesian Wilks-type theorems in non-regular models under certain priors, and their resulting Bayesian credible sets provide valid frequentist coverage, which applies to models with parameter-dependent support. In regular set-identified models, they show that the limiting quasi-posterior distribution of the quasi-likelihood ratio statistic has a chi-squared distribution, and a gamma distribution when the model has parameter-dependent support.
Moreover, \cite{bochkinaBernsteinMisesTheoremNonregular2014} establishes a Bernstein--von Mises (BvM) theorem for non-regular models, showing that the posterior distribution converges to a product of (truncated) Gaussian distributions and gamma distributions, and~\cite{brunel2026bernsteinvonmisestheoremlogconcave} showed that non-regular BvM theorems hold under the sole assumption of log-concavity.

In our work, we focus on a regime of sample sizes in which these asymptotic results do not apply (e.g., $n \gg d$), and thus cannot be used for inferential purposes.
Instead, we establish structural properties which imply computational tractability of the posterior in this regime.

\paragraph{Non-asymptotic posterior sampling guarantees.}

The closest related work to ours in the pre-asymptotic regime is \cite{nickl2022}, which studied the sampling problem for high-dimensional \emph{regular} posterior distributions. They showed local strongly log-concavity of the posterior, which implies fast mixing for Langevin-type sampling algorithms when initialized at a warm start. Specifically, they studied a Bayesian inverse problem with i.i.d.\ pairs $(X_i, Y_i)$, where $X_i$ is the covariate and $Y_i$ is the response variable,
$$Y_i = \mathcal{G}(\theta)(X_i) + \epsilon_i\,,\qquad i=1,\dotsc, n\,.$$
The regression function $\mathcal{G}$ maps the parameter $\theta\in\Theta \subseteq \R^d$ to a continuous real-valued function over a bounded subset $\mathcal O$ of $\R^m$. The distributional assumptions are that $\epsilon_i \simiid \sN(0, \sigma^2)$ and $X_i \simiid P_X$, are independent, where $P_X$ is a probability measure over $\mathcal{O}$. Their analysis showed that when $n \gg d$, Langevin MCMC with a warm start produces samples whose law is close to the posterior in the 2-Wasserstein distance, with an iteration complexity polynomial in both $d$ and $n$.

We stress that the result of \cite{nickl2022} does not apply to non-regular models. 
Indeed,
their proof relies on a local strong log-concavity around the mode of the posterior distribution, which fails when the mode is attained on the boundary. To our knowledge, our work is the first to analyze sampling properties of non-regular models in a pre-asymptotic regime.

We also mention the recent work~\cite{chak2025complexitymarkovchainmonte}, which studies the complexity of MCMC algorithms on log-concave measures. In particular, they establish for linear, logistic, and Poisson regression that with $n \gg d$, MCMC attains the same complexity scaling in $n, d$ as first-order optimization algorithms, given a warm start inside the locally strongly log-concave region.

\paragraph{Functional inequalities for low-temperature Gibbs measures.}
Prior works have also studied functional inequalities, such as the Poincar\'e inequality, for low-temperature Gibbs measures.
When the energy function (the negative of the log-density) has an energy barrier, the precise exponential blow-up of the Poincar\'e constant as $n\to\infty$ is known as the Eyring--Kramers formula~\cite{Eyr1935, Kra1940}.
On the other hand, when the energy function has a benign landscape, the Poincar\'e constant remains bounded or even decays at the rate $1/n$ as $n\to\infty$~\cite{LiErd23RiemLangevin, CheSri25Lyapunov, chewi2024ballisticlimitlogsobolevconstant, GonHeShe25LocalPL}.
By truncating to the good set, our analysis falls within the benign landscape setting; however, to our knowledge, prior works did not consider the non-regular case in which the mode lies on the boundary of a constraint set.

\subsection{Notation}

We abuse notation by using the same symbol $\mu$ to denote both the probability measure of interest, as well as its Lebesgue density.
Let $d$ denote the ambient dimension and $n$ be a parameter, interpreted as the inverse temperature in general, and as the sample size in a Bayesian context. We write $\hat \theta$ for the mode of $p$, and $\theta^{\star}$ as the ground truth parameter in our Bayesian analysis.
The constraint set is $\Theta \deq [0, \infty)^d$ unless otherwise specified. 
 For any set of indices $S$, we denote by $\theta_{S}$ the vector of coordinates of $\theta$ indexed by $S$, and $\ell(\theta_S) \deq \ell(\theta_S, \hat \theta_{[d]\setminus S})$, where the coordinates at the omitted indices are fixed to be at the mode. 

 We denote the usual Euclidean norm as $\norm{\cdot}_2$, and $\norm{\cdot}_{\rm op}$ as the operator norm. We denote the largest and smallest eigenvalues by $\lambda_{\max}(\cdot)$ and $\lambda_{\min}(\cdot)$ respectively. For a matrix $A$, $\norm{A}_\infty$ is the maximum absolute coordinate of $A$, i.e., $\norm{A}_\infty \deq \max_{i,j} |A_{ij}|$.

 The notation $\vee$ denotes taking maximum of two numbers, and $\wedge$ denotes taking minimum of two numbers. We denote a function $f$ as $o(n)$ if $f(n)/n \to 0$ as $n \to \infty$, $O(n)$ if $f(n)/n$ is bounded by a constant, and we write $a \lesssim b$, $a \asymp b$, and $a\gtrsim b$ to denote $a = O(b)$, $a = \Theta(b)$, and $a = \Omega(b)$. The condition $n \gg A$ means that there exists a sufficiently large universal constant $C$ such that if $n \geq CA$, then the subsequent statement holds.
 We denote a ball of radius $r$ centered at $\theta$ in with respect to the $\ell_2$ and $\ell_\infty$ norms as $B_2(\theta, r)$ and $B_\infty(\theta,r)$ respectively, and we sometimes abbreviate $B(\theta,r) \deq B_2(\theta, r)$.

\subsection{Main results}

As discussed in~\Cref{sec:related_work}, unlike the work of~\cite{nickl2022}, the constrained low-temperature Gibbs distribution~\eqref{eq:constrained_gibbs_density} is not locally strong log-concave.
We propose to decompose the negative log-likelihood into three components: a regular part, one-dimensional non-regular parts, and perturbation terms that capture interactions between two parts. We analyze the regular and non-regular components separately.
In the regular part of the model, we can apply the Laplace expansion to second order, where the first-order term involves the gradient of the log-likelihood function and therefore vanishes at the mode. Therefore, the regular part corresponds to a locally strongly log-concave distribution.
 
On the other hand, we show that the non-regular part has an exponential-type density, which therefore satisfies a Poincar\'e inequality. By applying tensorization (\cref{thm:tensorization}) and the bounded perturbation principle (\cref{thm:holley-stroock}), we can then establish a Poincar\'e inequality for the overall distribution $\mu$, truncated to the good set. The first main result is the following (see the formal statement in~\cref{theorem:poincare}): 
\begin{inftheorem}\label{infthm:poincare}
    Consider the density $\mu$ defined in \eqref{eq:constrained_gibbs_density} and let $\hat\theta$ be its unique mode. 
    Define the index sets
    \begin{align*}
        S_0 \deq \{j\in [d]: \hat\theta_j > 0\}\,, \qquad S_1 \deq \{j\in [d]: \hat\theta_j = 0\}\,,
    \end{align*}
    corresponding to the regular and non-regular parts respectively, and write $d_0 \deq \abs{S_0}$, $d_1 \deq \abs{S_1}$.
    Define a ``good set'' $\hat\Theta \deq B_2(\hat\theta_{S_0}, r_0) \times B_\infty(\hat\theta_{S_1}, r_1)$, where $r_0 \asymp \sqrt{d_0/n}$ and $r_1 \asymp (\log d_1)/n$.
    Adopt the following assumptions:
    \begin{itemize}
        \item The Hessian of $\ell$ in the regular part, and the gradient of $\ell$ in the non-regular part, are strictly negative at $\hat\theta$. Namely, $\nabla^2_{S_0,S_0} \ell(\hat\theta) \preceq -C_{S_0} I \prec 0$ and $\partial_j \ell(\hat\theta) \le -C_{S_1} < 0$ for all $j\in S_1$.
        \item On $\hat\Theta$, the Hessian $\nabla^2 \ell$ is bounded in operator norm by $s_2$.
        \item The ``prior'' $\pi = \bigotimes_{j\in [d]} \pi_j$ is a log-concave product measure, such that each $\log \pi_j$ is $o(n)$-Lipschitz.
    \end{itemize}
    If $n \gg d_0$, then the conditional distribution $\mu|_{\hat\Theta}$ satisfies a Poincar\'e inequality with constant
    $$\CPI \lesssim \Bigl(\frac{1}{C_{S_0} n} \vee \frac{1}{C_{S_1}^2 n^2}\Bigr)\exp\Bigl\{O\bigl(s_2\,\bigl(\sqrt{\frac{d_0 d_1}{n} } + \frac{d_1}{n}\bigr)\bigr)\Bigr\}\,.$$ 
\end{inftheorem}

In order for this result to be meaningful, it is also important to check that $\mu$ places most of its mass on $\hat\Theta$.
Toward that end, we establish sufficient conditions for this to hold in \cref{sec:laplace_sufficient}.
In the application to Bayesian inference, these results capture the concentration of the posterior distribution around its mode which, to our knowledge, has not previously been quantitatively and systematically investigated in the non-regular case.
Although our conditions are somewhat complicated to state, in the case when $\mu$ is log-concave, our result holds as soon as
\begin{align*}
    n \gg d_0 d_1 \,.
\end{align*}
Hence, our results are strongest when $d_1$, the number of non-regular coordinates, is small.
We expect this to be true for many applications, e.g., the case of an open domain with smooth boundary corresponding to $d_1 = 1$.
It is worth noting that even when $d_1$ is allowed to grow slowly with $n$, e.g., $d_1 \lesssim \log n$, this is still well before the asymptotic regime $n \gg d^2$. 
We conjecture that the condition could be improved to $n \gg d_0 + d_1$ with more refined techniques, and we leave this as future work.

Next, we turn toward the application of our result to Bayesian inference in \cref{sec:random_laplace_density}.
Here, the measure $\mu$ is now random, interpreted as the posterior, and we aim to place suitable assumptions such that the conditions of \cref{infthm:poincare} hold with high probability.

\begin{inftheorem}\label{infthm:random}
    Consider the density $\mu$ in~\eqref{eq:constrained_gibbs_density}, where now $\ell \deq \ell_n$ is a random function and $\ell^\star \deq \E \ell_n$.
    Let $\hat\theta$, $\theta^\star$ denote the maximizers of $\ell_n$ and $\ell^\star$ over $[0,\infty)^d$ respectively, and
    define the index sets
    \begin{align*}
        S_0 \deq \{j\in [d]: \theta_j^\star > 0\}\,, \qquad S_1 \deq \{j\in [d]: \theta_j^\star = 0\}\,.
    \end{align*}
    Write $d_0 \deq \abs{S_0}$, $d_1 \deq \abs{S_1}$, define the ``good set'' $\hat\Theta \deq B_2(\hat\theta_{S_0}, c_0\sqrt{d_0/n}) \times B_\infty(\hat\theta_{S_1}, (c_1 \log d_1)/n)$, as well as the region $R \deq B_2(\theta^\star_{S_0}, r_0) \times B_\infty(\theta^\star_{S_1}, r_1)$.
    
    Suppose that the following assumptions hold with high probability:
    \begin{itemize}
        \item Over $R$, the empirical Hessian $\nabla^2\ell_n$ satisfies $-\nabla^2_{S_0,S_0} \ell_n \succeq c_{S_0} I$ and $\norm{\nabla^2 \ell_n}_{\rm op} \le s_2$.
        \item For all $j\in S_1$, $\partial_j \ell^\star(\theta^\star) \le -c_{S_1} < 0$.
        \item The random function $\ell_n$ is an i.i.d.\ sum $\ell_n = n^{-1} \sum_{i=1}^n \ell(\cdot; X_i)$, and the one-sample score is subexponential: $\sup_{\norm v_2 \le 1}{\norm{\langle v,\nabla_\theta \ell(\theta; X_1) - \E\nabla_\theta \ell(\theta; X_1)\rangle}_{\psi_1}} < \infty$ for all $\theta \in R$.
    \end{itemize}
    Then, if $n \gg d_0$, with high probability, the conditional distribution $\mu|_{\hat\Theta}$ 
    satisfies a Poincar\'e inequality with constant
    $$\CPI \lesssim \Bigl(\frac{1}{c_{S_0} n} \vee \frac{1}{c_{S_1}^2 n^2}\Bigr)\exp\Bigl\{O\bigl(s_2\,\bigl(\sqrt{\frac{d_0 d_1}{n} } + \frac{d_1}{n}\bigr)\bigr)\Bigr\}\,.$$
\end{inftheorem}

Our assumptions are chosen with an eye toward checking them in examples.
To illustrate our theory, we present three concrete applications to posterior sampling: logistic regression (\cref{section:logistic_regression}), the Poisson linear model (\cref{section:poisson_linear_model}), and Gaussian mixture models (\cref{section:gaussian_mixture_model}).

In the logistic regression and Poisson linear models, we show that when $n \gg d_0d_1$, the posterior distribution concentrates on the good set, and conditioned on that set the posterior satisfies a Poincar\'e inequality with constant $O(1/n)$. For the Gaussian mixture model, we show that the same conclusion holds when $n \gg d^2$.
We caution that these results are contingent on assumptions on the model and that the implicit constants depend on model-specific parameters, which could also scale with the dimension in certain situations.
Here, our goal is not to study these examples in-depth, but rather to demonstrate the applicability of the general theory.

However, in the well-studied logistic regression case, it is possible to directly compare our results to Laplace/BvM-type results from~\cite{katsevich2024laplaceasymptoticexpansionhigh, kasprzak2025goodlaplaceapproximationbayesian, Kat25BvM} and deduce that our sample size requirement $n \gg d_0 d_1 $ is indeed much less stringent than the requirement $n \gg d_0^2$ for the validity of the asymptotic approximation (and they consider the regular case with $d_1 = 0$; recall that we assume $d_1 \ge 1$, so in the case $d_1 = 0$ our requirement should be interpreted as $n \gg d_0$).
More broadly, although we do not claim tightness of our bounds for our other specific examples, we generally expect our approach to furnish sampling guarantees at a significantly smaller sample size than asymptotic approaches.

We conduct two simulation studies on three statistical models to assess how our theory performs in practice. We implemented the projected Langevin Monte Carlo algorithm with warm start in both pre-asymptotic and asymptotic regimes. The results support our theory and classical large-sample Bayesian behavior in non-regular models.

\paragraph{Organization of the paper.}
The paper is organized as follows. \cref{sec:preliminary} provides some technical background. In \cref{sec:laplace_density}, we provide a detailed analysis of the constrained low-temperature Gibbs distribution and establish the Poincar\'e inequality (\cref{infthm:poincare}), as well as our results on concentration on the good set. In \cref{sec:random_laplace_density}, we study Bayesian inference setting in which the Gibbs distribution is random, and we establish \cref{infthm:random}. In \cref{section:examples}, we illustrate our analysis by three examples of posterior sampling in logistic regression, Poisson linear models, and Gaussian mixture models. Two sets of simulation results on the three statistical models are then presented in \cref{sec:simulation} to complement our theory with practical experiments. Proofs deferred from the main text can be found in the Appendix.

\section{Preliminaries}\label{sec:preliminary}

We recall the definition of the Poincar\'e inequality.

\begin{definition}[Poincar\'e constant]\label{def:poincare}
Given a probability measure $\mu \in \mathcal{P}(\R^d)$, its Poincar\'e constant, denoted $\CPI(\mu)$, is the least constant $C$ such that for all smooth, compactly supported functions $f: \R^d \to \R$,
\begin{equation*}
\int f^2\, d\mu - \bigl(\int f\, d\mu\bigr)^2 \leq C \int \norm{\grad f}^2_2\, d\mu\,.
\end{equation*}
\end{definition}

We recall that a Poincar\'e inequality can be interpreted as a spectral gap for the Langevin dynamics, and in particular the following lemma holds~\cite{bakry2014analysis}.

\begin{lemma}
Consider a probability distribution $\mu$ with a smooth positive density, and the associated Langevin dynamics \begin{equation}
d X_t = \grad \log \mu(X_t)\,dt + \sqrt{2}\, d B_t\,, \qquad X_0 \sim \mu_0\,,
\end{equation}
where $\{B_t\}_{t\geq 0}$ is a standard Brownian motion.
Then, $\mu$ satisfies a Poincar\'e inequality with constant $\CPI$ if and only if
\begin{equation}
\chi^2(\mu_t, \mu) \leq \exp\bigl(-\frac{2t}{\CPI}\bigr)\,\chi^2(\mu_0, \mu) \qquad\text{for all}~t\ge 0\,,
\end{equation}
where $X_t \sim \mu_t$.
\end{lemma}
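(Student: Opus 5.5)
The plan is to argue at the level of the Markov semigroup $(P_t)_{t\ge 0}$ of the Langevin dynamics. Its generator is $\mathcal L f = \Delta f + \langle \grad \log \mu, \grad f\rangle$. It is symmetric in $L^2(\mu)$, and integration by parts gives the Dirichlet form $-\int f\,\mathcal L f\,d\mu = \int \norm{\grad f}_2^2\,d\mu \eqqcolon \mathcal E(f)$. By reversibility, if $\mu_0 \ll \mu$ with density $h_0 = d\mu_0/d\mu$, then $\mu_t$ has density $h_t = P_t h_0$ with respect to $\mu$. Since $\int h_t\,d\mu = 1$, we get
\begin{equation*}
\chi^2(\mu_t,\mu) = \int h_t^2\,d\mu - 1 = \var_\mu(P_t h_0)\,.
\end{equation*}
If $\mu_0 \not\ll \mu$, then $\chi^2(\mu_0,\mu)=\infty$ and there is nothing to prove. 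The equivalence therefore reduces to the classical equivalence between a Poincar\'e inequality and exponential $L^2(\mu)$-contraction of $P_t$ on mean-zero functions.

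For the forward direction, assume $\mu$ satisfies a Poincar\'e inequality with constant $\CPI$. Set $V(t) \deq \var_\mu(P_t h_0)$. Using $\partial_t P_t h = \mathcal L P_t h$ and the fact that the mean $\int P_t h_0\,d\mu = 1$ is conserved, we get
\begin{equation*}
V'(t) = 2\int P_t h_0\,\mathcal L P_t h_0\,d\mu = -2\,\mathcal E(P_t h_0) \le -\frac{2}{\CPI}\,V(t)\,.
\end{equation*}
The inequality is the Poincar\'e inequality applied to $P_t h_0$. Gr\"onwall's lemma then yields $V(t) \le e^{-2t/\CPI} V(0)$, which is the claimed $\chi^2$ decay.

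For the converse, fix a smooth compactly supported $f$ with $\int f\,d\mu = 0$; this loses no generality, since both sides of the Poincar\'e inequality are invariant under adding constants. Take $h_0 = 1 + \epsilon f$ with $\epsilon>0$ small enough that $h_0 \ge 0$, which defines an admissible initial law $\mu_0$. Then $\chi^2(\mu_t,\mu) = \epsilon^2 \var_\mu(P_t f)$, so the hypothesis gives $\var_\mu(P_t f) \le e^{-2t/\CPI}\var_\mu(f)$ for all $t \ge 0$. Both sides agree at $t=0$, so the left side's right-derivative at $0$ is at most the right side's. By the computation above, that derivative is $-2\mathcal E(f)$, which gives $-2\mathcal E(f) \le -\frac{2}{\CPI}\var_\mu(f)$. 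This is exactly the Poincar\'e inequality for $f$.

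The main obstacle is not the algebra but justifying the analytic steps. We need that $\mathcal L$ is essentially self-adjoint on smooth compactly supported functions, so that $P_t$ is the symmetric Markov semigroup with $\partial_t P_t f = \mathcal L P_t f$. We also need to differentiate $V(t)$, including at $t = 0^+$, and to integrate by parts with no boundary terms. For a smooth positive density these are standard facts from the theory of diffusion semigroups~\cite{bakry2014analysis}, and I would invoke them there rather than reprove them. In particular, the Poincar\'e inequality extends from compactly supported $f$ to the domain of $\mathcal E$ by density, so it can be applied to $P_t h_0$ in the forward direction.
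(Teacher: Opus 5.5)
Your proof is correct and is the standard argument the paper relies on: the paper gives no proof of its own but cites \cite{bakry2014analysis} (Theorem 4.2.5 there, which states that a Poincar\'e inequality is equivalent to $\var_\mu(P_t f)\le e^{-2t/\CPI}\var_\mu(f)$), and your reduction of $\chi^2(\mu_t,\mu)$ to $\var_\mu(P_t h_0)$, followed by Gr\"onwall in one direction and differentiation at $t=0$ in the other, is how that equivalence is proved. The one slip is cosmetic: subtracting the mean from a compactly supported $f$ destroys compact support, but running your converse with $h_0 = 1+\epsilon\,(f-\int f\,d\mu)$ fixes this, since that $h_0$ is bounded, smooth, and in the domain of $\mathcal L$ (because $P_t 1 = 1$), and it yields the Poincar\'e inequality for every smooth compactly supported $f$ with no other change.
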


A sufficient condition for $\mu$ to satisfy a Poincar\'e inequality is for $\mu$ to be $\alpha$-strongly log-concave, i.e., $-\log \mu$ is $\alpha$-strongly convex, in which case $\CPI(\mu) \le 1/\alpha$.
However, in general, a Poincar\'e inequality is a much weaker condition than strong log-concavity.
We also record the following important properties of the Poincar\'e constant.

\begin{lemma}[Tensorization]\label{thm:tensorization}
Suppose that $\mu_1, \dots, \mu_N \in \mathcal{P}(\R^d)$ satisfy a Poincar\'e inequality with Poincar\'e constants $\CPI(\mu_1), \dots, \CPI(\mu_N)$ respectively. Then, for any $N\in \mathbb{N}$, the product measure $\mu \coloneqq \bigotimes_{i=1}^{N}\mu_i$ satisfies the Poincar\'e inequality with constant $\CPI(\mu) = \max_{i \in [N]} \CPI(\mu_i)$.
\end{lemma}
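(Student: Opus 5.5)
The plan is to prove the tensorization bound by induction on $N$, which reduces the claim to the case of two factors. For $N=2$, write $\mu = \mu_1 \otimes \mu_2$ on $\R^{d}\times\R^{d}$. Fix a smooth compactly supported $f(x,y)$ and use the law of total variance with respect to the conditional structure of the product measure:
\begin{equation*}
\var_\mu(f) = \int \var_{\mu_2}\bigl(f(x,\cdot)\bigr)\,\mu_1(dx) + \var_{\mu_1}(g)\,, \qquad g(x) \deq \int f(x,y)\,\mu_2(dy)\,.
\end{equation*}
This decomposition holds because $\mu$ is a product: the conditional law of $y$ given $x$ is $\mu_2$, independently of $x$.

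For the first term, apply the Poincar\'e inequality for $\mu_2$ to $y \mapsto f(x,y)$ for each fixed $x$. This bounds it by $\CPI(\mu_2)\int\!\!\int \norm{\nabla_y f}_2^2\,d\mu_2\,d\mu_1$. For the second term, apply the Poincar\'e inequality for $\mu_1$ to $g$. Since $f$ is smooth with compact support, differentiation under the integral gives $\nabla g(x) = \int \nabla_x f(x,y)\,\mu_2(dy)$. Jensen's (or Cauchy--Schwarz) inequality then yields $\norm{\nabla g(x)}_2^2 \le \int \norm{\nabla_x f(x,y)}_2^2\,\mu_2(dy)$. Hence the second term is at most $\CPI(\mu_1)\int \norm{\nabla_x f}_2^2\,d\mu$.

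Summing the two bounds and using $\norm{\nabla f}_2^2 = \norm{\nabla_x f}_2^2 + \norm{\nabla_y f}_2^2$ gives
\begin{equation*}
\var_\mu(f) \le \max\{\CPI(\mu_1),\CPI(\mu_2)\}\int \norm{\nabla f}_2^2\,d\mu\,.
\end{equation*}
The induction step is identical, with $\mu_1$ replaced by $\bigotimes_{i<N}\mu_i$ and $\mu_2$ by $\mu_N$. The equality asserted in the statement needs a matching lower bound. For this, take test functions $f$ depending only on the $i$-th block of coordinates. Then $\var_\mu(f) = \var_{\mu_i}(f)$ and $\int\norm{\nabla f}_2^2\,d\mu = \int\norm{\nabla f}_2^2\,d\mu_i$, so $\CPI(\mu)\ge \CPI(\mu_i)$ for every $i$.

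I expect the main obstacle to be technical rather than conceptual. First, $g$ need not be compactly supported when $\mu_2$ has unbounded support, whereas the Poincar\'e inequality was only assumed for compactly supported test functions. Second, the test functions of a single block are not compactly supported in the product space. I would handle both by a standard density argument. The Poincar\'e inequality extends by approximation (cutoffs $\chi_R f$ with $\chi_R$ a smooth bump equal to one on a ball of radius $R$, together with dominated convergence) to all smooth $f$ that are bounded with bounded gradient, and $g$ belongs to this class.
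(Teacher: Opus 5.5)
Your proposal is correct and is essentially the standard argument (the law of total variance, i.e.\ sub-additivity of the variance, applied block by block) behind the tensorization result in Bakry--Gentil--Ledoux, which the paper cites rather than proves. One small correction: $g(x)=\int f(x,y)\,\mu_2(dy)$ is automatically smooth and compactly supported, because it vanishes for $x$ outside the projection of $\operatorname{supp} f$, so your cutoff/density argument is needed only for the lower bound that gives equality.
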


\begin{lemma}[Holley--Stroock perturbation \cite{Holley1987LogarithmicSI}]\label{thm:holley-stroock}
Suppose that a probability measure $\pi$ satisfies a Poincar\'e inequality with constant $\CPI(\pi)$. If we write $\frac{d\mu}{d\pi} = \exp f$, then $\mu$ also satisfies a Poincar\'e inequality with constant $\CPI(\mu) \leq \CPI(\pi) \exp(\osc f)$, where $\osc f \deq \sup f - \inf f$.
\end{lemma}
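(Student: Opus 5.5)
The statement to prove is the Holley--Stroock perturbation lemma. Since $\CPI$ is the smallest constant in the Poincar\'e inequality, it suffices to fix a smooth, compactly supported $f$ (rename the perturbation $V$ to avoid a clash, so $d\mu = e^{V}\,d\pi$) and show
\[
\var_\mu(f) \le \CPI(\pi)\, e^{\osc V} \int \norm{\grad f}_2^2\, d\mu\,.
\]
The plan is to pass from $\mu$ to $\pi$ in the variance, apply the Poincar\'e inequality for $\pi$, and then pass back to $\mu$ in the energy. Each change of measure costs a factor $e^{\sup V}$ or $e^{-\inf V}$.

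\textbf{Variance step.} The key tool is the variational characterization
\[
\var_\mu(f) = \inf_{c\in\R} \int (f-c)^2\, d\mu\,,
\]
with the infimum attained at $c = \int f\, d\mu$. I choose $c = \int f\, d\pi$ instead. Since $(f-c)^2 \ge 0$ and $e^{V} \le e^{\sup V}$, this gives
\[
\var_\mu(f) \le \int (f-c)^2 e^{V}\, d\pi \le e^{\sup V}\, \var_\pi(f)\,.
\]
This is the only point that needs care. One cannot compare $\var_\mu$ and $\var_\pi$ directly, because their centering constants differ, and the infimum formula is exactly what resolves this.

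\textbf{Energy step.} The Poincar\'e inequality for $\pi$ gives
\[
\var_\pi(f) \le \CPI(\pi) \int \norm{\grad f}_2^2\, d\pi\,.
\]
Writing $d\pi = e^{-V}\, d\mu$ and using $e^{-V} \le e^{-\inf V}$,
\[
\int \norm{\grad f}_2^2\, d\pi \le e^{-\inf V} \int \norm{\grad f}_2^2\, d\mu\,.
\]
Chaining the two steps yields the constant $\CPI(\pi)\, e^{\sup V - \inf V} = \CPI(\pi)\, e^{\osc V}$, as claimed.

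Two technical points should be noted. First, if $\osc V = \infty$ the bound is vacuous, so we may assume $V$ is bounded. Second, $f$ is a valid test function for $\pi$ in \cref{def:poincare}, since the class of smooth compactly supported functions is the same for both measures. No other obstacle is expected.
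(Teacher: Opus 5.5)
Your proof is correct. You bound $\var_\mu(f)$ by $\int (f-\int f\,d\pi)^2\,d\mu \le e^{\sup V}\var_\pi(f)$ and then move the Dirichlet energy back to $\mu$ at cost $e^{-\inf V}$; this is the standard Holley--Stroock argument. The paper gives no proof of its own and cites Holley--Stroock and Bakry--Gentil--Ledoux, whose proof is the same as yours.
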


These facts, as well as further detailed information on Poincar\'e inequalities in the general context of Markov semigroup theory, can be found in the comprehensive monograph~\cite{bakry2014analysis}.

\section{Sampling from low-temperature Gibbs distributions}\label{sec:laplace_density}

In this section, we consider the problem of sampling from a high-dimensional low-temperature Gibbs distribution on a constrained parameter space. We consider the following density:
\begin{equation}
    \mu(\theta) \deq \frac{\pi(\theta)\exp{n \ell(\theta)}}{\int_{[0,\infty)^d}\pi(\theta)\exp{n \ell(\theta)} \,d\theta}\,, \qquad\theta \in [0, \infty)^d\,,\label{eq:gibbs_constrained}
\end{equation}
where the function $\ell$ has a unique global maximizer $\hat \theta$ in the truncated parameter space $\Theta \deq [0, \infty)^d$.
Let $S_0 \deq \{j \in [d] : \hat\theta_j > 0\}$ and $S_1 \deq \{j \in [d] : \hat\theta_j = 0\}$. We denote the dimensions of the regular and non-regular parts as $d_0$ and $d_1$ respectively, i.e., $\abs{S_0} = d_0$ and $\abs{S_1}=d_1$.
We assume throughout that $d_1 \ge 1$ to avoid unnecessary cases.

Throughout the entire paper, we impose the following assumption on $\pi$.

\begin{assumption}[Log-concavity of $\pi$]\label{integrand} 
    The measure $\pi$ is a log-concave product measure, i.e., $\pi = \bigotimes_{j\in [d]} \pi_j$ and for all $j \in [d]$, $\pi_j$ is log-concave.
\end{assumption}

\begin{remark}
The log-concavity assumption is not essential for our analysis and is only used for simple illustration. Our analysis extends to a more general $\pi$, e.g., it would suffice for $\log \pi$ to be a smooth function (admitting Lipschitz continuous gradients) in each coordinate.
\end{remark}

Later, in~\cref{sec:random_laplace_density}, we will take $\ell = \ell_n$ to be a random function, interpreted as the averaged (quasi-)log-likelihood function for applications to Bayesian inference.
The analysis there will build upon the deterministic analysis developed in this section.
As discussed in \cref{sec:intro}, we are interested in the pre-asymptotic regime, meaning that our analysis should apply to the regime $n\gg d$ (under suitable regularity conditions on the model).
In contrast,
validity of the Laplace approximation typically requires $n \gg d^2$ by \citep{katsevich2024laplaceasymptoticexpansionhigh} and \citep{kasprzak2025goodlaplaceapproximationbayesian}.

\subsection{Assumptions}\label{sec:laplace_assumptions}

In this section, we state the assumptions needed to establish our main result (\cref{theorem:poincare}) for constrained Gibbs measures. To check concentration on the good set (\cref{assumption:posterior contraction}), we provide sufficient conditions which are easier to verify in~\Cref{sec:laplace_sufficient}.

\begin{assumption}[Unique maximizer]\label{ass:mode}
    The maximizer $\hat\theta \deq \argmax_{\theta \in \Theta}\ell(\theta)$ exists and is unique.
\end{assumption}

\begin{assumption}[Regular part in the strict interior] \label{assumption:dimension}
The regular part of the global maximizer lies strictly in the interior of the parameter space, i.e., for some $C_0 > 0$, $$\inf_{i \in S_0}\abs{\hat \theta_{i}} \geq C_0 > 0\,. $$
\end{assumption}

Before proceeding, we introduce the notion of a good set, on which $\ell$ admits derivative bounds and which contains most of the mass of $\mu$, as formalized below.
  
\begin{definition}[Good set]\label{def:good_set}
  The good set $\hat\Theta_{\delta} \subseteq \Theta$ is defined as follows: $$\hat\Theta_\delta \coloneqq \Bigl\{\theta \in \Theta : \norm{\hat \theta_{S_0} - \theta_{S_0}}_{2} \leq \delta_0  \sqrt{\frac{d_0}{n}}\,,\; \norm{\hat \theta_{S_1} - \theta_{S_1}}_{\infty} \leq  \frac{\delta_1}{n}\Bigr\}\,.$$
  We assume that $0 < \delta_0 \sqrt{d_0/n} < C_0$ so that the condition $\norm{\hat\theta_{S_0} - \theta_{S_0}}_2 \le \delta \sqrt{d_0/n}$ implies $\theta_{S_0} \in [0,\infty)^{d_0}$.
  In an abuse of notation, we also write
  \begin{align*}
      \hat \Theta_{\delta_0} \deq \Bigl\{\theta_{S_0} \in \R^{d_0}: \norm{\hat \theta_{S_0} - \theta_{S_0}}_{2} \leq \delta_0\sqrt{\frac{d_0}{n}}\Bigr\}\,, \qquad \hat\Theta_{\delta_1} \deq \Bigl\{\theta_{S_1} \in \R^{d_1}: \norm{\hat \theta_{S_1} - \theta_{S_1}}_{\infty} \leq \frac{\delta_1}{n}\Bigr\}\,.
  \end{align*}
\end{definition}

Next, we impose assumptions on the local behavior of $\ell$ around the maximizer $\hat\theta$.
The second-order optimality condition already implies that the regular part of the Hessian of $\ell$ is non-positive at $\hat \theta$. We further assume that $\ell$ is in fact locally \emph{strongly} concave around $\hat \theta$ in the regular directions.

\begin{assumption}[Local strong concavity in the interior]\label{ass:local_cvxty}
    There exists $C_{S_0} > 0$ such that for all $\theta \in \hat\Theta_\delta$, we have $-\nabla_{S_0}^2 \ell(\theta) \succeq C_{S_0} I \succ 0$.
\end{assumption}

For the non-regular part, the first-order optimality condition implies that the non-regular part of the gradient of $\ell$ at $\hat\theta$ is coordinate-wise non-positive.
We further assume that non-regular part of the gradient is actually \emph{strictly} negative at the boundary.
This assumption rules out cases in which $\partial_j \ell(\hat\theta) = 0$ for some $j\in S_1$ and is made for the sake of simplicity.

\begin{assumption}[Locally negative gradient at the boundary] \label{assumption:negative_gradient_l}
  There exists $C_{S_1} > 0$ such that $\partial_{j} \ell(\hat \theta) \leq -C_{S_1} < 0$ for all $j \in S_1$.
\end{assumption}

\begin{remark}[One-sided derivative at the boundary]
    Here and below, derivatives (and higher-order derivatives) at the boundary are interpreted as one-sided derivatives, i.e., if $\theta_j = 0$, then
    \begin{align*}
        \partial_j \ell(\theta) \deq \lim_{h\searrow 0} \frac{\ell(\theta+he_j) - \ell(\theta)}{h}\,.
    \end{align*}
\end{remark}

Next, we require derivative bounds over $\hat\Theta_\delta$.

\begin{assumption}[Boundedness of derivatives around the mode] \label{assumption:bounded_mixed_derivatives}
  %
  We assume $\ell$ is $C^2$ on $\hat\Theta_\delta$ and admits the following Hessian bounds on the good set: $$\sup_{\theta \in \hat\Theta_{\delta}} \norm{\grad^2 \ell(\theta)}_{\rm op} \leq s_2 $$  for some $s_2 < \infty$.
\end{assumption}

Finally, we require that the good set contains most of the mass of $\mu$.

\begin{assumption}[Concentration] \label{assumption:posterior contraction}
    Let $\mugood$ be the measure $\mu$ conditional on the good set $\hat \Theta_{\delta}$, i.e., $\mugood \deq \mu\one_{\hat\Theta_\delta}/\mu(\hat\Theta_\delta)$. We assume that by judiciously choosing the radii $\delta_0$ and $\delta_1$ of the good set, $\mu$ concentrates on the good set:
    \begin{equation*}
  \norm{\mu - \mugood}_{\rm TV} \leq \epsilon\,,
\end{equation*}
where $\epsilon \in (0, 1)$ is the desired error tolerance.

\end{assumption}

\subsection{Likelihood decomposition}

Our main structural result is the following decomposition of $\ell$ on the good set.

\begin{theorem}[Likelihood decomposition]\label{thm:likelihood_decomposition}
    Adopt~\cref{integrand} through~\cref{assumption:bounded_mixed_derivatives}.
    We have the decomposition 
$$\ell(\theta) = B(\theta) + f(\theta_{S_0}) + \sum_{j\in S_1} g_j(\theta_j)\,, \qquad\theta \in \hat\Theta_\delta\,,$$
with the following properties:
\begin{itemize}
    \item $f$ is $C_{S_0}$-strongly concave.
    \item Each $g_j$ is linear with slope $\partial_j \ell(\hat\theta) \le -C_{S_1}$.
    \item The coupling term $B$ satisfies the bound
    \begin{align*}
        \osc B
        \deq \sup B - \inf B
        \le 2s_2 \,\Bigl(\frac{\delta_0 \delta_1 \,(d_0 d_1)^{1/2}}{n^{3/2}} + \frac{\delta_1^2 d_1}{n^2}\Bigr)\,.
    \end{align*}
\end{itemize}
\end{theorem}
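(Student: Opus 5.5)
The natural construction is a Taylor-type splitting around the mode. We define
\[
f(\theta_{S_0}) \deq \ell(\theta_{S_0}, \hat\theta_{S_1}),
\qquad
g_j(\theta_j) \deq \partial_j \ell(\hat\theta)\,\theta_j \quad (j\in S_1),
\]
and set $B \deq \ell - f - \sum_{j\in S_1} g_j$. Since $\hat\theta_{S_1}=0$, $g_j$ is linear with slope $\partial_j\ell(\hat\theta)\le -C_{S_1}$ by \cref{assumption:negative_gradient_l}. Strong concavity of $f$ on $\hat\Theta_{\delta_0}$ follows from \cref{ass:local_cvxty}: for $\theta_{S_0}\in\hat\Theta_{\delta_0}$, the point $(\theta_{S_0},\hat\theta_{S_1})$ lies in $\hat\Theta_\delta$, so $\nabla^2 f(\theta_{S_0}) = \nabla^2_{S_0}\ell(\theta_{S_0},\hat\theta_{S_1}) \preceq -C_{S_0} I$. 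The set $\hat\Theta_{\delta_0}$ is convex, and \cref{assumption:dimension} together with the constraint $\delta_0\sqrt{d_0/n}<C_0$ keeps it inside the interior, so this gives $C_{S_0}$-strong concavity. The first two bullets are therefore immediate.

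The substance is the oscillation bound on $B$. Write $u \deq \theta_{S_1}-\hat\theta_{S_1} = \theta_{S_1}$, which satisfies $\norm{u}_\infty\le\delta_1/n$, and hence $\norm{u}_2\le \sqrt{d_1}\,\delta_1/n$. Then
\[
B(\theta) = \ell(\theta_{S_0},\theta_{S_1}) - \ell(\theta_{S_0},\hat\theta_{S_1}) - \langle \nabla_{S_1}\ell(\hat\theta), u\rangle .
\]
Applying the fundamental theorem of calculus along the segment $t\mapsto(\theta_{S_0},\hat\theta_{S_1}+tu)$, which stays in the convex set $\hat\Theta_\delta$, the first difference equals $\int_0^1 \langle \nabla_{S_1}\ell(\theta_{S_0},\hat\theta_{S_1}+tu),u\rangle\,dt$. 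Subtracting the linear term gives
\[
B(\theta)=\int_0^1\bigl\langle \nabla_{S_1}\ell(\theta_{S_0},\hat\theta_{S_1}+tu)-\nabla_{S_1}\ell(\hat\theta),\,u\bigr\rangle\,dt .
\]
The gradient difference is bounded by integrating the Hessian along the path from $\hat\theta$ to $(\theta_{S_0},\hat\theta_{S_1}+tu)$. Its displacement is $(\theta_{S_0}-\hat\theta_{S_0},\,tu)$, with $\ell_2$ norm at most $\norm{\theta_{S_0}-\hat\theta_{S_0}}_2 + t\norm{u}_2$. By \cref{assumption:bounded_mixed_derivatives}, the gradient difference therefore has norm at most $s_2\bigl(\delta_0\sqrt{d_0/n}+t\sqrt{d_1}\delta_1/n\bigr)$.

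Applying Cauchy--Schwarz with $\norm{u}_2\le\sqrt{d_1}\delta_1/n$ and integrating in $t$ yields
\[
|B(\theta)|\le s_2\Bigl(\frac{\delta_0\delta_1\sqrt{d_0d_1}}{n^{3/2}}+\frac{\delta_1^2 d_1}{2n^2}\Bigr).
\]
Then $\osc B\le 2\sup|B|$ gives the stated bound, with room to spare in the second term.

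The only step needing care is the boundary. Derivatives at $\theta_j=0$ are one-sided, and $\ell$ is only assumed $C^2$ on $\hat\Theta_\delta$, which meets the boundary face $\{\theta_{S_1}\text{ has zero coordinates}\}$. I would handle this by noting that all segments used lie in $\hat\Theta_\delta\subseteq[0,\infty)^d$, and $\ell$ is $C^2$ up to the boundary in the one-sided sense. The mean value and fundamental theorem of calculus arguments along such segments are then valid, after approximating from the interior and passing to the limit by continuity of $\nabla\ell$ if necessary. Otherwise the argument is routine.
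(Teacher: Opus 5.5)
Your proposal is correct and follows the paper's proof: it uses the same $f(\theta_{S_0}) = \ell(\theta_{S_0},\hat\theta_{S_1})$ and $g_j(\theta_j) = \partial_j\ell(\hat\theta)\,\theta_j$, and hence the same coupling term $B$. The only cosmetic difference is in bounding $B$: the paper splits it into a mixed-Hessian double integral (the $\delta_0\delta_1$ term) plus a second-order Taylor remainder in $\theta_{S_1}$ at $\hat\theta_{S_0}$ (the $\delta_1^2$ term), whereas you merge both into one gradient-difference integral along a diagonal path; both routes give the identical bound $2s_2\delta_0\delta_1\sqrt{d_0d_1}/n^{3/2} + s_2\delta_1^2 d_1/n^2$.
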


Using \cref{thm:likelihood_decomposition}, tensorization of the Poincar\'e inequality (\cref{thm:tensorization}), and the Holley--Stroock perturbation theorem (\cref{thm:holley-stroock}), we can then obtain a dimension free Poincar\'e constant for $\mugood$ (see~\cref{theorem:poincare} below).
%

%

\begin{proof}
    We show in turn the likelihood decomposition, log-concavity in the regular part, and control of the bounded perturbation term.
    \paragraph{Likelihood decomposition.}
    To show this separable structure, we first separate the regular and non-regular parts by Taylor expanding $\ell$ around $\hat \theta_{S_0}$ while fixing $\hat\theta_{S_1}$ and around $\hat \theta_{S_1}$ while fixing $\hat\theta_{S_0}$. Then, we apply the same idea within the non-regular part. 
    We use the notation $\ell(\theta_{S_0}) \deq \ell(\theta_{S_0}, \hat\theta_{S_1})$ and $\ell(\theta_{S_1}) \deq \ell(\hat\theta_{S_0}, \theta_{S_1})$.
\begin{align*}
  \ell(\theta) &= \ell(\theta_{S_0},\hat \theta_{S_1}) + \ell(\hat \theta_{S_0}, \theta_{S_1}) + \ell(\theta) - \ell(\theta_{S_0},\hat \theta_{S_1}) - \ell(\hat \theta_{S_0}, \theta_{S_1}) \\
    &= \ell(\theta_{S_0}) + \ell(\theta_{S_1}) + B_{S_0, S_1}(\theta)\,,
\end{align*}
where $B_{S_0, S_1}(\theta) \deq \ell(\theta) - \ell(\theta_{S_0}) - \ell(\theta_{S_1})$.
Further, we have
\begin{align*}
    B_{S_0, S_1}(\theta)
    &= \ell(\theta) - \ell(\theta_{S_0}) - \ell(\theta_{S_1}) + \ell(\hat \theta) - \ell(\hat \theta) \\ 
    &= \int_{0}^{1} (\theta_{S_1} -\hat\theta_{S_1})^\T \grad_{S_1} \ell(\theta_{S_0}, t\theta_{S_1} + (1-t)\hat \theta_{S_1})\, dt \\ 
    &\qquad{} - \int_{0}^{1} (\theta_{S_1} -\hat \theta_{S_1})^\T\grad_{S_1} \ell(\hat \theta_{S_0}, t\theta_{S_1} + (1-t)\hat \theta_{S_1})\, dt - \ell(\hat \theta) \\
    &= \int_{0}^{1} \int_{0}^{1} (\theta_{S_1} -\hat \theta_{S_1})^\T\grad_{S_1,S_0}^2 \ell(s\theta_{S_0} + (1-s)\hat \theta_{S_0}, t\theta_{S_1})(\theta_{S_0} -\hat \theta_{S_0})\, ds\, dt - \ell(\hat \theta) \,,
\end{align*}
where we used $\hat \theta_{S_1}=0$ to lighten notation.


Note that by \cref{assumption:bounded_mixed_derivatives}, $\osc B_{S_0,S_1} \le 2s_2\delta_0 \delta_1 \sqrt{\frac{d_0}{n}}\,\frac{\sqrt{d_1}}{n}$.

\paragraph{Exponential-type distribution in non-regular part.}
Now, let
\begin{align*}
    B_{S_1}(\theta) \deq \ell(\theta_{S_1}) - \ell(\hat\theta) - \sum_{j \in S_1} \partial_j \ell(\hat\theta)\, \theta_j\,.
\end{align*}
Then, by Taylor expansion to second order, we have
\begin{align*}
    B_{S_1}(\theta)
    &= \int_{0}^{1} (1-t)\,\theta_{S_1}^\T \grad^2_{S_1} \ell(\hat \theta_{S_0}, t\theta_{S_1})\theta_{S_1}\, dt \\ 
    &\leq \int_{0}^{1} (1-t)\,s_2\, \norm{\theta_{S_1}}_2^2\, dt
    \leq \frac{s_2 \delta_1^2 d_1}{2n^2}\,.
\end{align*}

Therefore, we have the desired decomposition, with $f(\theta_{S_0}) \deq \ell(\theta_{S_0})$, $g_j(\theta_j) \deq \partial_j \ell(\hat\theta)\,\theta_j$, and $B(\theta) \deq B_{S_0,S_1}(\theta) + B_{S_1}(\theta) + \ell(\hat\theta)$.
By~\cref{ass:local_cvxty}, we know that $f$ is $C_{S_0}$-strongly concave on $\hat\Theta_\delta$.
Finally, the perturbation term satisfies
\begin{align*}
    \osc B
    &\le 2s_2 \delta_0 \delta_1 \sqrt{\frac{d_0}{n}}\,\frac{\sqrt{d_1}}{n} + \frac{s_2 \delta_1^2 d_1}{n^2}\,. \qedhere
\end{align*}
\end{proof}

\subsection{Main result}\label{section:main_results}

Under the assumptions in \cref{sec:laplace_assumptions}, we are able to show good sampling properties of \eqref{eq:gibbs_constrained} on the good set $\hat\Theta_\delta$ via a dimension-free Poincar\'e inequality. We present the main result for constrained low-temperature Gibbs distributions in the following theorem.
\begin{theorem}[Poincar\'e inequality for the constrained Gibbs measure]\label{theorem:poincare}
    Adopt~\cref{integrand} through~\cref{assumption:bounded_mixed_derivatives}, and assume that $n \ge \delta_0^2 d_0/C_0^2$.
    Then, the low-temperature Gibbs distribution in \eqref{eq:gibbs_constrained} conditioned on the good set $\hat\Theta_\delta$ satisfies the Poincar\'e inequality with
    \begin{align*}
        \CPI(\mugood) \le \Bigl(\frac{1}{nC_{S_0}} \vee \frac{4 \max_{j\in S_1} \exp(\osc_{[0,\delta_1/n]} \log \pi_j)}{n^2 C_{S_1}^2}\Bigr) \exp\Bigl(2s_2 \,\Bigl(\frac{\delta_0 \delta_1 \,(d_0 d_1)^{1/2}}{n^{1/2}} + \frac{\delta_1^2 d_1}{n}\Bigr)\Bigr)\,.
    \end{align*}
\end{theorem}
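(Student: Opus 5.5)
Using \cref{thm:likelihood_decomposition}, on $\hat\Theta_\delta$ the conditional density factors as
\begin{align*}
\mugood(\theta) \propto \exp\bigl(nB(\theta)\bigr)\,\Bigl[\pi_{S_0}(\theta_{S_0})\, e^{n f(\theta_{S_0})}\,\one_{\hat\Theta_{\delta_0}}(\theta_{S_0})\Bigr]\prod_{j\in S_1}\Bigl[\pi_j(\theta_j)\, e^{n\,\partial_j\ell(\hat\theta)\,\theta_j}\,\one_{[0,\delta_1/n]}(\theta_j)\Bigr]\,.
\end{align*}
The condition $n \ge \delta_0^2 d_0/C_0^2$ makes the ball in the regular coordinates lie inside $[0,\infty)^{d_0}$, so the good set is exactly the product $\hat\Theta_{\delta_0}\times[0,\delta_1/n]^{d_1}$. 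Let $\nu$ be the normalized product measure in the brackets. Then $\mugood$ has density proportional to $e^{nB}$ with respect to $\nu$. The plan is:
\begin{enumerate}
\item bound $\CPI$ of each factor of $\nu$;
\item apply \cref{thm:tensorization};
\item apply \cref{thm:holley-stroock}, using $\osc(nB) \le 2s_2\bigl(\delta_0\delta_1 (d_0d_1)^{1/2}n^{-1/2} + \delta_1^2 d_1 n^{-1}\bigr)$, which is $n$ times the bound in \cref{thm:likelihood_decomposition}.
\end{enumerate}
This produces exactly the exponential factor in the statement.

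\textbf{Regular factor.} The potential is $-nf - \log\pi_{S_0}$. By \cref{ass:local_cvxty} it is $nC_{S_0}$-strongly convex, since $\log\pi$ is concave by \cref{integrand}. The restriction to the convex ball $\hat\Theta_{\delta_0}$ preserves strong log-concavity; this is a convex indicator, and one can cite Bakry--\'Emery for convex domains or approximate the indicator by smooth convex penalties. Hence the constant is at most $1/(nC_{S_0})$.

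\textbf{Non-regular factors.} Each one-dimensional factor is $\pi_j(x)\,e^{-n a_j x}$ on $[0,\delta_1/n]$, with $a_j = -\partial_j\ell(\hat\theta) \ge C_{S_1}$. I would first treat the pure truncated exponential $\rho_j \propto e^{-na_jx}\one_{[0,\delta_1/n]}$. For an exponential distribution of rate $\lambda$ on a half-line, the standard bound is $\CPI \le 4/\lambda^2$; I would prove it via the integration-by-parts argument $\int g^2\,d\rho \le \frac{2}{\lambda}\int |g|\,|g'|\,d\rho$ for $g$ vanishing at the endpoint, or via a Hardy/Muckenhoupt inequality. The same bound $4/(n^2a_j^2) \le 4/(n^2C_{S_1}^2)$ should hold for the truncation to an interval. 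One way to see this is that truncation of a log-concave measure to an interval preserves the Muckenhoupt bound; another is to observe directly that the boundary term at $\delta_1/n$ has a favorable sign when we center at the median. The factor $\pi_j$ is then handled by \cref{thm:holley-stroock} on the interval, which gives the factor $\exp(\osc_{[0,\delta_1/n]}\log\pi_j)$. Taking the max over the factors via \cref{thm:tensorization} gives the bracket in the statement.

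\textbf{Main obstacle.} The delicate step is the constant $4/\lambda^2$ for the exponential truncated to $[0,\delta_1/n]$, uniformly in the truncation length. I would argue this through the Muckenhoupt criterion: with median $m$, $\CPI \le 4\max(B_+,B_-)$, where $B_+ = \sup_{x>m}\rho([x,L])\int_m^x \rho^{-1}$ and $B_-$ is defined symmetrically. For a density $\propto e^{-\lambda x}$, both $B_\pm$ are bounded by $1/\lambda^2$, and truncation only shrinks the tail masses. Each step is standard, so the remaining bookkeeping is matching constants.
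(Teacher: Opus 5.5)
Your proposal is correct and follows the paper's proof step for step: Holley--Stroock for the coupling term $nB$, tensorization over the product structure of $\hat\Theta_\delta$, Bakry--\'Emery on the convex ball for the regular block, and, for each non-regular coordinate, Holley--Stroock to absorb $\pi_j$ together with the $4/\lambda^2$ exponential bound. The only difference is the truncation step, and there the paper simply cites that restricting a one-dimensional measure to an interval cannot increase its Poincar\'e constant (extend test functions by constants). Your direct Muckenhoupt check also works: the median moves left, but $e^{\lambda m_L}\le 2$ still gives $B_-\le 1/\lambda^2$. In the integration-by-parts variant you should center at $f(0)$ rather than at the median, since that choice makes the boundary term $-g(L)^2e^{-\lambda L}$ favorable.
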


As noted in the introduction, in order to avoid exponential dependence, the result requires $n\gg d_0 d_1$.
We have in mind the case in which the non-regular part has much smaller dimension than the regular part, i.e., $d_1 \ll d_0$.
In particular, if $d_1 = O(1)$, the requirement simply becomes $n \gg d_0$, which covers the regular case ($d_1 = 0$) treated in~\cite{nickl2022}, and is expected to cover constraint sets consisting of an open domain with smooth boundary ($d_1 = 1$).

If $\log\pi_j$ is $O(1)$-Lipschitz, then $\osc_{[0,\delta_1/n]} \log \pi_j = O(1/n)$. 
Thus, under these conditions on $n$, the Poincar\'e constant of $\mugood$ scales as $O(1/n)$.
This is in line with recent works on functional inequalities for low-temperature Gibbs measures, as discussed in the related works (\cref{sec:related_work}), except that our analysis is local (restricted to the good set $\hat\Theta_\delta$) and covers the non-regular case.

\begin{proof}
First, define the distribution $\tilde\mu$ on $\hat\Theta_\delta$ with density
\begin{align*}
    \tilde\mu(\theta) \propto \pi(\theta) \exp\Bigl(nf(\theta_{S_0}) + n\sum_{j\in S_1} g_j(\theta_j)\Bigr)\,,
\end{align*}
where the functions $f$ and $g_j$, $j\in S_1$ are from~\cref{thm:likelihood_decomposition}.
By the Holley--Stroock perturbation theorem (\cref{thm:holley-stroock}) and the bound on the oscillation of $B$,
\begin{align*}
    \CPI(\mugood) \le \CPI(\tilde\mu) \exp\Bigl(2s_2 \,\Bigl(\frac{\delta_0 \delta_1 \,(d_0 d_1)^{1/2}}{n^{1/2}} + \frac{\delta_1^2 d_1}{n}\Bigr)\Bigr)\,.
\end{align*}
Next, since $\tilde\mu$ is a product measure (using the structure of $\hat\Theta_\delta$), tensorization of the Poincar\'e inequality (\cref{thm:tensorization}) yields
\begin{align*}
    \CPI(\tilde\mu) \le \CPI(\tilde\mu_{S_0}) \vee \max_{j\in S_1} \CPI(\tilde\mu_j)\,.
\end{align*}
Since $\tilde\mu_{S_0}$ is $nC_{S_0}$-strongly log-concave, the well-known Bakry{--}\'Emery criterion implies that
\begin{align*}
    \CPI(\tilde\mu_{S_0}) \le \frac{1}{nC_{S_0}}\,.
\end{align*}
For a version of this theorem that applies to constrained domains, see~\cite[Theorem 3.3.2]{Wang14Diffusion}.

Finally, for the non-regular part, we again apply the Holley--Stroock perturbation principle:
\begin{align*}
    \CPI(\tilde\mu_j) \le \exp\bigl(\osc_{[0, \delta_1/n]} \log \pi_j\bigr)\,\CPI(\breve\mu_j)\,, \qquad \breve\mu_j \propto \exp(ng_j) \one_{[0,\delta_1/n]}\,.
\end{align*}
The distribution $\breve\mu_j$ is an exponential distribution with parameter $-n\partial_j\ell(\hat\theta) \ge nC_{S_1}$, restricted to the interval $[0,\delta_1/n]$.
Since restriction to an interval does not increase the Poincar\'e constant---see~\cite[Lemma 4]{RouBarIoo17PI}---it follows from a standard computation that $\CPI(\breve\mu_j) \le 4/(n^2 C_{S_1}^2)$.
See, e.g.,~\cite[Propositions 4.4.1 and 4.4.4]{bakry2014analysis}.

The final result follows from putting together these bounds.
\end{proof}

\begin{remark}\label{remark:generalize nonregular poincare}
From the proof, it can be seen that our analysis can be extended to the case $\partial_j \ell(\hat\theta) = 0$ for some $j\in S_1$, provided that the measure $\tilde\mu_j(\theta_j) \propto \exp\bigl(n\ell(\hat\theta_{-j},\theta_j)\bigr) \one_{0 \le \theta_j \le \delta_1/n}$ satisfies a Poincar\'e inequality.
The same proof carries through with minor modifications.
\end{remark}
It is standard that a Poincar\'e inequality implies convergence of the Langevin diffusion.

\begin{corollary}[Convergence of the Langevin diffusion]\label{corollary:langevin_convergence}
    Adopt the assumptions of~\cref{sec:laplace_assumptions}.
    Let $\mu_t$ denote the law of the Langevin diffusion
    \begin{align*}
        dX_t = \nabla \log \mu(X_t)\,dt + \sqrt 2\,d B_t\,,
    \end{align*}
    initialized at $\mu_0$ supported on $\hat\Theta_\delta$ and reflected at the boundary of $\hat\Theta_\delta$.
    Then,    

    %
  \begin{equation*}
    \norm{\mu - \mu_t}_{\rm TV} \leq \epsilon + \sqrt{\frac{1}{2}\,\chi^2(\mu_0,  \mugood)}\exp\Bigl(-\frac{t}{\CPI(\mugood)}\Bigr)
  \end{equation*}
\end{corollary}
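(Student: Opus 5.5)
The proof splits $\norm{\mu - \mu_t}_{\rm TV}$ by the triangle inequality through the intermediate measure $\mugood$:
\begin{align*}
    \norm{\mu - \mu_t}_{\rm TV} \le \norm{\mu - \mugood}_{\rm TV} + \norm{\mugood - \mu_t}_{\rm TV}\,.
\end{align*}
The first term is at most $\epsilon$ directly by \cref{assumption:posterior contraction}.

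For the second term, I will identify the stationary law of the dynamics. The reflected Langevin diffusion on $\hat\Theta_\delta$ with drift $\nabla\log\mu$ has generator $\mathcal L = \Delta + \langle \nabla \log \mu, \nabla\cdot\rangle$ with Neumann boundary conditions on the convex set $\hat\Theta_\delta$. This operator is symmetric in $L^2(\mugood)$: an integration by parts gives Dirichlet form $\int \norm{\nabla f}_2^2\, d\mugood$ with vanishing boundary terms, because of the Neumann condition. Note that $\nabla\log\mu = \nabla \log\mugood$ on the interior of $\hat\Theta_\delta$, since the normalizing constant drops out. Hence $\mugood$ is the reversible invariant measure.

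Next I will apply the spectral-gap lemma of \cref{sec:preliminary}, in its reflected/Neumann form; it holds by the same semigroup argument, cf.~\cite{bakry2014analysis, Wang14Diffusion}. Write $h_t = d\mu_t/d\mugood$. Then
\begin{align*}
    \frac{d}{dt}\chi^2(\mu_t,\mugood) = -2\int \norm{\nabla h_t}_2^2\, d\mugood \le -\frac{2}{\CPI(\mugood)}\,\chi^2(\mu_t,\mugood)\,,
\end{align*}
using \cref{theorem:poincare}, which applies because its hypotheses are contained in the assumptions of \cref{sec:laplace_assumptions}. Grönwall's inequality then gives $\chi^2(\mu_t,\mugood) \le e^{-2t/\CPI}\chi^2(\mu_0,\mugood)$.

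Finally, Pinsker's inequality combined with $\mathrm{KL}\le \chi^2$ gives
\begin{align*}
    \norm{\mugood-\mu_t}_{\rm TV} \le \sqrt{\tfrac12 \chi^2(\mu_t,\mugood)} \le \sqrt{\tfrac12\chi^2(\mu_0,\mugood)}\,e^{-t/\CPI}\,.
\end{align*}
Summing the two terms yields the claim.

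The main technical point is justifying the reflected-diffusion semigroup facts: well-posedness, and the vanishing of the boundary terms in the integration by parts, on the non-smooth but convex box-times-ball domain. I would handle this by citing standard results for reflected diffusions in convex domains, e.g.~\cite{Wang14Diffusion}, or by approximating with smooth convex domains.
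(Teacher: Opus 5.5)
Your proposal is correct and matches the paper's proof: triangle inequality through $\mugood$, the concentration assumption for the first term, and Pinsker with $\mathrm{KL}\le\chi^2$ plus exponential $\chi^2$ decay under the Poincar\'e inequality for the second. The only difference is that you derive the $\chi^2$ decay and the invariance of $\mugood$ under the reflected dynamics by hand, where the paper simply cites \cite[Theorem 4.2.5]{bakry2014analysis}; this adds some rigor but does not change the argument.
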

\begin{proof}
    By the triangle inequality, Pinsker's inequality, and the fact that the KL divergence is bounded by the $\chi^2$ divergence,
    \begin{align*}
        \norm{\mu - \mu_t}_{\rm TV} &\leq \norm{\mu - \mugood}_{\rm TV} + \norm{\mugood - \mu_t}_{\rm TV} \\ 
        &\leq \epsilon + \sqrt{\frac{1}{2}\,\chi^2(\mu_0, \mugood)\exp\Bigl(-\frac{2t}{\CPI(\mugood)}\Bigr)}\,,
      \end{align*}
      where the last line uses~\cref{assumption:posterior contraction} and the classical equivalence between a Poincar\'e inequality and exponential ergodicity of the Langevin diffusion; see~\cite[Theorem 4.2.5]{bakry2014analysis}.
\end{proof}

Although \cref{corollary:langevin_convergence} only pertains to the idealized diffusion in continuous time, discrete-time algorithms for sampling under a Poincar\'e inequality are also well-studied.
For example, a simple approach could be to use projected Langevin Monte Carlo~\citep{BubeckEldanLehec2018}:
\begin{equation}
    X_{k+1} = \mathcal{P}_{\hat\Theta_{\delta}}\big(X_k + h \nabla \log \mu(X_k) + \sN(0, 2hI_d)\big)\,, \label{eq:projected_langevin}
\end{equation}
where $\mathcal{P}_{\hat\Theta_{\delta}}$ is the Euclidean projection onto the set $\hat\Theta_{\delta}$ and $h$ is the step size. In the non-negative orthant case, the projection simply takes the maximum with $0$ coordinate-wise. This is implemented in our simulation experiments in \cref{sec:simulation}.

For completeness, we also show how sampling guarantees immediately translate into guarantees for estimating bounded test functions against the posterior, e.g., indicator functions.

\begin{corollary}[Estimating expectations]
     Let $f:\R^d \to [0,1]$ be bounded and $\theta_1,\dots,\theta_N \stackrel{\mathrm{i.i.d.}}{\sim}\hat\mu$, where $\hat\mu$ is the output distribution from a sampling algorithm, e.g., the law of Langevin diffusion in \cref{corollary:langevin_convergence}. Then,
    \begin{align*}
        &\E_{\hat\mu}\Bigl|\frac{1}{N}\sum_{i=1}^N f(\theta_i) - \E_{\mu} f(\theta)\Bigr| \leq \sqrt{\frac{\Var_{\mu}(f(\theta)) + \norm{\hat\mu-\mu}_{\rm TV}}{N} } + \norm{\hat\mu-\mu}_{\rm TV} \,. 
    \end{align*}
\end{corollary}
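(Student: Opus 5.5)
The plan is to split the error into a variance term and a bias term. Write $\hat m \deq \frac1N\sum_{i=1}^N f(\theta_i)$ and $m_\mu \deq \E_\mu f$, $m_{\hat\mu}\deq \E_{\hat\mu} f$. By the triangle inequality,
\begin{align*}
    \E_{\hat\mu}\abs{\hat m - m_\mu} \le \E_{\hat\mu}\abs{\hat m - m_{\hat\mu}} + \abs{m_{\hat\mu} - m_\mu}\,.
\end{align*}
Since $f$ takes values in $[0,1]$, the bias term is controlled by the variational characterization of total variation: $\abs{\int f\,d\hat\mu - \int f\,d\mu} \le \sup_{0\le g\le 1}\abs{\int g\, d(\hat\mu-\mu)} = \norm{\hat\mu-\mu}_{\rm TV}$. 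This gives the additive $\norm{\hat\mu-\mu}_{\rm TV}$ term.

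For the fluctuation term, apply Jensen (Cauchy--Schwarz) and independence of the samples to get $\E_{\hat\mu}\abs{\hat m - m_{\hat\mu}} \le \sqrt{\Var_{\hat\mu}(f)/N}$. It then remains to compare $\Var_{\hat\mu}(f)$ with $\Var_\mu(f)$. I would use $\Var_{\hat\mu}(f) \le \E_{\hat\mu}(f - m_\mu)^2$, since the mean minimizes the mean squared deviation. Then $h\deq (f-m_\mu)^2$ takes values in $[0,1]$, because $f, m_\mu \in [0,1]$. Applying the same TV bound to $h$ gives
\begin{align*}
    \E_{\hat\mu} h \le \E_\mu h + \norm{\hat\mu-\mu}_{\rm TV} = \Var_\mu(f) + \norm{\hat\mu-\mu}_{\rm TV}\,.
\end{align*}
Combining the two estimates yields the stated bound.

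The only point needing care is the TV convention. The paper's $\norm{\cdot}_{\rm TV}$ is presumably $\sup_A\abs{\mu(A)-\hat\mu(A)}$. Under that convention, $\abs{\int g\,d(\hat\mu-\mu)} \le \norm{\hat\mu-\mu}_{\rm TV}$ holds for every $g$ with values in $[0,1]$, because such a $g$ has oscillation at most one; it follows from the layer-cake formula or from Hahn decomposition. I do not expect any step to be a genuine obstacle. The main thing to check is that the centering trick keeps $h$ in $[0,1]$, so that no extra constant factor appears.
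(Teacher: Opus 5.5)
Your proposal is correct and is essentially the paper's proof: the bias is bounded by $\norm{\hat\mu-\mu}_{\rm TV}$, the fluctuation by $\sqrt{\Var_{\hat\mu}(f)/N}$, and the variance comparison comes from applying the TV bound to the $[0,1]$-valued function $(f-m)^2$. The paper phrases the last step as an infimum over $m\in[0,1]$ on both sides, and your choice $m=\E_{\mu} f$ is exactly the minimizer on the $\mu$ side, so the two arguments coincide.
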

\begin{proof}
    Since $f$ is bounded, $\abs{\E_{\hat \mu} f(\theta) - \E_{\mu} f(\theta)}\le \norm{\hat\mu-\mu}_{\rm TV}$, and
\begin{align*} 
  \E_{\hat\mu}\Bigl|\frac{1}{N}\sum_{i=1}^N f(\theta_i) - \E_{\hat\mu} f(\theta)\Bigr|
  &\le \sqrt{\frac{\Var_{\hat\mu}(f(\theta))}{N}}\,.
\end{align*}
Then, we can use the following inequality:
\begin{align*}
  \Var_{\hat\mu} (f(\theta))
   = \inf_{m \in [0,1]} \E_{\hat\mu} (f(\theta) - m)^2 &\leq \inf_{m \in [0,1]} \E_{\mu} (f(\theta) - m)^2 + \norm{\hat\mu-\mu}_{\rm TV} \\ 
  &= \Var_{\mu}(f(\theta)) + \norm{\hat\mu-\mu}_{\rm TV}\,. \qedhere
\end{align*}
\end{proof}

\subsection{Sufficient conditions for concentration on the good set}\label{sec:laplace_sufficient}

To show that the good set $\hat\Theta_{\delta}$ contains most of the mass of the distribution, we make two different sets of assumptions to control the tail of the distribution. 
First, we show that concentration holds for log-concave distributions. Next, for non-log-concave distributions, we assume that the constraint set is compact and that the mode is well-separated.
To our knowledge, quantitative concentration results under various structural conditions in the non-regular case have not been extensively investigated in the literature, and to do so would be out of scope for the present work.
Instead, our goal is simply to provide some sufficient conditions which can be checked for our examples in \cref{section:examples}.
The proofs are given in~\cref{app:concentration}.

\paragraph{Log-concave case.}

We impose an assumption on the prior to ensure that it is relatively ``flat'' and therefore does not significantly affect the distribution $\mu$.

\begin{assumption}[Gradient bound on the prior]\label{assumption:Gradient bound on the prior} 
We assume that each $\log \pi_j$ is $L_\pi$-Lipschitz. 
\end{assumption}

We also assume the following condition which we call ``consistency'', because in the context of~\cref{sec:random_laplace_density} it is implied by the statement that a random draw from the posterior is a consistent estimator.
This is a global assumption and is typically checked separately.

\begin{assumption}[Consistency]\label{posterior consistency}
     For some $r_0' > 0$, $\mu(\|\theta_{S_0}-\hat\theta_{S_0}\|_2\le r_0') \ge 2/3$. 
\end{assumption}

We can now state our main concentration bound under log-concavity.
Note that below, we impose our assumptions on a larger region with radii $r_0$, $r_1$, where we treat $r_0$ and $r_1$ as being of constant order (rather than only imposing the assumptions on the good set).
For simplicity, we state a simplified result which suppresses the dependence on the constants $C_{S_0}$, $C_{S_1}$, $s_2$, etc., but the full dependencies are given in the detailed proof.

\begin{theorem}[Concentration under log-concavity]\label{thm:concentration_log_concave}
Adopt~\cref{integrand} through~\cref{assumption:bounded_mixed_derivatives}, \cref{assumption:Gradient bound on the prior}, and \cref{posterior consistency}.
Furthermore, assume the following:
\begin{itemize}
    
    \item $\mu$ is log-concave.
    \item For some constants $r_0 > 0$ and $\frac{C_{S_0}C_{S_1}}{2s_2^2} \ge r_1 > 0$, $\grad_{S_0}^2 \ell(\theta) \preceq -C_{S_0} I$ for all $\theta \in B(\hat\theta, r_0, 0)$, and $\partial_j\ell(\theta) \leq -C_{S_1}$ for all $j \in S_1$ and $\theta \in B(\hat\theta, 0, r_1)$.
    \item  $\sup_{\theta \in B(\hat\theta, r_0, r_1)} \norm{\grad^2 \ell(\theta)}_{\rm op} \leq s_2$.
\end{itemize}

There are constants $\bar c_0$, $\bar c_1$, $\bar c_2$, $\bar c_3$, $\bar c_4$, depending only on $C_{S_0}$, $C_{S_1}$, $r_0$, $r_1$, $s_2$, such that if the following conditions hold:
\begin{align*}
    \delta_0 = \bar c_0 \log\frac{1}{\varepsilon}\,, \quad \delta_1 = \bar c_1 \log\frac{d_1}{\varepsilon}\,, \quad L_\pi \le \bar c_2 \,\bigl(\sqrt{\frac{n}{d_0 d_1}} \wedge \sqrt{\frac{d_0}{d_1}}\bigr)\,, \quad r_0' \le \frac{\bar c_3}{\log(1/\varepsilon)}\,, \quad d_1 \le d_0\,,
\end{align*}
and
\begin{align}\label{eq:log_concave_concentration_n}
    n \ge \bar c_4 d_0 d_1 \log^2\bigl(\frac{d}{\varepsilon}\bigr)\,,
\end{align}
then~\cref{assumption:posterior contraction} holds: $\mu(\Theta\setminus\hat\Theta_\delta)\le \varepsilon$.
\end{theorem}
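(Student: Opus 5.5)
We need $\mu(\Theta\setminus\hat\Theta_\delta)\le\varepsilon$. By a union bound it suffices to show two things. First, $\mu(\|\theta_{S_0}-\hat\theta_{S_0}\|_2>\delta_0\sqrt{d_0/n})\le\varepsilon/2$. Second, $\mu(\theta_j>\delta_1/n)\le\varepsilon/(2d_1)$ for each $j\in S_1$; this per-coordinate budget is where the $\log(d_1/\varepsilon)$ in $\delta_1$ comes from. Both bounds will come from the same two-stage scheme. In the first stage we localize $\mu$ to the constant-size region $B(\hat\theta,r_0,r_1)$, using log-concavity. In the second stage we use the local structure of $\ell$ there to get the sharp scales $\sqrt{d_0/n}$ and $1/n$.

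\emph{Localization.} The plan is to use log-concavity of $\mu$ in three steps.
\begin{itemize}
\item By \cref{posterior consistency}, the ball $\{\|\theta_{S_0}-\hat\theta_{S_0}\|_2\le r_0'\}$ has mass $\ge 2/3$.
\item Log-concave measures have sub-exponential radial tails away from any set of constant mass (a Borell-type lemma). This gives $\mu(\|\theta_{S_0}-\hat\theta_{S_0}\|_2> t\,r_0')\le e^{-ct}$. Taking $t\asymp\log(1/\varepsilon)$ and $r_0'\le \bar c_3/\log(1/\varepsilon)$ keeps the $S_0$-marginal inside $r_0$ with probability $1-\varepsilon/4$.
\item For the $S_1$ directions, use the negative slope $\partial_j\ell\le -C_{S_1}$ on $B(\hat\theta,0,r_1)$ together with concavity of the one-dimensional slices $\theta_j\mapsto n\ell+\log\pi$. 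This shows the conditional density in $\theta_j$ decays like $e^{-n(C_{S_1}-L_\pi/n)\theta_j}$ beyond the origin. By concavity this rate continues past $r_1$, so $\mu(\theta_j>r_1)$ is exponentially small in $n$.
\end{itemize}
The condition $r_1\le C_{S_0}C_{S_1}/(2s_2^2)$ should enter when transferring the slope bound from $\hat\theta_{S_0}$ to nearby $\theta_{S_0}$. A Schur-complement type argument shows the profile in $\theta_{S_1}$, after optimizing over $\theta_{S_0}$, keeps slope $\le -C_{S_1}/2$.

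\emph{Sharp scales.} On the localized region, write $\ell$ as in the proof of \cref{thm:likelihood_decomposition}: $\ell(\theta)=\ell(\theta_{S_0})+\partial_{S_1}\ell(\hat\theta)^\T\theta_{S_1}+R(\theta)$. The remainder $R$ collects the cross term $\theta_{S_1}^\T\nabla^2_{S_1S_0}\ell\,(\theta_{S_0}-\hat\theta_{S_0})$ and the $S_1$ quadratic term, and is bounded by $s_2$.

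For the $S_1$ coordinates, condition on $\theta_{-j}$. The cross term changes the slope by at most $s_2\|\theta_{S_0}-\hat\theta_{S_0}\|_2+s_2\|\theta_{S_1}\|_2$. On the event where the $S_0$ part is already within $\delta_0\sqrt{d_0/n}$ and the other $S_1$ coordinates are $\lesssim\delta_1/n$, this perturbation is $\lesssim s_2(\delta_0\sqrt{d_0/n}+\delta_1\sqrt{d_1}/n)\ll C_{S_1}$, using \eqref{eq:log_concave_concentration_n}. Also $L_\pi/n\ll C_{S_1}$. So each conditional law is dominated by an exponential of rate $\gtrsim nC_{S_1}$, and $\mu(\theta_j>\delta_1/n)\le e^{-c\delta_1}\le\varepsilon/(2d_1)$.

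For the $S_0$ part, the conditional law of $\theta_{S_0}$ given $\theta_{S_1}$ is $nC_{S_0}$-strongly log-concave on the localized region. Its mode is shifted from $\hat\theta_{S_0}$ by at most $(\|\nabla_{S_0S_1}^2\ell\|\,\|\theta_{S_1}\|_2+L_\pi\sqrt{d_0}/n)/C_{S_0}$, which is $\lesssim s_2\delta_1\sqrt{d_1}/n+L_\pi\sqrt{d_0}/n$. The constraint $L_\pi\lesssim\sqrt{n/(d_0d_1)}\wedge\sqrt{d_0/d_1}$ is what makes this shift $O(\sqrt{d_0/n})$. Gaussian concentration for strongly log-concave measures then gives $\|\theta_{S_0}-\hat\theta_{S_0}\|_2\le C\sqrt{d_0/n}+\sqrt{\log(1/\varepsilon)/n}$ with probability $1-\varepsilon/4$. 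Strong log-concavity only holds on the localized region, so the restriction must be handled, for instance by extending $\ell$ outside the region by a strongly concave function or by a convexity comparison.

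\emph{Main obstacle.} The difficulty is the circularity between the two bounds: the $S_1$ bound uses control of $\theta_{S_0}$, and vice versa. I would resolve it by a bootstrap. Start from the crude localized radii, derive an improved bound for $S_1$, feed it into the $S_0$ bound, and iterate once more. Carrying out this bootstrap without losing more than the factor $d_0d_1$, and keeping track of conditioning events so the union bound closes, is the delicate step.
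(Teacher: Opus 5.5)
The genuine gap is the step you flag as the main obstacle: under the stated hypotheses, your bootstrap cannot start from the crude radii.

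\begin{itemize}
\item \emph{The $S_1$ slope bound needs $\theta_{S_0}$ very close to $\hat\theta_{S_0}$.} The bound $\partial_j\ell\le -C_{S_1}$ is assumed only on the slice $\theta_{S_0}=\hat\theta_{S_0}$. Moving off that slice costs up to $s_2\|\theta_{S_0}-\hat\theta_{S_0}\|_2$. Nothing in the statement relates $r_0$ to $C_{S_1}/s_2$, so knowing $\|\theta_{S_0}-\hat\theta_{S_0}\|_2\le r_0$ gives no negative slope for the $\theta_j$-slices at all.
\item \emph{Strong log-concavity of $\theta_{S_0}\mid\theta_{S_1}$ is not available.} Strong concavity in $\theta_{S_0}$ is assumed only on the slice $\theta_{S_1}=0$ and on $\hat\Theta_\delta$, which presupposes the conclusion. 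A Hessian bound alone does not propagate it. Moreover, with only $\|\theta_{S_1}\|_\infty\le r_1$, the center shift can be as large as $s_2\sqrt{d_1}\,r_1/C_{S_0}$, which is of constant order, not $O(\sqrt{d_0/n})$.
\item \emph{Your $S_1$ localization has these defects and more.} A slice argument in $\theta_j$ also needs the other $S_1$ coordinates to lie in $[0,r_1]$; for a concave function, $\partial_j$ can turn positive once another coordinate leaves the box. That is exactly what is being proved. The Schur-complement remark controls the profile $\max_{\theta_{S_0}}\ell$, not the marginal mass of $\theta_{S_1}$. Coordinatewise it would also need $\sqrt{d_1}\,r_1\lesssim C_{S_0}C_{S_1}/s_2^2$, not the stated condition on $r_1$.
\end{itemize}

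The paper orders its estimates so that no circularity arises. Write $\Delta_0\deq\theta_{S_0}-\hat\theta_{S_0}$. It splits $\Theta\setminus\hat\Theta_\delta$ into four regions.
\begin{itemize}
\item \emph{Far in $S_0$:} Borell's lemma, as you do.
\item \emph{$\delta_0\sqrt{d_0/n}\le\|\Delta_0\|_2\le r_0$:} integration by parts against $\nu=\mu|_{B(\hat\theta,r_0,r_1)}$ with test function $\frac12\|\theta-\hat\theta\|_2^2$. All boundary terms are nonnegative, and both blocks are treated at once. The two slice hypotheses and the cross-Hessian bound give $\langle\nabla\ell(\theta),\theta-\hat\theta\rangle\le -C_{S_0}\|\Delta_0\|_2^2+2s_2\|\Delta_0\|_2\|\theta_{S_1}\|_2-C_{S_1}\|\theta_{S_1}\|_1$. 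Young's inequality with $\|\theta_{S_1}\|_2^2\le r_1\|\theta_{S_1}\|_1$ absorbs the cross term into the $S_1$ drift. This, not a profile argument, is where $r_1\le C_{S_0}C_{S_1}/(2s_2^2)$ is used. It yields $\E_\nu\|\Delta_0\|_2^2\lesssim d/(nC_{S_0})$ with no prior control of $\theta_{S_1}$, which Markov and Borell upgrade to an exponential tail.
\item \emph{The scale $\delta_1/n$ in $S_1$:} obtained only afterwards, by translating $\theta\mapsto\theta-(\delta_1/n)e_j$ on $\{\|\Delta_0\|_2\le\delta_0\sqrt{d_0/n},\ \|\theta_{S_1}\|_\infty\le r_1\}$. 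This needs only the $S_0$ control just obtained and the slope hypothesis on the whole box.
\item \emph{$\|\theta_{S_1}\|_\infty\ge r_1$:} a linear-growth lemma. Concavity along segments from $\hat\theta$, plus the likelihood decomposition on $\partial\hat\Theta_\delta$, gives $\tilde\ell(\hat\theta)-\tilde\ell(\theta)\ge\Cgr_0\|\Delta_0\|_2+\Cgr_1\sum_{j\in S_1}|\partial_j\ell(\hat\theta)|\,\theta_j$ off $\hat\Theta_\delta$, where $\tilde\ell=\ell+\frac1n\log\pi$. This is integrated against a Gaussian-times-exponential lower bound on the unnormalized mass of $\hat\Theta_\delta$. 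Keeping $\Cgr_1=\Omega(1)$ is where $n\gg d_0d_1$ enters.
\end{itemize}

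If you prefer to keep your conditioning scheme, three ingredients are missing.
\begin{itemize}
\item Localize $\theta_{S_0}$ via consistency and Borell to a radius $\lesssim C_{S_1}/s_2$. This is allowed, since $\bar c_3$ may depend on $C_{S_1}$ and $s_2$, and it makes every $S_1$-slope at $\theta_{S_1}=0$ at most $-C_{S_1}/2$.
\item Bound $\theta_{S_1}\mid\theta_{S_0}$ jointly rather than by one-dimensional slices. Use the tangent-plane upper bound of its concave log-density at $\theta_{S_1}=0$, matched with a Taylor lower bound near $0$.
\item Replace strong log-concavity of $\theta_{S_0}\mid\theta_{S_1}$ by a second-moment bound via integration by parts, as above.
\end{itemize}
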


\begin{remark}
    In the above theorem, we take $r_1$ to be a constant.
    However, for our results in \cref{sec:random_laplace_density}, we will also require $r_1 \le C_{S_1}/(3s_2 \sqrt{d_1})$, which depends on $d_1$.
    If we take $r_1 \asymp 1/\sqrt{d_1}$, then the detailed bound in the proof of~\cref{thm:concentration_log_concave} shows that the condition~\eqref{eq:log_concave_concentration_n} still suffices, where $\bar c_4$ now only depends on $C_{S_0}$, $C_{S_1}$, $r_0$, and $s_2$.
    Similarly, when these other parameters depend on the dimension, one can refer to the full bound given in~\cref{appendix:proof_lemma: log concave}.
\end{remark}

\paragraph{Well-separated mode.}\label{contraction:compact_region}
We next look at a compact parameter space and assume that the log-likelihood admits a well-separated maximizer, which is significantly weaker than assuming log-concavity. In this case, we can still show concentration of the posterior $\mu$ on the good set $\hat\Theta$.

\begin{assumption}[Compact parameter space]\label{assumption_contraction:compact_parameter_space}
    The parameter space is defined as $\Theta \deq \mathcal C \cap [0,\infty)^d$, where $\mathcal{C}\subset\mathbb{R}^d$ is a compact set with non-empty interior.
Assume that $B(\hat\theta,r_0,r_1)\subseteq \Theta$.

Let $R_\Theta$ denote the radius of the parameter space $\Theta$, i.e., $R_\Theta \deq \max_{\theta \in \Theta}\norm{\theta}_2$.
\end{assumption}

\begin{assumption}[Well-separated mode]\label{assumption:exponential_decay}
    For some $\zeta > 0$ and any $\theta \in \Theta \setminus B(\hat\theta, r_0, r_1)$, we have $$\ell(\theta) - \ell(\hat\theta) \leq - \zeta\,.$$
\end{assumption}

Our analysis extends the arguments in \cref{thm:concentration_log_concave} to more complex models, in particular allowing us to treat Gaussian mixture models in \cref{section:gaussian_mixture_model}.

\begin{theorem}[Concentration under the well-separated mode condition]\label{proof_lemma: exponential_decay}
Adopt~\cref{integrand} through~\cref{assumption:bounded_mixed_derivatives}, \cref{assumption:Gradient bound on the prior}, and \cref{assumption_contraction:compact_parameter_space}.
Furthermore, assume:
\begin{itemize}
    \item For some constant $r_0 > 0$ and $0 < r_1 \leq \frac{C_{S_1}C_{S_0}}{2s_2^2}$, $\grad_{S_0}^2 \ell(\theta) \preceq -C_{S_0} I$ for all $\theta \in B(\hat\theta, r_0, 0)$, and $\partial_j\ell(\theta) < -C_{S_1}$ for all $j \in S_1$ and $\theta \in B(\hat\theta, 0, r_1)$. Moreover, $\ell$ is concave in $B(\hat\theta, r_0, r_1)$.
    \item \cref{assumption:exponential_decay} is satisfied with the same $r_0$ and $r_1$ above.
    \item  $\sup_{B(\hat\theta, r_0, r_1)} \norm{\grad_{S_0,S_1}^2 \ell(\theta)}_{\rm op} \leq s_2$.
\end{itemize}
Then, there are constants $\bar c_0$, $\bar c_1$, $\bar c_2$, depending only on $C_{S_0}$, $C_{S_1}$, $r_0$, $r_1$, $s_2$, $R_\Theta$, $\zeta$, such that if the following conditions hold:
\begin{align*}
    \delta_0 = \bar c_0 \log\frac{1}{\varepsilon}\,, \quad \delta_1 = \bar c_1 \log\frac{d_1}{\varepsilon}\,, \quad d_1 \le d_0\,, \quad n \ge \bar c_2\,d \log^2\bigl(\frac{d}{\varepsilon}\bigr)\,,
\end{align*}
and if $L_\pi$ is sufficiently small,
then~\cref{assumption:posterior contraction} holds: $\mu(\Theta\setminus\hat\Theta_\delta)\le \varepsilon$.

\end{theorem}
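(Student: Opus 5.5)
We bound $\mu(\Theta\setminus\hat\Theta_\delta)$ by splitting the bad set into two regions and comparing each to a lower bound on the normalizing constant. The first region is the \emph{far region} $\Theta\setminus B(\hat\theta,r_0,r_1)$. The second is the \emph{intermediate shell} $B(\hat\theta,r_0,r_1)\setminus\hat\Theta_\delta$. The normalizing constant is $Z=\int_\Theta \pi e^{n\ell}$, and we lower bound it by its restriction to the good set.

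\textbf{Lower bound on $Z$.} On $\hat\Theta_\delta$, \cref{thm:likelihood_decomposition} gives $\ell(\theta)\ge \ell(\hat\theta) - \frac{s_2}{2}\|\theta_{S_0}-\hat\theta_{S_0}\|_2^2 - \sum_{j\in S_1}|\partial_j\ell(\hat\theta)|\,\theta_j - \osc B$. The quadratic bound on the $S_0$ part uses the Hessian bound $s_2$ together with the vanishing of $\nabla_{S_0}\ell(\hat\theta)$. Integrating over a ball of radius $\asymp n^{-1/2}$ in $S_0$ and the box $[0,1/n]^{d_1}$ in $S_1$ gives
\[
Z\gtrsim \pi(\hat\theta)\,e^{n\ell(\hat\theta)}\,(c/\sqrt n)^{d_0}\,(c/n)^{d_1}\,e^{-O(d_0+d_1)}.
\]
Here the prior varies by at most $e^{L_\pi(\sqrt{d_0}\cdot \text{radius}+d_1/n)}$, which is harmless since $L_\pi$ is small. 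Note $d_0+d_1\lesssim d_0$.

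\textbf{Far region.} By \cref{assumption:exponential_decay}, $e^{n\ell}\le e^{n\ell(\hat\theta)-n\zeta}$ there. The prior mass $\int_\Theta \pi$ relative to $\pi(\hat\theta)$ is at most $\mathrm{vol}(\Theta)\,e^{L_\pi \sqrt d R_\Theta}$, and $\mathrm{vol}(\Theta)\le (CR_\Theta)^d$. Hence the far-region mass divided by $Z$ is at most $\exp\bigl(-n\zeta + O(d\log n) + O(d\log R_\Theta)\bigr)$. This is $\le\varepsilon/2$ once $n\ge \bar c_2 d\log^2(d/\varepsilon)$, with $\bar c_2$ depending on $\zeta$ and $R_\Theta$.

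\textbf{Intermediate shell.} On $B(\hat\theta,r_0,r_1)$, $\ell$ is concave, so the restricted measure $\nu\propto \pi e^{n\ell}\one_{B(\hat\theta,r_0,r_1)}$ is log-concave. I would reuse the argument of \cref{thm:concentration_log_concave} applied to $\nu$, whose total mass is comparable to $Z$. It remains to show $\nu(B\setminus\hat\Theta_\delta)\le\varepsilon/2$, and I would split this into two parts.

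The $S_0$ part is handled by upper bounding $\ell$ on the shell via a Taylor expansion around $\hat\theta$. The cross term $(\theta_{S_1})^\T\nabla^2_{S_1S_0}\ell\,(\theta_{S_0}-\hat\theta_{S_0})$ is bounded via Young's inequality, absorbing part of the $C_{S_0}$ curvature and part of the linear decay in $\theta_{S_1}$. This is where $r_1\le C_{S_0}C_{S_1}/(2s_2^2)$ enters: it ensures that $-\frac{C_{S_0}}{4}\|\Delta_{S_0}\|^2 - \frac{C_{S_1}}{2}\sum_j\theta_j$ dominates. Comparing with the Gaussian-times-exponential lower bound then yields a sub-Gaussian tail for $\|\theta_{S_0}-\hat\theta_{S_0}\|_2$ beyond $\delta_0\sqrt{d_0/n}$ with $\delta_0\asymp\log(1/\varepsilon)$. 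Since the ratio of normalizers contributes $e^{O(d_0)}$ rather than the sharp constant, I expect to need $\delta_0^2\gtrsim 1+\log(1/\varepsilon)/d_0$ via Gaussian concentration of $\|\cdot\|_2$ about $\sqrt{d_0/n}$.

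The $S_1$ part handles each $j\in S_1$ separately using the coordinatewise linear decay $-C_{S_1}\theta_j/2$ plus the cross term bounded as above. This gives $\nu(\theta_j>\delta_1/n)\lesssim e^{O(\text{cross})}e^{-c\delta_1}$, and a union bound over the $d_1$ coordinates forces $\delta_1\asymp\log(d_1/\varepsilon)$.

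\textbf{Main obstacle.} The main obstacle is controlling the coupling between $S_0$ and $S_1$ on the shell, where $\theta$ is no longer in $\hat\Theta_\delta$ and the bounded-oscillation bound from \cref{thm:likelihood_decomposition} is unavailable. My first attempt would be the completed-square/Young bound above, with the constraint on $r_1$. If that fails to close, the fallback is a one-dimensional argument along rays from $\hat\theta$, exploiting concavity of $\ell$ on $B$: the log-density decays at least linearly in the radial variable once it exits $\hat\Theta_\delta$. Concavity makes this reduction valid, and integrating in polar-like coordinates gives the tail.
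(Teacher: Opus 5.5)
Your proposal has a genuine gap in the non-regular part of the intermediate shell, namely the sets $A_j=\{\theta\in\Theta:\|\theta_{S_0}-\hat\theta_{S_0}\|_2\le\delta_0\sqrt{d_0/n},\ \|\theta_{S_1}\|_\infty\le r_1,\ \theta_j\ge\delta_1/n\}$ for $j\in S_1$.

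You bound their mass by comparing the pointwise bound $\ell(\theta)-\ell(\hat\theta)\le-\frac{C_{S_0}}{4}\|\theta_{S_0}-\hat\theta_{S_0}\|_2^2-\frac{C_{S_1}}{2}\|\theta_{S_1}\|_1$ with the Gaussian-times-exponential lower bound on $Z$. That comparison costs a factor exponential in the dimension, from two sources:
\begin{itemize}
\item the mismatched curvatures in $S_0$ give about $(2s_2/C_{S_0})^{d_0/2}\ge 2^{d_0/2}$;
\item each $k\in S_1\setminus\{j\}$ gives a factor $2|\partial_k\ell(\hat\theta)|/C_{S_1}\ge 2$, because your upper bound decays at rate $nC_{S_1}/2$ over $[0,r_1]$ while the lower bound decays at rate $n|\partial_k\ell(\hat\theta)|$.
\end{itemize}
In the $S_0$ shell this loss is paid for by the Gaussian tail $e^{-\Omega(C_{S_0}\delta_0^2 d_0)}$, using $d_1\le d_0$. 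On $A_j$, however, the only gain is $e^{-C_{S_1}\delta_1/2}$, so you would need $\delta_1\gtrsim d_0$ rather than $\delta_1\asymp\log(d_1/\varepsilon)$.

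The factor $e^{O(\mathrm{cross})}$ does not rescue this. On this region the available bound on $n$ times the oscillation of the coupling term is of order $s_2\delta_0 r_1\sqrt{n d_0 d_1}$, which grows with $n$. Your ray/polar fallback also passes through the normalizer and has the same defect. Linear-growth bounds of that kind only close against $e^{-nC_{S_1}r_1}$ with $r_1$ of constant order, which is how the paper uses them for its far region in the log-concave case.

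The missing idea is to bound $\mu(A_j)$ without any normalizer, as the paper does.
\begin{itemize}
\item On $G=\{\|\theta_{S_0}-\hat\theta_{S_0}\|_2\le\delta_0\sqrt{d_0/n},\ \|\theta_{S_1}\|_\infty\le r_1\}$ you have $\partial_j\ell(\theta)\le\partial_j\ell(\hat\theta_{S_0},\theta_{S_1})+s_2\delta_0\sqrt{d_0/n}\le -C_{S_1}/2$ once $n\gtrsim s_2^2\delta_0^2 d_0/C_{S_1}^2$.
\item For $\theta\in A_j$, writing $t=\delta_1/n$, this gives $\pi(\theta)e^{n\ell(\theta)}\le e^{-(C_{S_1}/2-L_\pi/n)\delta_1}\,\pi(\theta-te_j)e^{n\ell(\theta-te_j)}$.
\item The translation $\theta\mapsto\theta-te_j$ has unit Jacobian and maps $A_j$ into $G$, so $\mu(A_j)\le e^{-(C_{S_1}/2-L_\pi/n)\delta_1}\mu(G)$ with no dimension-dependent prefactor.
\item A union bound over $j\in S_1$ then yields $\delta_1\asymp\log(d_1/\varepsilon)$.
\end{itemize}
Conditioning on $\theta_{-j}$ and using the one-dimensional log-derivative bound is an equivalent way to run this. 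Either way the argument rests on a bound on the derivative $\partial_j\ell$, not on the value of $\ell$. It also needs $\|\theta_{S_0}-\hat\theta_{S_0}\|_2$ to be small already, so the $S_0$ shell must be removed first.

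The rest of your plan is sound.
\begin{itemize}
\item Your far-region argument is the paper's.
\item Your comparison argument for the $S_0$ shell is a valid alternative to the paper's route (integration by parts for the second moment, then Markov and Borell's lemma for the log-concave $\mu|_{B(\hat\theta,r_0,r_1)}$). You must derive the pointwise bound through $(\theta_{S_0},0)$, since $\nabla^2_{S_0}\ell\preceq -C_{S_0}I$ is only assumed on the slice $B(\hat\theta,r_0,0)$.
\item Your stated lower bound $Z\gtrsim(c/\sqrt n)^{d_0}e^{-O(d_0)}\cdots$ needs a ball of radius $\asymp\sqrt{d_0/n}$ in $S_0$. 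A ball of radius $\asymp n^{-1/2}$ loses a further factor $d_0^{-d_0/2}$, which the far region tolerates but the shell comparison would not.
\end{itemize}
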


\begin{remark}
    \cite{katsevich2025asymptoticanalysisrareevents} also studies posterior concentration in a similar setting (e.g., see Proposition 3.11 therein), but focuses on the asymptotic regime $n \gg d^2$ and $d_1 = 1$. Our result is stronger in the following aspects: (i) we prove concentration of the good set in which the non-regular part is a box with width shrinking as $\log(d_1)/n$, while the analysis in \cite{katsevich2025asymptoticanalysisrareevents} requires $d/n$ width in the non-regular part; (ii) we allow for $d_1 > 1$, explicitly handling corners where multiple parameters lie on the boundary, whereas \cite{katsevich2025asymptoticanalysisrareevents} assumes a smooth boundary of the parameter space, which transforms to a single non-regular parameter after straightening the boundary.
\end{remark}

\section{Sampling from random low-temperature Gibbs distributions}\label{sec:random_laplace_density}

Let \( \mathcal{P} \) denote a family of distributions with density \( p_\theta \), parameterized by \( \theta \in \Theta = [0, \infty)^d \). Suppose that there exists a ground truth parameter \( \theta^{\star} \in \Theta \) that uniquely maximizes the population log-likelihood \( \ell^{\star}(\theta)  \deq  \mathbb{E} \ell_n(\theta) \), where  
  \[
  \ell_n(\theta) = \ell_n(\theta; X^{(n)}) \deq \frac{1}{n} \sum_{i=1}^n \ell(\theta; X_i) = \frac{1}{n} \sum_{i=1}^n \log p_\theta(X_i)
  \]  
  is the empirical log-likelihood of $n$ i.i.d.\ samples. 
  We consider the problem of sampling from a random, constrained low-temperature Gibbs distribution $\mu$ with the following density:
\begin{equation}
    \mu(\theta) = \frac{\pi(\theta)\exp{n \ell_n(\theta; X^{(n)})}}{\int_{[0,\infty)^d}\pi(\theta)\exp{n \ell_n(\theta;X^{(n)})}\, d\theta}\,, \qquad \theta \in \Theta\,. \label{eq:random_laplace_density}
\end{equation}
  
Define the index sets $$S_0 \deq \{j \in [d] : \theta^\star_j > 0\} \qquad \text{and} \qquad S_1 \deq \{j\in [d]: \theta_j^\star = 0\}\,.$$  We denote the dimension of regular part as $d_0$ and the non-regular part as $d_1$, i.e., $\abs{S_0}=d_0$ and $\abs{S_1}=d_1$. The prior $\pi$ is assumed to satisfy~\cref{integrand}.

The key difference from the previous section is that we now consider the averaged empirical likelihood or quasi-likelihood $\ell_n$, which introduces randomness from the data, in contrast to the deterministic function $\ell$ in \eqref{eq:gibbs_constrained}. Consequently, we replace the assumptions made in the previous section with ones stated in terms of the population function $\ell^\star$. We will establish how to transfer these assumptions from $\ell^\star$ to $\ell_n$ using finite-sample concentration, and then extend our earlier analysis to derive sampling guarantees for \eqref{eq:random_laplace_density}. At the end of this section, we will establish the posterior or quasi-posterior contraction rate to the good set given in \cref{def:good_set} around the MLE $\hat\theta$ under two different sets of assumptions.

\subsection{Assumptions} \label{sec:assumptions}

Here, we state the analogue of the assumptions from~\cref{sec:laplace_assumptions}.

\begin{assumption}[Unique maximizers]\label{assumption:existence of mle}
    The maximizer $\theta^\star \deq \argmax_{\theta\in\Theta} \ell^\star(\theta)$ exists and is unique.
    The MLE $\hat\theta \deq \argmax_{\theta\in\Theta}\ell_n(\theta)$ exists and is unique with probability at least $1-\eta$.
\end{assumption}

\begin{assumption}[Regular and non-regular parts] \label{assumption:dimension_random}
  We assume that $\theta_{S_0}^{\star}$ lies strictly in the interior of the parameter space, i.e.,$$\inf_{i \in S_0}\abs{\theta_{i}^\star} \geq c_0 > 0\,.$$
\end{assumption}

\begin{assumption}[Consistency]\label{ass:consistency}
    Given $r_0, r_1 > 1$, we define the set
    \begin{align*}
        B(\theta^\star,r_0,r_1) \deq (B_2(\theta_{S_0}^\star, r_0) \times B_\infty(\theta_{S_1}^\star, r_1))\cap [0,\infty)^d\,.
    \end{align*}
   To make sure that $B_2(\theta^\star,r_0)$ is contained in $\Theta_{S_0}$, we assume
   $r_0 \ge c_0$.
    We assume that with probability at least $1-\eta$, $\hat\theta \in B(\theta^\star, r_0, r_1)$.
    Also, we let $R \deq r_0 + r_1\sqrt{d_1}$ denote the radius.
\end{assumption}

\begin{assumption}[Local strong concavity in the interior]\label{assumption: random convexity of regular part at true parameter}
  There exist constants $c_{S_0}, r_0, r_1 > 0$ such that with probability at least $1-\eta$, for all $\theta \in B(\theta^{\star}, r_0, r_1)$, $$-\grad_{S_0}^2 \ell_n(\theta) \succeq c_{S_0}I \succ 0\,.$$ We informally refer to this as the $\Theta(1)$-region of strong concavity around the true parameter in the regular part.
\end{assumption}

\begin{assumption}[Locally negative gradient at the boundary]\label{ass:negative gradient l_n}
    There exists a constant $c_{S_1} > 0$ such that $\partial_j \ell^\star(\theta^\star) \le -c_{S_1}$ for all $j\in S_1$.
\end{assumption}

\begin{assumption}[Subexponential score]\label{ass:subG}
    There exists $\sigma > 0$ such that for any $\theta \in B(\theta^\star, r_0, r_1)$ and any unit vector $v \in S^{d-1}$, $\langle v, \nabla \ell(\theta; X_1)\rangle$ is $\sigma$-subexponential.
    That is,
    \begin{align*}
        \|\langle v, \nabla \ell(\theta; X_1) - \E\nabla \ell(\theta; X_1)\rangle\|_{\psi_1} \le \sigma\,,
    \end{align*}
    where $\|\cdot\|_{\psi_1}$ is the subexponential Orlicz norm.
\end{assumption}

\begin{assumption}[Boundedness of derivatives]\label{ass:bounded_mixed_derivatives_ell_n}
   
    We assume that with probability at least $1-\eta$
    \begin{align*}
        \sup_{\theta \in B(\theta^\star,r_0,r_1)}\;{\|\nabla^2 \ell_n(\theta)\|_{\rm op}} \le s_2\,.
    \end{align*}
\end{assumption}

Finally, we assume that posterior concentration (\cref{assumption:posterior contraction}) holds.

\begin{remark}[Remarks on the assumptions]
    Our assumptions are a mix of assumptions on the empirical likelihood $\ell_n$, as well as assumptions on the population likelihood $\ell^\star$.
    In principle, the assumptions could be imposed solely at the level of the population quantities. For example, in lieu of~\cref{assumption: random convexity of regular part at true parameter}, we could assume that $-\nabla^2_{S_0} \ell^\star$ is positive definite in a $\Theta(1)$ region, as well as to assume a concentration inequality for $\|\nabla^2_{S_0} \ell_n - \nabla^2_{S_0} \ell^\star\|_{\rm op}$ (or sufficient conditions for such an inequality to hold); in fact, we provide an approach to checking~\cref{assumption: random convexity of regular part at true parameter} via this approach in~\cref{general proof of local convexity} below.
    However, for concrete examples, it is sometimes easier (and leads to sharper bounds) to verify the positive definiteness of the empirical Hessian $-\nabla_{S_0}^2 \ell_n$ directly, rather than passing through concentration.
    In stating our set of assumptions, we opted for conditions which cover a range of examples of interest; see~\cref{section:examples}.

    Another notable difference is that we now require many of our assumptions to hold on the larger set $B(\theta^\star, r_0,r_1)$, rather than the good set $\hat\Theta_\delta$.
    This can be justified as follows, e.g., for~\cref{assumption: random convexity of regular part at true parameter}.
    If the population likelihood is strongly concave at the maximizer, $-\nabla_{S_0}^2 \ell^\star(\theta^\star) \succeq 2c_{S_0} I$, and if $\nabla^2\ell^\star$ is $s_3$-Lipschitz, then we would have $-\nabla_{S_0}^2 \ell^\star(\theta) \succeq c_{S_0} I$ for all $\theta$ in an $\ell_2$ ball around $\theta^\star$ with radius $r_0 = c_{S_0}/s_3$, and then we could expect the empirical Hessian to satisfy a similar inequality.
    Similar considerations apply for the other assumptions.
    Thus, it is reasonable to suppose that these assumptions hold on a region of size $\Theta(1)$ (rather than just on the good set $\hat\Theta_\delta$), and we will verify the assumptions for our examples in~\cref{section:examples}.

    Finally,~\cref{ass:consistency} is new.
    If $\hat\theta$ (the MLE) is consistent, then~\cref{ass:consistency} is satisfied for all  sufficiently large sample sizes.
    In general, we note that existence and consistency are often verified separately; see, e.g.,~\cite[Section 5.2]{Vaart_1998}.
\end{remark}

\subsection{Intermediate results}

In this section, we gather together intermediate results needed to prove our main results in the subsequent section.

\paragraph{Gradient concentration.}
We first establish convergence of the empirical gradient to the population gradient.

\begin{lemma}[Concentration of the gradient]\label{lem:grad_bd}
Adopt~\cref{assumption:existence of mle} through~\cref{ass:consistency}, \cref{ass:subG}, and \cref{ass:bounded_mixed_derivatives_ell_n}.
Then, with probability at least $1-\eta$,
\begin{align*}
    \|\nabla\ell_n(\theta^\star) - \nabla \ell^\star(\theta^\star)\|_2 \lesssim \sigma\,\Bigl(\frac{\sqrt{d+\log(1/\eta)}}{\sqrt n} + \frac{d+\log(1/\eta)}{n}\Bigr)\,.
\end{align*}
\end{lemma}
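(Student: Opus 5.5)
The plan is to write $\nabla\ell_n(\theta^\star) - \nabla\ell^\star(\theta^\star) = \frac1n\sum_{i=1}^n Z_i$, where $Z_i \deq \nabla\ell(\theta^\star;X_i) - \E\nabla\ell(\theta^\star;X_1)$. The $Z_i$ are i.i.d.\ and mean zero, and by \cref{ass:subG} (applied at $\theta = \theta^\star \in B(\theta^\star,r_0,r_1)$) every one-dimensional projection $\langle v, Z_i\rangle$ with $\norm{v}_2 = 1$ is $\sigma$-subexponential. We implicitly use that $\E\nabla\ell(\theta^\star;X_1) = \nabla\ell^\star(\theta^\star)$, i.e., differentiation and expectation may be interchanged; this is standard under the integrability given by the subexponential assumption. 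If $\theta^\star$ lies on the boundary, all derivatives are one-sided.

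\textbf{Step 1 (fixed direction).} Fix a unit vector $v$. The sum $\sum_i \langle v, Z_i\rangle$ is a sum of independent, centered, $\sigma$-subexponential variables, so Bernstein's inequality for subexponential variables gives
\begin{align*}
\PP\Bigl(\bigl|\tfrac1n\textstyle\sum_i \langle v, Z_i\rangle\bigr| \ge t\Bigr) \le 2\exp\Bigl(-c\,n\min\Bigl(\frac{t^2}{\sigma^2},\frac{t}{\sigma}\Bigr)\Bigr)\,.
\end{align*}

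\textbf{Step 2 (uniform over directions).} Take a $1/2$-net $\mathcal N$ of the sphere $S^{d-1}$ with $|\mathcal N| \le 5^d$. For any vector $w$ we have $\norm{w}_2 \le 2\max_{v\in\mathcal N}\langle v,w\rangle$. A union bound over $\mathcal N$ then multiplies the failure probability by $5^d$.

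\textbf{Step 3 (choice of $t$).} Choose
\begin{align*}
t = C\sigma\Bigl(\sqrt{\tfrac{d+\log(1/\eta)}{n}} + \tfrac{d+\log(1/\eta)}{n}\Bigr)\,.
\end{align*}
With this choice, $n\min(t^2/\sigma^2, t/\sigma) \ge C'(d+\log(1/\eta))$. Indeed, if $t \le \sigma$ the quadratic term governs and the square-root part of $t$ suffices; otherwise the linear term governs and the linear part of $t$ suffices. Hence $5^d \cdot 2\exp(-cC'(d+\log(1/\eta))) \le \eta$ once $C$ is large enough. This gives the claimed bound with probability at least $1-\eta$.

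The only delicate point is this two-regime Bernstein bookkeeping, which is exactly what produces the sum of the $\sqrt{\cdot/n}$ and $\cdot/n$ terms. The other assumptions cited in the statement are not needed for this step.
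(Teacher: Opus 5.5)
Your proposal is correct and follows the same route as the paper, whose proof is the one-line argument you flesh out: apply Bernstein's inequality to each fixed-direction projection $\langle v, \nabla\ell_n(\theta^\star) - \nabla\ell^\star(\theta^\star)\rangle$ using \cref{ass:subG}, then take a union bound over a covering of $S^{d-1}$. Your explicit $1/2$-net comparison and the two-regime choice of $t$ correctly supply the details the paper leaves implicit.
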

\begin{proof}
    For any unit vector $v \in S^{d-1}$, $\langle v, \nabla \ell_n(\theta^\star) - \nabla \ell^\star(\theta^\star)\rangle$ satisfies a Bernstein inequality by \cref{ass:subG}~\cite[Example 2.2.12]{vdVWel23Empirical}. Take a union bound over a covering of $S^{d-1}$.
\end{proof}

\paragraph{Rate of convergence.}
Recall that by~\cref{ass:consistency}, the MLE $\hat\theta$ is $O(1)$-close to $\theta^\star$, which generally holds by consistency.
We now show that under our assumptions, the consistency can be upgraded to a finite-sample rate of convergence.

We first show that the MLE exactly recovers the true parameter in the non-regular part.

\begin{lemma}[Negative gradient in the non-regular part]\label{lemma: negative gradient at the boundary ln random}
  Adopt~\cref{assumption:existence of mle} through~\cref{ass:consistency}, and \cref{ass:negative gradient l_n} through \cref{ass:bounded_mixed_derivatives_ell_n}.
  Assume that
  \begin{align*}
      R \le \frac{c_{S_1}}{3s_2} \qquad \text{and}\qquad n \gg \Bigl(\frac{\sigma}{c_{S_1}} \vee \frac{\sigma^2}{c_{S_1}^2}\Bigr)\, \bigl(d + \log(1/\eta)\bigr)\,.
  \end{align*}

  Then, with probability at least $1-\eta$, $$\partial_{j} \ell_n(\theta) \leq -\frac{c_{S_1}}{2} < 0\qquad\text{for all}~\theta\in B(\theta^\star,r_0,r_1)~\text{and}~j\in S_1\,.$$
\end{lemma}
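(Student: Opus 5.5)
The plan is to split $\partial_j \ell_n(\theta)$ for $\theta \in B(\theta^\star,r_0,r_1)$ and $j\in S_1$ into three pieces:
\begin{align*}
    \partial_j \ell_n(\theta) = \partial_j \ell^\star(\theta^\star) + \bigl(\partial_j \ell_n(\theta^\star) - \partial_j \ell^\star(\theta^\star)\bigr) + \bigl(\partial_j \ell_n(\theta) - \partial_j\ell_n(\theta^\star)\bigr)\,.
\end{align*}
Each piece is controlled separately. The first term is at most $-c_{S_1}$ by \cref{ass:negative gradient l_n}. We will show that the second and third terms are each at most $c_{S_1}/4$ in absolute value on a common high-probability event. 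A union bound over the events in \cref{ass:consistency}, \cref{ass:bounded_mixed_derivatives_ell_n}, and \cref{lem:grad_bd} (after rescaling $\eta$ by a constant factor) then yields the claim with the constant $-c_{S_1}/2$.

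\emph{Statistical term.} This is the easy step. On the event of \cref{lem:grad_bd},
\begin{align*}
    |\partial_j \ell_n(\theta^\star) - \partial_j \ell^\star(\theta^\star)| \le \|\nabla \ell_n(\theta^\star) - \nabla\ell^\star(\theta^\star)\|_2 \lesssim \sigma\Bigl(\sqrt{\tfrac{d+\log(1/\eta)}{n}} + \tfrac{d+\log(1/\eta)}{n}\Bigr)\,.
\end{align*}
Under the assumed sample size $n \gg (\sigma/c_{S_1} \vee \sigma^2/c_{S_1}^2)(d+\log(1/\eta))$, each of the two summands is at most $c_{S_1}/8$. The $\sigma^2/c_{S_1}^2$ factor handles the square-root summand and the $\sigma/c_{S_1}$ factor handles the linear one.

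\emph{Deterministic Lipschitz term.} The set $B(\theta^\star,r_0,r_1)$ is convex, being an intersection of a ball, a box and the orthant, so the segment from $\theta^\star$ to $\theta$ stays inside it. By the mean value theorem along this segment and \cref{ass:bounded_mixed_derivatives_ell_n},
\begin{align*}
    |\partial_j \ell_n(\theta) - \partial_j \ell_n(\theta^\star)| \le \sup_{\theta'\in B}\|\nabla^2 \ell_n(\theta')\|_{\rm op}\,\|\theta-\theta^\star\|_2 \le s_2\,(r_0 + r_1\sqrt{d_1}) = s_2 R\,.
\end{align*}
Here we use $\|\theta_{S_1}-\theta^\star_{S_1}\|_2 \le \sqrt{d_1}\,r_1$. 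The assumption $R\le c_{S_1}/(3s_2)$ then bounds this term by $c_{S_1}/3$.

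\emph{Main obstacle.} The hard part is the constant bookkeeping. The three bounds $c_{S_1}/3 + c_{S_1}/4$ give only $\le -5c_{S_1}/12$, which is short of $-c_{S_1}/2$. To recover the stated constant, I would first refine the Lipschitz bound: differentiate only along the path, splitting the $S_0$ and $S_1$ displacements, or simply note that the statistical term can be made at most $c_{S_1}/6$ by enlarging the implicit constant in $n\gg\cdots$. This gives $-c_{S_1} + c_{S_1}/6 + c_{S_1}/3 = -c_{S_1}/2$. A secondary point to check is that one-sided derivatives at boundary points with $\theta_j=0$ are consistent with the $C^2$ bound. This holds because the integral form of Taylor's theorem along the segment uses only interior points except at the endpoints.
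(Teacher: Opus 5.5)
Your proposal is correct and follows the paper's proof essentially verbatim. It uses the same three-term split (population gradient at $\theta^\star$, \cref{lem:grad_bd}, and the Hessian bound times $\|\theta-\theta^\star\|_2\le R$), and it settles the constant the same way the paper does, by taking the implicit constant in $n\gg\cdots$ large enough that the statistical term is at most $c_{S_1}/6$, which gives $-c_{S_1}+c_{S_1}/6+c_{S_1}/3=-c_{S_1}/2$.

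One correction to your aside on one-sided derivatives. If $\theta_j=0$ for some $j\in S_1$, the whole segment from $\theta^\star$ to $\theta$ lies in the face $\{\theta_j=0\}$, so the segment does not stay off the boundary. The boundary issue is really handled by reading \cref{ass:bounded_mixed_derivatives_ell_n} as a bound on the (one-sided) derivatives of $\ell_n$, viewed as a $C^2$ function up to the boundary.
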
 
\begin{proof}
    By~\cref{ass:negative gradient l_n}, \cref{ass:bounded_mixed_derivatives_ell_n}, and~\cref{lem:grad_bd}, for all $\theta \in B(\theta^\star, r_0, r_1)$ and $j\in S_1$,
    \begin{align*}
        \partial_j \ell_n (\theta)
        &\le \partial_j \ell_n(\theta^\star) + s_2\,\|\theta-\theta^\star\|_2 \le \partial_j \ell^\star(\theta^\star) + \sigma \,\Bigl(\frac{\sqrt{d+\log(1/\eta)}}{\sqrt n} + \frac{d+\log(1/\eta)}{n}\Bigr) + s_2 R \\
        &\le -c_{S_1} + \sigma \,\Bigl(\frac{\sqrt{d+\log(1/\eta)}}{\sqrt n} + \frac{d+\log(1/\eta)}{n}\Bigr) + s_2 R
        \le -\frac{c_{S_1}}{2}\,,
    \end{align*}
    provided $R \le c_{S_1}/(3s_2)$ and $n$ is sufficiently large.
\end{proof}

\begin{corollary}[Non-regular coordinates]\label{cor:non_reg_rate}
    Assume that the conclusion of~\cref{lemma: negative gradient at the boundary ln random} holds.
    Then, $\hat\theta_{S_1} = \theta^\star_{S_1}$.
\end{corollary}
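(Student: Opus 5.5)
We argue by contradiction, using the first-order optimality conditions of $\hat\theta$ together with the strict negativity of $\partial_j \ell_n$ on $B(\theta^\star, r_0, r_1)$ for every $j \in S_1$. We work on the event where the conclusion of~\cref{lemma: negative gradient at the boundary ln random} holds. Intersecting with the events of~\cref{assumption:existence of mle} and~\cref{ass:consistency} (after adjusting $\eta$ by a constant factor), we may also assume that $\hat\theta$ exists, is unique, and lies in $B(\theta^\star,r_0,r_1)$. Since $\theta^\star_{S_1}=0$, it suffices to show $\hat\theta_j = 0$ for each $j\in S_1$.

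Suppose instead that $\hat\theta_j > 0$ for some $j\in S_1$. For $t\in[0,\hat\theta_j]$, consider the segment $\theta(t) \deq \hat\theta - t e_j$.

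\begin{itemize}
\item \emph{The segment stays in the region.} Only the $j$-th coordinate changes along the segment. Its value moves from $\hat\theta_j$ toward $\theta^\star_j = 0$, so $|\theta(t)_j - \theta^\star_j| \le |\hat\theta_j| \le r_1$. The coordinate also remains non-negative. Hence $\theta(t) \in B(\theta^\star,r_0,r_1)$ for all $t$, since this set is a product of an $\ell_2$-ball in the $S_0$ coordinates and an $\ell_\infty$-box (intersected with the orthant) in the $S_1$ coordinates, and is convex in coordinate $j$.

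\item \emph{The likelihood strictly increases toward the boundary.} By~\cref{lemma: negative gradient at the boundary ln random},
\begin{align*}
    \ell_n(\theta(\hat\theta_j)) - \ell_n(\hat\theta) = -\int_0^{\hat\theta_j} \partial_j \ell_n(\theta(t))\,dt \ge \frac{c_{S_1}}{2}\,\hat\theta_j > 0\,.
\end{align*}
The point $\theta(\hat\theta_j)$ lies in $\Theta$, so this contradicts $\hat\theta$ being the maximizer of $\ell_n$ over $\Theta$.
\end{itemize}

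Equivalently, one can phrase this through first-order optimality: at an interior coordinate $\hat\theta_j>0$ we would need $\partial_j\ell_n(\hat\theta)=0$, which is incompatible with $\partial_j\ell_n(\hat\theta)\le -c_{S_1}/2$.

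The only point needing care is the first step, that the segment remains in $B(\theta^\star,r_0,r_1)$; this follows directly from the product structure of that set. Regularity is not an obstacle: $\ell_n$ is $C^2$ there by~\cref{ass:bounded_mixed_derivatives_ell_n}, so the fundamental theorem of calculus applies along the segment, with one-sided derivatives at the boundary endpoint as in the earlier remark.
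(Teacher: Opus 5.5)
Your proposal is correct and follows the paper's own argument. The paper likewise combines $\hat\theta \in B(\theta^\star,r_0,r_1)$ (from \cref{ass:consistency}) with $\partial_j \ell_n < 0$ on that set to conclude that the maximizer must have $\hat\theta_j = 0$ for every $j\in S_1$. You simply make explicit two steps the paper's one-line proof leaves implicit: that the segment toward the boundary stays inside the region, and the integration along that segment.
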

\begin{proof}
    By~\cref{ass:consistency}, $\hat\theta \in B(\theta^\star,r_0,r_1)$.
    By~\cref{lemma: negative gradient at the boundary ln random}, we know that for $\theta$ belonging to this set and all $j\in S_1$, $\partial_j \ell_n(\theta) < 0$.
    Hence, the maximizer $\hat\theta$ of $\ell_n$ must have $\hat\theta_{S_1} = 0 = \theta_{S_1}^\star$.
\end{proof}

Building on this result, we can now obtain the rate of convergence in the regular part by strong concavity of $\ell_n$.

\begin{lemma}[Rate of convergence in the regular part]\label{lemma:uniform convergence regular part}
    Suppose that the conditions of~\cref{lemma: negative gradient at the boundary ln random} hold.
    Furthermore, adopt~\cref{assumption: random convexity of regular part at true parameter}.
    Then, with probability at least $1-2\eta$, it holds that
    \begin{align*}
        \|\hat\theta_{S_0} - \theta_{S_0}^\star\|_2
        &\;\lesssim\; \frac{\sigma}{c_{S_0}}\,\Bigl( \frac{\sqrt{d_0 + \log(1/\eta)}}{\sqrt n} + \frac{d_0+\log(1/\eta)}{n}\Bigr)\,.
    \end{align*}

\end{lemma}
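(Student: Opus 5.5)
We work on the event, of probability at least $1-2\eta$, on which the following all hold: the conclusion of \cref{lemma: negative gradient at the boundary ln random} (and hence \cref{cor:non_reg_rate}), $\hat\theta \in B(\theta^\star,r_0,r_1)$ from \cref{ass:consistency}, the strong concavity of \cref{assumption: random convexity of regular part at true parameter}, and the gradient bound of \cref{lem:grad_bd} applied to the $S_0$-block. By \cref{cor:non_reg_rate}, $\hat\theta_{S_1} = \theta^\star_{S_1} = 0$. So it suffices to study the slice $\phi(\theta_{S_0}) \deq \ell_n(\theta_{S_0}, 0)$ on the ball $B_2(\theta^\star_{S_0}, r_0)$.

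On the segment joining $\theta^\star$ and $\hat\theta$ (both with $S_1$-part zero), $\phi$ is $c_{S_0}$-strongly concave. This is because the segment lies in the convex set $B(\theta^\star,r_0,r_1)$. Next, $\hat\theta_{S_0}$ is an interior point of $[0,\infty)^{d_0}$: since $r_0\ge c_0$ is only an assumption about containment, we simply use that $\hat\theta_{S_0}$ maximizes $\phi$ over the relevant region with $\hat\theta_{S_0}$ in the open orthant. Rather than rely on that, we use the first-order optimality condition in variational form, which handles a possible boundary: $\langle \nabla\phi(\hat\theta_{S_0}), \theta^\star_{S_0}-\hat\theta_{S_0}\rangle \le 0$, valid because $\theta^\star_{S_0}$ is feasible.

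Strong concavity then gives
\begin{align*}
c_{S_0}\|\hat\theta_{S_0}-\theta^\star_{S_0}\|_2^2 \le \langle \nabla\phi(\theta^\star_{S_0}) - \nabla\phi(\hat\theta_{S_0}), \hat\theta_{S_0}-\theta^\star_{S_0}\rangle \le \langle \nabla_{S_0}\ell_n(\theta^\star), \hat\theta_{S_0}-\theta^\star_{S_0}\rangle\,.
\end{align*}
Since $\theta^\star_{S_0}$ lies in the interior of the orthant (\cref{assumption:dimension_random}) and maximizes $\ell^\star(\cdot,0)$, we have $\nabla_{S_0}\ell^\star(\theta^\star)=0$. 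Applying Cauchy--Schwarz and dividing by $\|\hat\theta_{S_0}-\theta^\star_{S_0}\|_2$ gives
\begin{align*}
\|\hat\theta_{S_0}-\theta^\star_{S_0}\|_2 \le \frac{1}{c_{S_0}}\,\|\nabla_{S_0}\ell_n(\theta^\star)-\nabla_{S_0}\ell^\star(\theta^\star)\|_2\,.
\end{align*}

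Finally, we bound this gradient deviation by rerunning the argument of \cref{lem:grad_bd} with $d$ replaced by $d_0$. For each unit $v\in S^{d_0-1}$, extended by zeros, Bernstein's inequality applies under \cref{ass:subG}. A $1/2$-net of $S^{d_0-1}$ has cardinality $5^{d_0}$, and a union bound over it gives the deviation $\sigma\bigl(\sqrt{(d_0+\log(1/\eta))/n} + (d_0+\log(1/\eta))/n\bigr)$ with probability $1-\eta$. Combining this with the probability $1-\eta$ event of \cref{lemma: negative gradient at the boundary ln random} and the other assumptions yields the claimed bound with probability at least $1-2\eta$.

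The main care point is the optimality and convexity step. One must ensure $\nabla_{S_0}\ell^\star(\theta^\star)=0$, which holds because $\theta^\star_{S_0}$ is interior. One must also ensure the segment between $\theta^\star$ and $\hat\theta$ stays in the region where \cref{assumption: random convexity of regular part at true parameter} holds, which follows from convexity of $B(\theta^\star,r_0,r_1)$. Using the variational inequality avoids needing to show separately that $\hat\theta_{S_0}$ is interior.
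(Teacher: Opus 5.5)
Your proposal is correct and follows the paper's argument: you reduce to the $S_0$-slice via $\hat\theta_{S_1}=\theta^\star_{S_1}$, then use $c_{S_0}$-strong concavity on the convex set $B(\theta^\star,r_0,r_1)$ together with $\nabla_{S_0}\ell^\star(\theta^\star)=0$ to get $c_{S_0}\|\hat\theta_{S_0}-\theta^\star_{S_0}\|_2\le\|\nabla_{S_0}\ell_n(\theta^\star)-\nabla_{S_0}\ell^\star(\theta^\star)\|_2$, and finish with \cref{lem:grad_bd} run over a net of $S^{d_0-1}$. Your variational form of the first-order condition is a small improvement, since it removes the paper's implicit reliance on $\hat\theta_{S_0}$ being interior. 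One minor point, shared with the paper: the $1-2\eta$ probability really comes from a union bound over more than two events, which only changes constants.
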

\begin{proof}
    By \cref{assumption: random convexity of regular part at true parameter}, $\ell_n$ is strongly concave over $B(\theta^\star,r_0,r_1)$, and $\hat\theta \in B(\theta^\star,r_0,r_1)$ by~\cref{ass:consistency}.
    Also, by~\cref{cor:non_reg_rate}, $\hat\theta_{S_1} = \theta_{S_1}^\star$, so that $\hat\theta_{S_0}$ maximizes $\theta_{S_0} \mapsto \ell_n(\theta_{S_0}, \theta_{S_1}^\star)$.
    Then,
    \begin{align*}
        c_{S_0}\,\|\hat\theta_{S_0} - \theta_{S_0}^\star\|_2
        &\le \|\nabla_{S_0} \ell_n(\theta_{S_0}^\star, \theta_{S_1}^\star)\|_2
        = \|\nabla_{S_0} \ell_n(\theta_{S_0}^\star,\theta_{S_1}^\star) - \nabla_{S_0} \ell^\star(\theta_{S_0}^\star,\theta_{S_1}^\star)\|_2\,,
    \end{align*}
    where we use $\nabla_{S_0} \ell^\star(\theta_{S_0}^\star,\theta_{S_1}^\star) = 0$.
    Applying~\cref{lem:grad_bd} (restricting to the coordinates in $S_0$) yields the result.
\end{proof}

Recall that $\hat\Theta_{\delta}\coloneqq \{\theta \in \Theta: \norm{\theta_{S_0} - \hat \theta_{S_0}}_{2} < \delta_0  \sqrt{\frac{d_0}{n}},\, \norm{\theta_{S_1} - \hat \theta_{S_1}}_{\infty} <  \frac{\delta_1}{n}\}$, where $\hat\Theta_\delta$ is the good set (\cref{def:good_set}). Note that $\hat\Theta_\delta$ is now a random set.
We need the following lemma.
\begin{lemma}[Concentration region]\label{lemma:contraction region} 
    Suppose that both~\cref{cor:non_reg_rate} and \cref{lemma:uniform convergence regular part} hold.
    If
    \begin{align*}
        n \gg \max\Bigl\{\Bigl( \frac{\sigma}{c_{S_0} r_0} \vee \frac{\sigma^2}{c_{S_0}^2 r_0^2}\Bigr)\,\bigl(d+\log(1/\eta)\bigr)\,, \; \bigl(\frac{\delta_0}{r_0}\bigr)^2\,d_0\,,\; \frac{\delta_1}{r_1}\Bigr\}\,,
    \end{align*}
    it holds that
  $$\hat \Theta_{\delta} \subseteq B(\theta^\star, r_0, r_1)\,.$$
\end{lemma}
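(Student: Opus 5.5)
The plan is a triangle-inequality argument, done separately for the regular and non-regular blocks. Fix $\theta \in \hat\Theta_\delta$. We must show two things: $\norm{\theta_{S_0}-\theta^\star_{S_0}}_2 \le r_0$ and $\norm{\theta_{S_1}-\theta^\star_{S_1}}_\infty \le r_1$. Membership in $[0,\infty)^d$ is automatic, since $\hat\Theta_\delta \subseteq \Theta$ by definition. We work on the event where both \cref{cor:non_reg_rate} and \cref{lemma:uniform convergence regular part} hold.

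\textbf{Non-regular block.} By \cref{cor:non_reg_rate}, $\hat\theta_{S_1} = \theta^\star_{S_1}$. Hence
\[
\norm{\theta_{S_1}-\theta^\star_{S_1}}_\infty = \norm{\theta_{S_1}-\hat\theta_{S_1}}_\infty \le \frac{\delta_1}{n}.
\]
This is at most $r_1$ as soon as $n \ge \delta_1/r_1$, which is the third term in the assumed lower bound on $n$.

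\textbf{Regular block.} Write
\[
\norm{\theta_{S_0}-\theta^\star_{S_0}}_2 \le \norm{\theta_{S_0}-\hat\theta_{S_0}}_2 + \norm{\hat\theta_{S_0}-\theta^\star_{S_0}}_2 .
\]
The first term is at most $\delta_0\sqrt{d_0/n}$, and this is at most $r_0/2$ once $n \ge 4(\delta_0/r_0)^2 d_0$; this is the second term in the assumption, with the implicit constant absorbed into $\gg$. For the second term, \cref{lemma:uniform convergence regular part} gives
\[
\norm{\hat\theta_{S_0}-\theta^\star_{S_0}}_2 \lesssim \frac{\sigma}{c_{S_0}}\Bigl(\sqrt{\frac{d_0+\log(1/\eta)}{n}} + \frac{d_0+\log(1/\eta)}{n}\Bigr).
\]
We make each of the two summands at most $r_0/4$. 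The $\sqrt{\cdot}$ summand needs $n \gtrsim \sigma^2 (d_0+\log(1/\eta))/(c_{S_0}^2 r_0^2)$. The linear summand needs $n \gtrsim \sigma(d_0+\log(1/\eta))/(c_{S_0} r_0)$. Both are implied by the first term in the assumption, using $d_0 \le d$. Summing the pieces gives $\norm{\theta_{S_0}-\theta^\star_{S_0}}_2 \le r_0$.

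Combining the two blocks gives $\theta \in B(\theta^\star,r_0,r_1)$, which proves the inclusion. There is no real obstacle here. The only point needing care is that the non-regular bound depends on exact recovery $\hat\theta_{S_1}=\theta^\star_{S_1}$ from \cref{cor:non_reg_rate}; without it, we would need an additional rate for $\hat\theta_{S_1}$. We also use that the implicit universal constants in $\gg$ are large enough to absorb the $\lesssim$ in \cref{lemma:uniform convergence regular part}.
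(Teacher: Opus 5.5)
Your proposal is correct and matches the paper's approach. The paper just says the proof is immediate. The intended argument is exactly your blockwise triangle inequality: exact recovery $\hat\theta_{S_1}=\theta^\star_{S_1}$ handles the non-regular block, and the regular block combines $\delta_0\sqrt{d_0/n}\le r_0/2$ with the rate from \cref{lemma:uniform convergence regular part}. The three terms in the sample-size condition absorb precisely these requirements.
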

\begin{proof}
    The proof is immediate.
\end{proof}

\subsection{Main results}\label{sec:main_results_random}

We now establish our main result.
\begin{theorem}[Poincar\'e inequality for the random constrained Gibbs measure]\label{theorem:random_poincare}
    Suppose that~\cref{integrand}, as well as \cref{assumption:existence of mle} through \cref{ass:bounded_mixed_derivatives_ell_n}, hold.
    With probability at least $1-O(\eta)$, for $3R \le c_{S_1}/s_2$, 
    and
    \begin{align*}
        n \gg \max\Bigl\{\phi\Bigl(\frac{\sigma}{c_{S_0} r_0 \wedge c_{S_1}}\Bigr)\, \bigl(d + \log(1/\eta)\bigr)\,,\; \bigl(\frac{\delta_0}{c_0}\bigr)^2\,d_0\,, \; \frac{\delta_1}{r_1}\Bigr\}\,,
    \end{align*}
    where $\phi(x) \deq x \vee x^2$,
    the posterior distribution $\mu$ in \eqref{eq:random_laplace_density} conditioned on the good set $\hat\Theta_\delta$ satisfies a Poincar\'e inequality with Poincar\'e constant 
    at most
    \begin{align*}
        \CPI(\mugood) \le \Bigl(\frac{1}{nc_{S_0}} \vee \frac{16 \max_{j\in S_1} \exp(\osc_{[0,\delta_1/n]} \log \pi_j)}{n^2 c_{S_1}^2}\Bigr) \exp\Bigl(2s_2 \,\Bigl(\frac{\delta_0 \delta_1 \,(d_0 d_1)^{1/2}}{n^{1/2}} + \frac{\delta_1^2 d_1}{n}\Bigr)\Bigr)\,.
    \end{align*}

\end{theorem}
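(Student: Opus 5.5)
The plan is to reduce to the deterministic result, \cref{theorem:poincare}, applied to the realized function $\ell = \ell_n$ on a high-probability event. We check that each deterministic assumption from \cref{sec:laplace_assumptions} holds there, with $\hat\theta$ the MLE and with constants $C_0, C_{S_0}, C_{S_1}, s_2$ replaced by $c_0/2$, $c_{S_0}$, $c_{S_1}/2$, $s_2$. The factor $16$ in place of $4$ comes from $C_{S_1} = c_{S_1}/2$: indeed $4/(n^2 (c_{S_1}/2)^2) = 16/(n^2 c_{S_1}^2)$.

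First, define the event $\mathcal E$ as the intersection of the following events, each of probability at least $1-\eta$:
\begin{itemize}
\item existence and uniqueness of the MLE (\cref{assumption:existence of mle});
\item $\hat\theta \in B(\theta^\star,r_0,r_1)$ (\cref{ass:consistency});
\item local strong concavity (\cref{assumption: random convexity of regular part at true parameter});
\item the Hessian bound (\cref{ass:bounded_mixed_derivatives_ell_n});
\item the gradient concentration of \cref{lem:grad_bd}.
\end{itemize}
By a union bound, $\PP(\mathcal E)\ge 1-O(\eta)$.

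On $\mathcal E$, the sample-size condition with $\phi(\sigma/(c_{S_0}r_0\wedge c_{S_1}))$ dominates both the requirement of \cref{lemma: negative gradient at the boundary ln random} and the first term of \cref{lemma:contraction region}. Together with $3R\le c_{S_1}/s_2$, this lets us invoke \cref{lemma: negative gradient at the boundary ln random}, \cref{cor:non_reg_rate}, \cref{lemma:uniform convergence regular part} and \cref{lemma:contraction region}. As a result:
\begin{itemize}
\item $\hat\theta_{S_1}=\theta^\star_{S_1}=0$, so the non-regular index set defined through $\hat\theta$ coincides with $S_1$ defined through $\theta^\star$;
\item $\hat\Theta_\delta\subseteq B(\theta^\star,r_0,r_1)$.
\end{itemize}

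Next, verify the deterministic assumptions one at a time.
\begin{itemize}
\item \cref{ass:mode} holds on $\mathcal E$.
\item For \cref{assumption:dimension}: each $i\in S_0$ has $\hat\theta_i\ge \theta^\star_i-\|\hat\theta_{S_0}-\theta^\star_{S_0}\|_2\ge c_0/2$. This uses the rate from \cref{lemma:uniform convergence regular part} and $n$ large, which I would absorb into the $\phi(\cdot)$ condition, possibly after adjusting constants (a mild gap if $r_0\ge c_0$ is not small enough; I would handle it by noting $\sigma/(c_{S_0}r_0)\ge \sigma/(c_{S_0}\cdot\mathrm{const})$ or by shrinking the constant). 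With $n\gg(\delta_0/c_0)^2 d_0$, the condition $n\ge \delta_0^2 d_0/C_0^2$ of \cref{theorem:poincare} then holds with $C_0=c_0/2$, and $\hat\Theta_\delta$ stays in the orthant.
\item \cref{ass:local_cvxty} and \cref{assumption:bounded_mixed_derivatives} on $\hat\Theta_\delta$ follow from their versions on $B(\theta^\star,r_0,r_1)$ together with the containment $\hat\Theta_\delta\subseteq B(\theta^\star,r_0,r_1)$.
\item \cref{assumption:negative_gradient_l} with $C_{S_1}=c_{S_1}/2$ follows from \cref{lemma: negative gradient at the boundary ln random} evaluated at $\theta=\hat\theta\in B(\theta^\star,r_0,r_1)$.
\end{itemize}
\cref{integrand} is assumed directly.

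Finally, apply \cref{theorem:poincare} conditionally on the data on $\mathcal E$; the bound is deterministic given these verified constants, yielding the stated Poincaré constant. I expect the only real obstacle to be bookkeeping: showing that the single displayed lower bound on $n$ implies all the individual sample-size requirements of the intermediate lemmas, including $\delta_0\sqrt{d_0/n}<C_0$, and that the constants degrade only by the factors accounted for ($c_{S_1}\to c_{S_1}/2$, and $C_0\to c_0/2$ absorbed into the $\gg$).
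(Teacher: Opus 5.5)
Your proposal is correct and follows the paper's proof essentially step for step: both apply \cref{theorem:poincare} to $\ell_n$ on a high-probability event and verify its assumptions through the same intermediate lemmas, with $C_0 = c_0/2$, $C_{S_0} = c_{S_0}$, $C_{S_1} = c_{S_1}/2$ (which is where the factor $16$ comes from). The extra sample-size requirement you flag for $C_0 = c_0/2$ is a real gap in the statement as written, and the paper's own proof fills it the same way, by imposing $n \gg (\sigma + s_2 R)^2\,(d + \log^2(1/\eta))/(c_0 c_{S_0})^2$, a condition that does not appear in the displayed hypothesis.
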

\begin{proof}
    We apply \cref{theorem:poincare} to $\mu|_{\hat\Theta_\delta}$, and so we must check the assumptions of that theorem.
    \begin{itemize}
        \item \cref{ass:mode} holds with probability at least $1-\eta$ by \cref{assumption:existence of mle}.
        \item \cref{assumption:dimension} holds with $C_0 = c_0/2$ by \cref{assumption:dimension_random} and \cref{lemma:uniform convergence regular part}, provided that $n\gg (\sigma + s_2 R)^2\,(d+\log^2(1/\eta))/(c_0 c_{S_0})^2$.
        \item \cref{ass:local_cvxty} holds with $C_{S_0} = c_{S_0}$ by \cref{assumption: random convexity of regular part at true parameter} and \cref{lemma:contraction region}.
        \item \cref{assumption:negative_gradient_l} holds with $C_{S_1} = c_{S_1}/2$ by \cref{lemma: negative gradient at the boundary ln random}.
        \item \cref{assumption:bounded_mixed_derivatives} follows from \cref{ass:bounded_mixed_derivatives_ell_n} and \cref{lemma:contraction region}.
    \end{itemize}
\end{proof}

As discussed in~\cref{section:main_results}, the Poincar\'e inequality implies rapid mixing for Langevin-type MCMC algorithms for sampling from the posterior $\mu$. 

\subsection{Checking local concavity}

By the second-order optimality condition for the population log-likelihood at the true parameter $\theta^\star$, we have $\grad^2_{S_0} \ell^{\star}(\theta^\star) \preceq 0$.
One way to check the strong concavity of the empirical log-likelihood (\cref{assumption: random convexity of regular part at true parameter}) is to assume a constant region of strong concavity for the population log-likelihood around $\theta^\star$ in the regular part of the parameter space, and then to transfer this property to the empirical log-likelihood via concentration.

\begin{assumption}[Local strong concavity in the interior]\label{assumption: l_star convexity of regular part at true parameter}
  There exists a constant $c_{S_0}^\star >0$ such that for all $\theta \in B(\theta^{\star}, r_0, r_1)$, $$-\grad_{S_0}^2 \ell^\star(\theta) \succeq c_{S_0}^\star I \succ 0\,.$$ Here, the radii $r_0, r_1 > 0$ are as in \cref{ass:consistency}.
\end{assumption}

\begin{assumption}[H\"older continuity of the Hessian]\label{assumption:holder_hessian}
    With probability at least $1-\eta$, the Hessian of the empirical log-likelihood is H\"older-continuous on $B(\theta^\star,r_0,r_1)$, i.e., for any $\theta, \theta' \in B(\theta^\star,r_0,r_1)$, $$\norm{\grad^2 \ell_n(\theta) - \grad^2 \ell_n(\theta')}_{\rm op} \leq s_{2+\gamma}\, \norm{\theta - \theta'}_2^\gamma$$
    for some $s_{2+\gamma} > 0$ and $\gamma \in (0,1]$.
\end{assumption}

\begin{lemma} \label{lemma: l_n convexity}
  Adopt \cref{assumption:existence of mle,assumption:dimension_random,ass:consistency}, \cref{assumption: l_star convexity of regular part at true parameter}, and \cref{assumption:holder_hessian}.
    We have $$\inf_{\theta \in B(\theta^\star, r_0, r_1)}\lambda_{\min}(-\grad^2_{S_0} \ell_n(\theta)) \geq \frac{c_{S_0}^\star}{2} - \max_{\theta \in N_\epsilon(\theta^\star, r_0, r_1)}\norm{\grad^2_{S_0}\ell_n(\theta) - \grad^2_{S_0}\ell^\star(\theta)}_{\rm op}$$ with probability at least $1-\eta$, provided that $N_\varepsilon(\theta^\star,r_0,r_1)$ is an $\varepsilon$-covering net of $B(\theta^\star,r_0,r_1)$ with $\varepsilon \le (\frac{c_{S_0}^\star}{2s_{2+\gamma}})^{1/\gamma}$.
\end{lemma}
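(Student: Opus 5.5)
The argument is a standard covering-net discretization. It combines the population strong concavity (\cref{assumption: l_star convexity of regular part at true parameter}) with the H\"older continuity of the empirical Hessian (\cref{assumption:holder_hessian}) to transfer the minimum-eigenvalue bound from the finitely many net points to all of $B(\theta^\star,r_0,r_1)$.

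We work on the event of probability at least $1-\eta$ on which \cref{assumption:holder_hessian} holds. Fix any $\theta \in B(\theta^\star,r_0,r_1)$ and choose $\theta' \in N_\varepsilon(\theta^\star,r_0,r_1)$ with $\|\theta-\theta'\|_2 \le \varepsilon$. Since $-\lambda_{\min}(-\cdot)$ is $1$-Lipschitz in operator norm (Weyl's inequality), we obtain
\begin{align*}
\lambda_{\min}\bigl(-\nabla^2_{S_0}\ell_n(\theta)\bigr) \ge \lambda_{\min}\bigl(-\nabla^2_{S_0}\ell_n(\theta')\bigr) - \|\nabla^2_{S_0}\ell_n(\theta)-\nabla^2_{S_0}\ell_n(\theta')\|_{\rm op}\,.
\end{align*}
The $S_0\times S_0$ block is a principal submatrix, so its operator norm is bounded by that of the full Hessian. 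The last term is therefore at most $s_{2+\gamma}\varepsilon^\gamma \le c_{S_0}^\star/2$ by the choice of $\varepsilon$.

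Next, at the net point $\theta'$ we apply Weyl again, this time comparing with the population Hessian:
\begin{align*}
\lambda_{\min}\bigl(-\nabla^2_{S_0}\ell_n(\theta')\bigr) \ge \lambda_{\min}\bigl(-\nabla^2_{S_0}\ell^\star(\theta')\bigr) - \|\nabla^2_{S_0}\ell_n(\theta')-\nabla^2_{S_0}\ell^\star(\theta')\|_{\rm op}\,.
\end{align*}
The first term is at least $c_{S_0}^\star$ by \cref{assumption: l_star convexity of regular part at true parameter}, which applies because net points may be taken inside $B(\theta^\star,r_0,r_1)$. The second term is bounded by the maximum over the net. Combining the two displays gives $c_{S_0}^\star - c_{S_0}^\star/2 - \max_{N_\varepsilon}\|\cdot\|_{\rm op}$, uniformly in $\theta$. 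Taking the infimum over $\theta$ yields the claim.

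The only subtle point is bookkeeping. We must ensure the net is internal, meaning its points lie in $B(\theta^\star,r_0,r_1)$. This can always be arranged at the cost of replacing $\varepsilon$ by $2\varepsilon$, or simply assumed as part of the definition of the covering net. We must also ensure that the H\"older assumption is applied only between points of $B(\theta^\star,r_0,r_1)$, which holds since both $\theta$ and $\theta'$ lie in that set. No probabilistic estimate is needed beyond the single event of \cref{assumption:holder_hessian}. The concentration of $\nabla^2_{S_0}\ell_n - \nabla^2_{S_0}\ell^\star$ over the finite net is left explicit in the statement, to be controlled later by a union bound.
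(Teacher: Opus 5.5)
Your proposal is correct and follows the paper's proof essentially step for step. Weyl's inequality moves from an arbitrary $\theta$ to a nearby net point, at a cost of $s_{2+\gamma}\varepsilon^\gamma \le c_{S_0}^\star/2$ via H\"older continuity; Weyl again at the net point then compares with the population Hessian and invokes $-\nabla^2_{S_0}\ell^\star \succeq c_{S_0}^\star I$. Your insistence that the net be internal, and that the H\"older bound be applied only between points of $B(\theta^\star,r_0,r_1)$, is more careful than the paper's write-up, which takes the supremum over $\theta_\varepsilon \in B(\theta,\varepsilon)$ without restricting to that set.
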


\begin{proof}\label{general proof of local convexity}
    This lemma combines the strong concavity region of $\ell^\star$ in \cref{assumption: random convexity of regular part at true parameter} and the Hessian continuity in \cref{assumption:holder_hessian} via a covering net argument.

    Let $N_\epsilon(\theta^\star, r_0, r_1)$ denote an $\epsilon$-covering net of $B(\theta^\star, r_0, r_1)$,
     and $B(\theta, \epsilon)$ denote the ball of radius $\epsilon$ around $\theta$. By Weyl's inequality, on the event that~\cref{assumption:holder_hessian} holds,
     \begin{align*}
        &\inf_{\theta \in B(\theta^\star, r_0, r_1)}\lambda_{\min}(-\grad^2_{S_0} \ell_n(\theta)) \\
        &\qquad \ge \inf_{\theta \in N_\varepsilon(\theta^\star,r_0,r_1)} \lambda_{\min}(-\grad^2_{S_0} \ell_n(\theta)) - \sup_{\substack{\theta \in N_\varepsilon(\theta^\star,r_0,r_1) \\ \theta_\varepsilon \in B(\theta,\varepsilon)}}\|\nabla_{S_0}^2 \ell_n(\theta) - \nabla_{S_0}^2 \ell_n(\theta_\varepsilon)\|_{\rm op} \\
        &\qquad \ge \inf_{\theta \in N_\varepsilon(\theta^\star,r_0,r_1)}\bigl\{\lambda_{\min}(-\nabla_{S_0}^2\ell^\star(\theta)) - \|\nabla^2_{S_0} \ell_n(\theta) - \nabla^2_{S_0} \ell^\star(\theta)\|_{\rm op}\bigr\} - s_{2+\gamma}\, \varepsilon^\gamma \\
        &\qquad \ge c_{S_0}^\star - \max_{\theta \in N_\varepsilon(\theta^\star,r_0,r_1)} \|\nabla^2_{S_0} \ell_n(\theta) - \nabla^2_{S_0} \ell^\star(\theta)\|_{\rm op} - s_{2+\gamma}\, \varepsilon^\gamma \\
        &\qquad \ge \frac{c_{S_0}^\star}{2} - \max_{\theta \in N_\varepsilon(\theta^\star,r_0,r_1)} \|\nabla^2_{S_0} \ell_n(\theta) - \nabla^2_{S_0} \ell^\star(\theta)\|_{\rm op}\,,
     \end{align*}
     provided $\varepsilon \le (\frac{c_{S_0}^\star}{2s_{2+\gamma}})^{1/\gamma}$.
\end{proof}

    Using the result from \cref{lemma: l_n convexity}, we see that to verify the local concavity of the empirical log-likelihood in \cref{ass:local_cvxty}, it suffices to verify~\cref{assumption: l_star convexity of regular part at true parameter,assumption:holder_hessian}, and the uniform convergence of the Hessian difference over the covering net $N_\epsilon(\theta^\star, r_0, r_1)$.
    This can be done by applying a union bound over the finite covering net and using the pointwise concentration. We will demonstrate this in specific examples in \cref{section:glm,section:gaussian_mixture_model}.

\section{Applications to Bayesian inference}
\label{section:examples}

We are now ready to apply our main result in \cref{theorem:random_poincare} to specific statistical models. We will consider the following models: Generalized Linear Models (GLMs) in \cref{section:glm}, including logistic regression with random design in \cref{section:logistic_regression} and a Poisson linear model in \cref{section:poisson_linear_model}, and Gaussian mixture models in \cref{section:gaussian_mixture_model}.

In order to apply~\cref{theorem:random_poincare}, we must check~\cref{integrand}, \cref{assumption:posterior contraction}, as well as~\cref{assumption:existence of mle} through~\cref{ass:bounded_mixed_derivatives_ell_n}.
We now discuss these assumptions in turn.
\begin{itemize}
    \item \cref{integrand} states that the prior is log-concave and of product form.
    In order to check posterior concentration, we will also assume that the prior is log-Lipschitz (\cref{assumption:Gradient bound on the prior}) with a sufficiently small constant.
    For simplicity, one can think of our results as holding for a flat (uninformative) prior.
    \item \cref{assumption:existence of mle} through \cref{ass:consistency} concern the existence and uniqueness of the MLE\@, and that the regular part of $\theta^\star$ should lie strictly in the interior.
    These conditions are typically verified using arguments orthogonal to the ones in this paper, and will simply be assumed here.
    \item Similarly,~\cref{ass:negative gradient l_n}, which states that the gradient in the non-regular part at $\theta^\star$ is strictly negative, should be viewed as specifying the situation under consideration and will simply be assumed throughout.
    \item \cref{assumption: random convexity of regular part at true parameter} asserts strong concavity of the empirical Hessian of the log-likelihood.
    This will either be checked explicitly, or we will assume strong concavity of the population counterpart (\cref{assumption: l_star convexity of regular part at true parameter}) and invoke~\cref{lemma: l_n convexity}.
    \item \cref{ass:subG} (subexponential score) and~\cref{ass:bounded_mixed_derivatives_ell_n} (boundedness of the Hessian) will be checked explicitly.
    \item Finally,~\cref{assumption:posterior contraction} (posterior concentration on the good set) will be checked either via~\cref{thm:concentration_log_concave} or~\cref{proof_lemma: exponential_decay}.
    These theorems introduce additional assumptions:
    \begin{itemize}
        \item \cref{posterior consistency} (consistency of a posterior draw) will simply be assumed, similarly to consistency of the MLE\@.
        \item The theorems assume some form of log-concavity, either globally (\cref{thm:concentration_log_concave}) or locally (\cref{proof_lemma: exponential_decay}).
        These will be checked for the specific models.
        \item \cref{proof_lemma: exponential_decay} additionally assumes a compact parameter space (\cref{assumption_contraction:compact_parameter_space}) and a well-separated mode condition (\cref{assumption:exponential_decay}). Both conditions will simply be assumed when invoking~\cref{proof_lemma: exponential_decay}.
        \item We employ the notation $B(\theta^\star, r_0, r_1)$ and $B(\hat\theta, r_0, r_1)$ using the same symbols for the radii. However, implicitly, the radii defining the neighborhood around the estimator $\hat\theta$ are chosen to be smaller (e.g., scaled by a factor of one-half) than those around the true parameter $\theta^\star$. This adjustment is necessary to ensure the inclusion $B(\hat\theta, r_0, r_1) \subseteq B(\theta^\star, 2r_0, 2r_1)$, which follows from \cref{lemma:uniform convergence regular part} and \cref{cor:non_reg_rate}. Consequently, the values of $r_0$ and $r_1$ selected here effectively correspond to half the values used in \cref{sec:random_laplace_density}. For simplicity, we will not explicitly distinguish between these scaled radii in the subsequent examples.
        \item Finally, both theorems assume some conditions on derivatives of the log-likelihood, but these are implied by \cref{assumption: random convexity of regular part at true parameter}, \cref{ass:bounded_mixed_derivatives_ell_n}, and \cref{lemma: negative gradient at the boundary ln random}.
    \end{itemize}
\end{itemize}

To summarize, we will always invoke the following suite of basic assumptions.

\begin{description}
    \item[Assumption (B)\label{ass:basic}] We assume that \cref{integrand}, \cref{assumption:Gradient bound on the prior} (with sufficiently small $L_\pi$), \cref{posterior consistency}, \cref{assumption:existence of mle} through \cref{ass:consistency}, and \cref{ass:negative gradient l_n} hold.
\end{description}

We will focus on checking \cref{assumption: random convexity of regular part at true parameter} (strong concavity of the empirical Hessian), \cref{ass:subG} (subexponential score), \cref{ass:bounded_mixed_derivatives_ell_n} (boundedness of the Hessian), and the conditions for posterior concentration.

\begin{remark}[Choice of $r_0$, $r_1$]
    \cref{thm:concentration_log_concave} and \cref{theorem:random_poincare} impose the following conditions on $r_0$ and $r_1$:
    \begin{align*}
        r_1 \le \frac{c_{S_0} c_{S_1}}{4s_2^2}\,, \qquad r_0 \le c_0\,, \qquad r_0 + \sqrt{d_1}\,r_1 \le \frac{c_{S_1}}{3s_2}\,.
    \end{align*}
    The chain of reasoning is summarized as follows: first, we obtain bounds on $c_{S_0}$ and $s_2$ (which hold for $n$ sufficiently large).
    Then, we choose $r_0$, $r_1$ to be the largest possible values satisfying the above constraints.
    Then, we assume that \cref{ass:consistency} holds for these choices of $r_0$, $r_1$, and that \cref{posterior consistency} holds for $r_0' \asymp r_0/\log(1/\varepsilon)$.
    The logic is not circular since the derivation of bounds on $c_{S_0}$ and $s_2$ do not depend on \cref{ass:consistency}.
\end{remark}

\subsection{Generalized linear models}\label{section:glm}

We consider Generalized Linear Models (GLMs) in the following setting. We observe feature-label pairs $(X_i, Y_i)$ for $i = 1, \dotsc, n$, where the feature $X_i \in \R^{d}$ is a $d$-dimensional vector and the label $Y_i \in \R$. Given parameters $\theta \in \R^{d}$, the model for the distribution of $Y_i$ given $X_i$ is
\begin{equation}
Y_i \mid X_i \sim p(y \mid X_i^\T \theta)\,d \lambda(y)\,,\label{model:glm}
\end{equation}
for some reference measure $\lambda$. Let $\{p(\cdot \mid \eta) : \eta \in \Xi \subseteq \R\}$ be an exponential family:
\begin{equation} 
    p(y \mid \eta) = \exp{ b(\eta)\, y - c(\eta)}\,,  \label{model:glm_formula}
\end{equation}
where $c(\eta) = \log \int e^{b(\eta)\,y} \,d\lambda(y)$ is the log-partition function. Based on the model \eqref{model:glm_formula}, we have the normalized log-likelihood $$\ell_n(\theta) = \frac{1}{n}\sum_{i=1}^{n} \bigl(Y_i\, b(X_i^\T\theta) - c(X_i^\T\theta)\bigr)\,.$$ 

The derivatives of the log-likelihood are
\begin{align} 
    \grad \ell_n(\theta) &= \frac{1}{n}\sum_{i=1}^{n} \bigl(Y_i\, b^\prime(X_i^T\theta) - c'(X_i^\T\theta)\bigr)\,X_i\,, \label{eq:glm_gradient} \\
    \grad^2 \ell_n(\theta) &= \frac{1}{n}\sum_{i=1}^{n} \bigl(Y_i\, b^{\prime\prime}(X_i^\T\theta)-  c''(X_i^\T\theta)\bigr)\,X_i X_i^\T\,. \label{eq:glm_hessian} 
\end{align}

\begin{remark}
    The two examples we consider below possess concave log-likelihoods, so that guarantees for sampling from the posterior follow from standard log-concave theory~\cite{Chewi25Book}.
    We consider them in order to illustrate how to check the assumptions of our general theory for concrete examples.
    We note, however, that our framework only requires \emph{local} log-concavity, and therefore could be applied to non-convex GLMs as well.
    We consider a non-log-concave example in~\cref{section:gaussian_mixture_model}.
\end{remark}

\subsubsection{Logistic regression with random design}\label{section:logistic_regression}

We observe feature-label pairs $(X_i, Y_i)$ for $i = 1, \dotsc, n$, where $X_i \in \R^{d}$ is a feature vector and $Y_i \in \{0, 1\}$ is the label of $X_i$. Given parameter $\theta \in \Theta \deq [0,\infty)^{d}$, the logistic regression model for the distribution of $Y_i$ given $X_i$ is
\begin{equation}
Y_i \mid X_i \sim \mathsf{Bernoulli}(c^{\prime}(X_i^\T\theta))\,, \qquad i = 1, \dotsc, n\,,\label{model:logistic_regression}
\end{equation}
where the log-partition function is given by $c(\eta) = \log(1 + e^{\eta})$ and its derivative is the logistic function $c^{\prime}(\eta) = e^\eta/(1 + e^{\eta})$. 

This is a special case of the GLM in \eqref{model:glm_formula} with $b(\eta) = \eta$ and $c(\eta) = \log(1 + e^{\eta})$. 
The gradient and Hessian are given by
\begin{align} 
    \grad \ell_n(\theta) &= \frac{1}{n}\sum_{i=1}^{n} (Y_i - c^{\prime}(X_i^T\theta))\,X_i\,, \label{eq:logistic_regression_gradient} \\
    \grad^2 \ell_n(\theta) &= -\frac{1}{n}\sum_{i=1}^{n}  c^{\prime\prime}(X_i^\T\theta)\,X_i X_i^\T\,. \label{eq:logistic_regression_hessian}
\end{align}
The Hessian of the negative log-likelihood is positive semi-definite as $c$ is convex on $\eta > 0$. Moreover, the Hessian of the log-likelihood does not depend on the random variable $Y_i$. Therefore, $\grad^2 \ell_n(\theta) = \grad^2 \ell^{\star}(\theta)$ and same for higher order derivatives conditioning on the design $X_i$'s.

We now describe our particular setting of interest. We focus on the \textbf{random design} case, $X_i \simiid \mathsf{N}(0, I_d)$ for $i=1,\dotsc,n$.
We denote by $B \deq \max\{e, \norm{\theta^\star}_2\}$, and we treat the constants $c_0$, $c_{S_1}$, $r_0$, $r_1$, $B$ as dimension-free.

\begin{remark}
    As discussed in \cite{chardon2024finitesampleperformancemaximumlikelihood}, the regime of constant $B$ is an interesting one as it implies that logistic regression with Gaussian design has good statistical properties, e.g., the MLE exists and the excess risk decays at the rate of $O(\frac{d}{n})$
    with high probability when $n \gtrsim Bd$.
\end{remark}

The following theorem checks the remaining assumptions of \cref{theorem:random_poincare}.

\begin{theorem}\label{check:assumptions_lr}
    Adopt~\ref{ass:basic} for the logistic regression model with Gaussian design in \eqref{model:logistic_regression}.
    Then, the following assertions hold with constants depending on $B$ and with probability at least $1-O(\eta)$, provided $n\gg d +\log(1/\eta)$.
    \begin{itemize}
        \item \cref{assumption: random convexity of regular part at true parameter} holds with $c_{S_0} \gtrsim 1$~\cite[Theorem 6]{chardon2024finitesampleperformancemaximumlikelihood}.
        \item \cref{ass:subG} holds with $\sigma \lesssim 1$.
        \item \cref{ass:bounded_mixed_derivatives_ell_n} holds with $s_2 \lesssim 1$.
    \end{itemize}
\end{theorem}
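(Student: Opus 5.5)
The three assertions of \cref{check:assumptions_lr} can be checked separately. Throughout we use the standard Gaussian-design facts that, with probability $1-\eta$ and for $n \gg d+\log(1/\eta)$, $\frac1n\sum_i X_iX_i^\T \preceq 2I$ and $\max_i \|X_i\|_2 \lesssim \sqrt{d+\log(n/\eta)}$.

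\textbf{Hessian bound (\cref{ass:bounded_mixed_derivatives_ell_n}).} We have $0\le c''\le 1/4$. Hence, by \eqref{eq:logistic_regression_hessian}, for every $\theta$,
\[
\|\nabla^2\ell_n(\theta)\|_{\rm op}\le \tfrac14\,\Bigl\|\tfrac1n\textstyle\sum_i X_iX_i^\T\Bigr\|_{\rm op}.
\]
Standard covariance concentration for Gaussian vectors (e.g.\ via an $\varepsilon$-net and Bernstein for $\langle v,X_i\rangle^2$, which is subexponential) gives $\|\frac1n\sum_i X_iX_i^\T - I\|_{\rm op}\lesssim \sqrt{(d+\log(1/\eta))/n}+(d+\log(1/\eta))/n\le 1$ when $n\gg d+\log(1/\eta)$. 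This yields $s_2\le 1/2$, uniformly in $\theta$, not just on $B(\theta^\star,r_0,r_1)$.

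\textbf{Subexponential score (\cref{ass:subG}).} The one-sample score is $(Y_1-c'(X_1^\T\theta))X_1$. For a unit vector $v$, $|\langle v,(Y_1-c'(X_1^\T\theta))X_1\rangle|\le |\langle v,X_1\rangle|$ because $|Y_1-c'|\le 1$. Since $\langle v,X_1\rangle\sim\sN(0,1)$ is subgaussian, the product is subgaussian with $\psi_2$-norm $\lesssim 1$. Centering and passing from $\psi_2$ to $\psi_1$ only costs absolute constants, so $\sigma\lesssim 1$ for every $\theta$. No dependence on $B$ appears here.

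\textbf{Strong concavity (\cref{assumption: random convexity of regular part at true parameter}).} This is the main obstacle, since $c''(X_i^\T\theta)$ decays exponentially in $|X_i^\T\theta|$. I would first try to import \cite[Theorem 6]{chardon2024finitesampleperformancemaximumlikelihood}, which lower bounds $\lambda_{\min}$ of the empirical logistic Hessian uniformly over a ball of radius $O(B)$ by a constant depending on $B$, once $n\gtrsim B d$. Since $B(\theta^\star,r_0,r_1)\subseteq B_2(0,\|\theta^\star\|_2+R)$ with $R=O(1)$, the bound transfers. The restriction to the $S_0\times S_0$ block only increases $\lambda_{\min}$ by Cauchy interlacing.

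Should a self-contained route be preferred, I would argue as follows.
\begin{enumerate}
\item Lower bound $c''(t)\ge c''(M)\one_{|t|\le M}$.
\item Show that for unit $v$, $\frac1n\sum_i \one_{|X_i^\T\theta|\le M}\langle v,X_i\rangle^2\gtrsim 1$ uniformly in $v$ and in $\theta$ with $\|\theta\|_2\le B+R$, taking $M\asymp B$. The population version is bounded below by a small-ball computation in the two-dimensional Gaussian $(X^\T\theta,X^\T v)$.
\item Obtain the uniform lower tail by a one-sided (Paley--Zygmund/small-ball) argument combined with a covering of $v$ and $\theta$. Replace the indicator by a Lipschitz surrogate so that the net argument works; this costs $n\gg d+\log(1/\eta)$.
\end{enumerate}
The result is $c_{S_0}\gtrsim c''(M)\asymp e^{-O(B)}$, a constant depending only on $B$.

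Combining the three events by a union bound gives probability $1-O(\eta)$.
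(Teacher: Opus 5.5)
Your proposal is correct and follows the paper's proof essentially step for step. The paper likewise imports \cite[Theorem 6]{chardon2024finitesampleperformancemaximumlikelihood} for the strong concavity, bounds the score by $|\langle v, X_1\rangle|$ using $|Y_1 - c'(X_1^\T\theta)|\le 1$, and dominates $-\nabla^2\ell_n(\theta)$ by $\frac1n\sum_i X_iX_i^\T$ before applying Gaussian covariance concentration under $n\gg d+\log(1/\eta)$. Your optional self-contained small-ball argument for the Hessian lower bound is an addition the paper does not attempt and is not needed.
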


\begin{corollary}\label{corollary:logistic_regression_main_theorem}
    Adopt~\ref{ass:basic}.
    There are constants $\bar c_0,\dotsc,\bar c_3$ depending on $c_0$, $c_{S_1}$, and $B$ such that
    with probability at least $1-O(\eta)$, 
    if
    \begin{align*}
        \delta_0 = \bar c_0 \log\frac{1}{\varepsilon}\,, \qquad \delta_1 = \bar c_1\log\frac{d_1}{\varepsilon}\,, \qquad d_1 \le d_0\,, \qquad
        n \ge \bar c_2 \,\Bigl[d_0d_1 \log^2\bigl(\frac{d}{\varepsilon}\bigr) + \log\frac{1}{\eta}\Bigr]\,,
    \end{align*}
    then the posterior distribution $\mu$ in \eqref{eq:random_laplace_density} conditioned on the good set $\hat\Theta_\delta$ satisfies a Poincar\'e inequality with Poincar\'e constant 
    at most
    \begin{align*}
        \CPI(\mugood) \le \frac{\bar c_3}{n}\,,
    \end{align*}
    and $\mu(\Theta\setminus \hat\Theta_\delta) \le \varepsilon$.
 \end{corollary}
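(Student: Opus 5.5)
The plan is to combine \cref{check:assumptions_lr}, \cref{theorem:random_poincare}, and \cref{thm:concentration_log_concave}, and then track how each constant scales with $n$, $d_0$ and $d_1$. The steps are as follows.

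First, invoke \cref{check:assumptions_lr}. With probability $1-O(\eta)$, and once $n\gg d+\log(1/\eta)$, it gives $c_{S_0}\gtrsim 1$, $\sigma\lesssim 1$ and $s_2\lesssim 1$, with constants depending on $B$. Together with \ref{ass:basic}, every hypothesis of \cref{theorem:random_poincare} is then in force.

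Next, fix the radii following the remark on the choice of $r_0,r_1$. Take $r_0\asymp 1$, no larger than $c_0$ or $c_{S_1}/(6s_2)$. Take $r_1\asymp 1/\sqrt{d_1}$, so that $R=r_0+\sqrt{d_1}\,r_1\le c_{S_1}/(3s_2)$ and $r_1\le c_{S_0}c_{S_1}/(4s_2^2)$. The sample-size requirement of \cref{theorem:random_poincare} is
\[
n\gg \phi(\cdot)\,(d+\log(1/\eta)) + (\delta_0/c_0)^2 d_0 + \delta_1/r_1 .
\]
With $\delta_0=\bar c_0\log(1/\varepsilon)$ and $\delta_1=\bar c_1\log(d_1/\varepsilon)$, its terms are $d+\log(1/\eta)$, $d_0\log^2(1/\varepsilon)$ and $\sqrt{d_1}\log(d_1/\varepsilon)$. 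Since $d_1\le d_0$ gives $d\le 2d_0\le 2d_0d_1$, all three are dominated by $\bar c_2[d_0d_1\log^2(d/\varepsilon)+\log(1/\eta)]$.

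Then bound the Poincar\'e constant produced by \cref{theorem:random_poincare}. The prefactor is $\frac{1}{nc_{S_0}}\vee\frac{16e^{\osc\log\pi_j}}{n^2c_{S_1}^2}$. Because $\log\pi_j$ is $L_\pi$-Lipschitz, $\osc_{[0,\delta_1/n]}\log\pi_j\le L_\pi\delta_1/n=O(1)$ for the admissible small $L_\pi$ and large $n$, so the prefactor is $O(1/n)$. The exponent is $2s_2\bigl(\delta_0\delta_1\sqrt{d_0d_1/n}+\delta_1^2d_1/n\bigr)$. Using $\delta_0\delta_1\lesssim\log^2(d/\varepsilon)$, this is
\[
O\Bigl(\sqrt{d_0d_1\log^4(d/\varepsilon)/n}\Bigr)+O\bigl(d_1\log^2/n\bigr),
\]
and both terms are $O(1)$ under the stated lower bound on $n$. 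Hence $\CPI(\mugood)\le \bar c_3/n$.

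Finally, concentration comes from \cref{thm:concentration_log_concave}. The posterior is log-concave, since $\ell_n$ is concave by \eqref{eq:logistic_regression_hessian} and the prior is log-concave by \cref{integrand}. The local curvature, gradient and Hessian hypotheses around $\hat\theta$ transfer from the neighborhood of $\theta^\star$. This uses \cref{lemma: negative gradient at the boundary ln random} (giving $C_{S_1}=c_{S_1}/2$), \cref{assumption: random convexity of regular part at true parameter}, and \cref{ass:bounded_mixed_derivatives_ell_n}, applied on the halved radii so that $B(\hat\theta,r_0,r_1)\subseteq B(\theta^\star,2r_0,2r_1)$, which holds by \cref{lemma:uniform convergence regular part} and \cref{cor:non_reg_rate}. \cref{posterior consistency} and the smallness of $L_\pi$ are part of \ref{ass:basic}. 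With $r_1\asymp 1/\sqrt{d_1}$, the remark after \cref{thm:concentration_log_concave} says that condition \eqref{eq:log_concave_concentration_n} still suffices, so $\mu(\Theta\setminus\hat\Theta_\delta)\le\varepsilon$.

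I expect the main obstacle to be this last step. One must check that the choice $r_1\asymp 1/\sqrt{d_1}$, forced by the constraint $R\le c_{S_1}/(3s_2)$, does not degrade the requirement of \cref{thm:concentration_log_concave} beyond $d_0d_1\log^2(d/\varepsilon)$. This means going through the explicit dependence on $r_1$ in the full bound of \cref{appendix:proof_lemma: log concave}. One must also make sure that the events used (the Hessian bounds, the gradient concentration, and the MLE lying in the ball) are intersected consistently, so the final probability is still $1-O(\eta)$.
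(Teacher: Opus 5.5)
Your route is the paper's route: its proof of this corollary simply combines \cref{check:assumptions_lr}, \cref{theorem:random_poincare} and \cref{thm:concentration_log_concave}. Several of your steps are fine: the hypothesis checks, the choice $r_0\asymp 1$, $r_1\asymp 1/\sqrt{d_1}$, the sample-size conditions of \cref{theorem:random_poincare}, and the $O(1/n)$ prefactor.

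The step that fails is the Holley--Stroock exponent. You correctly bound it by $O\bigl(\sqrt{d_0 d_1\log^4(d/\varepsilon)/n}\bigr)$. However, the hypothesis only gives $n\ge \bar c_2\, d_0 d_1\log^2(d/\varepsilon)$, and substituting this yields
\[
2s_2\,\delta_0\delta_1\sqrt{\frac{d_0d_1}{n}} \;\le\; \frac{2s_2\bar c_0\bar c_1}{\sqrt{\bar c_2}}\,\frac{\log(1/\varepsilon)\,\log(d_1/\varepsilon)}{\log(d/\varepsilon)}\,.
\]
At the threshold value of $n$ this is genuinely of order $\log(1/\varepsilon)$; for example, when $\varepsilon\le 1/d$ the last ratio lies between $\tfrac12\log(1/\varepsilon)$ and $\log(1/\varepsilon)$. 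So \cref{theorem:random_poincare} only delivers $\CPI(\mugood)\lesssim \varepsilon^{-c}/n$ with $c\asymp s_2\bar c_0\bar c_1/\sqrt{\bar c_2}>0$.

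The constants $\bar c_0\gtrsim C_{S_0}^{-1/2}$ and $\bar c_1\gtrsim C_{S_1}^{-1}$ are dictated by Terms III and IV of the concentration proof, so you cannot shrink them. Hence this factor cannot be absorbed into a $\bar c_3$ depending only on $c_0$, $c_{S_1}$, $B$. Making the exponent $O(1)$ requires $n\gtrsim \delta_0^2\delta_1^2 d_0d_1\asymp d_0d_1\log^2(1/\varepsilon)\log^2(d_1/\varepsilon)$. That is a $\log^4$ requirement, whereas the stated condition (and the concentration theorem) only supplies $\log^2$.

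The paper's one-line proof passes over exactly the same point. So this is a looseness in how the three results are combined, not a misreading of the ingredients on your part. Still, as written your argument does not prove the stated bound. To close it you can do one of the following:
\begin{itemize}
\item strengthen the condition to $n\ge \bar c_2[d_0d_1\log^2(1/\varepsilon)\log^2(d/\varepsilon)+\log(1/\eta)]$;
\item keep the $\log^2$ condition and settle for $\CPI(\mugood)\le \bar c_3\,\varepsilon^{-c}/n$, with $c$ small once $\bar c_2$ is large;
\item replace the Holley--Stroock step with an argument that does not pay the full oscillation of the cross term, e.g.\ one exploiting the global log-concavity of the logistic posterior.
\end{itemize}

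By contrast, the obstacle you flagged is not one. With $r_1\asymp 1/\sqrt{d_1}$, the $r_1$-dependent terms in the explicit bound at the end of the proof of \cref{thm:concentration_log_concave} become $d_0\sqrt{d_1}$, $d_0^{1/3}d_1^{2/3}$ and $d_1^{3/4}$, each times $\log^2(d/\varepsilon)$. All of these are dominated by $d_0d_1\log^2(d/\varepsilon)$.
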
   
 \begin{proof}
    The result follows from~\cref{thm:concentration_log_concave},~\cref{theorem:random_poincare}, and \cref{check:assumptions_lr}.
 \end{proof}

\subsubsection{Poisson linear model}\label{section:poisson_linear_model}

We consider the Poisson linear model from~\citep{bochkinaBernsteinMisesTheoremNonregular2014}.
 Single photon emission computed tomography (SPECT) is a medical imaging technique in which a radioactively labelled tracer, known to concentrate in the tissue to be imaged, is introduced into the subject. Emitted particles are detected in a device called a gamma camera, forming an array of counts. Tomographic reconstruction is the process of inferring the spatial pattern of concentration of the tracer in the tissue from these counts. The Poisson linear model is given by
\begin{equation}\label{poisson_linear_model}
TY_i \simiid \mathsf{Poisson}(T A_i \theta)\,, \qquad i\in [n]\,.
\end{equation}

Here, $\theta \in \R^d$ is the unknown parameter to be estimated, representing the spatial distribution of the concentration of the tracer and typically discretized into $d$ pixels or voxels. The array $Y_i$ represents the rate of detected photons per time unit. The $n \times d$ matrix $A=(A_1^\T, \cdots, A_n^\T)^\T$ is formed so that each $A_i$ represents the sensitivity of the $i$-th detector to each pixel or voxel in the image. Specifically, $A_{ij}$ is the mean number of photons detected by the $i$-th detector per time unit and per unit concentration of the tracer in the $j$-th pixel or voxel, and it is non-negative. The parameter $T$ is a known positive constant representing the total observation time or exposure time during which the counts are collected. We assume that the model is well-specified, so that the data is drawn from a ground truth parameter $\theta^\star$.

\begin{assumption}\label{assumption:A and theta}
    For $i \in [n]$ and $j \in [d]$, $A_{ij} \geq 0$, $\theta_j^\star \geq 0.$ 
    Moreover, $\norm{A_i}_2 \leq a_2$, for all $i \in [n]$; $A_i\theta^\star \geq c > 0$ for all $i$ such that $Y_i > 0$.  We assume that at least one observation is not $0$, i.e., $Y_i > 0$ for some $i$. 
\end{assumption}

Note that $Y_i > 0$ implies $A_i\theta^\star > 0$, so the condition that $Y_i > 0$ implies $A_i\theta^\star \ge c > 0$ is a natural quantitative assumption.

The model has density  $p_\theta(y_i) = \frac{e^{-T A_i\theta}\,(T A_i\theta)^{Ty_i}}{(Ty_i)!}$, and the log-likelihood is given by $\ell(\theta; y_i) = -T A_i\theta + Ty_i \log{T A_i\theta} - \log((Ty_i)!)$. We consider the parameter space $\Theta = [0, \infty)^d$ and note that by~\cref{assumption:A and theta}, $A_i \theta^\star > 0$, and thus $A_i\theta > 0$ for all $\theta$ sufficiently close to $\theta^\star$.
The gradient of the log-likelihood is $$\grad \ell(\theta; y_i) = -TA_i + \frac{Ty_i A_i}{A_i\theta}\,,$$
and the Hessian is
$$\grad^2\ell(\theta; y_i) = -\frac{Ty_i A_i^\T {A_i}}{( A_i\theta)^2}\,.$$

Below, we assume that $R \le c/(2a_2)$, which simplifies the analysis but could be restrictive. We leave a more refined study for future work.

\begin{theorem}\label{theorem:poisson_check_assumptions}
     Adopt~\ref{ass:basic}, \cref{assumption: l_star convexity of regular part at true parameter}, and \cref{assumption:A and theta} for the Poisson linear model in \eqref{poisson_linear_model}. Then, the following assertions hold with probability $1-O(\eta)$, provided
     \begin{align*}
         n \gg \log\frac{1}{\eta} \vee \phi\Bigl(\frac{Ta_2^2}{cc_{S_0}^\star}\Bigr)\log\frac{d}{\eta} \qquad\text{and}\qquad R \le \frac{c}{2a_2}\,,
     \end{align*}
     where $\phi(x) \deq x \vee x^2$.

    \begin{itemize}
        \item \cref{assumption: random convexity of regular part at true parameter} holds with $c_{S_0} = c^\star_{S_0}/4$.
        \item \cref{ass:subG} holds with $\sigma \lesssim Ta_2$.
        \item \cref{ass:bounded_mixed_derivatives_ell_n} holds with  $s_2\lesssim Ta_2^2/c$.
    \end{itemize}
\end{theorem}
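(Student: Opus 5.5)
The three assertions of \cref{theorem:poisson_check_assumptions} are proved in reverse order: the Hessian bound first, since it feeds into the other two. The key geometric input is the restriction $R \le c/(2a_2)$. For $\theta \in B(\theta^\star,r_0,r_1)$ we have $\norm{\theta-\theta^\star}_2 \le R$, so by Cauchy--Schwarz and $\norm{A_i}_2 \le a_2$,
\begin{align*}
  \abs{A_i\theta - A_i\theta^\star} \le a_2 R \le c/2\,.
\end{align*}
Hence $A_i\theta \ge c/2$ for every $i$ with $Y_i>0$. For indices with $Y_i = 0$, the Hessian summand vanishes and the gradient summand is just $-TA_i$, so these indices cause no trouble.

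\textbf{Hessian bound.} Write
\begin{align*}
  \norm{\nabla^2\ell_n(\theta)}_{\rm op} \le \frac{4T}{c^2}\,\frac1n\sum_i Y_i\,\norm{A_i}_2^2 \le \frac{4Ta_2^2}{c^2}\,\frac1n\sum_i Y_i\,.
\end{align*}
The average $\frac1n\sum_i Y_i$ concentrates around $\frac1n\sum_i A_i\theta^\star$, which is bounded via $\norm{A_i}_2\norm{\theta^\star}_2$. A Poisson (Bernstein-type) tail bound for $T\sum_i Y_i$ gives this with probability $1-\eta$ once $n \gg \log(1/\eta)$. To obtain the stated $s_2 \lesssim Ta_2^2/c$ rather than $Ta_2^2/c^2$, I would instead use
\begin{align*}
  Y_i/(A_i\theta)^2 \le 2\,Y_i/(c\,A_i\theta^\star)\,,
\end{align*}
together with $\E Y_i = A_i\theta^\star$. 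Then $\frac1n\sum_i Y_i\,A_i^\T A_i/(A_i\theta^\star)$ has mean $\frac1n\sum_i A_i^\T A_i$, with operator norm at most $a_2^2$, and I would control its deviation by matrix Bernstein. This last step is where the $\log(d/\eta)$ in the sample size condition comes from.

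\textbf{Subexponential score.} For a unit vector $v$, the centered one-sample score is
\begin{align*}
  \langle v, \nabla\ell(\theta;Y_i) - \E\nabla\ell(\theta;Y_i)\rangle = \frac{T\,(Y_i - A_i\theta^\star)\,A_iv}{A_i\theta}\,.
\end{align*}
Here $\abs{A_iv}/(A_i\theta) \le 2a_2/c$. Moreover, $TY_i$ is Poisson, so $T(Y_i - A_i\theta^\star)$ is a centered Poisson variable. Centered Poisson variables are subexponential, with $\psi_1$ norm bounded by a constant when the mean is bounded and growing only mildly otherwise. This gives $\sigma \lesssim Ta_2$ after absorbing the dependence on $c$ and $A_i\theta^\star$ into the constants. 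I expect the bookkeeping here, namely exactly which constants are absorbed to reach the clean $Ta_2$, to be the fiddliest part. If needed, I would use $\norm{\,\cdot\,}_{\psi_1}$ bounds for Poisson variables with mean at most $Ta_2\norm{\theta^\star}_2$.

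\textbf{Strong concavity.} I would invoke \cref{lemma: l_n convexity}, which requires two inputs.

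\emph{Hölder continuity of the Hessian.} On the region, \cref{assumption:holder_hessian} holds with $\gamma = 1$: differentiating $(A_i\theta)^{-2}$ gives a Lipschitz constant of order $T a_2^3 c^{-3}\,\frac1n\sum_i Y_i$, which is controlled by the same Poisson sum as above.

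\emph{Pointwise Hessian concentration.} At each net point $\theta$, the difference $\nabla^2_{S_0}\ell_n(\theta)-\nabla^2_{S_0}\ell^\star(\theta)$ equals
\begin{align*}
  \frac1n\sum_i \frac{T(Y_i - A_i\theta^\star)\,A_{i,S_0}^\T A_{i,S_0}}{(A_i\theta)^2}\,,
\end{align*}
a sum of independent centered random matrices with subexponential scalar weights. Matrix Bernstein bounds its operator norm by
\begin{align*}
  \lesssim \frac{Ta_2^2}{c}\Bigl(\sqrt{\frac{\log(d/\eta')}{n}} + \frac{\log(d/\eta')}{n}\Bigr)\,.
\end{align*}
I would take a net of mesh $\varepsilon \asymp c_{S_0}^\star/s_3$ and apply a union bound over it. This is the main obstacle: the net of a $d$-dimensional set has size $e^{O(d)}$, which would force $n\gg d$ and not just $\log d$.

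To avoid this, I would exploit the special structure. For $Y_i>0$, the ratio $(A_i\theta^\star)^2/(A_i\theta)^2$ lies in $[4/9, 4]$, so the random matrix at any $\theta$ is sandwiched between constant multiples of its value at $\theta^\star$. It therefore suffices to concentrate the single matrix at $\theta^\star$, whose mean is $-\nabla^2_{S_0}\ell^\star(\theta^\star)$, and the constant factors explain the loss $c_{S_0}=c^\star_{S_0}/4$. Under $n \gg \phi(Ta_2^2/(c\,c^\star_{S_0}))\log(d/\eta)$, this deviation is at most $c^\star_{S_0}/4$, which yields the claim.
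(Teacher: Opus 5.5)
Apart from one fixable slip in the score bound (last paragraph), your argument works. On the strong-concavity claim it takes a genuinely different and sharper route than the paper.

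\textbf{Strong concavity.} The paper invokes \cref{lemma: l_n convexity} with $s_3\lesssim Ta_2^3/c^2$. It then applies matrix Bernstein to $\nabla^2_{S_0}\ell_n(\theta)-\nabla^2_{S_0}\ell^\star(\theta)$ at a \emph{fixed} $\theta$, and uses the resulting bound for the maximum over the net without a union bound. As you observe, that union bound would add a term of order $d\log(Rs_3/c^\star_{S_0})$ to $\log(d/\eta)$, which the stated condition does not cover.

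Your sandwich avoids this. On the region, $A_i\theta\in[\tfrac12A_i\theta^\star,\tfrac32A_i\theta^\star]$ for every $i$ with $Y_i>0$. Moreover, $-\nabla^2_{S_0}\ell_n(\theta)$ is a nonnegatively weighted sum of the fixed PSD matrices $A_{i,S_0}^\T A_{i,S_0}$. Hence $-\nabla^2_{S_0}\ell_n(\theta)\succeq\tfrac49\,\bigl(-\nabla^2_{S_0}\ell_n(\theta^\star)\bigr)$ holds simultaneously for all $\theta$. A single matrix Bernstein bound at $\theta^\star$ then gives $\lambda_{\min}\bigl(-\nabla^2_{S_0}\ell_n(\theta)\bigr)\ge\tfrac49\cdot\tfrac34\,c^\star_{S_0}=c^\star_{S_0}/3\ge c^\star_{S_0}/4$, under exactly $n\gg\phi\bigl(Ta_2^2/(cc^\star_{S_0})\bigr)\log(d/\eta)$.

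With this, the H\"older-continuity computation, the net, and \cref{lemma: l_n convexity} all become unnecessary, so you can drop that preliminary part of your write-up. The paper's route buys only generality: the same lemma is reused for the Gaussian mixture, whose Hessian has no such monotone PSD structure.

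\textbf{Hessian bound.} Here you also deviate from the paper, but needlessly. Start from the domination $-\nabla^2\ell_n(\theta)\preceq\frac{4T}{c}\,\frac1n\sum_iY_iA_i^\T A_i/(A_i\theta^\star)$. The constant is $4$, not $2$, since $(A_i\theta)^2\ge\frac c2\cdot\frac{A_i\theta^\star}2$. Then bound the operator norm by $a_2^2\,\frac1n\sum_iY_i/(A_i\theta^\star)$, a scalar average with mean $1$. A scalar Bernstein bound needs only $n\gg\log(1/\eta)$. This is the paper's argument, and in it the $\log(d/\eta)$ comes from the concavity step, not from this one.

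\textbf{Score bound.} This is the one step that does not go through as written. Bounding $\abs{A_iv}/(A_i\theta)\le2a_2/c$ leaves $\frac{a_2}{c}$ times the $\psi_1$ norm of a centered Poisson variable with mean $TA_i\theta^\star$, which grows like $\sqrt{TA_i\theta^\star}$. Since $A_i\theta^\star$ is only controlled by $a_2\norm{\theta^\star}_2$, which may grow with $d$, this factor cannot be absorbed into constants.

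Keep the denominator instead and use $A_i\theta\ge A_i\theta^\star/2$, which you already derived; the Poisson scale then cancels against $A_i\theta^\star$.
\begin{itemize}
\item With the paper's bound $\norm{TY_i}_{\psi_1}\lesssim TA_i\theta^\star$, this gives exactly $\sigma\lesssim Ta_2$.
\item With the sharper centered-Poisson bound $\lesssim 1+\sqrt{TA_i\theta^\star}$, it gives $a_2\bigl(1/c+\sqrt{T/c}\bigr)$. This is $\lesssim Ta_2$ when $Tc\gtrsim1$, and is free of $\theta^\star$ in any case.
\end{itemize}
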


To simplify the statement of the next result, we treat the constants $c_0$, $c_{S_0}^\star$, $c_{S_1}$, $Ta_2$, and $a_2/c$ as dimension-free. We also assume that $R \le c/(2a_2)$.

\begin{corollary}\label{corollary:poisson_linear_model_main_theorem}
    Adopt~\ref{ass:basic}, \cref{assumption: l_star convexity of regular part at true parameter}, and \cref{assumption:A and theta} for the Poisson linear model in \eqref{poisson_linear_model}.
    There are constants $\bar c_0,\dotsc,\bar c_3$ depending on $c_0$, $c_{S_0}^\star$, $c_{S_1}$, $c$, $Ta_2$, and $a_2/c$ such that
    with probability at least $1-O(\eta)$, 
    if
    \begin{align*}
        R \leq \frac{c}{2a_2}\,, \quad \delta_0 = \bar c_0 \log\frac{1}{\varepsilon}\,, \quad \delta_1 = \bar c_1\log\frac{d_1}{\varepsilon}\,, \quad d_1 \le d_0\,, \quad
        n \ge \bar c_2 \Bigl[d_0d_1 \log^2\bigl(\frac{d}{\varepsilon}\bigr) + \log\frac{d}{\eta}\Bigr]\,,
    \end{align*}
    then the posterior distribution $\mu$ in \eqref{eq:random_laplace_density} conditioned on the good set $\hat\Theta_\delta$ satisfies a Poincar\'e inequality with Poincar\'e constant 
    at most
    \begin{align*}
        \CPI(\mugood) \le \frac{\bar c_3}{n}\,,
    \end{align*}
    and $\mu(\Theta\setminus \hat\Theta_\delta) \le \varepsilon$.
    
 \end{corollary}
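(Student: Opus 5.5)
This corollary mirrors \cref{corollary:logistic_regression_main_theorem}. It should follow by combining \cref{theorem:poisson_check_assumptions}, \cref{theorem:random_poincare}, and \cref{thm:concentration_log_concave}. The plan is to check the hypotheses of each result in turn and then specialize the bounds, using that the listed constants are dimension-free.

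\textbf{Step 1 (model-specific assumptions).} Invoke \cref{theorem:poisson_check_assumptions}. The requirement $n \gg \log(1/\eta) \vee \phi(Ta_2^2/(cc_{S_0}^\star))\log(d/\eta)$ is implied by $n \ge \bar c_2[d_0d_1\log^2(d/\varepsilon) + \log(d/\eta)]$ once $\bar c_2$ is large in terms of $Ta_2$, $a_2/c$, and $c_{S_0}^\star$. With probability $1-O(\eta)$ this yields $c_{S_0} = c_{S_0}^\star/4$, $\sigma \lesssim Ta_2$, and $s_2 \lesssim Ta_2^2/c$, all of constant order. Together with \ref{ass:basic}, this supplies every hypothesis of \cref{theorem:random_poincare}.

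\textbf{Step 2 (choice of radii and the sample size condition of \cref{theorem:random_poincare}).} Choose $r_0, r_1$ as in the remark on the choice of $r_0,r_1$, namely $r_0 \asymp 1$ and $r_1 \asymp 1/\sqrt{d_1}$. This enforces $3R \le c_{S_1}/s_2$ and $r_1 \le c_{S_0}c_{S_1}/(4s_2^2)$, and we also keep $R \le c/(2a_2)$. The remaining conditions in \cref{theorem:random_poincare} are:
\begin{itemize}
    \item $n \gg d + \log(1/\eta)$, which holds because $d = d_0+d_1 \le 2d_0 \le 2d_0d_1$;
    \item $n \gg \delta_0^2 d_0$, which holds since $\delta_0^2 \lesssim \log^2(1/\varepsilon)$;
    \item $n \gg \delta_1/r_1 \asymp \sqrt{d_1}\log(d_1/\varepsilon)$.
\end{itemize}
All three are dominated by $d_0d_1\log^2(d/\varepsilon)$. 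The Poincar\'e bound then has exponent $2s_2(\delta_0\delta_1\sqrt{d_0d_1/n} + \delta_1^2 d_1/n)$, which is $O(1)$ when $n \gtrsim d_0 d_1 \log^2(1/\varepsilon)\log^2(d_1/\varepsilon)$. I expect this fourth-power log product to be absorbed into the $\log^2(d/\varepsilon)$ scaling as done in the logistic case. If it is not, I will enlarge the log factor or fold it into $\bar c_2$, following the convention of \cref{corollary:logistic_regression_main_theorem}. The prefactor is $\tfrac{1}{nc_{S_0}} \vee \tfrac{16 e^{\osc\log\pi_j}}{n^2c_{S_1}^2} \lesssim 1/n$, because $\osc_{[0,\delta_1/n]}\log\pi_j \le L_\pi\delta_1/n = O(1)$. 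This gives $\CPI(\mugood) \le \bar c_3/n$.

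\textbf{Step 3 (concentration, the main obstacle).} Apply \cref{thm:concentration_log_concave}.
\begin{itemize}
    \item Log-concavity of $\mu$ holds because each $\ell(\cdot;y_i)$ is concave (its Hessian is $-Ty_iA_i^\T A_i/(A_i\theta)^2 \preceq 0$) and the prior is log-concave.
    \item Local strong concavity on $B(\hat\theta,r_0,0)$, negativity of $\partial_j\ell_n$ on $B(\hat\theta,0,r_1)$, and the Hessian bound follow from Step 1 and \cref{lemma: negative gradient at the boundary ln random}. To transfer from $\theta^\star$-centered to $\hat\theta$-centered balls, use the halved radii via \cref{lemma:uniform convergence regular part} and \cref{cor:non_reg_rate}.
    \item The conditions on $L_\pi$, $r_0'$, and $d_1 \le d_0$ are supplied by \ref{ass:basic} and the hypotheses.
\end{itemize}
The delicate point is the Hessian bound on the whole ball. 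The Poisson log-likelihood blows up as $A_i\theta \to 0$, and observations with $Y_i = 0$ contribute nothing to the Hessian. So we need $A_i\theta \ge c/2$ for $Y_i>0$ throughout the ball. This follows from $|A_i(\theta-\theta^\star)| \le a_2 R \le c/2$, which is exactly why $R \le c/(2a_2)$ is imposed. With it, condition \eqref{eq:log_concave_concentration_n} is our sample size assumption, giving $\mu(\Theta\setminus\hat\Theta_\delta) \le \varepsilon$. A union bound over the $O(1)$ high-probability events completes the proof.
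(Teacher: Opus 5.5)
Your route is the paper's: its proof is the one-line combination of \cref{theorem:poisson_check_assumptions}, \cref{theorem:random_poincare}, and \cref{thm:concentration_log_concave}. Your hypothesis-checking fills in what the paper leaves implicit: constant-order $c_{S_0}$, $\sigma$, $s_2$; radii $r_0\asymp 1$, $r_1\asymp 1/\sqrt{d_1}$; $A_i\theta\ge c/2$ from $R\le c/(2a_2)$; and halved radii for the $\hat\theta$-centered balls.

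The one step that does not go through as written is your fallback in Step~2. Under $n\ge \bar c_2 d_0d_1\log^2(d/\varepsilon)$, using $\delta_1\le \bar c_1\log(d/\varepsilon)$, the cross term satisfies $2s_2\delta_0\delta_1\sqrt{d_0d_1/n}\le 2s_2\bar c_0\bar c_1\log(1/\varepsilon)/\sqrt{\bar c_2}$. So Holley--Stroock only gives $\CPI(\mugood)\lesssim n^{-1}\varepsilon^{-2s_2\bar c_0\bar c_1/\sqrt{\bar c_2}}$. This cannot be folded into $\bar c_2$ or $\bar c_3$, since those constants may not depend on $\varepsilon$.

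There is also nothing in the logistic case to imitate here. That proof, like the paper's proof of this corollary, is a one-line citation and passes over the same point. To make your argument rigorous, either strengthen the sample-size condition to $n\gtrsim d_0d_1\log^2(1/\varepsilon)\log^2(d/\varepsilon)$, or treat $\varepsilon$ as a fixed constant so that $\bar c_3$ may depend on it.
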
 
 
 \begin{proof}
    The result follows from~\cref{thm:concentration_log_concave},~\cref{theorem:random_poincare}, and \cref{theorem:poisson_check_assumptions}.
 \end{proof}
 

\subsection{Gaussian mixture model}\label{section:gaussian_mixture_model}
The Gaussian mixture model is a widely used model in statistics and machine learning \cite{lindsay1995mixture}. 
We consider the Gaussian mixture model with $k$ components, where $k \geq 2$. The posterior distribution is generally not log-concave and clearly multimodal, and it is therefore a challenging problem to efficiently draw samples. 
We consider the following model:
\begin{equation}
    X_i \sim \sum_{j=1}^k\omega_j\sN(\mu_j, \Sigma_j)\,,  \qquad i = 1, \dotsc, n\,, \label{model:gaussian_mixture_model}
\end{equation}
where $\omega_j \in (0, 1)$ are the mixing proportions satisfying $\sum_{j=1}^k \omega_j = 1$, $\mu_j\in \R^d$ is the mean of the $j$-th component, and $\Sigma_j \in \R^{d \times d}$ is the covariance matrix of the $j$-th component.

In what follows, we assume for simplicity that the mixing proportions $\omega_1,\dotsc,\omega_k$ and the covariances $\Sigma_1,\dotsc,\Sigma_k$ are known, so that the posterior distribution is over the unknown means.
We consider a compact parameter space to ensure the posterior distribution is well-defined. We also assume that the only case in which the ground truth parameter $\theta^\star = (\mu_1^\star,\dotsc,\mu_k^\star)$ lies at the boundary of the parameter space is when some coordinates of the $\mu_j^\star$ are zero.

In this simplified setup, our goal is to show that the landscape of the posterior, locally around $\theta^\star$, becomes benign at a reasonable value of the sample size $n$.
The main purpose of this example is to check our assumptions on a genuinely non-log-concave example, and we do not claim that our quantitative bounds are tight.
We also emphasize that our result only implies efficient sampling given a warm start (i.e., knowledge of the good set), which may not be easy to achieve and is investigated in a line of works on global optimization algorithms for highly non-convex problems (e.g., \cite{doi:10.1073/pnas.2519845123}) that are orthogonal to our work.

\begin{assumption}\label{assumption:gmm_dimension}
    Let $\theta = (\mu_1, \dotsc, \mu_k)$. The parameter space is defined as $\Theta \deq \mathcal C \cap [0,\infty)^{kd}$, where $\mathcal{C}\subset\mathbb{R}^d$ is a compact set with non-empty interior.
Assume that $B(\theta^\star, r_0, r_1) \cap [0,\infty)^{kd} \subset \Theta$.

Let $R_\Theta$ denote the radius of the parameter space $\Theta$, i.e., $R_\Theta \deq \max_{\theta \in \Theta}\norm{\theta}_2$. 
\end{assumption}
\cref{assumption:gmm_dimension} ensures that the regular part of $\theta^\star$ lies in the interior of the compact set $\mathcal{C}$, and the only boundary constraints come from the non-negative orthant.

\begin{assumption}\label{assumption:gmm_covariance}
    For any $j \in [k]$,
    $$0 < \lambda_{\min} \leq \norm{\Sigma_j}_{\rm op} \leq \lambda_{\max} < \infty$$ for some constants $\lambda_{\min}, \lambda_{\max}$.
\end{assumption}

One sufficient condition for posterior concentration in this example is that the population log-likelihood has a unique well-separated maximum.

\begin{assumption}[Well-separated mode]\label{assumption:exponential_decay_l}
    There exists $\zeta^\star > 0$ such that $$\sup_{\theta \in \Theta \setminus B(\theta^\star,r_0/2, r_1/2)} \ell^\star(\theta) - \ell^\star(\theta^\star) \leq - \zeta^\star\,.$$
\end{assumption}

We now describe our particular setting of interest. For simplicity, we suppress the dependence on the parameters $c_0$, $c_{S_0}^\star$, $c_{S_1}^\star$, $r_0$, $r_1$, $k$, $\lambda_{\min}$, $\lambda_{\max}$, $R_\Theta$, and $\zeta^\star$, treating them as dimension-free, although more detailed bounds are available in \cref{app:gmm_pf}.
We are ready to check the assumptions in \cref{sec:assumptions}  and posterior concentration for the Gaussian mixture model.

\begin{theorem}\label{theorem:gaussian_mixture_model_check_assumptions}
    Adopt~\ref{ass:basic}, \cref{assumption: l_star convexity of regular part at true parameter},
     and \cref{assumption:gmm_dimension,assumption:gmm_covariance}. Then, for the Gaussian mixture model in \eqref{model:gaussian_mixture_model}, with probability at least $1-O(\eta)$, provided $n \gg d^2\log^2(1/\eta)$:
    \begin{itemize}
        \item \cref{assumption: random convexity of regular part at true parameter} holds with $c_{S_0} = c_{S_0}^\star/4$.
        \item \cref{ass:subG} holds with $\sigma \lesssim 1$.
        \item \cref{ass:bounded_mixed_derivatives_ell_n} holds with $s_2 \lesssim  1$.
        \item \cref{assumption_contraction:compact_parameter_space} and \cref{assumption:exponential_decay} hold.
    \end{itemize}
\end{theorem}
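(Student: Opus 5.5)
We verify the four assertions in turn by working directly with the mixture log-likelihood. Write $\ell(\theta;X) = \log \sum_{j=1}^k \omega_j \varphi_{\Sigma_j}(X-\mu_j)$, where $\varphi_\Sigma$ is the Gaussian density. Let $w_j(\theta;X)$ be the posterior responsibilities, which are nonnegative and sum to one. Then
\begin{align*}
\nabla_{\mu_j}\ell(\theta;X) = w_j\,\Sigma_j^{-1}(X-\mu_j)\,,
\end{align*}
and the Hessian is
\begin{align*}
\nabla^2\ell = \operatorname{diag}_j\bigl(-w_j\Sigma_j^{-1}\bigr) + \operatorname{Cov}_w\bigl(\Sigma_J^{-1}(X-\mu_J)\,e_J\bigr)\,,
\end{align*}
a block covariance of the per-component scores under $J\sim w$.

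\textbf{Subexponential score and Hessian bound.} Since $0\le w_j\le 1$ and $\|\theta\|_2\le R_\Theta$ on the compact $\Theta$, each score is bounded by $\lambda_{\min}^{-1}(\|X\|_2+R_\Theta)$ in a linear, Lipschitz way. More precisely, $\langle v,\nabla\ell\rangle$ is a bounded-weight combination of the linear forms $\langle v_j,\Sigma_j^{-1}(X-\mu_j)\rangle$. Under the mixture these linear forms are sub-Gaussian with parameter $\lesssim \lambda_{\max}^{1/2}/\lambda_{\min}$ plus a bounded mean. The responsibility-weighted sum of these forms is still sub-Gaussian, hence $\sigma$-subexponential with $\sigma\lesssim 1$; this gives \cref{ass:subG}.

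For \cref{ass:bounded_mixed_derivatives_ell_n}, the Hessian satisfies
\begin{align*}
\|\nabla^2\ell(\theta;X)\|_{\rm op} \lesssim 1+\|X\|_2^2\,,
\end{align*}
which alone yields only $s_2\lesssim d$. We therefore bound the empirical Hessian directly. Its operator norm is at most $\lambda_{\min}^{-1}$ plus $\lambda_{\min}^{-2}$ times
\begin{align*}
\Bigl\|\frac1n\sum_i (X_i-\mu_J)(X_i-\mu_J)^\T\Bigr\|_{\rm op}\,,
\end{align*}
uniformly in $\theta$. Covariance concentration for sub-Gaussian vectors gives $\|\frac1n\sum_i X_iX_i^\T\|_{\rm op}\lesssim 1$ whenever $n\gg d+\log(1/\eta)$, so $s_2\lesssim1$.

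\textbf{Local strong concavity.} We invoke \cref{lemma: l_n convexity}, which requires \cref{assumption:holder_hessian} and uniform Hessian concentration over a net.
\begin{itemize}
\item \emph{H\"older condition.} Differentiate once more: third derivatives are polynomials of degree three in $X$ with bounded coefficients, giving $\gamma=1$ and $s_3\lesssim \frac1n\sum_i(1+\|X_i\|_2^3)$. This quantity is $\lesssim d^{3/2}$ with high probability. The net scale is therefore $\varepsilon\asymp d^{-3/2}$, and the net has cardinality $\exp(O(d\log d))$.
\item \emph{Pointwise concentration.} For fixed $\theta$, $\nabla^2_{S_0}\ell(\theta;X_i)$ is a sum of matrices of the form (bounded) $\times$ (sub-Gaussian outer products). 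A matrix Bernstein or $\epsilon$-net argument with subexponential entries gives a deviation
\begin{align*}
\lesssim \sqrt{(d+t)/n}+(d+t)/n\,,
\end{align*}
up to a log factor. This is complicated by the non-linear $w_j$ multiplying the outer products.
\item \emph{Union bound.} Taking $t\asymp d\log d+\log(1/\eta)$ over the net, the deviation is below $c^\star_{S_0}/4$ once $n\gg d^2\log^2(1/\eta)$, which is where the $d^2$ requirement enters. This yields $c_{S_0}=c^\star_{S_0}/4$.
\end{itemize}

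\textbf{Compactness and well-separated mode.} \cref{assumption_contraction:compact_parameter_space} is inherited from \cref{assumption:gmm_dimension}, together with the inclusion $B(\hat\theta,r_0,r_1)\subseteq B(\theta^\star,2r_0,2r_1)$ noted earlier. For \cref{assumption:exponential_decay} we need a uniform law of large numbers,
\begin{align*}
\sup_{\theta\in\Theta}|\ell_n(\theta)-\ell^\star(\theta)| \le \zeta^\star/4\,.
\end{align*}
The gradient bound gives $\ell(\cdot;X)$ Lipschitz with constant $\lesssim 1+\|X\|_2$, and pointwise Bernstein applies since $|\ell|\lesssim 1+\|X\|_2^2$. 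A net over $\Theta$ at scale $\asymp \zeta^\star/\sqrt d$ then gives the uniform bound once $n\gg d\log d+\log(1/\eta)$. Combined with $\ell_n(\hat\theta)\ge\ell_n(\theta^\star)$, this gives, for $\theta$ outside $B(\hat\theta,r_0,r_1)$,
\begin{align*}
\ell_n(\theta)-\ell_n(\hat\theta)\le -\zeta^\star/2\,,
\end{align*}
that is, $\zeta=\zeta^\star/2$. Local concavity of $\ell_n$ on $B(\hat\theta,r_0,r_1)$ follows because $s_2\lesssim1$ and strong concavity in $S_0$ combine with the negative $S_1$-gradient from \cref{lemma: negative gradient at the boundary ln random} only in the sense required there. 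The main obstacle I expect is the Hessian concentration step, specifically controlling the heavy-tailed responsibility-weighted outer products uniformly over the net.
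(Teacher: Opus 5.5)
Most of your proposal matches the paper in substance: the score bound, the Hessian bound, the H\"older-plus-net route to local strong concavity, and compactness. Your sandwich $-\mathrm{blockdiag}(w_j\Sigma_j^{-1})\preceq\nabla^2\ell\preceq\mathrm{blockdiag}(w_jg_jg_j^\T)$ is cleaner than the paper's block bound. The obstacle you anticipate in the Hessian step goes away by domination: for unit $v$, $|\langle\nabla^2\ell(\theta;X)v,v\rangle|\le\lambda_{\min}^{-1}+\sum_j(v_j^\T g_j)^2$, which has $\psi_1$-norm $O(1)$ for fixed $\theta$.

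\textbf{The gap.} The genuine gap is the uniform law of large numbers for \cref{assumption:exponential_decay}, and this is exactly where the $d^2$ in the hypothesis comes from. You claim that a net at scale $\zeta^\star/\sqrt d$ plus pointwise Bernstein gives $\sup_\theta|\ell_n-\ell^\star|\le\zeta^\star/4$ once $n\gg d\log d+\log(1/\eta)$. That implicitly treats $\ell(\theta;X)$ as having $O(1)$ subexponential norm.
\begin{itemize}
\item The envelope you cite, $|\ell|\lesssim1+\norm{X}_2^2$, only gives $\norm{\ell(\theta;X)-\E\ell(\theta;X)}_{\psi_1}\lesssim d$.
\item This is not an artifact of the envelope. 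If, for example, $\Sigma_1=I_d$ and $\Sigma_2=2I_d$, then $\ell(\theta;X)$ shifts by order $d$ according to which component generated $X$, so $\mathrm{Var}\,\ell(\theta;X)\asymp d^2$.
\item Pointwise Bernstein then gives deviations of order $d\sqrt{t/n}$. A union bound over a net of cardinality $e^{c\,d\log d}$ therefore needs $n\gtrsim d^3\log d$, which $n\gg d^2\log^2(1/\eta)$ does not supply.
\end{itemize}
Relatedly, you attribute the $d^2$ to the Hessian union bound, and that is wrong. With $t\asymp d\log d+\log(1/\eta)$, the deviation $\sqrt{(d+t)/n}+(d+t)/n$ is already below $c_{S_0}^\star/4$ once $n\gg d\log d+\log(1/\eta)$. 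This misattribution is harmless, since that condition is weaker.

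\textbf{The repair.} Use your own comparison $\ell_n(\hat\theta)\ge\ell_n(\theta^\star)$, which also lets you skip the paper's Taylor bound on $\ell^\star(\theta^\star)-\ell^\star(\hat\theta)$. You then only need to control $Z_n(\theta)\deq(\ell_n-\ell^\star)(\theta)-(\ell_n-\ell^\star)(\theta^\star)$, in which the order-$d$ fluctuation cancels.
\begin{itemize}
\item We have $|\ell(\theta;X)-\ell(\theta';X)|\le G(X)\,\norm{\theta-\theta'}_2$ with $G(X)\lesssim1+\norm{X-\bar\mu}_2$ and $\norm{G}_{\psi_2}\lesssim\sqrt d$.
\item So $Z_n$ has sub-Gaussian increments at scale $\sqrt{d/n}\,\norm{\theta-\theta'}_2$ over $\Theta\subset\R^{kd}$.
\item Chaining then gives $\sup_\theta|Z_n(\theta)|\lesssim d/\sqrt n$, up to $\log(1/\eta)$ factors. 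The paper instead uses the bracketing-entropy argument of \cref{lemma:gmm_log_likelihood_uniform_convergence}, obtaining order $d\log(1/\eta)/\sqrt n$.
\item Requiring this to be at most $\zeta^\star/4$ is where $n\gg d^2\log^2(1/\eta)$ is used. A single-scale net also works but costs an extra $\log d$.
\end{itemize}

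Finally, drop your closing sentence on concavity of $\ell_n$ over $B(\hat\theta,r_0,r_1)$. Strong concavity in $S_0$, a Hessian bound, and a negative $S_1$-gradient do not imply concavity in the $S_1$ directions, and that property is not part of this statement anyway.
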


\begin{corollary}\label{corollary:gmm_main_theorem}
    Adopt~\ref{ass:basic}, \cref{assumption: l_star convexity of regular part at true parameter}, \cref{assumption:gmm_dimension}, \cref{assumption:gmm_covariance} and \cref{assumption:exponential_decay_l} for the Gaussian mixture model in \eqref{model:gaussian_mixture_model}.
    There are constants $\bar c_0, \bar c_1, \bar c_2, \bar c_3$ depending on $c_0$, $c_{S_1}$, $c_{S_0}^\star$, $r_0$, $r_1$, $k$, $\lambda_{\min}$, $\lambda_{\max}$, $R_\Theta$, and $\zeta^\star$ such that
    with probability at least $1-O(\eta)$, 
    if
    \begin{align*}
        \delta_0 = \bar c_0 \log\frac{1}{\varepsilon}\,, \qquad \delta_1 = \bar c_1\log\frac{d_1}{\varepsilon}\,, \qquad d_1 \le d_0\,, \qquad
        n \ge \bar c_2 d^2 \,\Bigl[  \log^2\bigl(\frac{d}{\varepsilon}\bigr) + \log^2(\frac{1}{\eta})\Bigr]\,,
    \end{align*}
    then the posterior distribution $\mu$ in \eqref{eq:random_laplace_density} conditioned on the good set $\hat\Theta_\delta$ satisfies a Poincar\'e inequality with Poincar\'e constant 
    at most
    \begin{align*}
        \CPI(\mugood) \le \frac{\bar c_3}{n}\,,
    \end{align*}
    and $\mu(\Theta\setminus \hat\Theta_\delta) \le \varepsilon$.
    
 \end{corollary}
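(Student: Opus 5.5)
The plan mirrors the proofs of \cref{corollary:logistic_regression_main_theorem} and \cref{corollary:poisson_linear_model_main_theorem}. The difference is that the posterior is not log-concave, so the concentration step uses \cref{proof_lemma: exponential_decay} (well-separated mode) instead of \cref{thm:concentration_log_concave}.

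\textbf{Poincar\'e part.} I would first apply \cref{theorem:gaussian_mixture_model_check_assumptions}. With probability $1-O(\eta)$ and for $n\gg d^2\log^2(1/\eta)$, it gives
\begin{itemize}
\item \cref{assumption: random convexity of regular part at true parameter} with $c_{S_0}=c_{S_0}^\star/4$;
\item \cref{ass:subG} with $\sigma\lesssim 1$;
\item \cref{ass:bounded_mixed_derivatives_ell_n} with $s_2\lesssim 1$.
\end{itemize}
Together with \ref{ass:basic}, these are all the hypotheses of \cref{theorem:random_poincare}. Choose $r_0,r_1$ as in the remark on the choice of radii, so that $3R\le c_{S_1}/s_2$. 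The sample size condition there, $n\gg \phi(\sigma/(c_{S_0}r_0\wedge c_{S_1}))(d+\log(1/\eta)) + (\delta_0/c_0)^2 d_0+\delta_1/r_1$, is implied by $n\ge \bar c_2 d^2[\log^2(d/\varepsilon)+\log^2(1/\eta)]$ once $\delta_0\asymp\log(1/\varepsilon)$ and $\delta_1\asymp\log(d_1/\varepsilon)$.

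Next I would bound the two factors in the Poincar\'e constant of \cref{theorem:random_poincare}.
\begin{itemize}
\item The exponent is $O(\delta_0\delta_1\sqrt{d_0d_1/n}+\delta_1^2d_1/n)$. Since $d_0d_1\le d^2$ and $n\gtrsim d^2\log^2(d/\varepsilon)$, this is $O(1)$.
\item The prior oscillation satisfies $\osc_{[0,\delta_1/n]}\log\pi_j\le L_\pi\delta_1/n=O(1)$ by \cref{assumption:Gradient bound on the prior}.
\end{itemize}
Hence $\CPI(\mugood)\lesssim 1/n + 1/n^2\lesssim 1/n$.

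\textbf{Concentration part.} I would apply \cref{proof_lemma: exponential_decay} to the random $\ell_n$ centred at $\hat\theta$. Its hypotheses are checked as follows.
\begin{itemize}
\item \cref{assumption_contraction:compact_parameter_space} and the empirical well-separation \cref{assumption:exponential_decay} come from \cref{theorem:gaussian_mixture_model_check_assumptions}, fed by \cref{assumption:exponential_decay_l}.
\item Local strong concavity near $\hat\theta$ and the Hessian bound come from the inclusion $B(\hat\theta,r_0,r_1)\subseteq B(\theta^\star,2r_0,2r_1)$. This inclusion holds by \cref{lemma:uniform convergence regular part} and \cref{cor:non_reg_rate}, with the halved-radii convention described before \ref{ass:basic}.
\item The negative gradient in $S_1$ comes from \cref{lemma: negative gradient at the boundary ln random}.
\item Concavity of $\ell_n$ on the local region also needs control of the $S_1$ block of the Hessian. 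I would take the region small enough that the strong concavity in $S_0$ dominates the cross terms (using $s_2 r_1$ small), or else read this off from concavity of $\ell^\star$ plus Hessian concentration, as in the appendix proof of \cref{theorem:gaussian_mixture_model_check_assumptions}.
\end{itemize}
The theorem then yields $\mu(\Theta\setminus\hat\Theta_\delta)\le\varepsilon$ under $n\ge \bar c\, d\log^2(d/\varepsilon)$, which is weaker than our $d^2$ condition. A union bound over the $O(1)$ high-probability events gives overall probability $1-O(\eta)$.

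\textbf{Main obstacle.} The hard step is transferring \cref{assumption:exponential_decay_l} from $\ell^\star$ (gap $\zeta^\star$ outside $B(\theta^\star,r_0/2,r_1/2)$) to $\ell_n$ around $\hat\theta$. This needs uniform convergence $\sup_\Theta|\ell_n-\ell^\star|\le\zeta^\star/4$ over the compact $\Theta$. I would get it from a covering-net argument combined with the Lipschitz continuity of the GMM log-likelihood on $\Theta$, whose constant grows with $R_\Theta$ and the data norm. This is the step that forces $n\gg d^2\log^2(1/\eta)$.

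That bound is packaged inside \cref{theorem:gaussian_mixture_model_check_assumptions}, so here I would simply invoke it. The remaining work is to track the constant dependencies in $\bar c_0,\dots,\bar c_3$ and to check that the radius constraints are mutually consistent.
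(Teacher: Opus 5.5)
Your route is the paper's: its proof of this corollary is exactly the combination of \cref{theorem:gaussian_mixture_model_check_assumptions}, \cref{theorem:random_poincare} and \cref{proof_lemma: exponential_decay}. Your tracking of the sample-size conditions also agrees with it, including that uniform convergence of $\ell_n$ is what forces the $d^2\log^2(1/\eta)$ term. Two of the justifications you add, however, do not hold as written.

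First, the Holley--Stroock exponent is not $O(1)$ under the stated condition on $n$. Using $\sqrt n\ge\sqrt{\bar c_2}\,d\log(d/\varepsilon)$ and $\sqrt{d_0d_1}\le d/2$,
\begin{align*}
    s_2\,\delta_0\delta_1\sqrt{\frac{d_0d_1}{n}}
    \le \frac{s_2\bar c_0\bar c_1}{2\sqrt{\bar c_2}}\cdot\frac{\log(1/\varepsilon)\,\log(d_1/\varepsilon)}{\log(d/\varepsilon)}\,.
\end{align*}
At the threshold sample size with $d_1\asymp d_0$, this is of order $\log(1/\varepsilon)$ for small $\varepsilon$. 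The single factor $\log^2(d/\varepsilon)$ in $n$ absorbs only one of the two logarithms in $\delta_0\delta_1$. The argument therefore only gives $\CPI(\mugood)\lesssim\varepsilon^{-c}/n$ with $c\propto s_2\bar c_0\bar c_1/\sqrt{\bar c_2}$. To make $\bar c_3$ independent of $\varepsilon$ you need $n\gtrsim d_0d_1\,\delta_0^2\delta_1^2$, i.e.\ an extra $\log^2(1/\varepsilon)$ in the sample-size requirement. The paper's one-line proof passes over the same point, so the issue is inherited, but your stated reason for $O(1)$ is incorrect.

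Second, your first suggestion for the local concavity hypothesis of \cref{proof_lemma: exponential_decay} cannot work. Concavity is a pointwise condition on $\nabla^2\ell_n$. In a direction $v=(0,v_{S_1})$ the quadratic form is $v_{S_1}^\T\nabla^2_{S_1}\ell_n(\theta)\,v_{S_1}$, which neither shrinks with $r_1$ nor is controlled by strong concavity in $S_0$. What is actually needed on the box is the Schur-complement condition $-\nabla^2_{S_1}\ell_n\succeq\nabla^2_{S_1,S_0}\ell_n\,(-\nabla^2_{S_0}\ell_n)^{-1}\nabla^2_{S_0,S_1}\ell_n$.

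Your second suggestion needs a hypothesis that is not in the statement, because \cref{assumption: l_star convexity of regular part at true parameter} constrains only the $S_0$ block. You would have to assume strong concavity of $\ell^\star$ in all coordinates near $\theta^\star$ and run \cref{lemma: l_n convexity} for the full Hessian. The hypothesis is genuinely used: it is what licenses Borell's lemma for $\mu$ restricted to $B(\hat\theta,r_0,r_1)$ in Term III. \cref{theorem:gaussian_mixture_model_check_assumptions} does not supply it, and the paper leaves this implicit as well.

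So either add full local concavity as an assumption, or rework Term III so it uses only the conditional $nc_{S_0}$-strong log-concavity of $\theta_{S_0}$ given $\theta_{S_1}$. Combine this with the Term IV shift argument, which already works on the whole box because $\partial_j\ell_n\le -c_{S_1}/2$ there by \cref{lemma: negative gradient at the boundary ln random}.
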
 
 
 \begin{proof}
    The result follows from~\cref{proof_lemma: exponential_decay},~\cref{theorem:random_poincare}, and \cref{theorem:gaussian_mixture_model_check_assumptions}.
 \end{proof}

\section{Simulation results}\label{sec:simulation}

We have conducted two simulation studies to check how our theory works in practice using the three examples we studied in the previous section. The first simulation is about pre-asymptotic regime; the second simulation shows that our method performs well in asymptotic regime as well. In both simulations, we implement the projected Langevin Monte Carlo algorithm as described in \eqref{eq:projected_langevin} to sample from the posterior distributions.

Our first simulation investigates the high-dimensional, pre-asymptotic regime, setting the dimension $d=200$ and sample size $n=800$. With the sample size being small relative to the dimension, the problem is far from its asymptotic limit. We evaluate the convergence speed of Langevin Monte Carlo (LMC), which is also called the unadjusted Langevin algorithm (ULA), by reporting the effective sample size (ESS). The ESS is a standard measure of sampler efficiency estimating the number of independent draws that would provide the same estimation variance as the autocorrelated samples from the MCMC chain. A higher ESS thus indicates faster convergence and better mixing. All ESS diagnostics are computed using the rank-normalization methodology \citep{10.1214/20-BA1221} as implemented in the \texttt{arviz} package in Python.

The study consists of 20 independent trials for each of the three statistical models. In each trial, $n=800$ samples are generated from the model, and uniform priors are employed. We particularly focus on MCMC mixing performance given a warm start; to simulate this, we initialize the sampler by perturbing the true parameter values with standard Gaussian noise.
For each trial, the LMC sampler is run for 30,000 iterations using a step size from the range $[0.1, 0.5]$ depending on the model. We conservatively discard the first 20,000 burn-in iterations and report the bulk ESS per coordinate and provide a histogram of the bulk ESS for the log-likelihood ratio (LLR) summarized from the last 10,000 iterations from each of the 20 trials.

The second simulation explores the classical asymptotic regime, with dimension $d=10$ and a large sample size of $n=1000$. With the sample size substantially exceeding the dimension, we evaluate the frequentist coverage properties of the resulting $95\%$ credible sets. We expect the empirical coverage to closely match the nominal $95\%$ level, consistent with the well-established theoretical properties of Bayesian methods in such large-sample settings. 

For the logistic regression and Poisson linear models, the posteriors are log-concave, ensuring a unique global mode. We first locate this mode using the L-BFGS-B algorithm to obtain a warm start. From this starting point, we run the LMC for a total of 30,000 iterations. We employ a conservative burn-in period, discarding the initial 20,000 iterations and retaining the final 10,000 samples for the coverage analysis. For the Gaussian mixture model, which exhibits a multi-modal posterior, we first identify a high-density mode using the dual annealing algorithm. This stochastic global optimization method provides a warm start within the posterior contraction region. From this optimized starting point, we run LMC for a total of 30,000 iterations. We again apply a burn-in of 20,000 iterations and retain the final 10,000 samples for the coverage analysis. The step size of LMC ranges from $0.001$ to $0.01$.

\begin{figure}[htbp]
    \centering \includegraphics[width=\textwidth]{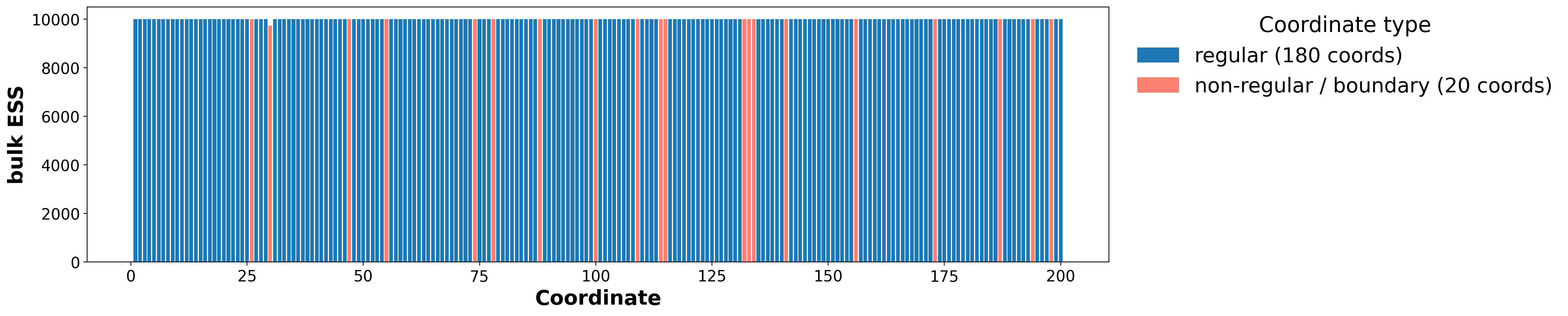}
    
    \caption{Effective sample size for each coordinate in logistic regression model, reported for one out of 20 trials with 10000 MCMC steps and step size 0.5. }
    \label{fig:ess_coord_logistic}
\end{figure}

\begin{figure}[htbp]
    \centering
    \includegraphics[width=\textwidth]{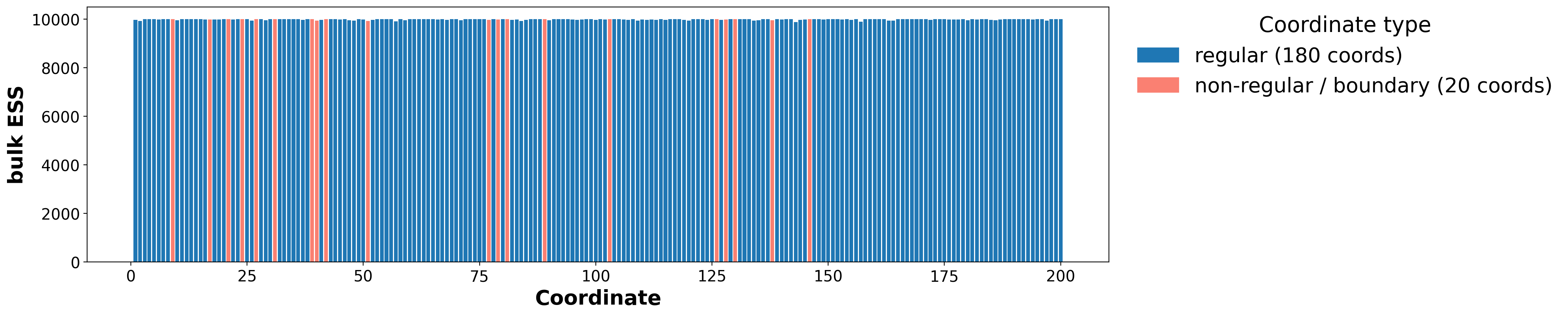}
    \caption{Effective sample size for each coordinate in Poisson linear model, reported for one out of 20 trials with 10000 MCMC steps and step size 0.1. }
    \label{fig:ess_coord_poisson}
\end{figure}

\begin{figure}[htbp]
    \centering \includegraphics[width=\textwidth]{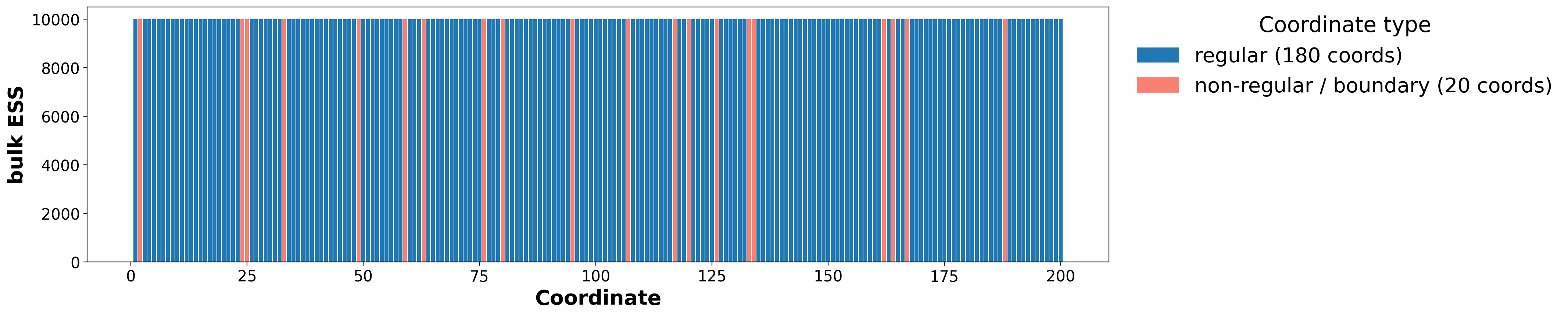}
    \caption{Effective sample size for each coordinate in Gaussian mixture model, reported for one out of 20 trials with 10000 MCMC steps and step size 0.1. }
    \label{fig:ess_coord_gaussian}
\end{figure}

\begin{figure}[htbp]
  \centering
    \begin{subfigure}{0.33\textwidth}
    \includegraphics[width=\linewidth]{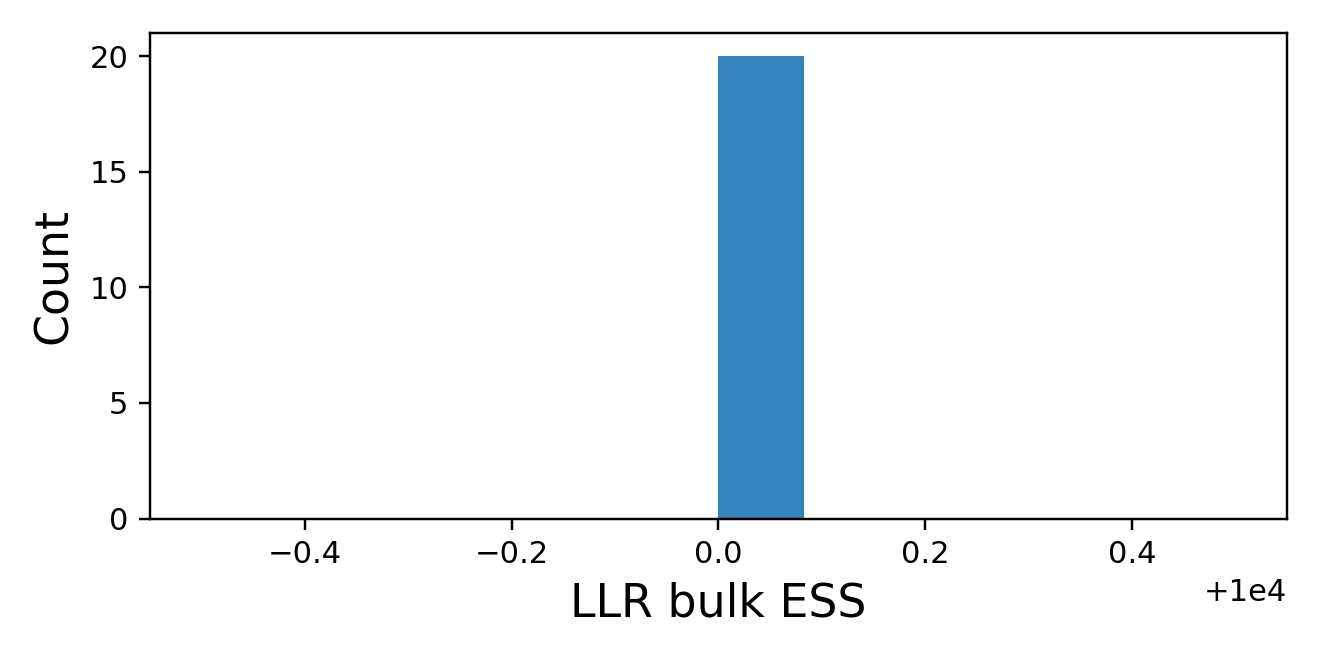}
    \caption{Logistic}\label{fig:ess_llr_logistic}
  \end{subfigure}\hfill
  \begin{subfigure}{0.33\textwidth}
    \includegraphics[width=\linewidth]{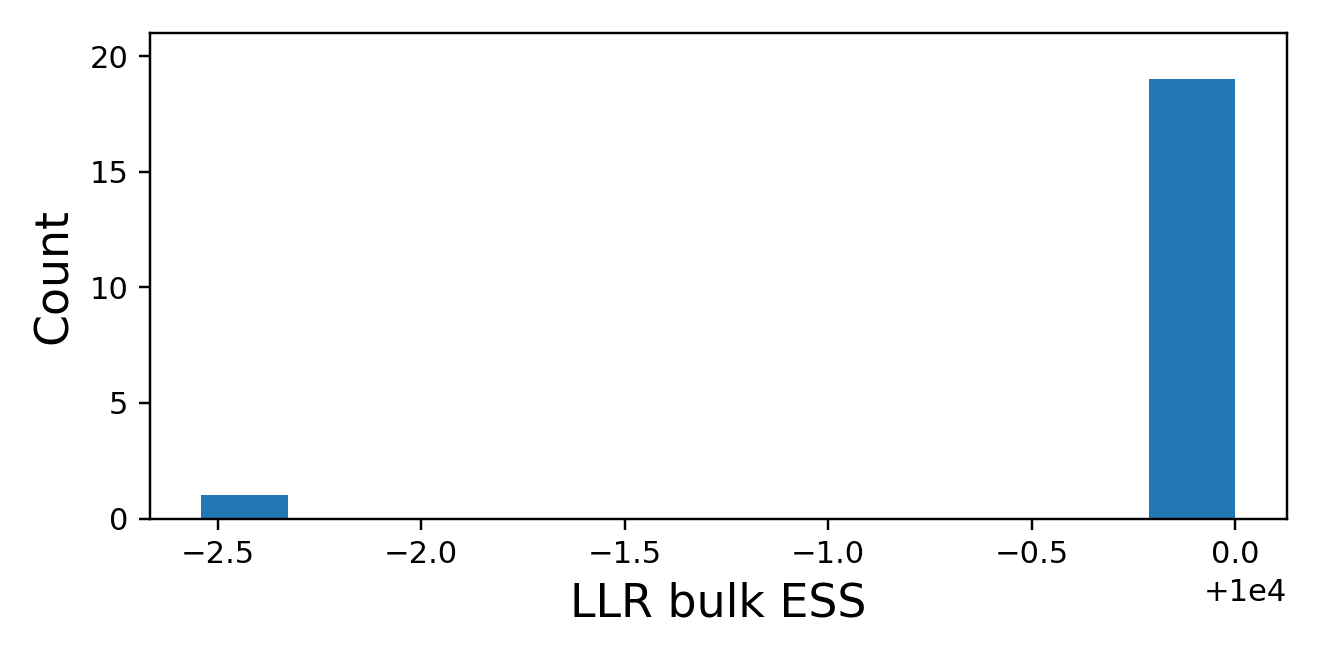}
    \caption{Poisson}\label{fig:ess_llr_poisson}
  \end{subfigure}\hfill
  \begin{subfigure}{0.33\textwidth}
    \includegraphics[width=\linewidth]{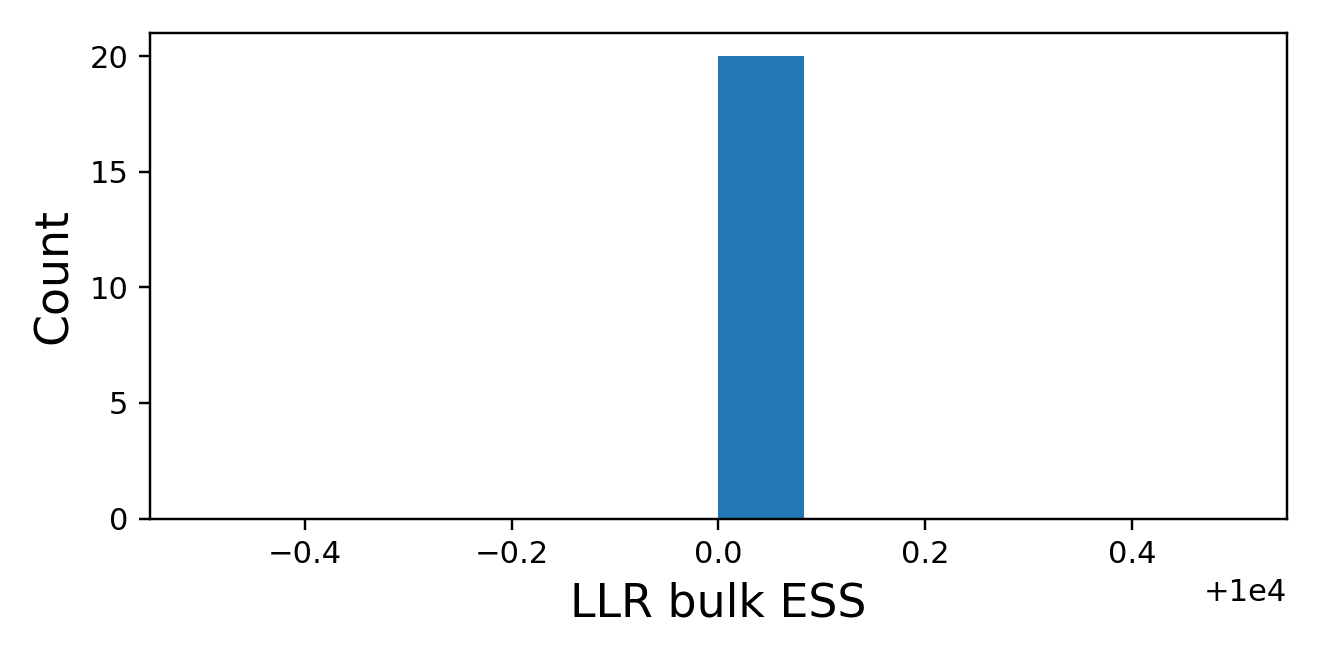}
    \caption{Gaussian mixture}\label{fig:ess_llr_gaussian}
  \end{subfigure}
  \caption{LLR bulk ESS across 20 trials for three models.}
  \label{fig:ess_llr_row}
\end{figure}

\begin{figure}[htbp]
    \centering
    \includegraphics[width=\textwidth]{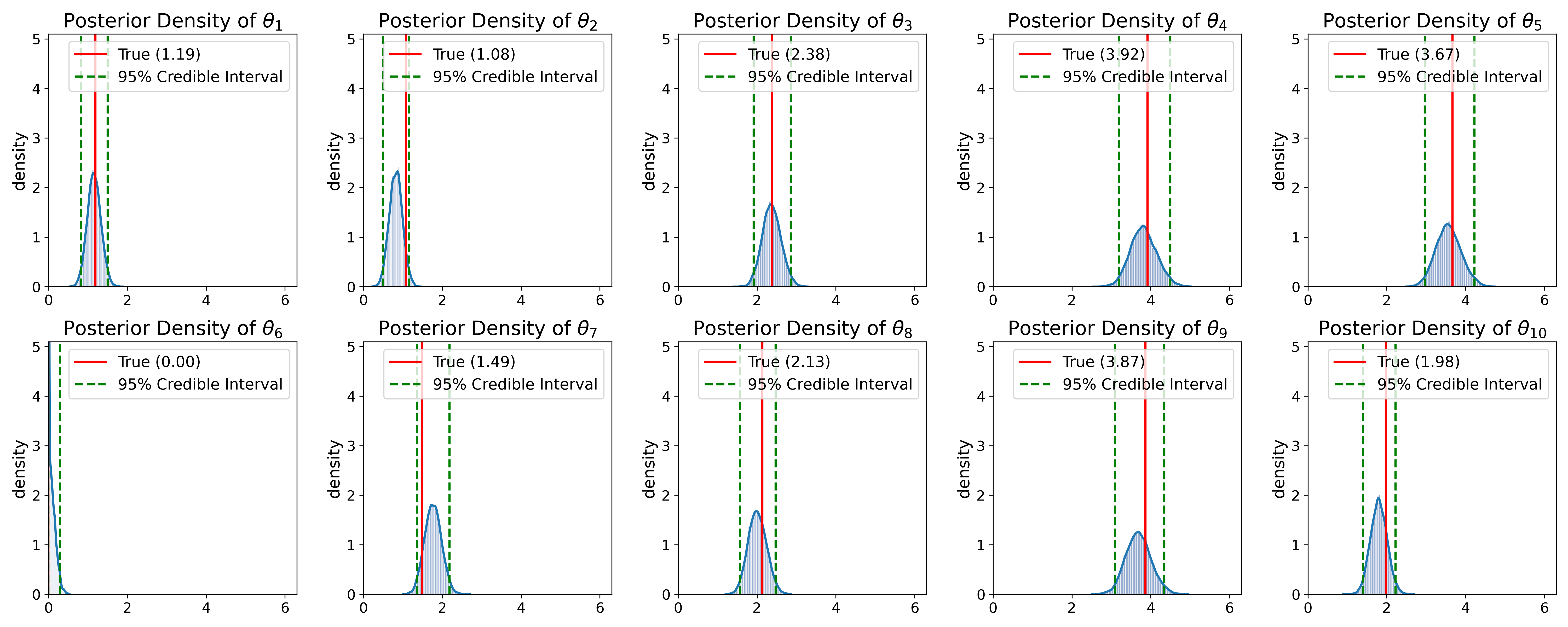}
    \caption{Empirical posterior density of one of the 20 MCMC runs for logistic regression models. The non-regular coordinate $\theta_6$ has a much narrower high-probability region compared to regular coordinates. }
    \label{fig:posterior_lr}
\end{figure}

\begin{figure}[htbp]
    \centering
    \includegraphics[width=.7\textwidth]{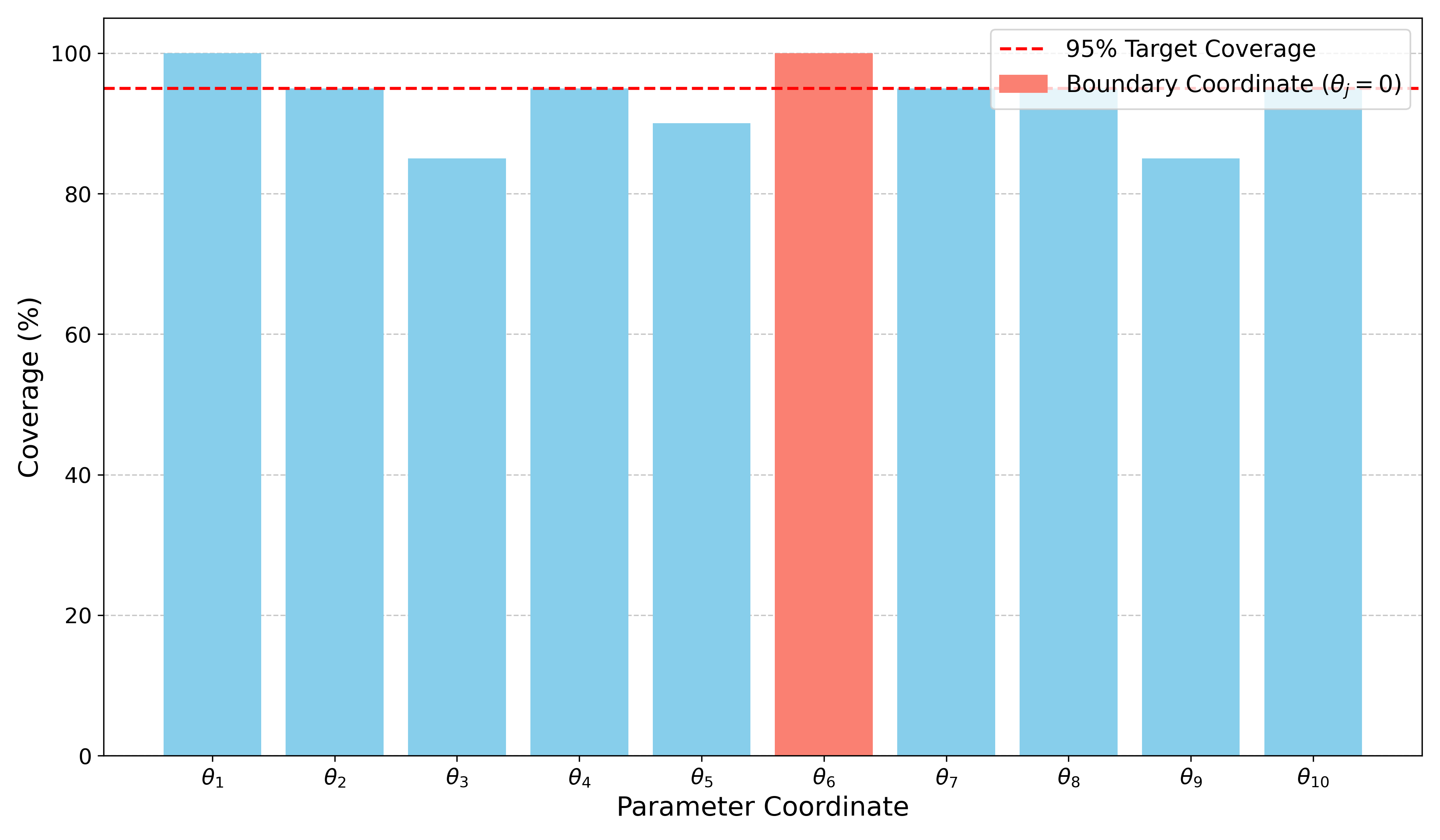}
    \caption{Summary of 95\% credible interval coverage for each parameter of the logistic regression model across 20 trials. The salmon-colored bar indicates the boundary coordinate where the true value is $\theta_6 = 0$.}
    \label{fig:coverage_lr}
\end{figure}

\begin{figure}[htbp]
    \centering
    \includegraphics[width=\textwidth]{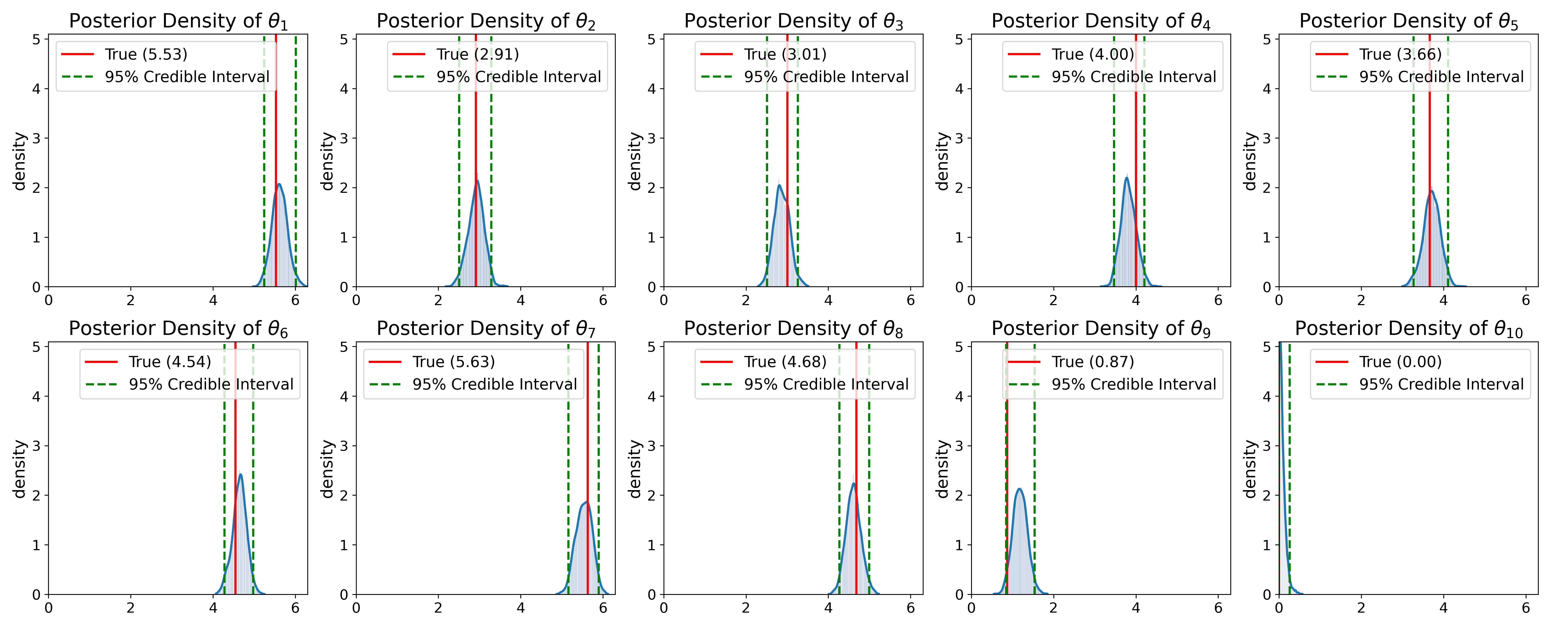}
    \caption{Empirical posterior density of one of the 20 MCMC runs for Poisson linear model. The non-regular coordinate is $\theta_{10}$.}
    \label{fig:posterior_poisson}
\end{figure}  

\begin{figure}[htbp]
    \centering
    \includegraphics[width=.7\textwidth]{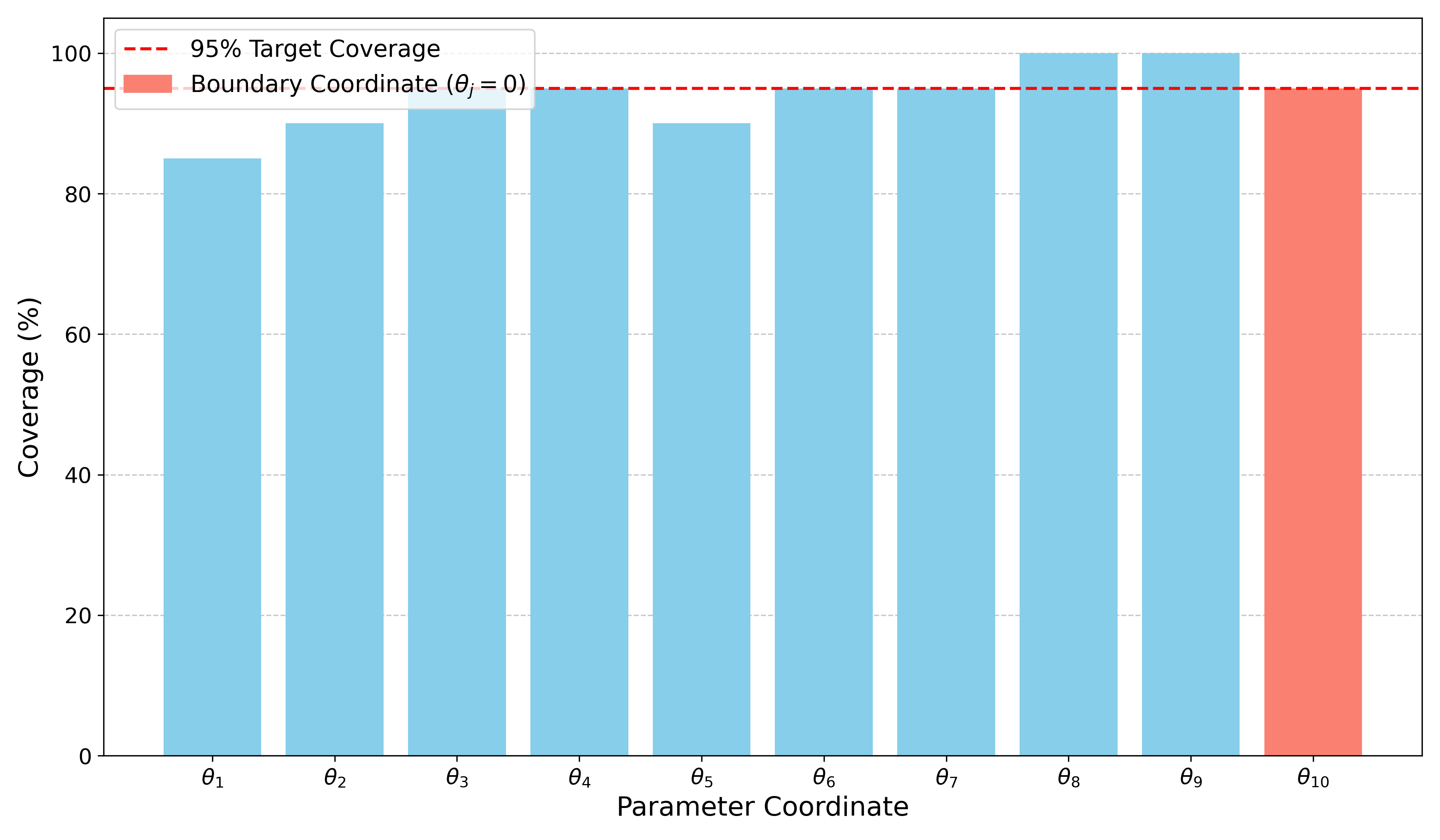}
    \caption{Summary of 95\% credible interval coverage for each parameter of the Poisson linear model across 20 trials. The salmon-colored bar indicates the boundary coordinate where the true value is $\theta_{10} = 0$.}
    \label{fig:coverage_poisson}
\end{figure}

\begin{figure}[htbp]
    \centering
    \includegraphics[width=\textwidth]{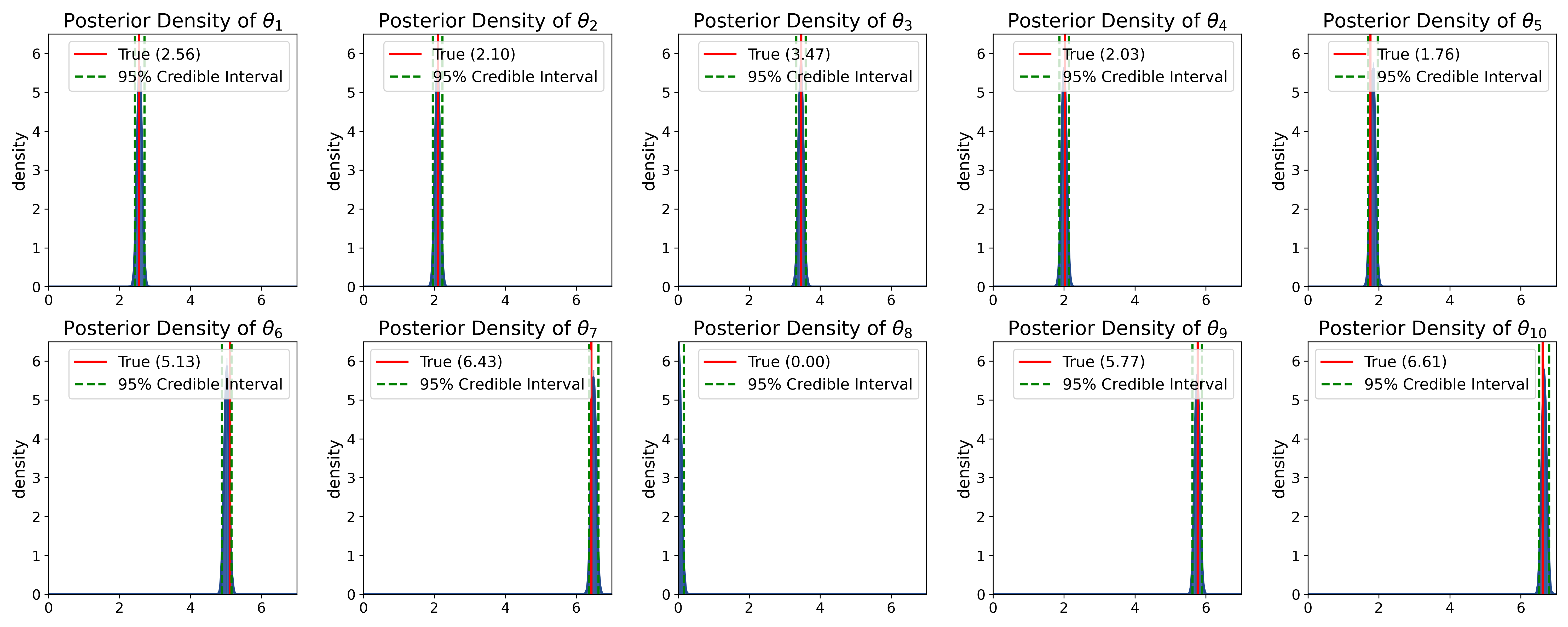}
    \caption{Empirical posterior density of one of the 20 MCMC runs for Gaussian mixture model with two modes with weights 0.7 and 0.3 respectively. The non-regular coordinate is $\theta_8$.}
    \label{fig:posterior_gmm}
\end{figure}  

\begin{figure}[htbp]
    \centering

\includegraphics[width=.7\textwidth]{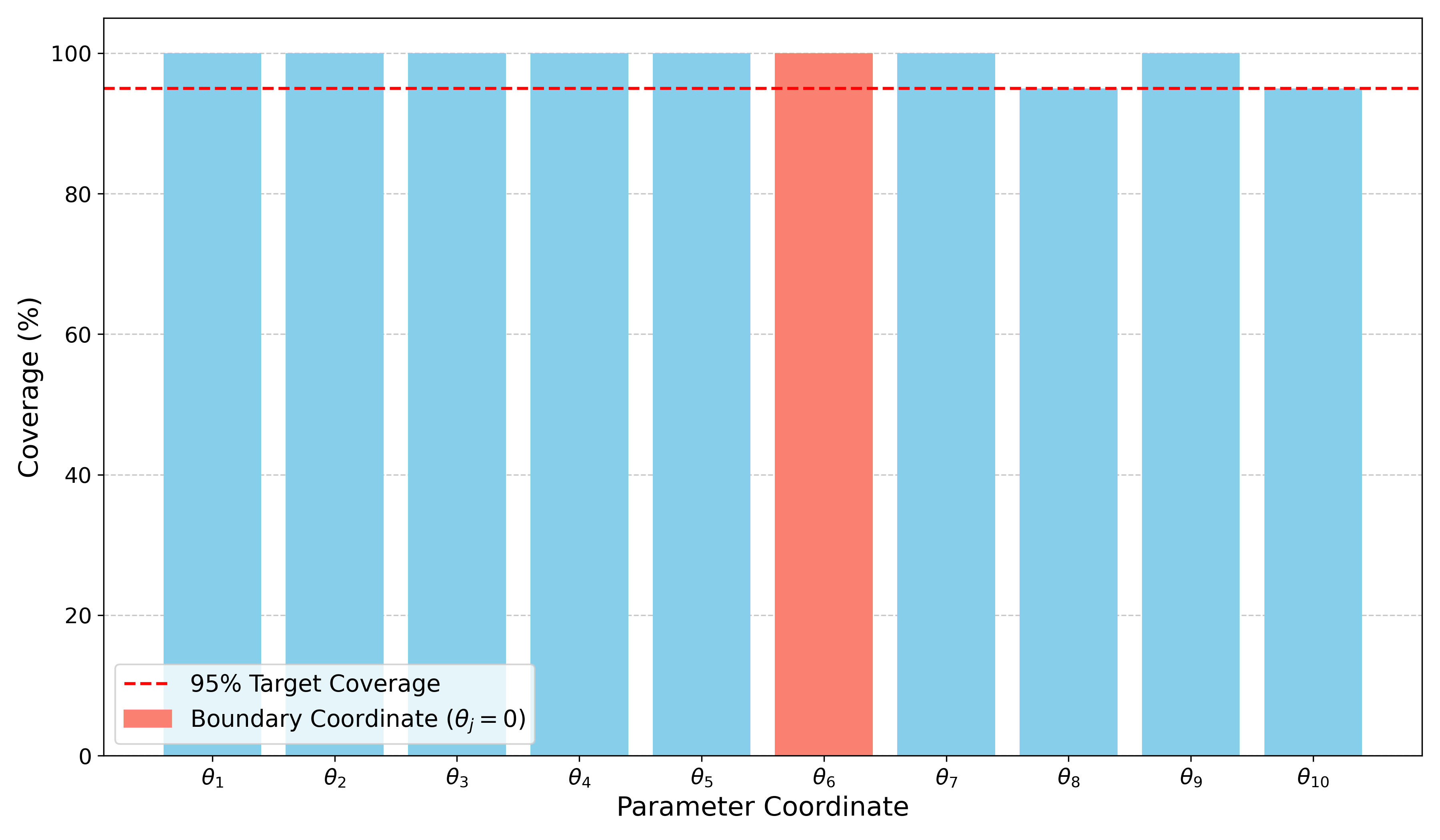}
    \caption{Summary of 95\% credible interval coverage for each parameter of the gaussian mixture model across 20 trials. The salmon-colored bar indicates the boundary coordinate where the true value is $\theta_8 = 0$.}
    \label{fig:coverage_gmm}
\end{figure}

\newpage
\section{Conclusion}\label{sec:conclusion}

In this work, we analyzed the problem of sampling from high-dimensional low-temperature Gibbs distributions on constrained sets, with a specific focus on non-regular models where the mode lies on the boundary of the parameter space. Our primary contribution is to establish a non-asymptotic sampling guarantee by deriving a Poincar\'e inequality for the target distribution restricted to a high-probability ``good set'' with a dimension-free Poincar\'e constant in the pre-asymptotic regime. We also provided a general recipe for verifying concentration of the target distribution on such good sets, which may be of independent interest for analyzing posterior contraction in non-regular models.
We applied this framework to Bayesian inference, demonstrating sampling guarantees for high-dimensional logistic regression, Poisson linear models, and Gaussian mixture models, all in regimes where the sample size $n$ grows with the dimension $d$ with a rate that is much milder than asymptotic theory would suggest. 

Our analysis relies on a local likelihood decomposition, and consequently our sampling guarantees are valid only when the algorithm is initialized with a warm start within the good set surrounding the mode. In non-log-concave settings, identifying this region in high-dimensional non-regular landscapes is non-trivial and remains an active area of research.

Finally, the tools we developed for handling boundary constraints offer a promising foundation for analyzing partially identified models. As noted in the introduction, partially identified models frequently arise in overparametrized neural networks and auction models. These settings often violate standard regularity conditions and have flat regions. We anticipate that our techniques for analyzing spectral gaps of constrained measures can be adapted to provide non-asymptotic sampling guarantees for the distributions arising in these models.

\bibliographystyle{unsrtnat}
\bibliography{refs}

\appendix
\section{Sufficient conditions for concentration on the good set}\label{app:concentration}

In this section, we provide the proofs for the sufficient conditions for concentration on the good set given in~\cref{sec:laplace_sufficient}.

We will repeatedly use the likelihood decomposition in~\cref{thm:likelihood_decomposition}, so we define $\error$ to be the upper bound on $\osc B$ therein:
\begin{align*}
    \error \deq 2s_2 \,\Bigl(\frac{\delta_0 \delta_1 \,(d_0 d_1)^{1/2}}{n^{3/2}} + \frac{\delta_1^2 d_1}{n^2}\Bigr)\,.
\end{align*}

We start by establishing the following growth bound. 

\begin{lemma}[Log-concave measures satisfy linear growth]\label{lem:log_concave_growth}
    Adopt~\cref{integrand} through~\cref{assumption:bounded_mixed_derivatives}.
    Additionally, assume that $\ell$ is concave and that $n \gg d_1/d_0$ (see~\eqref{eq:n_cond_1} below).
    Then, for all $\theta \in \Theta \setminus \hat\Theta_\delta$,
    \begin{align*}
        \ell(\hat\theta) - \ell(\theta)
        &\ge \Bigl(\frac{C_{S_0}\delta_0\sqrt{d_0}}{8\sqrt n} \wedge \frac{\sqrt{C_{S_0} C_{S_1} \delta_1}}{\sqrt{d_1 n}}\Bigr)\,\|\theta_{S_0} - \hat\theta_{S_0}\|_2 + \Bigl(1 - \frac{n\,\error}{C_{S_1} \delta_1} - \frac{1}{d_1}\Bigr)\,\|\theta_{S_1} - \hat\theta_{S_1}\|_{\nabla_{S_1} \ell(\hat\theta)}\,.
    \end{align*}
\end{lemma}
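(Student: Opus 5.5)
We would reduce to the good set by concavity along rays. Here $\|x\|_{\nabla_{S_1}\ell(\hat\theta)} \deq \sum_{j\in S_1} |\partial_j\ell(\hat\theta)|\,|x_j|$. Fix $\theta\in\Theta\setminus\hat\Theta_\delta$ and put $\theta_t \deq \hat\theta + t(\theta-\hat\theta)$. Let $t\in(0,1)$ be the largest value with $\theta_t\in\hat\Theta_\delta$, so $\theta_t$ lies on the relative boundary of the good set. Also $\theta_t\in\Theta$ by convexity of $\Theta$. Since $\ell$ is concave, $s\mapsto \ell(\hat\theta)-\ell(\theta_s)$ is convex and vanishes at $s=0$. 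Hence
\begin{align*}
\ell(\hat\theta)-\ell(\theta) \ge \frac{1}{t}\,\bigl(\ell(\hat\theta)-\ell(\theta_t)\bigr)\,.
\end{align*}
This reduces the claim to a lower bound on the good set, divided by $t$.

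On $\hat\Theta_\delta$ we invoke \cref{thm:likelihood_decomposition}: $\ell = B + f + \sum_j g_j$ with $\osc B\le \error$. Since $\hat\theta_{S_0}$ is an interior maximizer, $\nabla f(\hat\theta_{S_0})=\nabla_{S_0}\ell(\hat\theta)=0$. Strong concavity of $f$ on the ball then gives $f(\hat\theta_{S_0})-f(\theta_{t,S_0})\ge \frac{C_{S_0}}{2}\,t^2 a^2$, where $a\deq\|\theta_{S_0}-\hat\theta_{S_0}\|_2$. The linear parts give exactly $t\,\|\theta_{S_1}-\hat\theta_{S_1}\|_{\nabla_{S_1}\ell(\hat\theta)}$, using $\hat\theta_{S_1}=0$ and $\theta_{S_1}\ge 0$. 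Dividing by $t$,
\begin{align*}
\ell(\hat\theta)-\ell(\theta)\ge \frac{C_{S_0}}{2}\,t a^2 + \|\theta_{S_1}-\hat\theta_{S_1}\|_{\nabla_{S_1}\ell(\hat\theta)} - \frac{\error}{t}\,.
\end{align*}
It remains to absorb $\error/t$ and convert $t a^2$ into a term linear in $a$. We split into two cases according to which constraint is active at $\theta_t$.

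\emph{Case 1: the $S_0$-ball constraint is active}, i.e.\ $ta=\delta_0\sqrt{d_0/n}$. Then $\frac{C_{S_0}}{2}ta^2 = \frac{C_{S_0}\delta_0\sqrt{d_0}}{2\sqrt n}\,a$. Also $\error/t = \error\, a\sqrt n/(\delta_0\sqrt{d_0})$. This is at most half of the quadratic term provided $\error \le \frac{C_{S_0}\delta_0^2 d_0}{4n}$. Plugging in the definition of $\error$, this is the condition $n\gg d_1/d_0$ of \eqref{eq:n_cond_1}, up to the $\delta$'s and constants. The $S_1$ term keeps coefficient $1$, which dominates the claimed coefficient.

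\emph{Case 2: an $S_1$-box constraint is active}, i.e.\ $t\theta_j=\delta_1/n$ for some $j\in S_1$. Then
\begin{align*}
t\,\|\theta_{S_1}\|_{\nabla_{S_1}\ell(\hat\theta)}\ge C_{S_1}\delta_1/n\,,
\end{align*}
so $\error/t\le \frac{n\,\error}{C_{S_1}\delta_1}\|\theta_{S_1}\|_{\nabla_{S_1}\ell(\hat\theta)}$. This produces the $-\frac{n\,\error}{C_{S_1}\delta_1}$ in the coefficient. We sacrifice a further $\frac{1}{d_1}$ fraction of the $S_1$ term, again using $\|\theta_{S_1}\|_{\nabla}\ge C_{S_1}\delta_1/(nt)$. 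AM--GM in $t$ then gives
\begin{align*}
\frac{C_{S_0}}{2}ta^2 + \frac{C_{S_1}\delta_1}{d_1 n t} \ge \sqrt{\frac{2C_{S_0}C_{S_1}\delta_1}{d_1 n}}\;a\,,
\end{align*}
which yields the second entry of the minimum. Taking the worse coefficient over the two cases gives the stated bound; the factor $1/8$ leaves slack for constants.

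The main obstacle is Case 1: $\error/t$ must be absorbed without any help from the $S_1$ term, since there $\theta_{S_1}$ may be tiny. This forces the quantitative condition relating $\error$ to $\delta_0^2 d_0/n$, and the assumption on $n$ has to be checked carefully there. A minor point is that strong concavity of $f$ is only available on the ball $\hat\Theta_{\delta_0}$. This is harmless because $\theta_t$ stays in the good set.
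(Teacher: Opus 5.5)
Your proof is correct and follows the paper's argument essentially step for step: you use concavity along the ray from $\hat\theta$ to the boundary of $\hat\Theta_\delta$, apply the decomposition of \cref{thm:likelihood_decomposition} at $\theta_t$, and split into cases by which constraint is active, with AM--GM producing the $S_0$ coefficient in the box case. The differences are only cosmetic: you use the quadratic constant $C_{S_0}/2$ (via $\nabla_{S_0}\ell(\hat\theta)=0$) where the paper uses $C_{S_0}/4$, and you apply AM--GM in $t$ rather than in $\|\theta_{S_1}-\hat\theta_{S_1}\|_1$.
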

\begin{proof}
We begin with a lower bound on $\ell(\hat\theta) - \ell(\theta)$ for $\theta\in\hat\Theta_\delta$.
    By~\cref{thm:likelihood_decomposition},
    \begin{align*}
        \ell(\hat\theta) - \ell(\theta)
        &= B(\hat\theta) - B(\theta) + f(\hat\theta_{S_0}) - f(\theta_{S_0}) + \sum_{j\in S_1} [g_j(\hat\theta_j) - g_j(\theta_j)] \\
        &\ge \frac{C_{S_0}}{4}\,\|\theta_{S_0} - \hat\theta_{S_0}\|_2^2 + \|\theta_{S_1} - \hat\theta_{S_1}\|_{\nabla_{S_1} \ell(\hat\theta)} - \error\,,
    \end{align*}
    where we define the weighted norm
    \begin{align*}
        \|\theta_{S_1}\|_{\nabla_{S_1} \ell(\hat\theta)}
        &\deq \sum_{j\in S_1} (-\partial_j\ell(\hat\theta))\,|\theta_j|\,.
    \end{align*}
    Now, let $\theta \in \Theta\setminus\hat\Theta_\delta$, and let $t \in [0,1]$ be the largest $t$ such that $\theta(t) \deq (1-t)\,\hat\theta + t\,\theta \in \hat\Theta_\delta$.
    Since $\|\theta(t)_{S_0} - \hat\theta_{S_0}\|_2 = t\,\|\theta_{S_0} -\hat\theta_{S_0}\|_2 \le \delta_0\sqrt{d_0/n}$ and $\|\theta(t)_{S_1} - \hat\theta_{S_1}\|_\infty = t\,\|\theta_{S_1} - \hat\theta_{S_1}\|_\infty \le \delta_1/n$, this implies that $t = \min\{\delta_0\sqrt{d_0}/(\sqrt n\,\|\theta_{S_0} - \hat\theta_{S_0}\|_2), \delta_1/(n\,\|\theta_{S_1} - \hat\theta_{S_1}\|_\infty)\}$.
    So, by concavity of $\ell$,
    \begin{align*}
        \ell(\hat\theta) - \ell(\theta)
        &\ge \frac{\ell(\hat\theta) - \ell(\theta(t))}{\|\theta(t) - \hat\theta\|_2}\,\|\theta - \hat\theta\|_2
        \ge \frac{1}{t}\,\Bigl\{ \frac{C_{S_0}}{4}\,\|\theta(t)_{S_0} - \hat\theta_{S_0}\|_2^2 + \|\theta(t)_{S_1} - \hat\theta_{S_1}\|_{\nabla_{S_1}\ell(\hat\theta)} - \error\Bigr\} \\
        &= \frac{C_{S_0} t}{4}\,\|\theta_{S_0} - \hat\theta_{S_0}\|_2^2 + \|\theta_{S_1} - \hat\theta_{S_1}\|_{\nabla_{S_1} \ell(\hat\theta)} - \frac{\error}{t}\,.
    \end{align*}
    We now split into two cases. If $t = \delta_0\sqrt{d_0}/(\sqrt n\,\|\theta_{S_0} - \hat\theta_{S_0}\|_2)$, then
    \begin{align*}
        \frac{C_{S_0} t}{4}\,\|\theta_{S_0} - \hat\theta_{S_0}\|_2^2 - \frac{\error}{t}
        &= \Bigl(\frac{C_{S_0} \delta_0 \sqrt{d_0}}{4\sqrt n} - \frac{\error\sqrt n}{\delta_0\sqrt{d_0}}\Bigr)\,\|\theta_{S_0} - \hat\theta_{S_0}\|_2
        \ge \frac{C_{S_0} \delta_0 \sqrt{d_0}}{8\sqrt n}\,\|\theta_{S_0} - \hat\theta_{S_0}\|_2
    \end{align*}
    provided $\error \le C_{S_0} \delta_0^2 d_0/(8n)$.
    A sufficient condition for this to hold is
    \begin{align}\label{eq:n_cond_1}
        n \ge \bigl(\frac{32s_2}{C_{S_0}}\bigr)^2\, \frac{\delta_1^2}{\delta_0^2}\,\frac{d_1}{d_0}\,.
    \end{align}
    On the other hand, if $t = \delta_1/(n\,\|\theta_{S_1} - \hat\theta_{S_1}\|_\infty)$, then
    \begin{align*}
        \|\theta_{S_1} - \hat\theta_{S_1}\|_{\nabla_{S_1} \ell(\hat\theta)} - \frac{\error}{t}
        &\ge \Bigl(1 - \frac{n\,\error}{C_{S_1} \delta_1}\Bigr)\,\|\theta_{S_1} - \hat\theta_{S_1}\|_{\nabla_{S_1}\ell(\hat\theta)}\,.
    \end{align*}
    Furthermore,
    \begin{align*}
        \frac{C_{S_0} t}{4}\,\|\theta_{S_0} - \hat\theta_{S_0}\|_2^2 + \frac{1}{d_1}\,\|\theta_{S_1} - \hat\theta_{S_1}\|_{\nabla_{S_1} \ell(\hat\theta)}
        &\ge \frac{C_{S_0} \delta_1\,\|\theta_{S_0} - \hat\theta_{S_0}\|_2^2}{4n\,\|\theta_{S_1} - \hat\theta_{S_1}\|_1} + \frac{C_{S_1}}{d_1}\,\|\theta_{S_1} - \hat\theta_{S_1}\|_1 \\
        &\ge \frac{\sqrt{C_{S_0} C_{S_1} \delta_1}}{\sqrt{d_1 n}}\,\|\theta_{S_0} -\hat\theta_{S_0}\|_2\,.
    \end{align*}
    Combining both cases, we see that, provided~\eqref{eq:n_cond_1} holds,
    \begin{align*}
        \ell(\hat\theta) - \ell(\theta)
        &\ge \Bigl(\frac{C_{S_0}\delta_0\sqrt{d_0}}{8\sqrt n} \wedge \frac{\sqrt{C_{S_0} C_{S_1} \delta_1}}{\sqrt{d_1 n}}\Bigr)\,\|\theta_{S_0} - \hat\theta_{S_0}\|_2 + \Bigl(1 - \frac{n\,\error}{C_{S_1} \delta_1} - \frac{1}{d_1}\Bigr)\,\|\theta_{S_1} - \hat\theta_{S_1}\|_{\nabla_{S_1} \ell(\hat\theta)}\,.
    \end{align*}
    This concludes the proof.
\end{proof}

Next, let
\[
  \tilde\ell(\theta)  \deq  \frac{1}{n}\log \pi(\theta) + \ell(\theta)
\]
denote the prior-adjusted log-likelihood.
After taking the prior distribution into account, by \cref{assumption:Gradient bound on the prior}, we have the following lemma.

\begin{lemma}[Linear growth outside the good set with a prior]\label{lemma:linear_growth_with_prior}
    In the setting of~\cref{lem:log_concave_growth}, if we assume that $L_\pi \le \frac{n}{2\sqrt{d_0}}\,(\frac{C_{S_0}\delta_0\sqrt{d_0}}{8\sqrt n} \wedge \frac{\sqrt{C_{S_0} C_{S_1} \delta_1}}{\sqrt{d_1 n}})$, then for all $\theta \in \Theta \setminus \hat\Theta_\delta$,
    \begin{align*}
        \tilde\ell(\hat\theta) - \tilde\ell(\theta)
        &\ge \underbrace{\Bigl(\frac{C_{S_0}\delta_0\sqrt{d_0}}{16\sqrt n} \wedge \frac{\sqrt{C_{S_0} C_{S_1} \delta_1}}{2\sqrt{d_1 n}}\Bigr)}_{\eqqcolon \Cgr_0}\,\|\theta_{S_0} - \hat\theta_{S_0}\|_2 \\
        &\qquad{} + \underbrace{\Bigl(1 - \frac{n\,\error}{C_{S_1} \delta_1} - \frac{1}{d_1} - \frac{L_\pi}{C_{S_1} n}\Bigr)}_{\eqqcolon \Cgr_1}\,\|\theta_{S_1} - \hat\theta_{S_1}\|_{\nabla_{S_1} \ell(\hat\theta)}\,.
    \end{align*}
  \end{lemma}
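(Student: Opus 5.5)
The plan is to write $\tilde\ell(\hat\theta) - \tilde\ell(\theta) = [\ell(\hat\theta) - \ell(\theta)] - \frac1n[\log\pi(\theta) - \log\pi(\hat\theta)]$. We bound the first bracket from below by \cref{lem:log_concave_growth} and the second from above using \cref{assumption:Gradient bound on the prior}. Since $\pi = \bigotimes_j \pi_j$ by \cref{integrand} and each $\log\pi_j$ is $L_\pi$-Lipschitz,
\begin{align*}
    \frac1n\,\bigl|\log\pi(\theta) - \log\pi(\hat\theta)\bigr|
    \le \frac{L_\pi}{n}\sum_{j\in[d]} |\theta_j - \hat\theta_j|
    = \frac{L_\pi}{n}\,\|\theta_{S_0} - \hat\theta_{S_0}\|_1 + \frac{L_\pi}{n}\,\|\theta_{S_1} - \hat\theta_{S_1}\|_1\,.
\end{align*}
We then absorb each of the two pieces into the corresponding linear term of \cref{lem:log_concave_growth}.

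For the regular part, Cauchy--Schwarz gives $\|\theta_{S_0} - \hat\theta_{S_0}\|_1 \le \sqrt{d_0}\,\|\theta_{S_0} - \hat\theta_{S_0}\|_2$. The prior contributes at most $\frac{L_\pi\sqrt{d_0}}{n}\|\theta_{S_0}-\hat\theta_{S_0}\|_2$. Under the stated hypothesis $L_\pi \le \frac{n}{2\sqrt{d_0}}(\cdots)$, this is at most half of the coefficient $\frac{C_{S_0}\delta_0\sqrt{d_0}}{8\sqrt n} \wedge \frac{\sqrt{C_{S_0}C_{S_1}\delta_1}}{\sqrt{d_1 n}}$ times $\|\theta_{S_0}-\hat\theta_{S_0}\|_2$. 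Subtracting leaves exactly half of that coefficient, namely $\Cgr_0$.

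For the non-regular part, \cref{assumption:negative_gradient_l} gives $-\partial_j\ell(\hat\theta) \ge C_{S_1}$ for every $j\in S_1$. Hence $\|\theta_{S_1}-\hat\theta_{S_1}\|_1 \le C_{S_1}^{-1}\|\theta_{S_1}-\hat\theta_{S_1}\|_{\nabla_{S_1}\ell(\hat\theta)}$, and the prior contribution is at most $\frac{L_\pi}{C_{S_1} n}\|\theta_{S_1}-\hat\theta_{S_1}\|_{\nabla_{S_1}\ell(\hat\theta)}$. Subtracting this from the coefficient $1 - \frac{n\,\error}{C_{S_1}\delta_1} - \frac1{d_1}$ in \cref{lem:log_concave_growth} yields $\Cgr_1$.

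Adding the two bounds gives the claim. The argument is essentially bookkeeping. The only point needing care is that the lemma's bound is used for every $\theta\in\Theta\setminus\hat\Theta_\delta$ with no sign condition on $\Cgr_1$, so no further case analysis arises. In particular, the Lipschitz bound on $\log\pi$ is global, so it can be applied along the whole segment from $\hat\theta$ to $\theta$ even when $\theta$ is far from the good set.
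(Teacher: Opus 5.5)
Your proposal is correct and follows the paper's proof. The paper likewise uses the product structure of $\pi$ and the coordinatewise $L_\pi$-Lipschitz bound to get $\log\pi(\hat\theta)-\log\pi(\theta) \ge -L_\pi\sqrt{d_0}\,\|\theta_{S_0}-\hat\theta_{S_0}\|_2 - L_\pi\|\theta_{S_1}-\hat\theta_{S_1}\|_1$, divides by $n$, and adds this to \cref{lem:log_concave_growth}; your explicit absorption step (the hypothesis on $L_\pi$ halving the $S_0$ coefficient to $\Cgr_0$, and $\|\theta_{S_1}-\hat\theta_{S_1}\|_1 \le C_{S_1}^{-1}\|\theta_{S_1}-\hat\theta_{S_1}\|_{\nabla_{S_1}\ell(\hat\theta)}$ producing the extra $-L_\pi/(C_{S_1}n)$ in $\Cgr_1$) is exactly the bookkeeping the paper leaves implicit.
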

  \begin{proof}
   Since the prior $\pi$ has a different mode than $\hat\theta$, we use~\cref{assumption:Gradient bound on the prior}:
   \begin{align*}
       \log \pi(\hat\theta) - \log\pi(\theta)
       &= \sum_{j=1}^d [\log\pi_j(\hat\theta_j) - \log \pi_j(\theta_j)]
       \ge -L_\pi\,\|\theta - \hat\theta\|_1 \\
       &\ge -L_\pi\sqrt{d_0}\,\|\theta_{S_0} - \hat\theta_{S_0}\|_2 - L_\pi\,\|\theta_{S_1} - \hat\theta_{S_1}\|_1\,.
   \end{align*}
   Divide by $n$ and add this inequality to the conclusion of~\cref{lem:log_concave_growth}.
\end{proof}

Before proving Theorem~\ref{thm:concentration_log_concave}, we state a lemma that bounds the expected squared distance between $\theta_{S_0}$ and $\hat\theta_{S_0}$ in a $\Theta(1)$ neighborhood of $\hat\theta$.

\begin{lemma}[Integration by parts]\label{lemma:term III}
Suppose that~\cref{integrand} and~\cref{assumption:Gradient bound on the prior} hold.
Furthermore, assume that the following conditions hold:
\begin{itemize} 
    \item For some constant $r_0 > 0$ and $ 0 < r_1 \leq \frac{C_{S_0}C_{S_1}}{2s_2^2}$, $\grad_{S_0}^2 \ell(\theta) \preceq -C_{S_0} I$ for all $\theta \in B(\hat\theta, r_0, 0)$, and $\partial_j\ell(\theta) < -C_{S_1}$ for all $j \in S_1$ and $\theta \in B(\hat\theta, 0, r_1)$.
    \item  $\sup_{\theta \in B(\hat\theta, r_0, r_1)} \norm{\grad^2 \ell(\theta)}_{\rm op} \leq s_2$.
\end{itemize}
Let $B \deq B(\hat\theta, r_0, r_1) \deq \{\theta \in \Theta : \norm{\theta_{S_0}-\hat\theta_{S_0}}_2 \leq r_0,\, \norm{\theta_{S_1} - \hat\theta_{S_1}}_\infty \leq r_1\}$. Let $\nu \deq \mu|_B$ be the measure $\mu$ conditioned on $B$. 
Then, we have the bound
\begin{align*}
    \Ex_\nu[\|\theta_{S_0} - \hat\theta_{S_0}\|_2^2] \leq \frac{2\,(d + \sqrt{d} L_\pi\, (r_0 + \sqrt{d_1} r_1))}{nC_{S_0}}\,.
\end{align*}
\end{lemma}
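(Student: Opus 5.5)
Writing $\nu \propto \exp(-V)\one_B$ with $V \deq -n\ell - \log\pi$, the plan is to integrate by parts against the vector field $x \deq \theta_{S_0} - \hat\theta_{S_0}$ in the $S_0$ coordinates only. For each fixed $\theta_{S_1}$, the $S_0$-section of $B$ is the Euclidean ball $B_2(\hat\theta_{S_0}, r_0)$. Following the paper's convention that $r_0$ is small enough to keep this ball inside $(0,\infty)^{d_0}$, the divergence theorem on that ball gives
\begin{align*}
    \int_{B_2(\hat\theta_{S_0},r_0)} \langle x, \nabla_{S_0} V\rangle\, e^{-V}\, d\theta_{S_0}
    = d_0 \int e^{-V}\,d\theta_{S_0} - r_0 \int_{\partial B_2} e^{-V}\, d\sigma \,.
\end{align*}
The boundary term is nonpositive, because the outward normal $x/r_0$ satisfies $\langle x, x/r_0\rangle = r_0 > 0$. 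Integrating over $\theta_{S_1}$ then yields $\E_\nu \langle x, \nabla_{S_0} V\rangle \le d_0 \le d$.

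Next I bound $\langle x, \nabla_{S_0} V\rangle$ from below, splitting it into a likelihood part and a prior part. For the likelihood, I expand $-n\langle x, \nabla_{S_0}\ell(\theta)\rangle$ around the point $(\hat\theta_{S_0}, \theta_{S_1})$. Strong concavity on $B(\hat\theta, r_0, 0)$ (extended along the $S_0$ segment, as granted by the hypothesis) gives the leading term $n C_{S_0}\|x\|_2^2$. What remains is $-n\langle x, \nabla_{S_0}\ell(\hat\theta_{S_0},\theta_{S_1})\rangle$. Since $\nabla_{S_0}\ell(\hat\theta) = 0$ (the first-order condition at an interior coordinate), the Hessian bound gives $\|\nabla_{S_0}\ell(\hat\theta_{S_0},\theta_{S_1})\|_2 \le s_2\|\theta_{S_1}\|_2$. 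For the prior, the Lipschitz bound gives $|\langle x, \nabla_{S_0}\log\pi\rangle| \le L_\pi\sqrt{d_0}\,\|x\|_2 \le \sqrt d\,L_\pi r_0$.

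The main obstacle is the coupling term $n s_2\|x\|_2\|\theta_{S_1}\|_2$, which is of order $n$ unless $\theta_{S_1}$ is small, since $\theta_{S_1}$ can be as large as $r_1$. I plan to handle it with Young's inequality, $n s_2\|x\|\|\theta_{S_1}\| \le \tfrac{n C_{S_0}}{2}\|x\|^2 + \tfrac{n s_2^2}{2C_{S_0}}\|\theta_{S_1}\|_2^2$. This reduces the task to controlling $n\,\E_\nu\|\theta_{S_1}\|_2^2$, which I will do by a second integration by parts in the $S_1$ coordinates against the field $\theta_{S_1}$ (recall $\hat\theta_{S_1}=0$). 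The boundary term at $\theta_j=0$ vanishes because $\theta_j = 0$ there. At $\theta_j = r_1$ the outward normal is $+e_j$, so the term equals $r_1 e^{-V}\ge 0$ with a favorable sign. This gives $\E_\nu \langle \theta_{S_1}, \nabla_{S_1}V\rangle \le d_1$. Using $\partial_j\ell < -C_{S_1}$ and $|\partial_j\log\pi| \le L_\pi$, I get $\langle \theta_{S_1}, \nabla_{S_1}V\rangle \ge (nC_{S_1} - L_\pi)\|\theta_{S_1}\|_1$. Then $\|\theta_{S_1}\|_2^2 \le r_1\|\theta_{S_1}\|_1$ gives $n\,\E_\nu\|\theta_{S_1}\|_2^2 \lesssim r_1 d_1/C_{S_1}$.

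The Young term then contributes $\tfrac{s_2^2 r_1}{2C_{S_0}C_{S_1}}\,d_1 \le d_1/4$, using the condition $r_1 \le C_{S_0}C_{S_1}/(2s_2^2)$, which is exactly the assumption in the statement. I must also remember that the hypothesis on $\partial_j\ell$ is stated only on $B(\hat\theta,0,r_1)$, and I will assume it transfers to the relevant points as the paper's conventions intend. Collecting terms gives $\tfrac{nC_{S_0}}{2}\E_\nu\|x\|^2 \le d + \sqrt d L_\pi(r_0+\sqrt{d_1}r_1)$, where I allocate the prior's $S_1$ contribution via $\|\theta_{S_1}\|_2 \le \sqrt{d_1}\,r_1$ where convenient. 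Dividing through yields the claimed bound with the factor $2/(nC_{S_0})$, possibly after adjusting constants.
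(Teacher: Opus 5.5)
Your two integration-by-parts identities are correct: the boundary terms have the right sign on both the $S_0$-sections and the $S_1$-sections. The problem is that the lower bounds you feed into them use the hypotheses at points where the lemma does not assume them. Strong concavity is only given on the slice $B(\hat\theta,r_0,0)$, where $\theta_{S_1}=\hat\theta_{S_1}=0$. The negative $S_1$-gradient is only given on the slice $B(\hat\theta,0,r_1)$, where $\theta_{S_0}=\hat\theta_{S_0}$.

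In the $S_0$ step you integrate $\nabla^2_{S_0}\ell$ along the segment from $(\hat\theta_{S_0},\theta_{S_1})$ to $(\theta_{S_0},\theta_{S_1})$, which is off the slice. This is easily repaired: expand around $(\theta_{S_0},\hat\theta_{S_1})$ instead, which gives $-\langle x,\nabla_{S_0}\ell(\theta)\rangle\ge C_{S_0}\|x\|_2^2-s_2\|x\|_2\|\theta_{S_1}\|_2$ with the same cross term.

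The $S_1$ step is the genuine gap. For general $\theta\in B$ you only know $\partial_j\ell(\theta)\le -C_{S_1}+s_2\|x\|_2$. Since $r_0$ is a fixed constant, not small relative to $C_{S_1}/s_2$, this need not even be negative, so it cannot simply be ``assumed to transfer''. The correct output of your second integration by parts is
\[
(nC_{S_1}-L_\pi)\,\E_\nu\|\theta_{S_1}\|_1 \le d_1 + n s_2\,\E_\nu\bigl[\|x\|_2\,\|\theta_{S_1}\|_2\bigr]\,.
\]
Its right-hand side involves the very quantity you are trying to bound. So your sequential chain (first bound $n\E_\nu\|\theta_{S_1}\|_2^2\lesssim r_1 d_1/C_{S_1}$, then substitute into the $S_0$ inequality) does not close.

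The missing step is to add the two inequalities rather than use them in sequence. This is the same as integrating by parts once against the full field $\theta-\hat\theta$, i.e.\ $h=\tfrac12\|\theta-\hat\theta\|_2^2$ on the convex set $B$, which is what the paper does. Move the prior term to the right and bound it by Cauchy--Schwarz using $\|\nabla\log\pi\|_2\le\sqrt d\,L_\pi$. You then get
\[
nC_{S_0}\,\E_\nu\|x\|_2^2 + nC_{S_1}\,\E_\nu\|\theta_{S_1}\|_1 - 2ns_2\,\E_\nu\bigl[\|x\|_2\,\|\theta_{S_1}\|_2\bigr] \le d + \sqrt d\,L_\pi\,(r_0+\sqrt{d_1}\,r_1)\,.
\]
Now apply a single Young's inequality, $2s_2\|x\|_2\|\theta_{S_1}\|_2\le\frac{C_{S_0}}{2}\|x\|_2^2+\frac{2s_2^2}{C_{S_0}}\|\theta_{S_1}\|_2^2$. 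Together with $\|\theta_{S_1}\|_2^2\le r_1\|\theta_{S_1}\|_1$ and $r_1\le C_{S_0}C_{S_1}/(2s_2^2)$, this absorbs the cross term completely into $nC_{S_0}\E_\nu\|x\|_2^2$ and $nC_{S_1}\E_\nu\|\theta_{S_1}\|_1$. What remains is $\frac{nC_{S_0}}{2}\E_\nu\|x\|_2^2\le d+\sqrt d\,L_\pi(r_0+\sqrt{d_1}\,r_1)$, which is exactly the claim.

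Two remarks on this version. The combined cross term carries the factor $2s_2$, one copy from each direction, which is why the hypothesis on $r_1$ has the $2$ in the denominator. And because the prior is handled crudely on the right-hand side, you never need a condition like $L_\pi<nC_{S_1}$, which your version implicitly required in order to divide by $nC_{S_1}-L_\pi$.
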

\begin{proof}
We start with integration by parts. Let $h(\theta) \deq \frac{1}{2}\,\norm{\theta - \hat\theta}_2^2$, so that $\nabla h(\theta) = \theta - \hat\theta$ and $\Delta h(\theta) = d$. The divergence theorem gives
\begin{align*}
\int_B (\Delta h + n\,\langle \nabla \tilde\ell , \nabla h \rangle)\, d\nu = \int_B \div{(\nu \grad h)} = \int_{\partial B} \langle \nabla h, \mathbf{n} \rangle\,d\nu\,,
\end{align*}
where $\mathbf{n}$ is the outward-pointing unit normal to the boundary $\partial B$.
We analyze the boundary term  $\int_{\partial B} \langle \theta - \hat\theta, \mathbf{n}(\theta) \rangle\, d\nu(\theta)$. The boundary $\partial B$ consists of three parts:
\begin{enumerate}
    \item[1.] The regular boundary $\{\theta \in B : \norm{\theta_{S_0}-\hat\theta_{S_0}}_2 = r_0\}$. Here, $\mathbf{n}= (\theta_{S_0} - \hat\theta_{S_0})/r_0$, so $\langle \theta - \hat\theta, \mathbf{n} \rangle = \langle \theta_{S_0} - \hat\theta_{S_0}, \mathbf{n} \rangle \ge 0$.
    \item[2.] The outer non-regular boundary $\{\theta \in B : \theta_j = r_1 \text{ for some } j \in S_1\}$. Here, $\mathbf{n} = e_j$. The integrand is $\langle \theta - \hat\theta, e_j \rangle = \theta_j - \hat\theta_j = r_1 - 0 = r_1 \ge 0$.
    \item[3.] The inner non-regular boundary $\{\theta \in B : \theta_j = 0 \text{ for some } j \in S_1\}$. Here, $\mathbf{n} = -e_j$. The integrand is $\langle \theta - \hat\theta, -e_j \rangle = -(\theta_j - \hat\theta_j) = 0$.
\end{enumerate}
Since the integrand is non-negative on all parts of the boundary $\partial B$, we have 
\begin{align*}
d + n\, \mathbb{E}_\nu[\langle \nabla \ell(\theta), \theta - \hat\theta \rangle] + \mathbb{E}_\nu[\langle \nabla \log \pi(\theta), \theta - \hat\theta \rangle] \ge 0\,.
\end{align*}
Since $\log \pi$ is $L_\pi$-Lipschitz, and $\norm{\theta - \hat\theta}_2 \leq r_0 + \sqrt{d_1} r_1$ for all $\theta \in B$, we have
\begin{align*}
    \mathbb{E}_\nu[\langle \nabla \log \pi(\theta), \theta - \hat\theta \rangle] &\leq \sqrt{d} L_\pi\, (r_0 + \sqrt{d_1} r_1)\,.
\end{align*}
Therefore, 
\begin{align*}
    d &\geq -n\, \mathbb{E}_\nu[\langle \nabla \ell(\theta), \theta - \hat\theta \rangle] - \sqrt{d} L_\pi\, (r_0 + \sqrt{d_1} r_1) \\ 
    &\geq -n \E_\nu[\langle\grad_{S_0} \ell(\theta), \theta_{S_0} - \hat\theta_{S_0}\rangle + \langle \nabla_{S_1} \ell(\theta), \theta_{S_1} - \hat\theta_{S_1} \rangle] - \sqrt{d} L_\pi\, (r_0 + \sqrt{d_1} r_1)\,.
\end{align*}

For the $S_0$ term:
\begin{align*}
\langle \nabla_{S_0} \ell(\theta), \theta_{S_0} - \hat\theta_{S_0} \rangle &= \langle \nabla_{S_0} \ell(\theta_{S_0}, \hat\theta_{S_1}) + (\nabla_{S_0} \ell(\theta) - \nabla_{S_0} \ell(\theta_{S_0}, \hat\theta_{S_1})), \theta_{S_0} - \hat\theta_{S_0} \rangle \\
&= \langle \nabla_{S_0} \ell(\theta_{S_0}, \hat\theta_{S_1}), \theta_{S_0} - \hat\theta_{S_0} \rangle + \Bigl\langle \int_0^1 \nabla^2_{S_0, S_1} \ell(\theta_{S_0}, t\theta_{S_1})\, \theta_{S_1}\, dt, \theta_{S_0} - \hat\theta_{S_0} \Bigr\rangle \\
&\le -C_{S_0}\, \norm{\theta_{S_0} - \hat\theta_{S_0}}_2^2 + \sup_{\theta \in B}{\norm{\nabla^2_{S_0, S_1} \ell(\theta)}_{\rm op}}\, \norm{\theta_{S_1} - \hat\theta_{S_1}}_2\, \norm{\theta_{S_0} - \hat\theta_{S_0}}_2 \\
&\le -C_{S_0}\, \norm{\theta_{S_0} - \hat\theta_{S_0}}_2^2 + s_2\, \norm{\theta_{S_0} - \hat\theta_{S_0}}_2\, \norm{\theta_{S_1} - \hat\theta_{S_1}}_2\,.
\end{align*}
For $S_1$ term, we have the upper bound
\begin{align*}
\langle \nabla_{S_1} \ell(\theta), \theta_{S_1} - \hat\theta_{S_1} \rangle &= \langle \nabla_{S_1} \ell(\theta)- \nabla_{S_1} \ell(\hat\theta_{S_0}, \theta_{S_1}) + \nabla_{S_1} \ell(\hat\theta_{S_0}, \theta_{S_1}), \theta_{S_1} - \hat\theta_{S_1} \rangle \\
&\le \langle \nabla_{S_1} \ell(\theta)- \nabla_{S_1} \ell(\hat\theta_{S_0}, \theta_{S_1}), \theta_{S_1} - \hat\theta_{S_1} \rangle + \langle \nabla_{S_1} \ell(\hat\theta_{S_0}, \theta_{S_1}), \theta_{S_1} - \hat\theta_{S_1} \rangle \\ 
&\le s_2\, \norm{\theta_{S_0} - \hat\theta_{S_0}}_2\, \norm{\theta_{S_1} - \hat\theta_{S_1}}_2 - C_{S_1}\, \norm{\theta_{S_1} - \hat\theta_{S_1}}_1\,.
\end{align*}
Combining these bounds gives
\begin{align*}
\langle \nabla \ell(\theta), \theta - \hat\theta \rangle \le  -C_{S_0}\, \norm{\theta_{S_0} - \hat\theta_{S_0}}_2^2 + 2s_2\, \norm{\theta_{S_0} - \hat\theta_{S_0}}_2\, \norm{\theta_{S_1} - \hat\theta_{S_1}}_2 - C_{S_1}\, \norm{\theta_{S_1} - \hat\theta_{S_1}}_1\,.
\end{align*}
We apply Young's inequality to the cross term,
\begin{align*} 
    s_2\, \norm{\theta_{S_0} - \hat\theta_{S_0}}_2\, \norm{\theta_{S_1} - \hat\theta_{S_1}}_2 &\le s_2\, \bigl(\frac{C_{S_0}}{4s_2}\, \norm{\theta_{S_0} - \hat\theta_{S_0}}_2^2 + \frac{s_2}{C_{S_0}}\, \norm{\theta_{S_1} - \hat\theta_{S_1}}_2^2\bigr) \\ 
    &= \frac{C_{S_0}}{4}\, \norm{\theta_{S_0} - \hat\theta_{S_0}}_2^2 + \frac{s_2^2}{C_{S_0}}\, \norm{\theta_{S_1} - \hat\theta_{S_1}}_2^2\,.
\end{align*}
Thus,
\begin{align*}
C_{S_0}\, \mathbb{E}_\nu[\norm{\theta_{S_0} - \hat\theta_{S_0}}_2^2] &\le \frac{d + \sqrt{d} L_\pi\, (r_0 + \sqrt{d_1} r_1)}{n} \\ 
&\qquad{} + \mathbb{E}_\nu\bigl[ \frac{C_{S_0}}{2}\, \norm{\theta_{S_0} - \hat\theta_{S_0}}_2^2 + \frac{2s_2^2}{C_{S_0}}\, \norm{\theta_{S_1} - \hat\theta_{S_1}}_2^2 - C_{S_1}\, \norm{\theta_{S_1} - \hat\theta_{S_1}}_1 \bigr]\,, \\
\frac{C_{S_0}}{2}\, \mathbb{E}_\nu[\norm{\theta_{S_0} - \hat\theta_{S_0}}_2^2] &\le \frac{d + \sqrt{d} L_\pi\, (r_0 + \sqrt{d_1} r_1)}{n} + \mathbb{E}_\nu\bigl[ \frac{2s_2^2}{C_{S_0}}\, \norm{\theta_{S_1} - \hat\theta_{S_1}}_2^2 - C_{S_1}\, \norm{\theta_{S_1} - \hat\theta_{S_1}}_1 \bigr]\,.
\end{align*}
If $r_1 \le \frac{C_{S_0} C_{S_1}}{2 s_2^2}$, then 
\begin{align*}
\frac{2s_2^2}{C_{S_0}} \E_\nu[\norm{\theta_{S_1} - \hat\theta_{S_1}}_2^2]
&\le \frac{2s_2^2}{C_{S_0}}\E_\nu[\norm{\theta_{S_1}-\hat\theta_{S_1}}_1\,\norm{\theta_{S_1} - \hat\theta_{S_1}}_\infty]
\le \frac{2r_1 s_2^2}{C_{S_0}}\E_\nu[\norm{\theta_{S_1}-\hat\theta_{S_1}}_1] \\
&\le C_{S_1}\E_\nu[\norm{\theta_{S_1}-\hat\theta_{S_1}}_1]\,.
\end{align*}
Thus,
$\mathbb{E}_\nu[\norm{\theta_{S_0} - \hat\theta_{S_0}}_2^2] \le \frac{2\,(d + \sqrt{d} L_\pi\, (r_0 + \sqrt{d_1} r_1))}{nC_{S_0}}$.
\end{proof}

\subsection{Proof of Theorem~\ref{thm:concentration_log_concave}}\label{appendix:proof_lemma: log concave}

\begin{proof}[Proof of~\cref{thm:concentration_log_concave}]
Let us break $\mu(\Theta \setminus \hat\Theta_{\delta})$ into four parts:
\begin{align*}
    {\rm I} &= \mu(\norm{\theta_{S_0}-\hat\theta_{S_0}}_2 \geq r_0)\,, \\ 
    {\rm II} &= \mu(\norm{\theta_{S_1}-\hat\theta_{S_1}}_{\infty} \geq r_1)\,, \\
    {\rm III} &= \mu\bigl(\delta_0\sqrt{\frac{d_0}{n}} \leq \norm{\theta_{S_0}-\hat\theta_{S_0}}_2 \leq r_0,\, \norm{\theta_{S_1} - \hat\theta_{S_1}}_\infty \leq r_1\bigr)\,, \\ 
    {\rm IV} &= \mu\bigl(\norm{\theta_{S_0} - \hat\theta_{S_0}}_2 \leq \delta_0 \sqrt{\frac{d_0}{n}},\, \frac{\delta_1}{n} \leq \norm{\theta_{S_1}- \hat\theta_{S_1}}_\infty \leq r_1 \bigr)\,.
\end{align*}
The goal is to bound each of these four terms by $\epsilon/4$.

\paragraph{Term I\@.}
By assumption, $\mu(\norm{\theta_{S_0}-\hat\theta_{S_0}}_2 \leq r_0^\prime) \geq 2/3$. We apply Borell's inequality~\citep{Bor1974Cvx}:
$$
\mu(\norm{\theta_{S_0}-\hat\theta_{S_0}}_2 \geq t r_0^\prime) \leq \frac{2}{3}\times 2^{-\frac{t+1}{2}}\,.
$$
To bound the tail probability ${\rm I}$ beyond $r_0$, we set $t = r_0 / r_0'$. The bound becomes
$$
{\rm I} \le \frac{2}{3}\times 2^{-\frac{r_0/r_0' + 1}{2}}\,.
$$
We require ${\rm I} \le \epsilon/4$. This holds if $t = r_0/r_0'$ satisfies the condition
$$
t = \frac{r_0}{r_0'} \ge 2 \log_2\bigl(\frac{8}{3\epsilon}\bigr) - 1\,.
$$
In other words, this is a condition on $r_0'$: $r_0' \le r_0 / \bigl( 2 \log_2(\frac{8}{3\epsilon}) - 1  \bigr)$.

\paragraph{Term II\@.}
The region of integration is $\Theta_{S_0} \times (\Theta_{S_1} \setminus B_\infty(\hat\theta_{S_1}, r_1))$, where $B_\infty(\hat\theta_{S_1}, r_1)$ is the $\ell_\infty$-ball of radius $r_1$ centered at $\hat\theta_{S_1}$. We bound the ratio:
    \begin{align*}
         {\rm II} = \frac{\int_{\Theta_{S_0} \times (\Theta_{S_1} \setminus B_\infty(\hat\theta_{S_1}, r_1))} \exp\{n\,(\tilde\ell(\theta) - \tilde\ell(\hat\theta))\}\,d\theta}{\int_\Theta \exp\{n\,(\tilde\ell(\theta) - \tilde\ell(\hat\theta))\}\,d\theta} 
         \le \frac{\int_{\Theta_{S_0} \times (\Theta_{S_1} \setminus B_\infty(\hat\theta_{S_1}, r_1))} \exp\{n\,(\tilde\ell(\theta) - \tilde\ell(\hat\theta))\}\,d\theta}{\int_{\hat\Theta_\delta} \exp\{n\,(\tilde\ell(\theta) - \tilde\ell(\hat\theta))\}\,d\theta}\,.
    \end{align*}
    By~\cref{lemma:linear_growth_with_prior}, if we assume that $L_\pi \le \frac{n}{2\sqrt{d_0}}\,(\frac{C_{S_0}\delta_0\sqrt{d_0}}{8\sqrt n} \wedge \frac{\sqrt{C_{S_0} C_{S_1} \delta_1}}{\sqrt{d_1 n}})$, the numerator $\text{Num(II)}$ is bounded above by
    \begin{align*}
        \text{Num(II)} &\le \int_{\Theta_{S_0} \times (\Theta_{S_1} \setminus B_\infty(\hat\theta_{S_1}, r_1))} \exp\bigl\{-n\Cgr_0\,\|\theta_{S_0} - \hat\theta_{S_0}\|_2 - n\Cgr_1\,\|\theta_{S_1} - \hat\theta_{S_1}\|_{\nabla_{S_1}\ell(\hat\theta)}\bigr\}\,d\theta \\
        &= \int_{\Theta_{S_0}} \exp\bigl\{-n\Cgr_0\,\|\theta_{S_0} - \hat\theta_{S_0}\|_2\bigr\}\,d\theta_{S_0} \\
        &\qquad{} \times \int_{\Theta_{S_1} \setminus B_\infty(\hat\theta_{S_1}, r_1)} \exp\bigl\{-n\Cgr_1\,\|\theta_{S_1} - \hat\theta_{S_1}\|_{\nabla_{S_1}\ell(\hat\theta)}\bigr\}\,d\theta_{S_1}\,.
    \end{align*}
    We bound the two resulting integrals. First, for the regular part, we integrate over the entire space as a simple upper bound:
    \begin{align*}
         \int_{\Theta_{S_0}} \exp\bigl\{-n\Cgr_0\,\|\theta_{S_0} - \hat\theta_{S_0}\|_2\bigr\}\,d\theta_{S_0} 
         &\le \int_{\mathbb{R}^{d_0}} \exp\bigl\{-n\Cgr_0\,\|z\|_2\bigr\}\,dz \\
         &= d_0 \mathsf V_{d_0} \int_0^\infty \exp\{-n\Cgr_0 r\}\,r^{d_0-1}\,dr \\
         &= \frac{d_0 \mathsf V_{d_0}}{(n\Cgr_0)^{d_0}} \int_0^\infty \exp(-r)\,r^{d_0-1}\,dr
         = \frac{d_0 \mathsf V_{d_0}}{(n\Cgr_0)^{d_0}} \,(d_0-1)! \\
         &\le ed_0\mathsf V_{d_0}\,\bigl(\frac{d_0}{en\Cgr_0}\bigr)^{d_0}\,,
    \end{align*}
    where $\mathsf V_{d_0}$ is the volume of the $d_0$-dimensional unit ball.
    For the non-regular part, we use a union bound:
    \begin{align*}
        &\int_{\|\theta_{S_1}\|_\infty \ge r_1,\, \theta_{S_1} \ge 0} \exp\Bigl\{-n\Cgr_1 \sum_{k \in S_1} \abs{\partial_k \ell(\hat\theta)} \theta_k\Bigr\}\,d\theta_{S_1}
        \le \sum_{j \in S_1} \int_{\substack{\theta_{S_1} \ge 0 \\ \theta_j \ge r_1}} \prod_{k \in S_1} \exp\{-n\Cgr_1\, \abs{\partial_k \ell(\hat\theta)}\, \theta_k\}\,d\theta_k \\
        &\qquad = \sum_{j \in S_1} \Bigl( \int_{r_1}^\infty \exp\{-n\Cgr_1\, \abs{\partial_j \ell(\hat\theta)}\, \theta_j\}\,d\theta_j \Bigr)\, \Bigl( \prod_{k \in S_1,\,k \ne j} \int_0^\infty \exp\{-n\Cgr_1\, \abs{\partial_k \ell(\hat\theta)}\, \theta_k\}\,d\theta_k \Bigr) \\
        &\qquad = \sum_{j \in S_1} \frac{\exp\{-n\Cgr_1\, \abs{\partial_j\ell(\hat\theta)}\, r_1\}}{n\Cgr_1\, \abs{\partial_j \ell(\hat\theta)}} \prod_{k \in S_1,\,k\ne j} \frac{1}{n\Cgr_1\, \abs{\partial_k \ell(\hat\theta)}} \\
        &\qquad = \Bigl( \prod_{k \in S_1} \frac{1}{n\Cgr_1\, \abs{\partial_k \ell(\hat\theta)}} \Bigr) \sum_{j \in S_1} \exp\{-n\Cgr_1\, \abs{\partial_j \ell(\hat\theta)}\, r_1\}\,.
    \end{align*} 
    Combining these, the numerator is bounded by:
    \[
        \text{Num(II)} \le ed_0 \mathsf V_{d_0} \,\bigl(\frac{d_0}{en\Cgr_0}\bigr)^{d_0} \, \Bigl( \prod_{k \in S_1} \frac{1}{n\Cgr_1\, |\partial_k \ell(\hat\theta)|} \Bigr) \sum_{j \in S_1} \exp\{-n\Cgr_1\, \abs{\partial_j \ell(\hat\theta)}\, r_1\}\,.
    \]
    Let $Z$ denote a standard Gaussian variable in $\R^{d_0}$. The denominator
    \begin{align*}
        \text{Den(II)} \deq \int_{\hat\Theta_\delta} \exp\{n\,(\tilde\ell(\theta) - \tilde\ell(\hat\theta))\}\,d\theta
    \end{align*}
    is bounded as follows.
    By~\cref{thm:likelihood_decomposition}, the denominator is at least
    \begin{align*}
 &\int_{\hat\Theta_\delta} \exp\Bigl\{n\,\Bigl(f(\theta_{S_0}) - f(\hat\theta_{S_0}) + \sum_{j\in S_1} \partial_j\ell(\hat\theta)\,\theta_j - \error - \frac{L_\pi}{n}\,\|\theta - \hat\theta\|_1\Bigr)\Bigr\}\,d\theta \\
        &\qquad \ge \int_{\hat\Theta_\delta} \exp\Bigl\{-\frac{ns_2}{2}\,\|\theta_{S_0} - \hat\theta_{S_0}\|_2^2 - \bigl(n+\frac{L_\pi}{C_{S_1}}\bigr)\,\|\theta_{S_1} - \hat\theta_{S_1}\|_{\nabla_{S_1}\ell(\hat\theta)} - n\,\error - L_\pi\,\|\theta_{S_0} - \hat\theta_{S_0}\|_1\Bigr\}\,d\theta \\
        &\qquad \ge \exp\Bigl( - \frac{L_\pi d_0\delta_0}{\sqrt n} - n\,\error\Bigr)\,\bigl(\frac{2\pi}{ns_2}\bigr)^{d_0/2}\,\Bigl(1-\PP\bigl(\bigl\lVert\frac{Z}{\sqrt{ns_2}}\bigr\rVert_2 \geq \delta_0 \sqrt{\frac{d_0}{n}}\bigr)\Bigr) \\ 
        &\qquad\qquad{}\times \int_{\hat\Theta_{\delta_1}} \exp\Bigl\{-\bigl(n + \frac{L_\pi}{C_{S_1}}\bigr)\,\|\theta_{S_1} - \hat\theta_{S_1}\|_{\nabla_{S_1}\ell(\hat\theta)}\Bigr\}\,d\theta \\
        &\qquad \geq \exp\Bigl( - \frac{L_\pi d_0\delta_0}{\sqrt n} - n\,\error\Bigr)\,\bigl(\frac{2\pi}{ns_2}\bigr)^{d_0/2}\, \Bigl(1-\exp\Bigl\{-d_0\,\frac{(\delta_0\sqrt{s_2}-1)^2}{2}\Bigr\}\Bigr)\\
        &\qquad\qquad{}\times \prod_{j\in S_1} \frac{1-\exp\bigl(\partial_j\ell(\hat\theta)\,\delta_1\,(1 + L_\pi/(C_{S_1} n))\bigr)}{(n+L_\pi/C_{S_1})\,|\partial_j \ell(\hat\theta)|} \\
        &\qquad \geq \frac{1}{2}\exp\Bigl( - \frac{L_\pi d_0\delta_0}{\sqrt n} - n\,\error\Bigr)\,\bigl(\frac{2\pi}{ns_2}\bigr)^{d_0/2} \prod_{j\in S_1} \frac{1-\exp\bigl(\partial_j\ell(\hat\theta)\,\delta_1\,(1 + L_\pi/(C_{S_1} n))\bigr)}{(n+L_\pi/C_{S_1})\,|\partial_j \ell(\hat\theta)|}\,,
    \end{align*}
     where we have the concentration inequality:
     \begin{align*}
         \PP(\|Z\|_2 - \sqrt{d_0} \ge t) \le \exp\bigl(-\frac{t^2}{2}\bigr)
     \end{align*}
    for $t > 0$.
    The last inequality follows provided $\delta_0 \sqrt{s_2} \gg 1$.
    Taking the ratio $\text{Num(II)}/\text{Den(II)}$, the terms $\prod_{j\in S_1} |\partial_j \ell(\hat\theta)|$ cancel, yielding:
    \begin{align*}
        {\rm II} &\le \frac{2ed_0 \mathsf V_{d_0}\, (\frac{d_0}{en\Cgr_0})^{d_0}\, d_1 \exp\{-n\Cgr_1 C_{S_1} r_1\} \prod_{k \in S_1} (n\Cgr_1)^{-1}}{\exp\Bigl( - \frac{L_\pi d_0\delta_0}{\sqrt n} - n\,\error\Bigr)\,\bigl(\frac{2\pi}{ns_2}\bigr)^{d_0/2} \prod_{j\in S_1} \frac{1-\exp\bigl(\partial_j\ell(\hat\theta)\,\delta_1\,(1 + L_\pi/(C_{S_1} n))\bigr)}{n+L_\pi/C_{S_1}}} \\
        &= \frac{2ed_0 \mathsf V_{d_0}\, (\frac{d_0}{en\Cgr_0})^{d_0}}{\exp\bigl( - \frac{L_\pi d_0\delta_0}{\sqrt n} - n\,\error\bigr)\,\bigl(\frac{2\pi}{ns_2}\bigr)^{d_0/2}} \\
        &\qquad \times d_1 \exp\{-n\Cgr_1 C_{S_1} r_1\}  \prod_{j \in S_1} \frac{n+L_\pi/C_{S_1}}{n\Cgr_1\, \bigl(1-\exp\bigl(\partial_j\ell(\hat\theta)\,\delta_1\,(1 + L_\pi/(C_{S_1} n))\bigr)\bigr)} \\
        &\le 2ed_1\sqrt{\frac{d_0}{\pi}}\, \bigl(\frac{\sqrt{d_0 s_2}}{\Cgr_0\sqrt{en}}\bigr)^{d_0} \exp\Bigl\{-n\Cgr_1 C_{S_1} r_1 + \frac{L_\pi d_0\delta_0}{\sqrt n}+n\, \error\Bigr\} \\ 
        &\qquad \times \Bigl( \frac{1+ L_\pi/(C_{S_1} n)}{\Cgr_1\,\bigl(1-\exp\{-\delta_1\,C_{S_1}\,(1 + L_\pi/(C_{S_1}n))\}\bigr)} \Bigr)^{d_1}
    \end{align*}
    where we used that $\mathsf V_{d_0} \sim \frac{1}{\sqrt{\pi d_0}}\,(2\pi e/d_0)^{d_0/2}$.
    Let us start with the last term.
    If we assume: $L_\pi \lesssim C_{S_1} n$, $\delta_1 \gtrsim C_{S_1}^{-1} \log d_1$, as well as the condition

    \begin{align}\label{eq:n_cond_2}
        n \gtrsim \frac{s_2^2 \delta_0 d_0 d_1}{C_{S_1}^2} \vee \frac{s_2 \delta_1 d_1}{C_{S_1}}
    \end{align}
    to ensure that $\Cgr_1 = \Omega(1)$, we have
    \begin{align*}
        \Bigl[\frac{1 + L_\pi/(C_{S_1} n)}{\Cgr_1\,\bigl(1-\exp\{-\delta_1\,(C_{S_1} + L_\pi/n)\}\bigr)}\Bigr]^{d_1}
        &\lesssim e^{d_1} \Bigl(1 + O\bigl(\frac{1}{d_1}\bigr)\Bigr)^{d_1}
         = O(e^{d_1})\,.
    \end{align*} 
   Therefore, it remains to control
    \begin{align*}
        d_1\sqrt{\frac{d_0}{\pi}}\, \bigl(\frac{\sqrt{d_0 s_2}}{\Cgr_0\sqrt{en}}\bigr)^{d_0} \exp\Bigl\{-n\Cgr_1 C_{S_1} r_1 + \frac{L_\pi d_0\delta_0}{\sqrt n}+n\, \error + d_1\Bigr\}\,.
    \end{align*}
    One can see that this is at most $\varepsilon/4$ provided
    \begin{align*}
        n \gg \bigl(\frac{L_\pi \delta_0 d_0}{C_{S_1} r_1}\bigr)^{2/3} \vee \bigl(\frac{s_2 \delta_0 \delta_1 \sqrt{d_0 d_1}}{C_{S_1} r_1}\bigr)^{2/3} \vee \bigl( \frac{s_2 \delta_1^2 d_1}{C_{S_1} r_1}\bigr)^{1/2} \vee \frac{d_0}{C_{S_1} r_1} \log \frac{\sqrt{d_0 s_2}}{\Cgr_0\sqrt n} \vee \frac{\log(d_0 d_1^2/\varepsilon)}{C_{S_1} r_1}\,.
    \end{align*}
     
%

\paragraph{Term III\@.}
We seek to bound ${\rm III} = \mu(\delta_0\sqrt{\frac{d_0}{n}} \leq \norm{\theta_{S_0}-\hat\theta_{S_0}}_2 \leq r_0,\, \norm{\theta_{S_1} - \hat\theta_{S_1}}_\infty \leq r_1)$.
Let $B \deq B(\hat\theta, r_0, r_1) \deq \{\theta \in \Theta : \norm{\theta_{S_0}-\hat\theta_{S_0}}_2 \leq r_0, \norm{\theta_{S_1} - \hat\theta_{S_1}}_\infty \leq r_1\}$. 

 Let $\nu \deq \mu|_B$ be the measure $\mu$ conditioned on $B$. 
We will bound this term using Markov's inequality, which requires a bound on the second moment $\mathbb{E}_\nu[\norm{\theta_{S_0} - \hat\theta_{S_0}}_2^2]$.

By \cref{lemma:term III}, 
$\mathbb{E}_\nu[\norm{\theta_{S_0} - \hat\theta_{S_0}}_2^2] \le \frac{2\,(d + \sqrt{d} L_\pi\, (r_0 + \sqrt{d_1} r_1))}{nC_{S_0}}$. By Markov's inequality, we have
\begin{align*}
\nu\Bigl(\norm{\theta_{S_0} - \hat\theta_{S_0}}_2 \ge \sqrt{\frac{6\,(d + \sqrt{d} L_\pi\, (r_0 + \sqrt{d_1} r_1))}{C_{S_0} d_0}}  \sqrt{\frac{d_0}{n}}\Bigr) \le \frac{1}{3}\,. 
\end{align*}
We can then apply Borell's inequality~\cite{Bor1974Cvx}:
\begin{align*}
&\mu\Bigl(r_0 \geq \norm{\theta_{S_0}-\hat\theta_{S_0}}_2 \geq t \sqrt{\frac{6\,(d + \sqrt{d} L_\pi\, (r_0 + \sqrt{d_1} r_1))}{C_{S_0} d_0}} \sqrt{\frac{d_0}{n}} \text{ and } \norm{\theta_{S_1} - \hat\theta_{S_1}}_\infty \leq r_1\Bigr) \\ 
&\qquad \leq \nu\Bigl(\norm{\theta_{S_0}-\hat\theta_{S_0}}_2 \geq t \sqrt{\frac{6\,(d + \sqrt{d} L_\pi\, (r_0 + \sqrt{d_1} r_1))}{C_{S_0} d_0}} \sqrt{\frac{d_0}{n}}\Bigr) \\\ 
&\qquad \leq \frac{2}{3}\times 2^{-\frac{t+1}{2}}\,.
\end{align*}
To ensure ${\rm III} \leq \frac{\epsilon}{4}$, it is sufficient to have $\delta_0 \ge (2 \log_2(\frac{8}{3\epsilon}) - 1)\sqrt{\frac{6\,(d + \sqrt{d} L_\pi\, (r_0 + \sqrt{d_1} r_1))}{C_{S_0} d_0}}$.

\paragraph{Term IV\@.}
We want to bound the mass of the region
$A \deq \bigl\{\theta \in\Theta: \norm{\theta_{S_0} - \hat\theta_{S_0}}_2 \leq \delta_0\sqrt{\frac{d_0}{n}},\, \frac{\delta_1}{n} \leq \norm{\theta_{S_1} - \hat\theta_{S_1}}_\infty \leq r_1 \bigr\}$.

Let us first define the local neighborhood
$$G \deq \Bigl\{\theta \in \Theta : \norm{\theta_{S_0} - \hat\theta_{S_0}}_2 \leq \delta_0\sqrt{\frac{d_0}{n}}\,,\; \norm{\theta_{S_1} - \hat\theta_{S_1}}_\infty \leq r_1 \Bigr\}\,.$$
Notice that $A \subseteq \bigcup_{j \in S_1} A_j$, where $A_j \deq \bigl\{\theta \in G : \theta_j - \hat\theta_j \geq \frac{\delta_1}{n}\bigr\}$.

For any $\theta \in G$ and $j \in S_1$, let $\bar\theta \deq (\hat\theta_{S_0}, \theta_{S_1})$. Since $\bar\theta \in B(\hat\theta, 0, r_1)$, our local gradient condition implies $\partial_j \ell(\bar\theta) \leq -C_{S_1}$. Moreover,
\begin{align*}
    |\partial_j \ell(\theta) - \partial_j \ell(\bar\theta)| 
    &\leq \sup_{u \in G} {\norm{\nabla^2 \ell(u)}_{\mathrm{op}}} \, \norm{\theta_{S_0} - \hat\theta_{S_0}}_2 \leq s_2 \delta_0 \sqrt{\frac{d_0}{n}}\,.
\end{align*}
Provided that $n \gtrsim s_2^2 \delta_0^2 d_0 / C_{S_1}^2$,
 we obtain $\partial_j \ell(\theta) \leq -\frac{C_{S_1}}{2}$ for all $\theta \in G$.

For a fixed $j \in S_1$, we now bound $\mu(A_j)$. Let $\tilde\mu(\theta) \deq \pi(\theta)\exp\{n\ell(\theta)\}1_\Theta(\theta)$ denote the unnormalized density. Let $t \deq \frac{\delta_1}{n}$. Since $G$ is defined by $\ell_\infty$ bounds and $\theta_j - \hat\theta_j \geq t$ on $A_j$, the shifted point $\theta - te_j$ for $\theta \in A_j$ remains within $G$. 

Using the product structure of the prior, we compare the unnormalized density at $\theta$ and $\theta - t e_j$:
\begin{align*}
    \log \frac{\tilde\mu(\theta)}{\tilde\mu(\theta - t e_j)} 
    &= \log \frac{\pi_j(\theta_j)}{\pi_j(\theta_j - t)} + n\, \bigl( \ell(\theta) - \ell(\theta - t e_j) \bigr) \\
    &= \log \frac{\pi_j(\theta_j)}{\pi_j(\theta_j - t)} + n \int_0^t \partial_j \ell(\theta - u e_j) \, d u\,.
\end{align*}
 The first term is at most $L_\pi t$ as $\log \pi_j$ is $L_\pi$-Lipschitz. For the second term, since the line segment from $\theta - t e_j$ to $\theta$ lies in $G$, we have $\partial_j \ell(\theta - u e_j) \leq - \frac{C_{S_1}}{2}$. Therefore,
\begin{align*}
    \log \frac{\tilde\mu(\theta)}{\tilde\mu(\theta - t e_j)} \leq L_\pi t - n\, \frac{C_{S_1}}{2}\, t = - \Bigl( \frac{C_{S_1}}{2} - \frac{L_\pi}{n} \Bigr)\, \delta_1\,.
\end{align*}
Exponentiating this inequality gives $\tilde\mu(\theta) \leq \exp\bigl(- ( \frac{C_{S_1}}{2} - \frac{L_\pi}{n} )\, \delta_1 \bigr)\, \tilde\mu(T_j(\theta))$.

Integrating this bound over $\theta \in A_j$ and applying the change of variables $\vartheta = T_j(\theta)$ which has Jacobian $1$, we obtain
\begin{align*}
    \int_{A_j} \tilde\mu(\theta) \, d\theta 
    &\leq \exp\Bigl(- \Bigl( \frac{C_{S_1}}{2} - \frac{L_\pi}{n} \Bigr)\, \delta_1 \Bigr) \int_{A_j} \tilde\mu(\theta - t e_j) \, d\theta \\
    &\leq \exp\Bigl(- \Bigl( \frac{C_{S_1}}{2} - \frac{L_\pi}{n} \Bigr)\, \delta_1 \Bigr) \int_G \tilde\mu(\vartheta) \, d\vartheta\,,
\end{align*}
where the last inequality follows since $T_j(A_j) \subseteq G$. Dividing both sides by the normalizing constant $Z = \int_\Theta \tilde\mu(\vartheta) \, d\vartheta$,
\begin{align*}
    \mu(A_j) \leq \exp\Bigl(- \Bigl( \frac{C_{S_1}}{2} - \frac{L_\pi}{n} \Bigr)\, \delta_1 \Bigr)\, \mu(G) \leq \exp\Bigl(- \Bigl( \frac{C_{S_1}}{2} - \frac{L_\pi}{n} \Bigr)\, \delta_1 \Bigr)\,.
\end{align*}

Finally, we apply a union bound over all $j \in S_1$:
\begin{align*}
    {\rm IV} \leq \sum_{j \in S_1} \mu(A_j) \leq d_1 \exp\Bigl(- \Bigl( \frac{C_{S_1}}{2} - \frac{L_\pi}{n} \Bigr)\, \delta_1 \Bigr)\,.
\end{align*}
This can be made at most $\varepsilon/4$ provided that 
\begin{align*}
    n \gg \frac{L_\pi}{C_{S_1}} \vee \frac{s_2^2 \delta_0^2 d_0}{C_{S_1}^2} \qquad \text{and} \qquad \delta_1 \gtrsim \frac{\log(d_1/\varepsilon)}{C_{S_1}}\,.
\end{align*}


\paragraph{Finishing the proof.} Collating all of the conditions and keeping the dominant terms yields the statement of the theorem.
Here, we provide a more explicit set of conditions for later reference; for simplicity, we assume $L_\pi = 0$ and $d_1 \le d_0$ (so that $d_0 \asymp d$), and that the parameters $C_{S_0}$, $C_{S_1}$, $r_0$, $r_1$, and $s_2$ are polynomial in $d$.
From terms III and IV\@, we take
\begin{align*}
    \delta_0 \asymp \frac{1}{\sqrt{C_{S_0}}} \log\frac{1}{\varepsilon}\,, \qquad \delta_1 \asymp \frac{1}{C_{S_1}} \log \frac{d_1}{\varepsilon}\,.
\end{align*}
Noting that~\eqref{eq:n_cond_1} and the second term of~\eqref{eq:n_cond_2} are subsumed by the first term in~\eqref{eq:n_cond_2}, it leads to the condition
\begin{align*}
    n \gg \Bigl[\frac{s_2^2 d_0 d_1}{C_{S_0} C_{S_1}^2} + \frac{d_0}{C_{S_1} r_1} + \frac{s_2^{2/3} d_0^{1/3} d_1^{1/3}}{C_{S_0}^{1/3} C_{S_1}^{4/3} r_1^{2/3}} + \frac{s_2 d_1}{C_{S_1}^2} + \frac{s_2^{1/2} d_1^{1/2}}{C_{S_1}^{3/2} r_1^{1/2}} \Bigr] \log^2\bigl(\frac{d}{\varepsilon}\bigr)\,.
\end{align*}
\end{proof}

\subsection{Proof of Theorem~\ref{proof_lemma: exponential_decay}}\label{appendix:proof_lemma: exponential_decay}

\begin{proof}[Proof of \cref{proof_lemma: exponential_decay}]
    As in the proof of \cref{thm:concentration_log_concave}, let us break $\mu(\Theta \setminus \hat\Theta_{\delta})$ into four parts:
\begin{align*}
    {\rm I} &= \mu(\norm{\theta_{S_0}-\hat\theta_{S_0}}_2 \geq r_0)\,, \\ 
    {\rm II} &= \mu(\norm{\theta_{S_1}-\hat\theta_{S_1}}_{\infty} \geq r_1)\,, \\
    {\rm III} &= \mu\bigl(\delta_0\sqrt{\frac{d_0}{n}} \leq \norm{\theta_{S_0}-\hat\theta_{S_0}}_2 \leq r_0,\, \norm{\theta_{S_1} - \hat\theta_{S_1}}_\infty \leq r_1\bigr)\,, \\ 
    {\rm IV} &= \mu\bigl(\norm{\theta_{S_0} - \hat\theta_{S_0}}_2 \leq \delta_0 \sqrt{\frac{d_0}{n}},\, \frac{\delta_1}{n} \leq \norm{\theta_{S_1}- \hat\theta_{S_1}}_\infty \leq r_1\bigr)\,.
\end{align*}
All of the above regions are understood to be intersected with $\Theta$.
Since the proof is mostly similar to the one for \cref{thm:concentration_log_concave}, we keep the arguments brief.

\paragraph{Term I.}
    For term I, we bound the ratio:
    \begin{align*}
        {\rm I} &= \frac{\int_{(\Theta_{S_0} \setminus B_2(\hat\theta_{S_0}, r_0)) \times \Theta_{S_1} } \exp\{n\,(\tilde\ell(\theta) - \tilde\ell(\hat\theta))\}\,d\theta}{\int_\Theta \exp\{n\,(\tilde\ell(\theta) - \tilde\ell(\hat\theta))\}\,d\theta} \\
        &\leq \frac{\int_{(\Theta_{S_0} \setminus B_2(\hat\theta_{S_0}, r_0)) \times \Theta_{S_1} } \exp\{-n\zeta + \sqrt{d_0} L_\pi\, \norm{\theta_{S_0} - \hat\theta_{S_0}}_2 + d_1 L_\pi\, \norm{\theta_{S_1} - \hat\theta_{S_1}}_\infty\}\,d\theta}{\int_{\hat\Theta_\delta} \exp\{n\,(\tilde\ell(\theta) - \tilde\ell(\hat\theta))\}\,d\theta} \\
        &\leq \frac{\mathsf V_{d_0}(R_\Theta^{d_0}-r_0^{d_0}) \mathsf V_{d_1} R_\Theta^{d_1}\exp(-n\zeta + \sqrt{d_0} L_\pi R_\Theta + d_1 L_{\pi} R_\Theta)}{\frac{1}{2}\exp\Bigl( - \frac{L_\pi d_0\delta_0}{\sqrt n} - n\,\error\Bigr)\,\bigl(\frac{2\pi}{ns_2}\bigr)^{d_0/2} \prod_{j\in S_1} \frac{1-\exp\bigl(\partial_j\ell(\hat\theta)\,\delta_1\,(1 + L_\pi/(C_{S_1} n))\bigr)}{(n+L_\pi/C_{S_1})\,|\partial_j \ell(\hat\theta)|}}\,,
    \end{align*}
    where the lower bound for the denominator comes from the proof of Term II in \cref{appendix:proof_lemma: log concave}.
    This yields the bound
    \begin{align*}
        {\rm I}
        &\lesssim \frac{\frac{1}{\sqrt{d_0 d_1}}\,(\frac{ns_2 e}{d_0})^{d_0/2} (\frac{2\pi e}{d_1})^{d_1/2} R_{\Theta}^{d} \exp(-n\zeta + \sqrt{d_0} L_\pi R_\Theta + d_1 L_{\pi} R_\Theta + \frac{L_\pi d_0\delta_0}{\sqrt n} + n\,\error)}{\prod_{j\in S_1} \frac{1-\exp\bigl(C_{S_1}\,\delta_1\,(1 + L_\pi/(C_{S_1} n))\bigr)}{(n+L_\pi/C_{S_1})\,|\partial_j \ell(\hat\theta)|}}\,.
    \end{align*}
    By same arguments as in \cref{appendix:proof_lemma: log concave}, if we assume that $L_\pi \lesssim C_{S_1} n/d_1$, $\delta_1 \gtrsim C_{S_1}^{-1} \log d_1$, 
    and
    \begin{align*}
        n
        &\gg \frac{1}{\zeta}\,\Bigl[\sqrt{d_0} L_\pi R_\Theta \vee d_1 L_\pi R_\Theta  \vee d \log R_\Theta \vee \log \frac{1}{\varepsilon}\Bigr] \\
        &\qquad{} \vee \bigl(\frac{L_\pi d_0 \delta_0}{\zeta}\bigr)^{2/3} \vee \bigl(\frac{s_2 \delta_0 \delta_1 \sqrt{d_0 d_1}}{\zeta}\bigr)^{2/3} \vee \bigl(\frac{s_2 \delta_1^2 d_1}{\zeta}\bigr)^{1/2}\,,
    \end{align*}
    Term I is bounded by $\varepsilon/4$.

\paragraph{Term II.}
This term is bounded in exactly the same way as Term I\@.

\paragraph{Terms III and IV.}
    Term III and IV are bounded exactly the same as in the proof of \cref{appendix:proof_lemma: log concave} because of the log-concavity in $B(\hat\theta, r_0, r_1)$, local strong log-concavity in the regular part and the strictly negative gradient in the non-regular part, albeit this bound is looser in this well-separated mode case since we do not have mass outside of a compact set of radius $R_\Theta$.

\paragraph{Finishing the proof.} Collating all of the conditions finishes the proof.
\end{proof}
\section{Checking the assumptions for the examples}


\subsection{Proof of Theorem~\ref{check:assumptions_lr}}
\begin{proof}
    We will check each assumption.
\paragraph{\cref{assumption: random convexity of regular part at true parameter}.}
    This follows directly from Theorem 6 in \cite{chardon2024finitesampleperformancemaximumlikelihood}.

\paragraph{\cref{ass:subG}.}
Recall that
\begin{align*} 
    \grad \ell(\theta; X_1) &= \bigl(Y_1 - c^{\prime}(X_1^\T\theta)\bigr)\,X_1\,.
\end{align*}
Since $0 \le c^\prime \leq 1$ and $Y_1 \in \{0,1\}$, the term $Y_1 - c^{\prime}(X_1^\T\theta)$ is bounded between $-1$ and $1$. Given that $X_1$ is a standard Gaussian random vector, we have
\begin{align*}
        \|\langle v, \nabla \ell(\theta; X_1) - \E\nabla \ell(\theta; X_1)\rangle\|_{\psi_1} \lesssim \norm{\inangle{v, X_1}}_{\psi_2} \lesssim 1\,.
    \end{align*}

\paragraph{\cref{ass:bounded_mixed_derivatives_ell_n}.}
Since $\grad^2 \ell_n(\theta) = -n^{-1} \sum_{i=1}^n c^{\prime\prime}(X_i^\T\theta)\,X_i X_i^\T$ and $c^{\prime\prime}(\eta) = \frac{\exp{\eta}}{(1+\exp{\eta})^2} \in [0,1]$,
\begin{align*}
    0\preceq -\nabla^2 \ell_n(\theta)
    \preceq \frac{1}{n} \sum_{i=1}^n X_i X_i^\T\,.
\end{align*}
By~\cite[Example 6.1]{Wai19Stats}, with probability at least $1-\eta$,
$$\bigl\lVert \frac{1}{n}\sum_{i=1}^n X_i X_i^\T\bigr\rVert_{\rm op} \lesssim 1 + \sqrt{\frac{d+\log(1/\eta)}{n}} + \frac{d+\log(1/\eta)}{n}\,.$$
Thus, the operator norm of the Hessian is $O(1)$ provided $n \gg d + \log(1/\eta)$.
\end{proof}


\subsection{Proof of Theorem~\ref{theorem:poisson_check_assumptions}}

\begin{proof}
Since $A_i\theta^\star \geq c$ for all $i \in [n]$, and $\norm{A_i}_2 \leq a_2$, then for any $\theta \in B(\theta^\star, r_0, r_1)$,
$$A_i\theta \geq A_i\theta^\star - \norm{A_i}_2\,\norm{\theta - \theta^\star}_2 \geq A_i\theta^\star - a_2 R \geq A_i\theta^\star/2 \ge c/2\,,$$
where we used the assumption that $R \leq c/(2a_2)$.

We note that since $TY_i \sim \Pois(TA_i\theta^\star)$, then $\norm{TY_i}_{\psi_1} \lesssim TA_i\theta^\star$.
In particular,~\cite[Theorem 2.8.1]{Vershynin_2018} shows that with probability at least $1-\eta$,
\begin{align}\label{eq:pois_concentration}
    \frac{1}{n} \sum_{i=1}^n \frac{TY_i}{A_i\theta^\star}
    &\lesssim T\,\Bigl(1 + \sqrt{\frac{\log(1/\eta)}{n}} + \frac{\log(1/\eta)}{n}\Bigr)\,.
\end{align}

We first verify~\cref{ass:subG} and~\cref{ass:bounded_mixed_derivatives_ell_n}.
Then, to verify~\cref{assumption: random convexity of regular part at true parameter}, we invoke \cref{lemma: l_n convexity}, which first requires us to show Lipschitz continuity of the Hessian (\cref{assumption:holder_hessian}).

\paragraph{\cref{ass:subG}.} For any $\theta \in B(\theta^\star, r_0, r_1)$,
\begin{align*}
        \nabla \ell(\theta; Y_i) - \E\nabla \ell(\theta; Y_i)
        &= -TA_i + \frac{TY_i A_i}{A_i\theta} - \bigl(- TA_i + \frac{\E[TY_i] A_i}{A_i\theta}\bigr) \\[0.25em]
        &=  (TY_i - \E[TY_i])\, \frac{A_i}{A_i\theta}\,.
\end{align*}
Since $\norm{A_i}_2 \leq a_2$ and $TY_i \sim \Pois(TA_i\theta^\star)$, for any unit vector $v \in S^{d-1}$,
\begin{align*}
        \|\langle v, \nabla \ell(\theta; Y_i) - \E\nabla \ell(\theta; Y_i)\rangle\|_{\psi_1} \lesssim TA_i\theta^\star\, \frac{a_2}{A_i \theta} \lesssim Ta_2\,.
    \end{align*}

\paragraph{\cref{ass:bounded_mixed_derivatives_ell_n}.}
Fix $\theta \in B(\theta^\star, r_0, r_1)$. 
Then, by~\eqref{eq:pois_concentration}, we have
$$\sup_{\theta \in B(\theta^\star,r_0,r_1)}\norm{\grad^2\ell_n(\theta)}_{\rm op} = \sup_{\theta \in B(\theta^\star,r_0,r_1)}{\Bigl\lVert \frac{1}{n}\sum_{i=1}^n \frac{TY_i A_i^\T {A_i}}{(A_i \theta)^2}\Bigr\rVert_{\rm op}} \lesssim \frac{a_2^2}{c}\,\frac{1}{n}\sum_{i=1}^n \frac{TY_i}{A_i \theta^\star} \lesssim \frac{Ta_2^2}{c}$$
provided $n \gg \log(1/\eta)$.

\paragraph{\cref{assumption:holder_hessian}.} 
We take $\gamma= 1$. Let $f(\theta) \deq \frac{1}{(A_i\theta)^2}$. Then, $\grad f(\theta) = -\frac{2A_i^T}{(A_i\theta)^3}$. Therefore, by the mean value theorem, for any $\theta, \theta^\prime \in B(\theta^\star, r_0, r_1)$ and $i$ with $Y_i > 0$,
\begin{align*} 
    \bigl\lvert\frac{1}{(A_i\theta)^2} - \frac{1}{(A_i\theta^\prime)^2}\bigr\rvert
    &= \abs{\langle \grad f(\tilde{\theta}), \theta - \theta^\prime\rangle} \leq \norm{\grad f(\tilde{\theta})}_2\, \norm{\theta - \theta^\prime}_2 \leq \frac{2\,\norm{A_i}_2}{(A_i\tilde{\theta})^3}\,\norm{\theta - \theta^\prime}_2 \\
    &\lesssim \frac{a_2}{(A_i\theta^\star)^3}\, \norm{\theta - \theta^\prime}_2
\end{align*}
for some $\tilde{\theta}$ in the line segment between $\theta$ and $\theta^\prime$.

Uniformly over all $\theta, \theta^\prime \in B(\theta^\star, r_0, r_1)$, 
\begin{align*} 
 \norm{\grad^2 \ell_{n}(\theta) - \grad^2 \ell_n(\theta^\prime)}_{\rm op}
 &= \Bigl\lVert\frac{1}{n} \sum_{i=1}^n TY_i A_i^\T A_i\,\bigl(\frac{1}{(A_i\theta)^2}-\frac{1}{(A_i\theta^\prime)^2}\bigr)\Bigr\rVert_{\rm op} \\ 
&\lesssim \frac{a_2^2}{n} \sum_{i=1}^n TY_i\,\frac{a_2\,\norm{\theta-\theta^\prime}_2}{(A_i\theta^\star)^3} \\ 
&\le \frac{a_2^3}{c^2n} \sum_{i=1}^n \frac{TY_i}{A_i\theta^\star}\,\norm{\theta-\theta^\prime}_2 \\ 
&\lesssim \frac{Ta_2^3}{c^2}\, \norm{\theta-\theta^\prime}_2 
\text{ by \eqref{eq:pois_concentration} with probability at least } 1-\eta\,,
\end{align*}
provided $n \gg \log(1/\eta)$.
Therefore,~\cref{assumption:holder_hessian} holds with $s_3 \lesssim Ta_2^3/c^2$.

\paragraph{\cref{assumption: random convexity of regular part at true parameter}.}
Applying \cref{lemma: l_n convexity} gives
$$\inf_{\theta \in B(\theta^\star, r_0, r_1)}\lambda_{\min}(-\grad^2_{S_0} \ell_n(\theta)) \geq \frac{c_{S_0}^\star}{2} - \max_{\theta \in N_\epsilon(\theta^\star, r_0, r_1)}\norm{\grad^2_{S_0}\ell_n(\theta) - \grad^2_{S_0}\ell^\star(\theta)}_{\rm op}$$ with probability at least $1-\eta$, provided that $N_\varepsilon(\theta^\star,r_0,r_1)$ is an $\varepsilon$-covering net of $B(\theta^\star,r_0,r_1)$ with $\varepsilon \le \frac{c_{S_0}^\star}{2s_{3}}$. Now, we need the following uniform bound over any $\theta\in B(\theta^\star, r_0, r_1)$: by the matrix Bernstein inequality~\cite[Theorem 6.2]{troppUserfriendlyTailBounds2012},
\begin{align*}
        \norm{\grad^2_{S_0} \ell_n(\theta) - \E\grad^2_{S_0} \ell_n(\theta)}_{\rm op}
        &= \Bigl\lVert \frac{1}{n}\sum_{i=1}^n \frac{(TY_i - \E[TY_i])\, A_i^\T {A_i}}{( A_i \theta)^2}\Bigr\rVert_{\rm op}\\ 
        &\lesssim \frac{Ta_2^2}{c}\,\Bigl(\sqrt{\frac{\log(d/\eta)}{n}} + \frac{\log(d/\eta)}{n}\Bigr)\,,
\end{align*}
with probability at least $1-\eta$.
This is at most $c_{S_0}^\star/4$, provided
\begin{align*}
    n \gg \Bigl[\frac{Ta_2^2}{cc_{S_0}^\star} \vee \Bigl(\frac{Ta_2^2}{cc_{S_0}^\star}\Bigr)^2\Bigr] \log \frac{d}{\eta}\,.
\end{align*}
This implies that~\cref{assumption: random convexity of regular part at true parameter} holds with $c_{S_0} = c_{S_0}^\star/4$.
\end{proof}


\subsection{Some properties of Gaussian mixture models}
\paragraph{Basic properties.}
The mean and covariance of the random variable $X_i$ are given by
\begin{align*} 
    \bar\mu &\coloneqq \E X_i = \sum_{j=1}^{k}\omega_j\mu_j^\star\,, \\
    \Sigma &\coloneqq \cov(X_i) = \E[\cov(X_i)\mid Z_i] + \cov(\E[X_i\mid Z_i]) \\ 
    &= \sum_{j=1}^k \omega_j\Sigma_j + \sum_{j=1}^k \omega_j\, (\mu_j-\bar\mu)\,(\mu_j-\bar\mu)^\T\,,
\end{align*}
where $Z_i \in [k]$ is a latent variable denoting component membership.

We calculate the derivatives of the likelihood. 
The empirical log-likelihood is given by
\begin{align*}
    \ell_n(\theta) &= \frac{1}{n}\sum_{i=1}^{n} \log\Bigl(\sum_{j \in [k]}\omega_j\sN(X_i \mid \mu_j, \Sigma_j)\Bigr) \\ 
    &= \frac{1}{n}\sum_{i=1}^{n} \log\Bigl(\sum_{j \in [k]}\omega_j\exp\bigl\{-\frac{1}{2}\,(X_i - \mu_j)^\T\Sigma_j^{-1}(X_i - \mu_j)\bigr\}\Bigr) + \frac{1}{n}\sum_{i=1}^{n} \log\Bigl(\sum_{j \in [k]}\frac{\omega_j}{\sqrt{(2\pi)^d\det\Sigma_j}}\Bigr)\,.
\end{align*}
To simplify notation, for $i \in [n]$, $j \in [k]$, define $$\gamma_{ij} \deq \frac{\omega_j\sN(X_i \mid \mu_j, \Sigma_j)}{\sum_{j' \in [k]}\omega_{j'} \sN(X_i \mid \mu_{j'}, \Sigma_{j'})}\,, \qquad g_{ij} = \Sigma_j^{-1}(X_i - \mu_j)\,.$$ Let $\gamma_i \deq (\gamma_{i1}, \dotsc, \gamma_{ik})^\T$, $g_i \deq (g_{i1}, \dotsc, g_{ik})^\T$. 
The gradient of the log-likelihood with respect to $\mu$ is given by 
\begin{align}\label{gmm_gradient}
    \nabla_{\mu_j} \ell_n(\theta)&= \frac{1}{n}\sum_{i=1}^{n} \frac{\omega_j\sN(X_i \mid \mu_j, \Sigma_j)\,\Sigma_j^{-1}( X_i - \mu_j)}{\sum_{j' \in [k]}\omega_{j'}\sN(X_i \mid \mu_{j'}, \Sigma_{j'})} = \frac{1}{n}\sum_{i=1}^{n} \gamma_{ij} g_{ij}\,. 
\end{align}
The diagonal blocks of the Hessian are given by
\begin{align} \label{gmm_hessian}
    &\nabla^2_{\mu_j,\mu_j}\ell_n(\theta) = \frac{1}{n}\sum_{i=1}^{n} \bigl(\gamma_{ij} g_{ij} g_{ij}^\T - \gamma_{ij}^2 g_{ij} g_{ij}^\T - \gamma_{ij} \Sigma_j^{-1}\bigr) = \frac{1}{n}\sum_{i=1}^{n} \bigl(\gamma_{ij}\, (1-\gamma_{ij})\, g_{ij} g_{ij}^\T - \gamma_{ij} \Sigma_j^{-1} \bigr)\,.
\end{align}
The mixed Hessian (for $j \neq j^\prime$) is given by
\begin{align}\label{gmm_mixed_hessian}
    &\nabla^2_{\mu_{j}, \mu_{j^\prime}}\ell_n(\theta) = -\frac{1}{n}\sum_{i=1}^{n} \gamma_{ij} \gamma_{ij^{\prime}} g_{ij} g_{ij^\prime}^\T\,.
\end{align}
\paragraph{Concentration properties.}
We explore the concentration properties of the random variable $X_i$.
Throughout, let $K_1 \deq R_\Theta + \max_{j\in [k]}\sqrt{\Tr(\Sigma_j)}$.
\begin{lemma}\label{lemma:gmm_subgaussian}
    The following bound holds:
    \[
    \norm{\norm{X_i}_2}_{\psi_2} \vee \norm{\norm{X_i - \bar\mu}_2}_{\psi_2} \lesssim K_1 = R_\Theta + \max_{j\in [k]} \sqrt{\Tr(\Sigma_j)}\,.
    \]
\end{lemma}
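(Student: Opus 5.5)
The plan is to write $X_i$ through its latent representation, $X_i = \mu^\star_{Z_i} + \Sigma_{Z_i}^{1/2}\xi_i$, where $Z_i\in[k]$ has law $(\omega_1,\dotsc,\omega_k)$ and $\xi_i\sim\sN(0,I_d)$ is independent of $Z_i$. The triangle inequality then gives
\begin{align*}
    \norm{X_i}_2 \le \norm{\mu^\star_{Z_i}}_2 + \norm{\Sigma_{Z_i}^{1/2}\xi_i}_2 \le R_\Theta + \max_{j\in[k]}\norm{\Sigma_j^{1/2}\xi_i}_2\,,
\end{align*}
where the bound $\norm{\mu^\star_j}_2\le R_\Theta$ uses $\theta^\star\in\Theta$ and the definition of $R_\Theta$. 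A constant has $\psi_2$-norm comparable to itself, so the deterministic part contributes $O(R_\Theta)$ and everything reduces to the Gaussian part.

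For the Gaussian part, I would condition on $Z_i=j$. Gaussian concentration for the $\norm{\Sigma_j^{1/2}}_{\rm op}$-Lipschitz function $\xi\mapsto\norm{\Sigma_j^{1/2}\xi}_2$ gives
\begin{align*}
    \norm{\Sigma_j^{1/2}\xi}_2 \le \E\norm{\Sigma_j^{1/2}\xi}_2 + \norm{\Sigma_j}_{\rm op}^{1/2}\,G\,,
\end{align*}
with $G$ sub-Gaussian of $O(1)$ norm. Combining this with $\E\norm{\Sigma_j^{1/2}\xi}_2 \le \sqrt{\Tr\Sigma_j}$ (Jensen) and $\norm{\Sigma_j}_{\rm op}\le\Tr\Sigma_j$ yields $\norm{\norm{\Sigma_j^{1/2}\xi}_2}_{\psi_2}\lesssim\sqrt{\Tr\Sigma_j}$. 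The $\psi_2$ condition is a bound on $\E\exp(Y^2/t^2)$, so averaging the conditional bounds over the mixture weights $\omega_j$ keeps the same $t$. Hence $\norm{\norm{X_i}_2}_{\psi_2}\lesssim R_\Theta+\max_j\sqrt{\Tr\Sigma_j}=K_1$, and no union over $k$ is needed since we mix rather than take a max.

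For the centred variable, $\norm{\bar\mu}_2\le\sum_j\omega_j\norm{\mu_j^\star}_2\le R_\Theta$, so $\norm{X_i-\bar\mu}_2\le\norm{X_i}_2+R_\Theta$ and the same bound follows up to constants. I expect no real obstacle. The only care points are stating the mixture-averaging step for the Orlicz norm correctly and noting that $\mu_j^\star$ is controlled by $R_\Theta$ through membership of $\theta^\star$ in $\Theta$.
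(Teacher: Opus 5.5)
Your proposal is correct and follows the paper's proof essentially step for step: the triangle inequality with $\|\mu_j^\star\|_2\le R_\Theta$, Lipschitz Gaussian concentration plus Jensen to get $\sqrt{\Tr\Sigma_j}$ for the Gaussian part, conditioning on the latent component to pass to the mixture, and a final triangle inequality (using $\|\bar\mu\|_2\le R_\Theta$) for $X_i-\bar\mu$. One inconsistency: your first display bounds by $\max_{j}\|\Sigma_j^{1/2}\xi_i\|_2$, but the argument you actually run works with $\|\Sigma_{Z_i}^{1/2}\xi_i\|_2$, and that is also what lets you avoid a union over $k$, so drop the $\max$ from that display.
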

\begin{proof}
    Since $X_i$ is a mixture of $k$ Gaussians, it suffices to bound the sub-Gaussian norm for an arbitrary component $j \in [k]$. Let $Y \sim \sN(\mu_j^\star, \Sigma_j)$. We can represent $Y$ as $Y = \mu_j^\star + \Sigma_j^{1/2}Z$, where $Z \sim \mathcal{N}(0, I_d)$. By the triangle inequality for the sub-Gaussian norm, we have
    \begin{align*}
        \norm{\norm{Y}_2}_{\psi_2} &= \norm{\norm{\mu_j^\star + \Sigma_j^{\frac{1}{2}}Z}_2}_{\psi_2} \\
        &\leq \norm{\mu_j^\star}_2 + \norm{\norm{\Sigma_j^{\frac{1}{2}}Z}_2}_{\psi_2}\,.
    \end{align*}
    To bound the second term, define the function $f(z) \deq \norm{\Sigma_j^{1/2} z}_2$. For any $z_1, z_2 \in \mathbb{R}^d$, the Lipschitz continuity of $f$ follows from the definition of the operator norm:
    \[
    |f(z_1) - f(z_2)| \leq \norm{\Sigma_j^{1/2}(z_1 - z_2)}_2 \leq \norm{\Sigma_j^{1/2}}_{\text{op}}\, \norm{z_1 - z_2}_2\,.
    \]
    Thus, $f$ is Lipschitz with constant $\norm{\Sigma_j}_{\text{op}}^{1/2}$. By the Gaussian concentration inequality for Lipschitz functions (see, e.g.,~\cite[Theorem 5.2.2]{Vershynin_2018}), it holds that $\norm{f(Z) - \mathbb{E} f(Z)}_{\psi_2} \lesssim \norm{\Sigma_j}_{\text{op}}^{1/2}$.
    
    Furthermore, by Jensen's inequality, 
    \[
    \mathbb{E} f(Z) \leq \bigl(\mathbb{E}[\norm{\Sigma_j^{1/2}Z}_2^2]\bigr)^{1/2} = \left(\Tr(\Sigma_j)\right)^{1/2}\,.
    \]
    Combining the concentration result with the expectation bound yields
    \[
    \norm{\norm{\Sigma_j^{\frac{1}{2}}Z}_2}_{\psi_2} \leq \norm{f(Z) - \mathbb{E} f(Z)}_{\psi_2} + \norm{\mathbb{E} f(Z)}_{\psi_2} \lesssim \norm{\Sigma_j}_{\text{op}}^{1/2} + \sqrt{\Tr(\Sigma_j)}\,.
    \]
    Since $\norm{\Sigma_j}_{\text{op}} \leq \Tr(\Sigma_j)$, the overall bound is of order $\sqrt{\Tr(\Sigma_j)}$. Finally, observing that $\norm{\mu_j^\star}_2 \leq R_\Theta$ yields the bound for a single component.

    Let $J_i$ denote the index of the component for $X_i$.
    Then,
    \begin{align*}
        \|\|X_i\|_2\|_{\psi_2}
        &\le \|\|X_i - \mu^\star_{J_i}\|_2\|_{\psi_2} + \|\|\mu^\star_{J_i}\|_2\|_{\psi_2}
        \lesssim \|\|X_i - \mu^\star_{J_i}\|_2\|_{\psi_2} + R_\Theta
    \end{align*}
    since $\mu^\star_{J_i}$ is bounded.
    The first term is bounded by $R_\Theta + \max_{j\in [k]}\sqrt{\Tr(\Sigma_j)}$, by conditioning on the component.
The bound for $\norm{X_i - \bar\mu}_2$ follows by the triangle inequality.
\end{proof}

\begin{corollary}\label{cor:gmm_concentrations}
    Let $K_1 \deq R_\Theta + \max_{j\in [k]}\sqrt{\Tr(\Sigma_j)}$.
    There exists a universal constant $c' > 0$ such that for any $t > 0$, we have
    \begin{align*}
        \PP\Bigl(\frac{1}{n}\sum_{i=1}^{n}\bigl\{ \norm{X_i}_2 - \E[\norm{X_i}_2]\bigr\} > t\Bigr) &\leq 2\exp\Bigl(-\frac{c' nt^2}{K_1^2}\Bigr)\,, \\
        \PP\Bigl(\frac{1}{n}\sum_{i=1}^{n} \bigl\{\norm{X_i}_2^2 - \E[\norm{X_i}_2^2]\bigr\} > t\Bigr) &\leq 2\exp\Bigl(-c' n \min\bigl(\frac{t^2}{K_1^4}, \frac{t}{K_1^2}\bigr)\Bigr)\,, \\
        \PP\Bigl(\frac{1}{n}\sum_{i=1}^{n} \bigl\{\norm{X_i - \bar\mu}_2 - \E[\norm{X_i - \bar\mu}_2]\bigr\} > t\Bigr) &\leq 2\exp\Bigl(-\frac{c'nt^2}{K_1^2}\Bigr)\,.
    \end{align*}
    Furthermore, the second moment satisfies the bound
    \[
    \E[\norm{X_i-\bar\mu}_2^2] \le \E[\norm{X_i}_2^2] \leq d \lambda_{\max} + R_\Theta^2\,.
    \]
\end{corollary}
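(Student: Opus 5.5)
The plan is to derive each of the three tail bounds from \cref{lemma:gmm_subgaussian} and standard concentration for i.i.d.\ sums, and to read the second-moment bound off the mixture representation. For the first and third inequalities, \cref{lemma:gmm_subgaussian} gives $\|\|X_i\|_2\|_{\psi_2} \lesssim K_1$ and $\|\|X_i-\bar\mu\|_2\|_{\psi_2}\lesssim K_1$. Centering preserves the sub-Gaussian norm up to a universal constant, so $\|X_i\|_2 - \E\|X_i\|_2$ is a centered sub-Gaussian variable with $\psi_2$-norm $\lesssim K_1$. Since the $X_i$ are i.i.d., Hoeffding's inequality for sub-Gaussian sums (e.g.\ \cite[Theorem 2.6.2]{Vershynin_2018}) yields
\begin{align*}
\PP\Bigl(\frac1n\sum_{i=1}^n\{\|X_i\|_2-\E\|X_i\|_2\}>t\Bigr)\le 2\exp\Bigl(-\frac{c'n^2t^2}{nK_1^2}\Bigr)=2\exp\Bigl(-\frac{c'nt^2}{K_1^2}\Bigr).
\end{align*}
The same argument applies verbatim to $\|X_i-\bar\mu\|_2$.

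For the second inequality, I use that the square of a sub-Gaussian variable is subexponential with $\|\|X_i\|_2^2\|_{\psi_1}=\|\|X_i\|_2\|_{\psi_2}^2\lesssim K_1^2$. Centering again costs only a constant factor. Bernstein's inequality for subexponential sums \cite[Theorem 2.8.1]{Vershynin_2018} then gives the tail bound $2\exp(-c'n\min(t^2/K_1^4,\,t/K_1^2))$. The constant $c'$ is taken to be the minimum of the universal constants appearing in the three applications.

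For the moment bound, the first inequality $\E\|X_i-\bar\mu\|_2^2\le\E\|X_i\|_2^2$ is the variance decomposition $\E\|X_i\|_2^2=\E\|X_i-\bar\mu\|_2^2+\|\bar\mu\|_2^2$. For the second, I condition on the latent component $J_i$ to get $\E\|X_i\|_2^2=\sum_j\omega_j(\Tr\Sigma_j+\|\mu_j^\star\|_2^2)$. Then $\Tr\Sigma_j\le d\|\Sigma_j\|_{\rm op}\le d\lambda_{\max}$ by \cref{assumption:gmm_covariance}, and $\|\mu_j^\star\|_2\le R_\Theta$ since $\theta^\star\in\Theta$. Summing with weights $\sum_j\omega_j=1$ gives $d\lambda_{\max}+R_\Theta^2$.

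No step is a real obstacle. The only care needed is tracking that centering and squaring change the Orlicz norms by universal constants only, so that everything can be absorbed into the single constant $c'$.
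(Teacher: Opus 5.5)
Your proposal is correct and follows essentially the same route as the paper. Both arguments get the tail bounds from the sub-Gaussian norm bound in \cref{lemma:gmm_subgaussian} combined with Hoeffding/Bernstein, and the second-moment bound by conditioning on the latent component. You also spell out the variance decomposition $\E\|X_i\|_2^2=\E\|X_i-\bar\mu\|_2^2+\|\bar\mu\|_2^2$ behind the first moment inequality, which the paper leaves implicit.
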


\begin{proof}
    We first establish the bound on the expected value. By the law of total expectation and the properties of the trace, we have
    \[
    \E[\norm{X_i}_2^2] = \sum_{j=1}^k \omega_j \left(\Tr(\Sigma_j) + \norm{\mu_j^\star}_2^2\right).
    \]
    Using the bound $\Tr(\Sigma_j) \leq d \norm{\Sigma_j}_{\text{op}} \leq d \lambda_{\max}$ and $\norm{\mu_j^\star}_2 \leq R_\Theta$, we obtain $\E[\norm{X_i}_2^2] \leq d\lambda_{\max} + R_\Theta^2$.

    The concentration inequalities follow from standard results for sums of independent sub-Gaussian and sub-exponential random variables. Specifically, by \cref{lemma:gmm_subgaussian}, $\norm{X_i}_2$ is sub-Gaussian with norm bounded by $O(K_1)$. The square $\norm{X_i}_2^2$ is therefore sub-exponential with norm $\norm{\norm{X_i}_2^2}_{\psi_1} = \norm{\norm{X_i}_2}_{\psi_2}^2 \lesssim K_1^2$. Applying Bernstein's inequality yields the stated probability bounds.
\end{proof}

\begin{lemma}[{\cite[Exercise 4.7.3]{Vershynin_2018}}]\label{lemma:gmm_covariance_concentration}
    Let $\hat\Sigma_n \deq \frac{1}{n}\sum_{i=1}^{n} (X_i - \E X_i) (X_i - \E X_i)^\T$ be the sample covariance matrix. Then, for any $u \geq 0$, with probability at least $1-2e^{-u}$, we have
    $$\norm{\hat\Sigma_n - \Sigma}_{\rm op} \lesssim \frac{R_\Theta^2 + \lambda_{\max}}{\lambda_{\min}}\,\Bigl(\sqrt{\frac{d+u}{n}} + \frac{d+u}{n}\Bigr)\,\norm{\Sigma}_{\rm op}\,.$$
\end{lemma}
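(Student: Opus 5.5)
The plan is to reduce \cref{lemma:gmm_covariance_concentration} to the standard covariance estimation bound for sub-Gaussian vectors, as in \cite[Exercise 4.7.3]{Vershynin_2018}. The centered vectors $W_i \deq X_i - \bar\mu$ are i.i.d.\ with mean zero and covariance $\Sigma$, and $\hat\Sigma_n = n^{-1}\sum_i W_i W_i^\T$. The cited result reads: if $\norm{\langle W_i, v\rangle}_{\psi_2}^2 \le K^2\, \langle v, \Sigma v\rangle$ for all $v$, then with probability at least $1-2e^{-u}$,
\begin{align*}
\norm{\hat\Sigma_n - \Sigma}_{\rm op} \lesssim K^2\,\Bigl(\sqrt{\frac{d+u}{n}} + \frac{d+u}{n}\Bigr)\,\norm{\Sigma}_{\rm op}\,.
\end{align*}
That result is proved by an $\varepsilon$-net on $S^{d-1}$ together with Bernstein's inequality for the sub-exponential variables $\langle W_i,v\rangle^2 - v^\T\Sigma v$. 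So the whole task is to exhibit an admissible $K^2 \lesssim (R_\Theta^2+\lambda_{\max})/\lambda_{\min}$.

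First I bound the directional sub-Gaussian norm. I condition on the latent label $J_i = j$ and write $\langle W_i, v\rangle = \langle \mu_j^\star - \bar\mu, v\rangle + \langle \Sigma_j^{1/2} Z, v\rangle$ with $Z\sim \sN(0,I_d)$. The first term is bounded by $2R_\Theta$, since all means lie in $\Theta$, whose radius is $R_\Theta$. The second term is Gaussian with variance $v^\T\Sigma_j v \le \lambda_{\max}$ by \cref{assumption:gmm_covariance}. A mixture of variables each having $\psi_2$ norm at most $M$ also has $\psi_2$ norm at most $M$, because the defining moment generating function bound averages over components. Hence, uniformly over unit $v$,
\begin{align*}
\norm{\langle W_i, v\rangle}_{\psi_2}^2 \lesssim R_\Theta^2 + \lambda_{\max}\,.
\end{align*}

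Second, I lower bound the variance along every direction. Since $\Sigma = \sum_j \omega_j\Sigma_j + (\text{PSD})$, we have $v^\T\Sigma v \ge \sum_j \omega_j\, v^\T\Sigma_j v \ge \lambda_{\min}$, reading \cref{assumption:gmm_covariance} as a lower bound on the smallest eigenvalue of each $\Sigma_j$. Combining the two steps gives $\norm{\langle W_i,v\rangle}_{\psi_2}^2 \lesssim \frac{R_\Theta^2+\lambda_{\max}}{\lambda_{\min}}\, v^\T\Sigma v$. This is exactly the hypothesis of the cited result with $K^2 = (R_\Theta^2+\lambda_{\max})/\lambda_{\min}$, and substituting yields the stated bound.

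The main obstacle is the second step. As literally written, \cref{assumption:gmm_covariance} constrains only $\norm{\Sigma_j}_{\rm op}$, which is the largest eigenvalue. A positive lower bound on every quadratic form $v^\T\Sigma_j v$ is exactly what converts the absolute sub-Gaussian bound into the relative one the cited result needs. I would state explicitly that $\lambda_{\min}$ is interpreted as a uniform lower bound on $\lambda_{\min}(\Sigma_j)$, as the notation suggests.

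If that interpretation were unavailable, I would bypass the relative condition and run the net argument directly. For fixed unit $v$, $\langle W_i,v\rangle^2$ is sub-exponential with norm $\lesssim R_\Theta^2+\lambda_{\max}$. Bernstein's inequality plus a union bound over a $1/4$-net of size $9^d$ then gives $(R_\Theta^2+\lambda_{\max})\bigl(\sqrt{(d+u)/n} + (d+u)/n\bigr)$. This implies the claim, since $\lambda_{\min}\le\norm{\Sigma}_{\rm op}$ under either reading.
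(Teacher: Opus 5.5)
Your proposal is correct and follows the paper's route. The paper also bounds $\norm{\inangle{X_i-\bar\mu,x}}_{\psi_2}\lesssim (R_\Theta+\sqrt{\lambda_{\max}})\,\norm{x}_2$ by separating the bounded component-mean part from the within-component Gaussian part, lower bounds $\norm{\inangle{X_i-\bar\mu,x}}_{L^2}\ge\sqrt{\lambda_{\min}}\,\norm{x}_2$, and plugs $K^2\lesssim(R_\Theta^2+\lambda_{\max})/\lambda_{\min}$ into \cite[Theorem 4.7.1]{Vershynin_2018}. It tacitly uses the eigenvalue reading of \cref{assumption:gmm_covariance} that you make explicit, as it also does elsewhere through $\norm{\Sigma_j^{-1}}_{\rm op}\le 1/\lambda_{\min}$.

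One small caveat concerns only your fallback. Under the literal operator-norm reading you get $\norm{\Sigma}_{\rm op}\ge\max_j\omega_j\norm{\Sigma_j}_{\rm op}\ge\lambda_{\min}/k$, not $\lambda_{\min}\le\norm{\Sigma}_{\rm op}$; two equal-mean components with covariances $\mathrm{diag}(1,\epsilon)$ and $\mathrm{diag}(\epsilon,1)$ are a counterexample. That route therefore loses a factor of $k$, which is harmless only because the paper treats $k$ as a constant.
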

To establish \cref{lemma:gmm_covariance_concentration}, it suffices to verify the sub-Gaussian concentration of the centered random vector as required by~\cite[Theorem 4.7.1]{Vershynin_2018}:
\begin{proposition}\label{prop:subgaussian_property}
    The centered random vector $X_i - \bar\mu$ is sub-Gaussian. Specifically,
    $$\norm{\inangle{X_i - \bar\mu, x}}_{\psi_2} \lesssim \frac{R_\Theta + \sqrt{\lambda_{\max}}}{\sqrt{\lambda_{\min}}}\, \norm{\inangle{X_i - \bar\mu, x}}_{L^2}$$
    for any $x \in \R^d$. 
\end{proposition}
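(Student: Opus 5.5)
We need to show that for any $x\in\R^d$ the one-dimensional variable $\inangle{X_i-\bar\mu,x}$ has $\psi_2$ norm controlled by its $L^2$ norm. The plan has three steps: an upper bound on the $\psi_2$ norm, a lower bound on the $L^2$ norm, and a comparison of the two. By homogeneity it suffices to take $\norm{x}_2=1$.

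\textbf{Upper bound on the $\psi_2$ norm.} Condition on the latent label $J_i=j$. Write
\begin{align*}
\inangle{X_i-\bar\mu,x} = \inangle{\mu_j^\star-\bar\mu,x} + \inangle{\Sigma_j^{1/2}Z,x}\,, \qquad Z\sim\sN(0,I_d)\,.
\end{align*}
The second term is Gaussian with variance $x^\T\Sigma_jx\le\lambda_{\max}$, so its $\psi_2$ norm is $\lesssim\sqrt{\lambda_{\max}}$. The first term is a bounded random variable, since it depends only on $J_i$ and satisfies $\abs{\inangle{\mu_j^\star-\bar\mu,x}}\le 2R_\Theta$. Its $\psi_2$ norm is therefore $\lesssim R_\Theta$. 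For the mixture, I would use that the $\psi_2$ norm is controlled by bounding the moment generating function uniformly over components (or via the triangle inequality in $\psi_2$ applied to the decomposition $(\mu^\star_{J_i}-\bar\mu) + \Sigma_{J_i}^{1/2}Z$, with $Z$ independent of $J_i$). This gives
\begin{align*}
\norm{\inangle{X_i-\bar\mu,x}}_{\psi_2}\lesssim R_\Theta+\sqrt{\lambda_{\max}}\,.
\end{align*}

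\textbf{Lower bound on the $L^2$ norm.} By the variance decomposition recorded earlier,
\begin{align*}
\norm{\inangle{X_i-\bar\mu,x}}_{L^2}^2 = x^\T\Sigma x \ge \sum_j\omega_j\, x^\T\Sigma_jx \ge \lambda_{\min}\,.
\end{align*}

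\textbf{Main obstacle.} The second inequality needs $x^\T\Sigma_jx\ge\lambda_{\min}$, that is, $\lambda_{\min}$ must lower-bound the smallest eigenvalue of $\Sigma_j$. \cref{assumption:gmm_covariance} as literally written only bounds $\norm{\Sigma_j}_{\rm op}$ from below, which does not suffice. I would therefore read the assumption as the eigenvalue bound $\lambda_{\min} I\preceq\Sigma_j\preceq\lambda_{\max}I$, which is evidently intended, and note this explicitly in the proof.

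\textbf{Conclusion.} Dividing the two bounds gives
\begin{align*}
\norm{\inangle{X_i-\bar\mu,x}}_{\psi_2} \lesssim \frac{R_\Theta+\sqrt{\lambda_{\max}}}{\sqrt{\lambda_{\min}}}\,\norm{\inangle{X_i-\bar\mu,x}}_{L^2}\,,
\end{align*}
and homogeneity extends this to all $x\in\R^d$. The remaining steps are routine.
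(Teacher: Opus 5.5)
Your proposal is correct and is essentially the paper's own proof. The paper likewise applies the triangle inequality in $\psi_2$ to split the variable into a conditionally Gaussian fluctuation (of order $\sqrt{\lambda_{\max}}$) and a bounded mean shift (of order $R_\Theta$), and then lower-bounds the $L^2$ norm via $\sqrt{x^\top \Sigma x} \ge \sqrt{\lambda_{\min}}\,\|x\|_2$. Your caveat is well taken: that last step in the paper silently requires $\lambda_{\min}(\Sigma) \ge \lambda_{\min}$, i.e., the same eigenvalue reading $\lambda_{\min} I \preceq \Sigma_j$ of the covariance assumption that you make explicit.
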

\begin{proof}
    Let $x \in \R^d$ be fixed. The scalar random variable $\inangle{X_i-\bar\mu, x}$ follows a mixture distribution with components $\sN(\langle \mu_j^\star - \bar\mu, x\rangle, \inangle{\Sigma_j x, x})$. By the triangle inequality for the sub-Gaussian norm, we decompose the variable into the contribution from the component means and the within-component fluctuation:
    \begin{align*}
        \norm{\inangle{X_i-\bar\mu, x}}_{\psi_2} &\leq \norm{\inangle{X_i - \mu_{J_i}^\star, x}}_{\psi_2} + \norm{\inangle{\mu_{J_i}^\star - \bar\mu, x}}_{\psi_2}\,,
    \end{align*}
    where $J_i \in [k]$ denotes the latent component index. Conditional on $J_i=j$, the first term is Gaussian with standard deviation $\sqrt{\inangle{\Sigma_j x, x}} \leq \sqrt{\lambda_{\max}(\Sigma_j)}\,\norm{x}_2$. The second term involves a bounded random variable, as $\abs{\langle \mu_j^\star - \bar\mu, x\rangle} \leq \norm{\mu_j^\star - \bar\mu}_2\, \norm{x}_2 \leq 2R_\Theta\,\norm{x}_2$, which implies that its sub-Gaussian norm is bounded by $O(R_\Theta\, \norm{x}_2)$.
    
    Combining these bounds, we have
    \begin{align}\label{eq:gmm_subg_in_dir}
    \norm{\inangle{X_i-\bar\mu, x}}_{\psi_2} \lesssim  (R_\Theta + \sqrt{\lambda_{\max}})\, \norm{x}_2\,.
    \end{align}
We observe that $$\norm{\inangle{X_i - \bar\mu, x}}_{L^2} = \sqrt{\inangle{\Sigma x, x}} \geq \sqrt{\lambda_{\min}(\Sigma)}\,\norm{x}_2 \geq \sqrt{\lambda_{\min}}\, \norm{x}_2\,,$$ which proves the result.
\end{proof}

\begin{lemma}\label{lemma:result_gmm_covariance_concentration}Let
\begin{align*}
    n\gg \frac{R_\Theta^4 + \lambda_{\max}^2}{\lambda_{\min}^2}\,\bigl(d+\log(1/\eta)\bigr)\,.
\end{align*}
    Uniformly over $\theta\in\Theta$ and $j,l \in [k]$, with probability at least $1-\eta$, we have
    \begin{align*}\Bigl\lVert\frac{1}{n}\sum_{i=1}^{n} g_{ij} g_{il}^\T\Bigr\rVert_{\rm op} 
    &\lesssim \frac{R_\Theta^2 + \lambda_{\max}}{\lambda_{\min}^2}\,.
    \end{align*}
\end{lemma}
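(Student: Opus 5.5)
The first step is to note that the matrix $\frac1n\sum_i g_{ij}g_{il}^\T$ is a sandwich. Write $g_{ij} = \Sigma_j^{-1}(X_i-\mu_j)$. Then
\begin{align*}
    \frac1n\sum_{i=1}^n g_{ij}g_{il}^\T = \Sigma_j^{-1}\Bigl(\frac1n\sum_{i=1}^n (X_i-\mu_j)(X_i-\mu_l)^\T\Bigr)\Sigma_l^{-1}\,,
\end{align*}
so it suffices to bound the middle matrix uniformly over $\theta$ and then multiply by $\|\Sigma_j^{-1}\|_{\rm op}\|\Sigma_l^{-1}\|_{\rm op}\le \lambda_{\min}^{-2}$. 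Here I read \cref{assumption:gmm_covariance} as giving $\lambda_{\min}(\Sigma_j)\ge\lambda_{\min}$, as is used in \cref{prop:subgaussian_property}. The key point is that all dependence on $\theta$ sits in the shifts $\mu_j,\mu_l$. Therefore no covering net over $\Theta$ is needed: one concentration event for the data suffices for every $\theta$ at once.

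Next, center the data at $\bar\mu$. Write $X_i-\mu_j = (X_i-\bar\mu) + (\bar\mu-\mu_j)$ and expand:
\begin{align*}
    \frac1n\sum_{i}(X_i-\mu_j)(X_i-\mu_l)^\T = \hat\Sigma_n + \bar Y(\bar\mu-\mu_l)^\T + (\bar\mu-\mu_j)\bar Y^\T + (\bar\mu-\mu_j)(\bar\mu-\mu_l)^\T\,,
\end{align*}
with $\bar Y \deq \frac1n\sum_i (X_i-\bar\mu)$ and $\hat\Sigma_n$ as in \cref{lemma:gmm_covariance_concentration}. We bound the four terms separately.

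\begin{itemize}
\item \emph{Deterministic term.} Since $\mu_j,\bar\mu\in\Theta$, we have $\|\bar\mu-\mu_j\|_2\le 2R_\Theta$, so the last term has operator norm $O(R_\Theta^2)$.
\item \emph{Covariance term.} We have $\|\Sigma\|_{\rm op}\le\lambda_{\max}+4R_\Theta^2$. By \cref{lemma:gmm_covariance_concentration} with $u=\log(2/\eta)$, and under the stated lower bound on $n$,
\begin{align*}
    \|\hat\Sigma_n\|_{\rm op} \le \|\Sigma\|_{\rm op} + O\Bigl(\frac{R_\Theta^2+\lambda_{\max}}{\lambda_{\min}}\sqrt{\frac{d+\log(1/\eta)}{n}}\Bigr)\|\Sigma\|_{\rm op} \lesssim R_\Theta^2+\lambda_{\max}\,.
\end{align*}
The factor $(R_\Theta^4+\lambda_{\max}^2)/\lambda_{\min}^2$ in the hypothesis is exactly what makes the ratio $(R_\Theta^2+\lambda_{\max})/\lambda_{\min}$ times $\sqrt{(d+\log(1/\eta))/n}$ at most $O(1)$.
\item \emph{Cross terms.} These are rank one, with operator norm at most $2R_\Theta\|\bar Y\|_2$. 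I would bound $\|\bar Y\|_2$ deterministically via Cauchy--Schwarz: $\|\bar Y\|_2\le \|\hat\Sigma_n\|_{\rm op}^{1/2}$, because $\bar Y\bar Y^\T\preceq\hat\Sigma_n$. This gives cross terms of order $R_\Theta\sqrt{R_\Theta^2+\lambda_{\max}}\lesssim R_\Theta^2+\lambda_{\max}$. This avoids any separate net argument for the mean vector.
\end{itemize}

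Summing the four bounds gives a middle matrix of operator norm $O(R_\Theta^2+\lambda_{\max})$ on the single event from \cref{lemma:gmm_covariance_concentration}, which has probability at least $1-\eta$. This event does not depend on $\theta$, $j$, or $l$, so the bound is uniform over all of them, and multiplying by $\lambda_{\min}^{-2}$ yields the claim.

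The only real obstacle is making sure \cref{lemma:gmm_covariance_concentration}'s relative-error bound converts into an absolute $O(R_\Theta^2+\lambda_{\max})$ bound under exactly the stated sample-size condition. The sample-size condition is $n \gg \frac{R_\Theta^4+\lambda_{\max}^2}{\lambda_{\min}^2}(d+\log(1/\eta))$, and $(R_\Theta^2+\lambda_{\max})^2 \le 2(R_\Theta^4+\lambda_{\max}^2)$, so the square-root term is $O(1)$ and the linear term is even smaller. Everything else is deterministic algebra.
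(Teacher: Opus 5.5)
Your proof is correct and follows the paper's route. Both arguments sandwich by $\Sigma_j^{-1}$ and $\Sigma_l^{-1}$, recentre at $\bar\mu$, bound the deterministic part by $O(R_\Theta^2)$, and control $\hat\Sigma_n$ through \cref{lemma:gmm_covariance_concentration} under the stated sample-size condition. The one real difference is the cross term. The paper bounds $\bigl\lVert\frac1n\sum_i(X_i-\bar\mu)\bigr\rVert_2$ by a separate sub-Gaussian covering argument, which needs a second high-probability event. Your observation $\bar Y\bar Y^\T\preceq\hat\Sigma_n$ gives $\|\bar Y\|_2\le\|\hat\Sigma_n\|_{\rm op}^{1/2}$ on the same single event, which is cleaner and keeps the uniformity over $\theta$, $j$, $l$ transparent.
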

\begin{proof}
    By~\cref{lemma:gmm_covariance_concentration} and the condition on $n$,
    \begin{align*} 
        \Bigl\lVert\frac{1}{n}\sum_{i=1}^{n} g_{ij} g_{il}^\T\Bigr\rVert_{\rm op} &= \Bigl\lVert\frac{1}{n}\sum_{i=1}^{n} \Sigma_j^{-1}(X_i - \mu_j)(X_i - \mu_l)^\T\Sigma_l^{-1}\Bigr\rVert_{\rm op} \\
        &\le\frac{1}{\lambda_{\min}^2}\, \Bigl\lVert \frac{1}{n} \sum_{i=1}^n (X_i - \mu_j)(X_i - \mu_l)^\T \Bigr\rVert_{\rm op} \\
        &\le \frac{1}{\lambda_{\min}^2}\,\Bigl( \Bigl\lVert \frac{1}{n} \sum_{i=1}^n (\mu_j - \bar\mu)(X_i - \mu_l)^\T\Bigr\rVert_{\rm op} + \Bigl\lVert \frac{1}{n} \sum_{i=1}^n (\mu_j - \bar\mu) (\mu_l-\bar\mu)^\T \Bigr\rVert_{\rm op} \\
        &\qquad\qquad{}+ \Bigl\lVert \frac{1}{n} \sum_{i=1}^n (X_i - \bar\mu)(X_i -\bar\mu)^\T\Bigr\rVert_{\rm op}\Bigr) \\
        &\lesssim\frac{1}{\lambda_{\min}^2}\,\Bigl(R_\Theta \,\Bigl\lVert \frac{1}{n} \sum_{i=1}^n (X_i - \mu_l)\Bigr\rVert_2 + R_\Theta^2 + \lambda_{\max} \Bigr) \\
        &\lesssim\frac{1}{\lambda_{\min}^2}\,\Bigl(R_\Theta \,\Bigl\lVert \frac{1}{n} \sum_{i=1}^n (X_i - \bar\mu)\Bigr\rVert_2 + R_\Theta^2 + \lambda_{\max} \Bigr)\,. 
    \end{align*}
    Next, we note that for any unit vector $v$, $\norm{\langle X_i-\bar\mu, v\rangle}_{\psi_2} \lesssim R_\Theta+\sqrt{\lambda_{\max}}$ by~\eqref{eq:gmm_subg_in_dir}, and hence we have $\norm{n^{-1} \sum_{i=1}^n \langle X_i-\bar\mu,v\rangle}_{\psi_2} \lesssim (R_\Theta+\sqrt{\lambda_{\max}})/\sqrt n$. By a standard covering argument, this implies $\norm{n^{-1} \sum_{i=1}^n (X_i-\bar\mu)}_2 \lesssim (R_\Theta +\sqrt{\lambda_{\max}})\sqrt{(d+\log(1/\eta))/n}$ with probability at least $1-\eta$.  By the condition on $n$, we conclude the result.
\end{proof}

\begin{lemma}[Uniform convergence of the GMM log-likelihood]\label{lemma:gmm_log_likelihood_uniform_convergence}
    If $n \gg d\log^2 d$, 
    then with probability at least $1 - \eta$, the empirical log-likelihood converges uniformly to the population log-likelihood with the rate:
    \begin{align*}
        \sup_{\theta \in \Theta} \abs{\ell_n(\theta) - \ell^{\star}(\theta)} \lesssim \Bigl(\frac{R_\Theta^2\sqrt d + R_\Theta \sqrt{\lambda_{\max}}\,d}{\sqrt n} + \frac{\lambda_{\max}\, d\log n}{n}\Bigr) \,\frac{\log(1/\eta)}{\lambda_{\min}}\,. 
    \end{align*}
\end{lemma}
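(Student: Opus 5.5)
We will prove the uniform bound on $\sup_{\theta \in \Theta} \abs{\ell_n(\theta) - \ell^\star(\theta)}$ by a pointwise-concentration plus covering argument over the compact set $\Theta$. Write $\ell(\theta;X) = \log\sum_{j}\omega_j \sN(X\mid\mu_j,\Sigma_j)$. Since all terms are bounded only after truncation, the first step is a \emph{pointwise envelope}. Use the log-sum-exp sandwich: $\ell(\theta;X)$ lies between $\max_j\{\log\omega_j - \frac12 (X-\mu_j)^\T\Sigma_j^{-1}(X-\mu_j)\}$ and the same quantity plus $0$, up to the constant normalizing terms, which cancel in $\ell_n-\ell^\star$. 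This gives
\begin{align*}
    \abs{\ell(\theta;X) - c_\theta} \lesssim \frac{\norm{X}_2^2 + R_\Theta^2}{\lambda_{\min}} + \log\frac{1}{\min_j \omega_j}\,,
\end{align*}
so $\ell(\theta;X_i) - \E\ell(\theta;X_i)$ is sub-exponential with $\psi_1$ norm $\lesssim K_1^2/\lambda_{\min}$ by \cref{lemma:gmm_subgaussian}. More sharply, I would split $\ell(\theta;X) = -\frac{1}{2}X^\T\Sigma_{1}^{-1}X + (\text{terms linear in } X \text{ with coefficient } \lesssim R_\Theta/\lambda_{\min}) + (\text{bounded})$, where the quadratic term does not depend on $\theta$ when the covariances coincide. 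In general, I will handle it via \cref{cor:gmm_concentrations}, which yields the $\lambda_{\max} d/n$-type contribution, while the linear term produces the $R_\Theta\sqrt{\lambda_{\max}}\,d/\sqrt n$ contribution.

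The second step is a \emph{Lipschitz bound in $\theta$}. By \eqref{gmm_gradient}, $\norm{\nabla_\theta \ell_n(\theta)}_2 \le \frac1n\sum_i \max_j\norm{g_{ij}}_2 \lesssim \lambda_{\min}^{-1}\frac1n\sum_i(\norm{X_i}_2 + R_\Theta)$. The same bound holds for $\ell^\star$ with expectations. By \cref{cor:gmm_concentrations}, with probability $1-\eta$ both $\ell_n$ and $\ell^\star$ are $L$-Lipschitz on $\Theta$ with $L \lesssim (\sqrt{d\lambda_{\max}} + R_\Theta)/\lambda_{\min}$.

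The third step is the \emph{net and union bound}. Take an $\varepsilon$-net $N_\varepsilon$ of $\Theta \subseteq B(0,R_\Theta)\subset\R^{kd}$, with $\log\abs{N_\varepsilon} \lesssim kd\log(3R_\Theta/\varepsilon)$. Choose $\varepsilon \asymp 1/(Ln)$ so that the discretization error $2L\varepsilon$ is $O(1/n)$; this makes $\log\abs{N_\varepsilon} \lesssim d\log n$, giving the $\log n$ factor. At each net point, apply Bernstein's inequality for the sub-exponential average and take a union bound, which gives a deviation
\begin{align*}
    \lesssim \sigma_\ell\sqrt{\frac{d\log n + \log(1/\eta)}{n}} + K_\ell\,\frac{d\log n+\log(1/\eta)}{n}\,.
\end{align*}
Here the variance proxy $\sigma_\ell$ comes from the linear part, of order $(R_\Theta^2 + R_\Theta\sqrt{\lambda_{\max}d})/\lambda_{\min}$, and $K_\ell \lesssim K_1^2/\lambda_{\min} \lesssim \lambda_{\max} d/\lambda_{\min}$. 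Collecting terms, and crudely bounding $\sqrt{\log(1/\eta)}$ and $\sqrt{d\log n}$ factors by $\log(1/\eta)\sqrt d$ under $n\gg d\log^2 d$, yields the stated rate.

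The main obstacle is getting the $\theta$-dependent part to be sub-\emph{Gaussian}-like, at rate $1/\sqrt n$, rather than paying $K_1^2 = \Theta(d)$ in the leading term. I will first try subtracting the $\theta$-independent quadratic $-\frac12\max_j X^\T\Sigma_j^{-1}X$ piece, or showing that the difference $\ell(\theta;X)-\ell(\theta';X)$ is controlled by $\lambda_{\min}^{-1}(\norm{X}_2+R_\Theta)\norm{\theta-\theta'}_2$. If that fails, I would accept the cruder sub-exponential bound, whose quadratic piece only enters the $d\log n/n$ term after recentering by $\E\norm{X}_2^2$.
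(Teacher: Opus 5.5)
Your route (single-scale $\varepsilon$-net, pointwise Bernstein, union bound) is different from the paper's, and as written it has two gaps.

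\textbf{First gap: the pointwise step.} At each net point you apply Bernstein to $\ell(\theta;X_i)$ itself, with a variance proxy $\sigma_\ell\lesssim(R_\Theta^2+R_\Theta\sqrt{\lambda_{\max}d})/\lambda_{\min}$ that you attribute to ``the linear part''. For unequal $\Sigma_j$, $\ell$ does not split into a $\theta$-free quadratic plus a remainder linear in $X$:
\begin{itemize}
\item where component $j$ dominates, $\ell(\theta;X)+\frac12 X^\T\Sigma_1^{-1}X$ still contains $-\frac12 X^\T(\Sigma_j^{-1}-\Sigma_1^{-1})X$;
\item the normalizers $\log\det\Sigma_j$ sit inside the log-sum-exp and do not cancel. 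For example, if $\Sigma_2=2\Sigma_1$, then $\ell(\theta;X)$ jumps by order $d$ with the latent label.
\end{itemize}
The $\theta$-free part also does not land in the $d\log n/n$ term. Recentering by $\E\|X\|_2^2$ removes its mean, not its variance, and \cref{cor:gmm_concentrations} gives a deviation of order $K_1^2\sqrt{\log(1/\eta)/n}$, which is at the $1/\sqrt n$ scale. Your cruder fallback keeps this piece inside the union bound. It therefore produces $\frac{\lambda_{\max}d}{\lambda_{\min}}\sqrt{kd\log n/n}$, a $d^{3/2}$ term with no $R_\Theta$ in front, which the stated bound does not contain.

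The second remedy you list is the right one, and it is exactly the paper's starting point. Your step-2 gradient bound, applied per sample, gives $|\ell(\theta;x)-\ell(\theta';x)|\le G(x)\,\|\theta-\theta'\|_2$ with $G(x)\asymp\lambda_{\min}^{-1}(R_\Theta+\|x-\bar\mu\|_2)$. So:
\begin{itemize}
\item fix an anchor $\theta_0$;
\item union-bound only the increments $\ell(\theta;X)-\ell(\theta_0;X)$, which are dominated by $2R_\Theta G(X)$ and hence sub-Gaussian with $\psi_2$-norm $\lesssim R_\Theta(R_\Theta+\sqrt{d\lambda_{\max}})/\lambda_{\min}$;
\item pay for $\ell_n(\theta_0)-\ell^\star(\theta_0)$ once, with no entropy factor.
\end{itemize}

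\textbf{Second gap: a single-scale net loses a logarithm.} Even after this repair, the union bound produces
\[
\frac{R_\Theta(R_\Theta+\sqrt{d\lambda_{\max}})}{\lambda_{\min}\sqrt n}\sqrt{kd\log(R_\Theta/\varepsilon)+\log(1/\eta)}
\]
plus a discretization error $L\varepsilon$. With your choice $\varepsilon\asymp 1/(Ln)$, this leaves a factor $\sqrt{\log n}$ on the leading term $\frac{R_\Theta^2\sqrt d+R_\Theta\sqrt{\lambda_{\max}}\,d}{\lambda_{\min}\sqrt n}$. Even $\varepsilon\asymp R_\Theta\sqrt{d/n}$, which balances the two errors, leaves $\sqrt{\log(n/d)}$.

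Your closing step, ``crudely bounding $\sqrt{d\log n}$ by $\sqrt d\log(1/\eta)$ under $n\gg d\log^2 d$'', is not a valid inequality. Nothing relates $\log n$ to $\eta$, and the excess cannot be absorbed into the $\frac{\lambda_{\max}d\log n}{n}$ term either.

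The missing idea is chaining over all scales. The paper gets it in two steps:
\begin{itemize}
\item It uses the bracketing entropy of the Lipschitz-in-parameter class \cite[Example 19.7 and Corollary 19.35]{Vaart_1998}, $\log N_{[\,]}(\varepsilon)\lesssim d\log(R_\Theta\|G\|_{L^2}/\varepsilon)$. Its entropy integral is $\asymp R_\Theta\sqrt d\,\|G\|_{L^2}$ with no $\log n$, and this bounds $\E\sup_\theta|\ell_n-\ell^\star|$.
\item It converts this to a tail bound via the $\psi_1$ maximal inequality \cite[Theorem 2.14.23]{vdVWel23Empirical}, with envelope $F(x)\asymp(R_\Theta^2+\|x-\bar\mu\|_2^2)/\lambda_{\min}$. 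This is where the $\frac{\log n}{n}\|F\|_{\psi_1}$ term and the overall $\log(1/\eta)$ factor come from.
\end{itemize}
As designed, your argument proves only the weaker bound with an extra $\sqrt{\log n}$ on the main term.
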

\begin{proof}
    First, we establish the local Lipschitz continuity of the log-likelihood. From~\eqref{gmm_gradient}, we have $\nabla_{\mu_j}\ell(x_i;\theta) = \gamma_{ij}\Sigma_j^{-1}(x_i - \mu_j)$ for each component $j$.
    Thus, we have the bound:
    $$ \norm{\nabla_{\mu_j}\ell(x;\theta)}_2 \le \norm{\Sigma_j^{-1}(x-\mu_j)}_2 \le \frac{1}{\lambda_{\min}}\, \norm{x-\mu_j}_2\,. $$
    Summing over all components, we obtain:
    \begin{align*}
        \norm{\nabla_\theta \ell(x;\theta)}_2^2 
        &= \sum_{j=1}^k \norm{\nabla_{\mu_j}\ell(x;\mu)}_2^2 
        \le \sum_{j=1}^k \norm{\Sigma_j^{-1}(x-\mu_j)}_2^2
        \lesssim \frac{R_\Theta^2}{\lambda_{\min}^2} + \frac{1}{\lambda_{\min}^2}\, \norm{x-\bar{\mu}}_2^2
    \end{align*}
    where the final inequality follows from the triangle inequality and the boundedness of $\Theta$. Consequently, for any $x \in \mathbb{R}^d$, the mapping $\theta \mapsto \ell(x; \theta)$ is Lipschitz continuous with constant at most $G(x) \deq C\lambda_{\min}^{-1}\,(R_\Theta + \norm{x-\bar{\mu}}_2)$ for some universal constant $C$. By \cref{cor:gmm_concentrations}, $G(X)$ is sub-Gaussian with $\mathbb{E} G(X) \lesssim (R_\Theta + \sqrt{\lambda_{\max} d})/\lambda_{\min}$. Moreover, if $n \gg \log(1/\eta)$, then with probability at least $1-\eta$, we have $G(X) \lesssim (R_\Theta+\sqrt{\lambda_{\max} d})/\lambda_{\min}$.

    By standard maximal inequalities~\cite[see][Example 19.7 and Corollary 19.35]{Vaart_1998},
    \begin{align*}
        \E\sup_{\theta\in\Theta}{|\ell_n(\theta) - \ell^\star(\theta)|} &\lesssim \frac{1}{\sqrt n} \int_0^\infty \sqrt{\log N_{[\,]}(\varepsilon, \{\ell(\cdot;\theta) : \theta \in\Theta\}, L^2(\PP))}\,d \varepsilon \\
        &\lesssim \frac{1}{\sqrt n} \int_0^\infty \sqrt{d \log \frac{R_\Theta\,\norm G_{L^2}}{\varepsilon}}\,d\varepsilon
        \lesssim \frac{R_\Theta\sqrt d\,(R_\Theta + \sqrt{\lambda_{\max} d})}{\lambda_{\min}\sqrt n}\,.
    \end{align*}
    Moreover, we can note that $\abs{\ell(x;(\bar\mu,\dotsc,\bar\mu))} \lesssim \norm{x-\bar\mu}_2^2/\lambda_{\min}$, and
    \begin{align*}
        \abs{\ell(x;\theta)}
        &\le \abs{\ell(x; (\bar\mu,\dotsc,\bar\mu))} + R_\Theta\,G(x)
        \lesssim \frac{\norm{x-\bar\mu}_2^2}{\lambda_{\min}} + \frac{R_\Theta\,(R_\Theta + \norm{x-\bar\mu}_2)}{\lambda_{\min}}
        \lesssim \frac{R_\Theta^2 + \norm{x-\bar\mu}_2^2}{\lambda_{\min}}\,.
    \end{align*}
    It follows that $F: x\mapsto C\, (R_\Theta^2 + \norm{x-\bar\mu}_2^2)/\lambda_{\min}$ is an envelope function for our class.
    By~\cite[Theorem 2.14.23]{vdVWel23Empirical},
    \begin{align*}
        \bigl\lVert\sup_{\theta\in\Theta}{|\ell_n(\theta) - \ell^\star(\theta)|}\bigr\rVert_{\psi_1}
        &\lesssim \E\sup_{\theta\in\Theta}{|\ell_n(\theta) - \ell^\star(\theta)|} + \frac{\log n}{n}\,\norm F_{\psi_1} \\
        &\lesssim \frac{R_\Theta\sqrt d\,(R_\Theta + \sqrt{\lambda_{\max} d})}{\lambda_{\min}\sqrt n} + \frac{\log n}{n}\,\frac{R_\Theta^2 + \lambda_{\max} d}{\lambda_{\min}}\,.
    \end{align*}
    If $n \gg d\log^2 d$, then the term $(R_\Theta^2 \log n)/(\lambda_{\min} n)$ can be dropped.
\end{proof}

\subsection{Proof of Theorem~\ref{theorem:gaussian_mixture_model_check_assumptions}}\label{app:gmm_pf}
  
To prove \cref{assumption: random convexity of regular part at true parameter}, we first establish \cref{assumption:holder_hessian}.
\paragraph{Proof of \cref{assumption:holder_hessian}.} 
We take $\gamma= 1$.

   We will upper bound the operator norm of a matrix $A$ by taking the maximum of the operator norms of each column in $A$. Precisely, for any $A \in \R^{kd \times kd},$ where $A$ has a $k \times k$ block structure with each block a $d \times d$ matrix, i.e., $$A=\begin{pmatrix}
    A_{11} & A_{12} & \cdots & A_{1k} \\ 
    A_{21} & A_{22} & \cdots & A_{2k} \\ 
    \vdots & \vdots & \ddots & \vdots \\ 
    A_{k1} & A_{k2} & \cdots & A_{kk}
    \end{pmatrix}\,,$$ we have the following inequality:
   \begin{equation}\label{gmm_hessian_block_bound}\norm{A}_{\rm op} \leq \max_{j\in [k]}\sum_{i=1}^{k}{\norm{A_{ij}}_{\rm op}} \leq k \max_{i,j\in [k]}{\norm{A_{ij}}_{\rm op}}\,.\end{equation} Therefore, it suffices to bound the operator norm for each $d \times d$ block of the Hessian.

    We first show that $\gamma_{ij}$ is Lipschitz in $\theta$ for all $i, j$, with high probability. Since $\nabla_{\mu_j} \gamma_{ij} = \gamma_{ij}\,(1-\gamma_{ij})\,g_{ij}$, and by \cref{cor:gmm_concentrations}, $g_{ij}$ is bounded by
    $K_1 \lambda_{\min}^{-1}\sqrt{\log(n/\eta)}$
    up to a universal constant with probability at least $1-\eta/2$ uniformly over all $i \in [n]$, $j \in [k]$, and $\theta\in\Theta$. 
    Therefore, $\gamma_{ij}$ is Lipschitz in $\theta$ with constant $L_\gamma = O(K_1\lambda_{\min}^{-1} \sqrt{\log(n/\eta)})$ with probability at least $1-\eta/2$ uniformly over all $i \in [n]$, $j \in [k]$.
    Let $f(\gamma_{ij}) \deq \gamma_{ij}\,(1-\gamma_{ij})$. Since $f^\prime(\gamma_{ij}) = 1-2\gamma_{ij}$ is bounded by $1$ in absolute value, we have $$\abs{\gamma_{ij}(\theta_1)\,(1-\gamma_{ij}(\theta_1)) - \gamma_{ij}(\theta_2)\,(1-\gamma_{ij}(\theta_2))} \leq \abs{\gamma_{ij}(\theta_1) - \gamma_{ij}(\theta_2)} \lesssim L_\gamma\,\norm{\theta_1 - \theta_2}_2\,.$$
    With probability at least $1-O(\eta)$, for any $\theta_1, \theta_2 \in \Theta$, $j\in [k]$, we have
   \begin{align*} 
        &\norm{\grad^2_{\mu_j,\mu_j} \ell_n(\theta_1) - \grad^2_{\mu_j,\mu_j} \ell_n(\theta_2)}_{\rm op}\\
        &\qquad= \Bigl\|\frac{1}{n}\sum_{i=1}^{n} \bigl(\gamma_{ij}(\theta_1)\, (1-\gamma_{ij}(\theta_1))\, g_{ij}(\theta_1)\, g_{ij}(\theta_1)^\T - \gamma_{ij}(\theta_1)\, \Sigma_j^{-1} \\ 
        &\qquad\qquad\qquad\qquad{}- \gamma_{ij}(\theta_2)\, (1-\gamma_{ij}(\theta_2))\, g_{ij}(\theta_2)\, g_{ij}(\theta_2)^\T + \gamma_{ij}(\theta_2)\, \Sigma_j^{-1}\bigr)\Bigl\|_{\rm op} \\
        &\qquad\leq \frac{1}{n}\sum_{i=1}^{n} \frac{1}{4}\, \norm{g_{ij}(\theta_1)\,(g_{ij}(\theta_1)-g_{ij}(\theta_2))^\T + (g_{ij}(\theta_1)-g_{ij}(\theta_2))\,g_{ij}(\theta_2)^\T}_{\rm op} \\ 
        &\qquad\qquad + \Bigl\lVert \frac{1}{n}\sum_{i=1}^{n} \bigl(\gamma_{ij}(\theta_1)\, (1-\gamma_{ij}(\theta_1))-\gamma_{ij}(\theta_2)\, (1-\gamma_{ij}(\theta_2))\bigr)\, g_{ij}(\theta_1)\, g_{ij}(\theta_1)^\T\Bigr\rVert_{\rm op} \\ 
        &\qquad\qquad +
        \frac{1}{n}\sum_{i=1}^{n} {\norm{\Sigma_j^{-1}}_{\rm op}\,\abs{\gamma_{ij}(\theta_1) - \gamma_{ij}(\theta_2)}} \\
        &\qquad\lesssim \frac{L_\gamma}{ \lambda_{\min}}\,\norm{\theta_1-\theta_2}_2+  L_\gamma\,\norm{\theta_1 - \theta_2}_2\,\Bigl\lVert\frac{1}{n} \sum_{i=1}^n g_{ij}(\theta_1)\,g_{ij}(\theta_1)^\T\Bigr\rVert_{\rm op}
        + \frac{L_\gamma}{\lambda_{\min}}\,\norm{\theta_1-\theta_2}_2 \\ 
        &\qquad\lesssim \frac{L_\gamma}{ \lambda_{\min}}\,\norm{\theta_1-\theta_2}_2 + \frac{L_\gamma \,(R_\Theta^2 + \lambda_{\max})}{\lambda_{\min}^2}\,\norm{\theta_1-\theta_2}_2\,. 
    \end{align*}
    In the last line, we applied~\cref{lemma:result_gmm_covariance_concentration}.
    If we assume that
    \begin{align*}
        n \gg \Bigl( \frac{R_\Theta^2 + \lambda_{\max}}{\lambda_{\min}}\Bigr)^2\,\bigl(d+\log(1/\eta)\bigr)\,,
    \end{align*}
    then it yields
   \begin{align*} 
        \norm{\grad^2_{\mu_j,\mu_j} \ell_n(\theta_1) - \grad^2_{\mu_j,\mu_j} \ell_n(\theta_2)}_{\rm op}
        &\lesssim \frac{L_\gamma\,(R_\Theta^2 + \lambda_{\max})}{\lambda_{\min}^2}\,\norm{\theta_1-\theta_2}_2\,.
    \end{align*}
    
   The same argument can be applied to the mixed Hessian $\nabla^2_{\mu_{j}, \mu_{j^\prime}}\ell_n = -\frac{1}{n}\sum_{i=1}^{n} \gamma_{ij} \gamma_{ij^{\prime}} g_{ij} g_{ij^\prime}^\T$ for $j \neq j^\prime$. 
   We have that $$\abs{\gamma_{ij}(\theta_1)\, \gamma_{ij^\prime}(\theta_1) - \gamma_{ij}(\theta_2)\, \gamma_{ij^\prime}(\theta_2)} \leq \abs{\gamma_{ij}(\theta_1) - \gamma_{ij}(\theta_2)} + \abs{\gamma_{ij^\prime}(\theta_1) - \gamma_{ij^\prime}(\theta_2)} \lesssim L_\gamma\,\norm{\theta_1 - \theta_2}_2\,.$$
   Therefore, for any $\theta_1, \theta_2 \in \Theta$,
   \begin{align*} 
        &\norm{\grad^2_{\mu_j, \mu_{j^\prime}} \ell_n(\theta_1) - \grad^2_{\mu_j, \mu_{j^\prime}} \ell_n(\theta_2)}_{\rm op}\\
        &\qquad = \Bigl\|\frac{1}{n}\sum_{i=1}^{n} \bigl(-\gamma_{ij}(\theta_1)\, \gamma_{ij^\prime}(\theta_1)\, g_{ij}(\theta_1)\, g_{ij^\prime}(\theta_1)^\T + \gamma_{ij}(\theta_2)\, \gamma_{ij^\prime}(\theta_2)\, g_{ij}(\theta_2)\, g_{ij^\prime}(\theta_2)^\T\bigr)\Bigr\|_{\rm op} \\
        &\qquad \leq \frac{1}{n}\sum_{i=1}^{n} \norm{g_{ij}(\theta_1)\,(g_{ij^\prime}(\theta_1)-g_{ij^\prime}(\theta_2))^\T + (g_{ij}(\theta_1)-g_{ij}(\theta_2))\,g_{ij^\prime}(\theta_2)^\T}_{\rm op} \\ 
        &\qquad\qquad + \Bigl\lVert\frac{1}{n}\sum_{i=1}^{n} \bigl(\gamma_{ij}(\theta_1)\, \gamma_{ij^\prime}(\theta_1)-\gamma_{ij}(\theta_2)\, \gamma_{ij^\prime}(\theta_2)\bigr)\, g_{ij}(\theta_1)\, g_{ij^\prime}(\theta_1)^\T\Bigr\rVert_{\rm op} \\ 
        &\qquad \lesssim \frac{L_\gamma}{\lambda_{\min}}\,\norm{\theta_1 - \theta_2}_2 + \frac{1}{\lambda_{\min}^2}\,\Bigl\lVert\frac{1}{n}\sum_{i=1}^{n} \bigl(\gamma_{ij}(\theta_1)\, \gamma_{ij^\prime}(\theta_1)-\gamma_{ij}(\theta_2)\, \gamma_{ij^\prime}(\theta_2)\bigr)\, (X_i - \mu_j)(X_i - \mu_{j'})^\T\Bigr\rVert_{\rm op}\,.
    \end{align*}
    Using a similar decomposition as the proof of~\cref{lemma:result_gmm_covariance_concentration}, one can bound the latter term by $L_\gamma\,(R_\Theta^2+\lambda_{\max})/\lambda_{\min}^2$.
    Therefore, we conclude that~\cref{assumption:holder_hessian} holds with
    \begin{align}\label{eq:gmm_s3}
        s_3 \lesssim \frac{(R_\Theta^2 + \lambda_{\max})\,(R_\Theta + \sqrt{d\lambda_{\max}})\,k\sqrt{\log(n/\eta)}}{\lambda_{\min}^3}\,.
    \end{align}

\paragraph{Proof of \cref{assumption: random convexity of regular part at true parameter}.}

By \cref{lemma: l_n convexity}, we have $$\inf_{\theta \in B(\theta^\star, r_0, r_1)}\lambda_{\min}(-\grad^2_{S_0} \ell_n(\theta)) \geq \frac{c_{S_0}^\star}{2} - \max_{\theta \in N_\epsilon(\theta^\star, r_0, r_1)}\norm{\grad^2_{S_0}\ell_n(\theta) - \grad^2_{S_0}\ell^\star(\theta)}_{\rm op}$$ with probability at least $1-\eta$, provided that $N_\varepsilon(\theta^\star,r_0,r_1)$ is an $\varepsilon$-covering net of $B(\theta^\star,r_0,r_1)$ with $\varepsilon \le \frac{c_{S_0}^\star}{2s_{3}}$ and $s_3$ is bounded as in~\eqref{eq:gmm_s3}.
It remains to show concentration of the Hessian around its population counterpart over the covering net.

Let $v=(v_1, \dotsc, v_k)$ be any unit vector in $\R^{kd_0}$ with support in $S_0$, where each $v_j \in \R^{d_0}$ for $j \in [k]$. For the rest of the proof, we will omit the subscript $S_0$ in $\grad^2_{S_0} \ell_n(\theta)$ for simplicity, but it is understood that we are only considering the Hessian with respect to regular parameters.
Since
    \begin{align*} 
        \bigl\langle \grad^2 \ell(\theta; X_i)\,v, v\bigr\rangle 
        &= \sum_{j,l=1}^{k} \bigl\langle\grad^2_{\mu_j,\mu_l} \ell(\theta; X_i)\,v_j, v_l\bigr\rangle \\
        &= \sum_{j=1}^{k} \bigl\langle \grad^2_{\mu_j,\mu_j} \ell(\theta; X_i)\,v_j, v_j\bigr\rangle + 2\sum_{j<l} \bigl\langle\grad^2_{\mu_j,\mu_l} \ell(\theta; X_i)\,v_j, v_l\bigr\rangle\,,
    \end{align*} 
    we can bound the subexponential norm of $\langle\grad^2 \ell(\theta; X_i)\,v, v\rangle$ by the subexponential norms of each term in the sum:
    \begin{align*} 
        &\bigl\lVert\bigl\langle \grad^2 \ell(\theta; X_i)\,v, v\bigr\rangle\bigr\rVert_{\psi_1}\\
        &\qquad \le \sum_{j=1}^{k} {\bigl\lVert \bigl\langle \grad^2_{\mu_j,\mu_j} \ell(\theta; X_i)\,v_j, v_j\bigr\rangle\bigr\rVert_{\psi_1}} + 2\sum_{j<l} {\bigl\lVert\bigl\langle\grad^2_{\mu_j,\mu_l} \ell(\theta; X_i)\,v_j, v_l\bigr\rangle\bigr\rVert_{\psi_1}} \\
        &\qquad = \sum_{j=1}^{k}{\bigl\lVert\gamma_{ij} \,(1-\gamma_{ij})\, v_j^\T g_{ij} g_{ij}^\T v_j - \gamma_{ij} v_j^\T \Sigma^{-1}_j v_j\bigr\rVert_{\psi_1}} + 2\sum_{j<l}{\bigl\lVert \gamma_{ij} \gamma_{il}v_j^\T g_{ij} g_{il}^\T v_l\bigr\rVert_{\psi_1}} \\ 
        &\qquad \leq \sum_{j=1}^{k}\bigl(\bigl\lVert v_j^\T \Sigma^{-1}_j (X_i - \mu_j)(X_i - \mu_j)^\T \Sigma^{-1}_j v_j\bigr\rVert_{\psi_1}  + \norm{v_j^\T \Sigma^{-1}_j v_j}_{\psi_1}\bigr) \\
        &\qquad\qquad{} + 2\sum_{j<l}{\bigl\lVert v_j^\T \Sigma^{-1}_j (X_i - \mu_j)(X_i - \mu_l)^\T \Sigma^{-1}_l v_l\bigr\rVert_{\psi_1}} \\
        &\qquad \lesssim \sum_{j=1}^{k}\bigl(\norm{v_j^\T \Sigma^{-1}_j(X_i - \mu_j)}_{\psi_2}^2  + \frac{\norm{v_j}_2^2}{\lambda_{\min}}\bigr) + \sum_{j<l}{\norm{v_j^\T \Sigma^{-1}_j(X_i - \mu_j)}_{\psi_2}\,\norm{v_l^\T \Sigma^{-1}_l(X_i - \mu_l)}_{\psi_2}}\,.
    \end{align*}
    
    By~\eqref{eq:gmm_subg_in_dir},
\begin{equation}\label{gmm_subexp_norm}\norm{v_j^\T g_{ij}}_{\psi_2} \lesssim \frac{R_\Theta+\sqrt{\lambda_{\max}}}{\lambda_{\min}}\,\norm{v_j}_2\,.\end{equation}
    Therefore,
    \begin{align*} 
        \sum_{j=1}^{k}{\norm{v_j^\T \Sigma^{-1}_j(X_i - \mu_j)}_{\psi_2}^2}
        &\lesssim \frac{R_\Theta^2+\lambda_{\max}}{\lambda_{\min}^2}\sum_{j=1}^{k}{\norm{v_j}_2^2}
        = \frac{R_\Theta^2+\lambda_{\max}}{\lambda_{\min}^2}\,, \\
        \sum_{j=1}^{k}\frac{\norm{v_j}_2^2}{\lambda_{\min}} 
        &= \frac{1}{\lambda_{\min}}\,,
        \end{align*}
    and
        \begin{align*}
        &\sum_{j<l}{\norm{v_j^\T \Sigma^{-1}_j(X_i - \mu_j)}_{\psi_2}\,\norm{v_l^\T \Sigma^{-1}_l(X_i - \mu_l)}_{\psi_2}}\\ 
        &\qquad \lesssim \sum_{j<l}\lambda_{\min}(\Sigma_j)^{-1}\,\lambda_{\min}(\Sigma_l)^{-1}\,\norm{v_j}_2\,\norm{v_l}_2\,(R_\Theta + \sqrt{\lambda_{\max}})^2\\
        &\qquad \leq \sum_{j<l}(\norm{v_j}_2^2+\norm{v_l}_2^2)\,\lambda_{\min}^{-2}\,(R_\Theta^2 + \lambda_{\max})\\
        &\qquad \lesssim k\lambda_{\min}^{-2}\,(R_\Theta^2 + \lambda_{\max})\,.
    \end{align*}
    Therefore, $$\bigl\lVert \inangle{\grad^2_{S_0} \ell(\theta; X_i)\,v, v}\bigr\rVert_{\psi_1} \lesssim k\lambda_{\min}^{-2}\,(R_\Theta^2 + \lambda_{\max})\,,$$ which implies
    \begin{align*}
        &\PP\bigl(\bigl\lvert \inangle{\grad^2_{S_0} \ell_n(\theta)\,v, v} - \E\inangle{\grad^2_{S_0} \ell_n(\theta)\,v, v}\bigr\rvert \geq t\bigr) \lesssim \exp\Bigl\{-n\,\Omega\Bigl( \frac{t^2\lambda_{\min}^{4}}{k^2\,(R_\Theta^2 + \lambda_{\max})^2} \wedge \frac{t\lambda_{\min}^{2}}{k\,(R_\Theta^2 + \lambda_{\max})}\Bigr)\Bigr\}\,.    
    \end{align*}
    This implies that
    $$\big|\inangle{\grad^2_{S_0} \ell_n(\theta)\,v, v} - \E\inangle{\grad^2_{S_0} \ell^{\star}(\theta)\,v, v}\big| \leq \frac{c_{S_0}^\star}{4} $$ with probability at least $1-\eta$, provided $n \gg \phi(\frac{k\,(R_\Theta^2 + \lambda_{\max})}{c_{S_0}^\star \lambda_{\min}^2}) \log(1/\eta)$, with $\phi(x) \deq x \vee x^2$. 

 Taking a union bound over all $v$ in an $\epsilon$-covering net of the unit sphere in $\R^{kd_0}$ with $\epsilon \ll \frac{c_{S_0}^\star}{2s_3}$, we have
    \begin{align*}
        &\sup_{\mu \in N_\epsilon(\theta^\star, r_0, r_1)}\norm{\grad^2_{S_0} \ell_n(\theta) - \grad^2_{S_0} \ell^{\star}(\theta)}_{\rm op} \leq \frac{c_{S_0}^\star}{4}
    \end{align*}
    with probability at least $1-\eta$, provided
    \begin{align*}
        n \gg \phi\Bigl(\frac{k\,(R_\Theta^2+\lambda_{\max})}{c_{S_0}^\star \lambda_{\min}^2}\Bigr)\,\Bigl(kd_0\log\frac{r_0 s_3}{c_{S_0}^\star} + \log(1/\eta)\Bigr)\,.
    \end{align*}
    Thus, \cref{assumption: random convexity of regular part at true parameter} holds with $c_{S_0} = c_{S_0}^\star/4$.

\paragraph{Proof of \cref{ass:subG}.}
For any unit vector $v \in \R^{kd}$,
\begin{align*}
\norm{\langle v, \grad\ell(\theta; X_1) - \E\grad\ell(\theta; X_1)\rangle}_{\psi_1} &= \Bigl\lVert \sum_{j=1}^k \langle v_j, \gamma_{1j} g_{1j} - \E[\gamma_{1j} g_{1j}]\rangle\Bigr\rVert_{\psi_1} \\
&\lesssim \sum_{j=1}^k{\|\langle v_j, g_{1,j}\rangle\|_{\psi_1}}
\lesssim \lambda_{\min}^{-1}\,(R_\Theta+\sqrt{\lambda_{\max}}) \sum_{j=1}^k \|v_j\|_2 \\
&\leq k^{1/2}\lambda_{\min}^{-1}\, (R_{\Theta} + \sqrt{\lambda_{\max}}) \text{ by } \eqref{gmm_subexp_norm}\,.
\end{align*} 
Therefore,~\cref{ass:subG} holds with $\sigma \lesssim k^{1/2} \lambda_{\min}^{-1}\,(R_\Theta+\sqrt{\lambda_{\max}})$.

\paragraph{Proof of \cref{ass:bounded_mixed_derivatives_ell_n}.}
    We apply \eqref{gmm_hessian_block_bound} to bound the whole matrix $\grad^2 \ell_n(\theta)$, so it suffices to bound each block $\grad^2_{\mu_j,\mu_j}\ell_n(\theta)$ and $\grad^2_{\mu_j, \mu_{j'}} \ell_n(\theta)$.
    \begin{align*} 
        \norm{\grad^2_{\mu_j,\mu_j}\ell_n(\theta)}_{\rm op}
        &= \Bigl\lVert\frac{1}{n}\sum_{i=1}^{n} \bigl(\gamma_{ij}\, (1-\gamma_{ij})\, g_{ij} g_{ij}^\T - \gamma_{ij} \Sigma^{-1}_{j}\bigr)\Bigr\rVert_{\rm op}
        \le \Bigl\lVert \frac{1}{n}\sum_{i=1}^{n}  g_{ij} g_{ij}^\T\Bigr\rVert_{\rm op} + \Bigl\lVert\frac{1}{n}\sum_{i=1}^{n} \gamma_{ij} \Sigma^{-1}_j\Bigr\rVert_{\rm op}\,, \\
        \norm{\grad^2_{\mu_j,\mu_j'} \ell_n(\theta)}_{\rm op}
        &= \Bigl\lVert\frac{1}{n}\sum_{i=1}^{n} \gamma_{ij} \gamma_{ij^{\prime}} g_{ij} g_{ij^\prime}^\T\Bigr\rVert_{\rm op}\,.
    \end{align*} 
     Therefore, $\|\grad^2_{\mu_j,\mu_j}\ell_n(\theta)\|_{\rm op} + \|\grad^2_{\mu_j,\mu_{j'}} \ell_n(\theta)\|_{\rm op} \lesssim \frac{R_\Theta^2 + \lambda_{\max}}{\lambda_{\min}^2}$ with probability at least $1-\eta$ when $n\gg \frac{R_\Theta^4 + \lambda_{\max}^2}{\lambda_{\min}^2}\,\bigl(d+\log(1/\eta)\bigr)\,,$
    where the inequalities follow from \cref{lemma:result_gmm_covariance_concentration}.
    By \eqref{gmm_hessian_block_bound}, we see that \cref{ass:bounded_mixed_derivatives_ell_n} holds with
    $$s_2 \lesssim k\,\frac{R_\Theta^2 + \lambda_{\max}}{\lambda_{\min}^2}\,.$$

\paragraph{Proof of \cref{assumption_contraction:compact_parameter_space}.}
    This follows from \cref{assumption:gmm_dimension}, \cref{cor:non_reg_rate}, and \cref{lemma:uniform convergence regular part}.

\paragraph{Proof of \cref{assumption:exponential_decay}.}
We decompose, 
    for any $\theta$ outside $B(\hat\theta, r_0,r_1)$,
    \begin{align*} 
        \ell_n(\theta) - \ell_n(\hat\theta) &= \underbrace{\ell_n(\theta) - \ell^{\star}(\theta)}_{(i)} + \underbrace{\ell^{\star}(\theta) - \ell^{\star}(\theta^\star)}_{(ii)} + \underbrace{\ell^{\star}(\theta^\star) - \ell^{\star}(\hat\theta)}_{(iii)} + \underbrace{\ell^{\star}(\hat\theta) - \ell_n(\hat\theta)}_{(iv)}\,.
    \end{align*}
    We control these terms using the uniqueness of the maximizer of $\ell^{\star}$, consistency of the log-likelihood, and the uniform convergence of the log-likelihood.
    In particular, $(i)$ and $(iv)$ are differences between the empirical log-likelihood and the population log-likelihood, which converges to $0$ uniformly over $\Theta$ by \cref{lemma:gmm_log_likelihood_uniform_convergence}.
    Term $(ii)$ is the difference between the population log-likelihood at $\theta$ and the population log-likelihood at the mode, which is bounded by $-\zeta^\star$ by \cref{assumption:exponential_decay_l}. Finally, $(iii)$ is the difference between the population log-likelihood at the true parameter and the mode of empirical log-likelihood. Recall that $\theta_{S_1}^\star=\hat\theta_{S_1}$ by \cref{cor:non_reg_rate}. Therefore, by Taylor expansion to the second order and \cref{lemma:uniform convergence regular part}, 
    \begin{equation*}
        \ell^{\star}(\theta^\star) - \ell^{\star}(\hat\theta) \leq \frac{s_2}{2} \,\norm{\hat\theta_{S_0} - \theta^\star_{S_0}}^2_2
        \lesssim \frac{s_2\sigma^2}{c_{S_0}^2}\,\Bigl(\frac{d_0 + \log(1/\eta)}{n} + \frac{(d_0 +\log(1/\eta))^2}{n^2}\Bigr)\,.
    \end{equation*}
     Collecting the error terms, we conclude that with probability at least $1-O(\eta)$, if
     \begin{align*}
         n &\gg d\log^2 d + \frac{(R_\Theta^2 + d\lambda_{\max})\,R_\Theta^2 d \log^2(1/\eta)}{\lambda_{\min}^2\,(\zeta^\star)^2} + \frac{d\lambda_{\max}}{\lambda_{\min} \zeta^\star} \log(1/\eta) \log\Bigl[\frac{d\lambda_{\max}}{\lambda_{\min} \zeta^\star} \log(1/\eta)\Bigr] \\[0.25em]
         &\qquad{} + \phi\Bigl(\frac{k\,(R_\Theta^2 + \lambda_{\max})}{c_{S_0}^\star\,\lambda_{\min}^2\,(\zeta^\star)^{1/2}}\Bigr)\,\bigl(d_0+\log(1/\eta)\bigr)\,,
     \end{align*}
     where $\phi(x) \deq x \vee x^2$, then \cref{assumption:exponential_decay} holds with $\zeta = \zeta^\star/2$.

\end{document}